\documentclass[10pt]{article}
\usepackage{amssymb,amsmath,amsthm,epsfig}
\usepackage{txfonts,mathrsfs}
\usepackage{latexsym, enumerate,cite}
\usepackage{eepic}
\usepackage{booktabs}
\usepackage{afterpage}
\usepackage{epic}
\usepackage{graphicx}
\usepackage{color}
\usepackage{ifpdf}
\usepackage{subfigure}
\usepackage{tikz}
\usepackage{dsfont}
\usepackage{multirow}
\usepackage{makecell}
\usepackage{algorithm}
\usepackage{bm}
\usepackage{multirow}
\usepackage{color}
\usepackage[colorlinks, linkcolor=blue,anchorcolor=blue,citecolor=blue,urlcolor=blue]{hyperref}
\usepackage[title]{appendix}
\usepackage{comment}
\graphicspath{{figs/}}

\numberwithin{equation}{section}

\theoremstyle{plain}
\newtheorem{lem}{Lemma}[section]
\newtheorem{thm}[lem]{Theorem}

\newtheorem{prop}{Proposition}[section]

\theoremstyle{definition}
\newtheorem{defn}{Definition}[section]

\newtheorem{rem}{Remark}[section]

\DeclareMathOperator{\diag}{diag}

\newcommand{\p}{\partial}
\newcommand{\ds}{\displaystyle}

\newcommand{\RR}{\mathbb{R}}
\newcommand{\rmr}{\mathrm{r}}

\newcommand{\nm}{\noalign{\smallskip}}
\newcommand{\Bx}{\mathbf{x}}
\newcommand{\By}{\mathbf{y}}

\renewcommand{\(}{\left(}
\renewcommand{\)}{\right)}

\def \e{\ensuremath{\mathrm{e}}}
\def \ii{\ensuremath{\mathrm{i}}}
\def \d{\ensuremath{\mathrm{d}}}
\begin{document}

\title{
	On subwavelength guided seismic waves in Matryoshka-type elastic media with ring defects
}

\author{
	{Lingzheng Kong} \thanks{School of Mathematics and Statistics, Central South University, Changsha, 410083, Hunan Province, China. Email:
		\href{mailto:math_klz@csu.edu.cn}{math\_klz@csu.edu.cn};\ \ \href{mailto:math_klz@163.com}{math\_klz@163.com}}
}

\date{}%
\maketitle
% ----------------------------------------------------------------
\begin{abstract}
Seismic surface waves, particularly low-frequency Rayleigh waves, are notoriously destructive and remain difficult to control using conventional engineering methods. Recent advances in elastic metamaterials have opened new avenues for manipulating and guiding seismic waves at subwavelength scales. In this work, we present a rigorous mathematical study of subwavelength resonances and seismic-wave localization in arbitrarily shaped Matryoshka-type elastic metamaterials, which are composed of nested high-contrast resonators made of materials much stiffer than the background medium. By employing the displacement-to-traction map and a variational formulation, we derive necessary and sufficient conditions that characterize subwavelength resonances. Using the Gohberg--Sigal theory and Puiseux series of multivalued algebraic functions, we establish the existence of subwavelength resonances and obtain their asymptotic characterization in terms of the eigensystem of the generalized stiffness tensor, which serves as the elastic analogue of the capacitance matrix in the celebrated Minnaert acoustic-cavitation systems. Furthermore, in the concentric spherical case, the stiffness tensor exhibits a block-diagonal structure separating translational and rotational components, with each block being tridiagonal and possessing positive, simple eigenvalues. Based on this matrix formulation, we rigorously demonstrate that, for structures with a sufficiently large number of layers, ring defects in layered subwavelength resonators can act as effective radially laminated seismic waveguides, supporting both wave localization and guided propagation along the defects. In particular, appropriately placed ring defects can induce eigenmodes that are exponentially localized at multiple defect sites simultaneously, with precisely quantified amplitude ratios. Finally, some numerical results are provided to corroborate the theoretical findings.

%nearly degenerate eigenfrequencies
%
%create nearly degenerate eigenfrequencies above the bulk spectrum, whose corresponding eigenmodes are  localized simultaneously at multiple defects.
\end{abstract}

{\bf Key words}: Matryoshka-type elastic metamaterial; subwavelength resonance; generalized stiffness tensor;  radially laminated seismic waveguide; ring defects; wave localization and tunneling

{\bf 2020 Mathematics Subject Classification:}~~35B30; 35B34; 35Q74; 74B05; 74J20
%    35B30 	Dependence of solutions to PDEs on initial and/or boundary data and/or on parameters of PDEs
% 35B34 Resonance in context of PDEs
%35Q74 — PDEs in connection with mechanics of deformable solids
%74J20  	Wave scattering in solid mechanics
%74B05  	Classical linear elasticity

% ----------------------------------------------------------------
\section{Introduction}

\subsection{Background and motivation}
Millions of earthquakes occur worldwide each year, with more than a thousand reaching magnitudes of 5.0 or higher \cite{LOF}. When an earthquake occurs, seismic waves carrying vast amounts of energy are emitted in all directions. Upon reaching the Earth’s surface, this energy propagates along it in the form of surface waves. Among these, the Rayleigh wave, whose frequency typically lies in the low range of approximately 0.1–20 Hz \cite{Palermo_SDEE_18,Wu_IJMS2021}, is particularly destructive to building structures, as the resonant frequencies of buildings usually fall within this range \cite{Achaoui_EML2016}. Moreover, these low-frequency surface waves are  difficult to guide or attenuate using conventional engineering approaches. In recent years, however, the advent of metamaterials has opened new possibilities for manipulating and confining low-frequency Rayleigh waves.

Metamaterials, which are artificial composite structures not found in nature, can be engineered to guide or trap wave propagation. Periodically arranged  metamaterials that exhibit band gaps \cite{AFLYZ_JDE2017,LZScience}, frequency ranges where elastic waves are strongly attenuated or do not propagate, offer a novel approach to earthquake protection. Most seismic metamaterials developed to date exploit this band-gap property. The formation mechanisms of band gaps primarily include Bragg scattering and local resonance \cite{Achaoui_PRB2011,LZScience}. For metamaterials based on Bragg scattering, the wavelength associated with the band gap must be comparable to the lattice constant of the periodic array \cite{BJEG_PRL2014}. However, the wavelengths of seismic waves that cause structural damage to buildings are generally much longer. Consequently, constructing seismic metamaterials that rely on Bragg scattering for low-frequency applications would require unrealistically large structures \cite{MBKPD_PNAS2016,AFLYZ_JDE2017}, rendering such designs both economically and physically impractical.

In contrast, locally resonant metamaterials exhibit band gaps whose lattice constants are two orders of magnitude smaller than the relevant wavelength \cite{LZScience,Maldovan_Na2013}. These metamaterials behave as effective media with negative elastic parameters \cite{LZScience,LXarXiv}, and the induced local resonances enable deep-subwavelength wave control\cite{NaturePhysics}, thereby allowing the manipulation of low-frequency seismic wave propagation through microstructured designs.  
This subwavelength mechanism makes local resonance a practically feasible strategy for seismic applications. As a result, periodic arrays of elastic resonant units based on local resonance have been widely proposed. Colombi et al. \cite{Colombi_SR2016} observed strong attenuation of seismic surface waves between 30 and 45 Hz and between 90 and 110 Hz, when seismic waves propagated through a forest, and numerical simulations confirmed that the attenuation arose from the local resonance of the trees. Inspired by forest trees, numerous studies have examined periodic arrays of various elastic resonant units \cite{Wu_IJMS2021,Palermo_SDEE_18} leveraging local resonance mechanisms arising from impedance mismatches. These mismatches originate from the high contrast in physical parameters between soft and hard elastic materials.  The assumption of an infinitely periodic background permits the application of Floquet–Bloch theory \cite{AK_book2018,FBTheory} to characterize guided modes induced by single or linear defects \cite{Khelif_PRE_2004,Pennec_APL_2005}. Similar ideas have been extensively developed in other fields, notably in quantum mechanics for the Schr\"odinger operator \cite{WeinsteinPNAS2014,WeinsteinMAMS2017}, and in subwavelength physics for bubbly crystals perturbed by point \cite{ADH_SIMA2023,ADH_CMP2024} or line defects \cite{AHLMZ_ArXiv,AHYJEMS}. These studies have established a rigorous foundation for understanding wave localization in periodic systems, where a localized defect breaks translational symmetry, producing exponentially confined modes with eigenvalues within the band gaps of the unperturbed periodic operator. It is also worth noting that another class of  anomalous localized resonances can arise directly from materials with negative parameters \cite{DLbook2024,LLL_JMPA2018}. Both negative-index and high-contrast resonant components can act as contrast agents in various imaging techniques \cite{sini_ARMA2025,AZ_CMP2015} by using the field enhancement and focusing of subwavelength resonances.

In most studies on seismic metamaterials, periodic structures often consist of single-layered (homogeneous) resonators. Such configurations, however, typically exhibit narrow band gaps, limiting their effectiveness in engineering applications \cite{LPDV_PRE2007,Zeng_IJSS20202}. This limitation has motivated the development of metamaterials featuring broader or multiple band gaps. Among these, Matryoshka-type metamaterials have gained considerable attention as subwavelength resonators, owing to their high tunability and high-quality resonance characteristics. Experimental and numerical studies \cite{LPDV_PRE2007,Zeng_IJSS20202} have shown that the nested design of Matryoshka resonators enables multiple resonances, resulting in a greater number of band gaps. Nevertheless, the mathematical understanding of the origin of subwavelength resonance and the underlying mechanisms of seismic wave guiding in Matryoshka-type elastic media with general geometries remains limited, apart from the case of concentric radially layered structures which has been well illustrated and studied in \cite{DKLZ_JDE26,KZDF_JCP,DKLLZ_SAPM2025,DKLZ_ESIAM24} focusing on various resonance phenomena. This setting poses several new challenges. First, the lack of translational invariance precludes the application of Floquet--Bloch theory, necessitating alternative analytical tools for spectral analysis. Second, the doubly connected nested geometry introduces topological and analytical complexities. Finally, the generality of the geometric configuration enhances the applicability of the model to a broad class of physical systems.

\subsection{Main novelty and strategy} 

%This work is a nontrivial extension of the method proposed in \cite{DKZ_JLMS2026} for the acoustic scattering governed by the scalar Helmholtz equation to the elastic scattering problem of the Lam\'e system. The elastic wave equation is more challenging due to the coexistence of compressional and shear waves, which propagate at different speeds, and the Green function exhibits higher singularity compared with the Helmholtz case. Consequently, a more sophisticated analysis is required. The results obtained in this work offer a rigorous mathematical foundation for understanding the aforementioned physical phenomena unique to elastic waves. 
To establish our main conclusions, we first decouple the multiple elastic scattering problem in Matryoshka-type metamaterials with arbitrary shapes and high-contrast material parameters using the displacement-to-traction map. This leads to a variational characterization of subwavelength resonances (see Proposition \ref{TCR}). By applying layer potential techniques and introducing the notion of {\em stiffness} in the gap region of nested resonators (see Definition \ref{defnstiff}), we derive an asymptotic expansion of the variational characterization in terms of the {\em stiffness tensor} (see Lemma \ref{EST}, Proposition \ref{ImU}). The notion of stiffness was originally developed to model the spherical bonded rubber bush mountings in the setting of elastostatics \cite{HT_IJSS2005,Hill1975}, 
which serves as a natural analogue of the capacitance matrix 
%in electrostatics \cite{DHJE2011} and  
in the celebrated Minnaert acoustic-cavitation systems \cite{FA_SAM_2022,ADH_SIMA2023}. This expansion motivates the introduction of the scaled resonance variable $\hat{\omega} = \delta^{-\frac{1}{2}}\omega$, which allows the application of Gohberg--Sigal theory to establish the existence of subwavelength resonances (see Theorem \ref{existence_SWLR}). To derive the asymptotic behavior of the subwavelength resonances $\omega(\delta)$ as  the material contrast  $\delta\to 0$, we note that the standard scalar-case analysis \cite{DKZ_JLMS2026,DKLZ_JDE26,FCA_SIAP2023} determines the generalized capacitance eigenvalue problem and its first-order corrections. Extending this to Lam\'e systems is highly nontrivial, since eigenvalues of the generalized elastic stiffness problem may have multiplicities, splitting into distinct eigenvalues whose number equals the multiplicity \cite{FA_SAM_2022,Katpbook1995}. Using theories of symmetric polynomials and algebraic functions, we show that $\omega(\delta)$ are multivalued algebraic functions admitting Puiseux series expansions at $\delta\to0$ (see Theorem \ref{PE_SWLR}).

Having established the theoretical framework for subwavelength resonances, we proceed to examine the problem of subwavelength guided seismic waves.  We first provide an explicit calculation for concentric radially layered structures, generalizing results in \cite{DKLZ_JDE26,LZArxiv} and validating the general theory for subwavelength resonances in arbitrary-shaped Matryoshka metamaterials. In the concentric spherical case, the stiffness tensor has a block-diagonal structure separating translational and rotational components, with each block tridiagonal and possessing positive, simple eigenvalues. Using this matrix formulation, we study wave localization induced by a ring defect in finite-layered dimer-type subwavelength resonators, characterized by nested layers with alternating stiffness values (see Definition \ref{Ccap}). Applying properties of Chebyshev polynomials within the framework of \cite{DKZ_JLMS2026,ABDHKL_SAPM2024}, we prove the existence, uniqueness, and convergence of an eigenfrequency within the spectral gap, and demonstrate exponential localization of the corresponding eigenmode, referred to as a localized defect mode. Although our focus is on geophysical structures, the proposed metamaterials offer alternatives for vibration and noise attenuation \cite{LZScience}, as the spectral gap can be tailored by modulating geometric parameters, controlling its width or opening/closing. When the spectral gap closes, dimer-type resonators degenerate into monomer-type ones, raising the natural question: 

{\em When the spectral gap closes, by what mechanism can a localized defect mode be excited?}

\noindent
 To restore the wave localization, we introduce material‑parameter defects, which act as selective trapping mechanisms for localized modes.  Specifically,  we establish the existence and convergence of a number of eigenfrequencies above the bulk spectrum that equals the number of defects, together with exponential localization of the associated eigenmodes (see Theorem \ref{Propiff2}).
 By strategically placing multiple defects with appropriately chosen parameters, we can engineer nearly degenerate eigenfrequencies whose corresponding eigenmodes are simultaneously localized at multiple defect sites (see Theorem \ref{thm_multimode}). These theoretical results are in agreement with the numerical simulations of double defects reported by \cite{ADH_CMP2024}.
 In contrast, we derive refined asymptotic expansions for these defect eigenvalues, which capture not only their leading-order dependence on the defect parameters but also their fine structure induced by inter-defect interactions (see \eqref{MLexp}). Furthermore, we obtain a detailed characterization of the associated eigenvectors at the defect sites: their amplitudes follow explicit sinusoidal profiles with precisely quantified amplitude ratios, given by \eqref{amplitude_ratio}. These amplitude distributions depend sensitively on the spacing between defects, exhibiting a parity-dependent structure that determines the relative phase and magnitude of the modal components across different defect locations. This provides a complete description of both the spectral splitting and the spatial structure of the multimode localized states.
 Exploiting this phenomenon, multimode localized states can be realized in metamaterial architectures, enabling multi-channel filtering capabilities that are unattainable with conventional single-defect designs. In particular, different quasi-degenerate modes exhibit distinct amplitude distributions across defect sites, allowing selective excitation and routing of waves through multiple spatial or frequency channels. Moreover, when the defects are arranged in hierarchical configurations (see Remark \ref{rem53}), the interaction between different block of defects gives rise to a rich multi-scale structure of localized modes. This provides a systematic framework for designing broadband and multifunctional wave control devices through controlled superposition and coupling of defect-induced modes, and contributes to the mathematical understanding of guided wave phenomena in complex structured media beyond periodicity, and provides a rigorous foundation for designing radial laminate seismic waveguides with nested resonator defects and thin functional coatings.
 
It is worth emphasizing that, as a significant by-product of this study, we also derive a complete spectral classification of tridiagonal 1-Toeplitz matrices, in the presence of finitely many compact defects. To the best of our knowledge, this characterization is the first result of its kind and is of independent mathematical interest. Consequently, it can be naturally applied to a broad class of waveguide problems, in particular to the trapping and guiding of acoustic waves governed by high-contrast Helmholtz equations \cite{ABDHKL_SAPM2024,DKZ_JLMS2026,ADH_SIMA2023,ADH_CMP2024}. Moreover, even in this acoustic setting, our results provide new insights, including a refined description of quasi-degenerate eigenvalues and their associated modal structures.

\subsection{Outlines}

The remainder of the paper is organized as follows.
In Section \ref{section2}, we introduce the problem setting for subwavelength resonances in a Matryoshka-type elastic medium with arbitrarily shaped resonators and review preliminary results on boundary layer potentials.
In Section \ref{D2Nmap}, we derive a new formula characterizing subwavelength resonances in Matryoshka-type elastic metamaterials of general geometry.
Section \ref{concentric_radial} provides explicit calculations for concentric radially layered structures, which serve to validate the general theory and formulas.
In Section \ref{sec5}, we investigate seismic-wave attenuation and localization induced by ring defects in finite-layered subwavelength resonators.
Finally, concluding remarks are presented in Section \ref{sec6}.

\section{Subwavelength resonances in Matryoshka-type elastic media}\label{section2}
\subsection{Problem setting}
We consider subwavelength resonances in a Matryoshka-type elastic medium. The structure is assumed to consist of two types of materials: the host matrix (soft) material and the high-contrast (hard) material, as illustrated in Figure~\ref{MLHCCB}.
We denote by
\[
D = \bigcup_{j=1}^{N}D_{j}
\]
the entire resonator formed by the nested hard materials, where each $D_j$ is the  bounded doubly-connected domain lying between the interior boundary $\Gamma_j^-$ and the exterior boundary $\Gamma_j^+$, $j=1,2,\ldots,N$. 
The boundaries are nested in such a way that $\Gamma_{j}^-$ encloses $\Gamma_{j+1}^+$ for $j=1,2,\ldots,N-1$.
Let $D_j'$ denote the gap region between $\Gamma_j^-$ and $\Gamma_{j+1}^+$, for $j=1,2,\ldots,N-1$. In addition, let $D_0'$ be the unbounded exterior region outside $\Gamma_{1}^+$, and $D_N'$ the bounded interior region inside $\Gamma_{N}^-$.  Then the host matrix material occupies
\[
\RR^3\setminus{\overline{D}} = \bigcup_{j=0}^{N} D'_j.
\]
The material configuration is characterized by the density $\rho(\Bx)$ and the Lamé parameters $(\lambda(\Bx),\mu(\Bx))$, defined by
 \begin{equation}\label{nestedcomplement}
 \rho(\Bx)
 =\left\{
 \begin{array}{ll}
 \rho_{\rmr,j}, & \Bx\in D_j, \; j=1,2,\ldots,N,\\
 \rho, & \Bx\in D_j', \; j=0,1,\ldots,N,
 \end{array}
 \right.\;\; \mbox{ and }\;\;(\lambda(\Bx),\mu(\Bx))
 =\left\{
 \begin{array}{ll}
 (\lambda_\rmr,\mu_\rmr), &\Bx\in D_j, \; j=1,2,\ldots,N,\\
 (\lambda,\mu), & \Bx\in D_j', \; j=0,1,\ldots,N,
 \end{array}
 \right.
 \end{equation}
 where $\rho(\Bx)>0$, and the Lam\'{e} parameters satisfy the following strong convexity conditions:
 \begin{equation}\label{strong_convexity}
  \mu(\Bx)>0 \mbox{ and } 3\lambda(\Bx)+2\mu(\Bx)>0.
 \end{equation}

 \begin{figure}[H]
 	\centering
 	\includegraphics[scale=0.09]{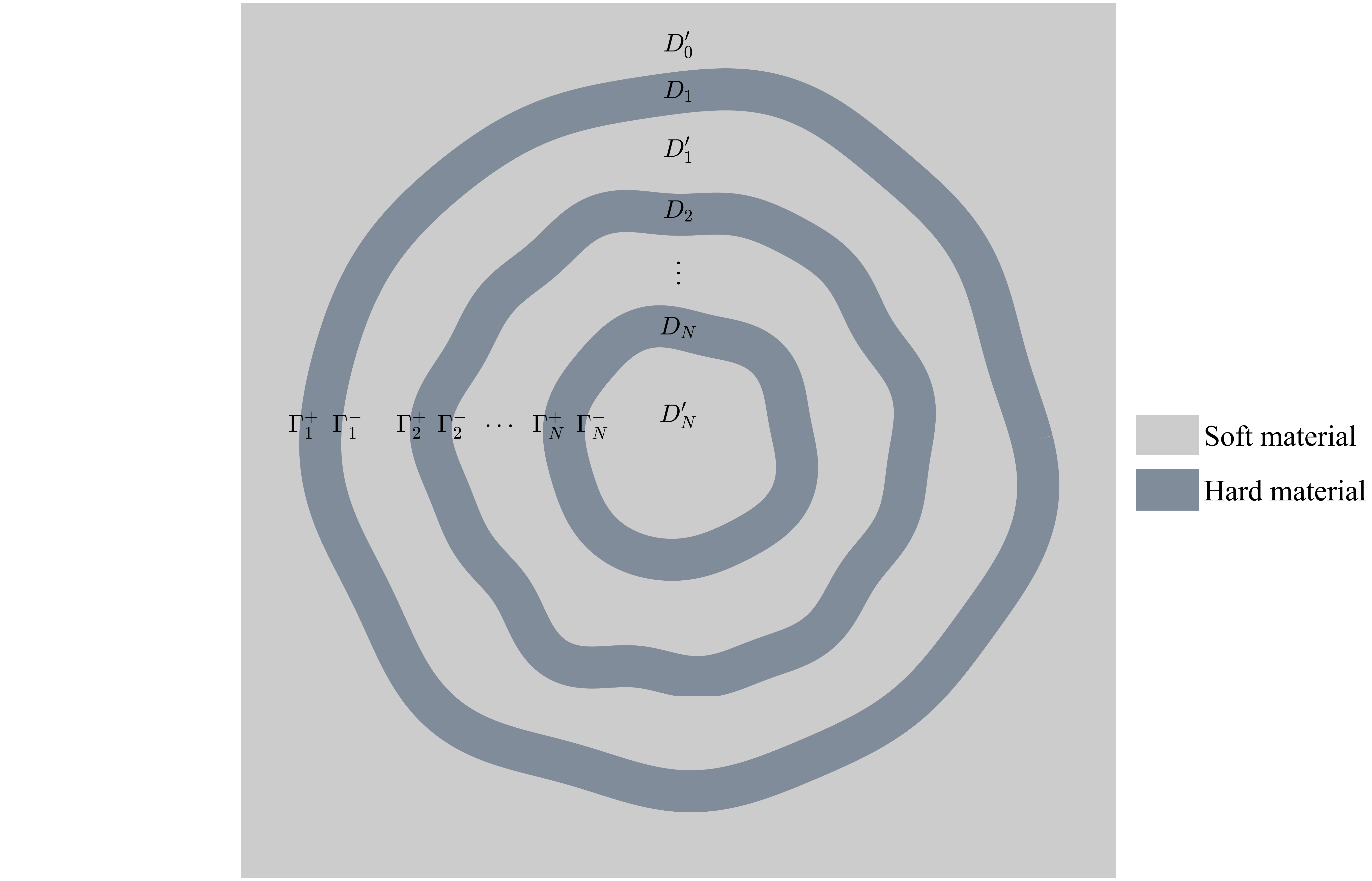}
 	\caption{ Schematic illustration of a structure of $N$-layer Matryoshka-type elastic resonators.}\label{MLHCCB}
 \end{figure}
% We remark that the setting \eqref{nestedcomplement} can be further generalized to accommodate more general material parameters; see Remark~\ref{General_material}.

Let $\mathbf{u}^{\textup{in}}$ be a time-harmonic incident elastic wave satisfying 
\[
\mathcal{L}_{\lambda,\mu } \mathbf{u}^{\textup{in}}  +\rho\omega^2  \mathbf{u}^{\textup{in}} = 0 \; \text{ in  } \RR^3,
\]
where the Lam\'e operator $\mathcal{L}_{\lambda, \mu}$  is defined by
\begin{equation}\label{Lameopert}
\mathcal{L}_{\lambda, \mu} \mathbf{u}^{\textup{in}}:=\mu \Delta \mathbf{u}^{\textup{in}}+(\lambda+\mu) \nabla \nabla \cdot \mathbf{u}^{\textup{in}}.
\end{equation}
Then the total elastic displacement field
$\mathbf{u}: = (\mathbf{u}^{(i)}(\Bx))_{1\leq i\leq 3} $ satisfies  the following Lam\'{e} system:
\begin{equation} \label{main_equation1}
\begin{cases}
\ds\mathcal{L}_{\lambda,\mu } \mathbf{u}  +\rho\omega^2  \mathbf{u} = 0, & \text{in  } \RR^3\setminus \overline{D}, \\
\ds\mathcal{L}_{\lambda_\rmr,\mu_\rmr } \mathbf{u}  +\sum_{j=1}^{N}\rho_{\rmr,j}\chi_{D_j}\omega^2  \mathbf{u}  = 0, & \text{in  } D, \\
\nm
\ds \mathbf{u}|_+ = \mathbf{u}|_-, & \text{on } \Gamma^\pm_j, \; j =1,2,\ldots,N,\\
\nm
\ds \partial_{\bm{\nu}}\mathbf{u}|_+ =  \partial_{\bm{\nu}_\rmr}\mathbf{u}|_- , & \text{on }\Gamma^+_j,\; j =1,2,\ldots,N, \\
\nm
\ds  \partial_{\bm{\nu}_\rmr}\mathbf{u}|_+  = \partial_{\bm{\nu}}\mathbf{u}|_- , & \text{on }\Gamma^-_j,\; j =1,2,\ldots,N, \\
\nm
\ds \mathbf{u}^{\textup{s}} := \mathbf{u} - \mathbf{u}^{\textup{in}} &  \mbox{satisfies the  radiation condition,}
\end{cases}
\end{equation}
where the conormtal derivative (or traction) is defined by
\begin{equation}\label{traction1}
\partial_{\bm{\nu}}\mathbf{u}=\lambda(\nabla \cdot \mathbf{u}) \bm{\nu}+\mu\left(\nabla \mathbf{u}+\(\nabla \mathbf{u}\)^{t}\right) \bm{\nu},
\end{equation}
with $\nabla \mathbf{u}$ denoting the matrix $(\partial_j \mathbf{u}_i)^3_
{i,j=1}$ and the superscript $t$ signifying the matrix transpose.
The operators $\mathcal{L}_{\lambda_\rmr,\mu_\rmr }$ and $\partial_{\bm{\nu}_\rmr}$ are defined in \eqref{Lameopert} and \eqref{traction1}, respectively, with
the parameters $(\lambda,\mu)$ replaced by $(\lambda_\rmr,\mu_\rmr)$.
In \eqref{main_equation1}, $\mathbf{u}^{\textup{s}}$  satisfies the radiation condition means that  there hold the following facts \cite{Kupradze_book1979},
\begin{align*}
\left(\nabla \times \nabla \times \mathbf{u}^{\textup{s}}\right)(\mathbf{x}) \times \frac{\mathbf{x}}{|\mathbf{x}|}-\mathrm{i} k_{s} \nabla \times \mathbf{u}^{\textup{s}}(\mathbf{x}) & =\mathcal{O}(|\mathbf{x}|^{-2}), \\
\frac{\mathbf{x}}{|\mathbf{x}|} \cdot\left[\nabla\left(\nabla \cdot \mathbf{u}^{\textup{s}}\right)\right](\mathbf{x})-\mathrm{i} k_{p} \nabla \mathbf{u}^{\textup{s}}(\mathbf{x}) & =\mathcal{O}(|\mathbf{x}|^{-2}),
\end{align*}
as $|\mathbf{x}| \rightarrow+\infty$. 
It is well-known that the elastic wave can be decomposed into the shear wave ($s$-wave)
and the compressional wave ($p$-wave).
The corresponding wavenumbers  are given by
\begin{equation}\label{auxiliaryparameters}
  k_s =\frac{\omega}{c_s}, \quad k_p=\frac{\omega}{c_p},
\end{equation}
 where $c_s$ is the wave velocity for shear waves and $c_p$ is the wave velocity for compressive waves:
 \begin{equation}\label{wave_velocity}
  c_s =\sqrt{\frac{\mu}{\rho}}, \quad c_p =\sqrt{\frac{\lambda+2\mu}{\rho}}.
 \end{equation}
 In what follows, the parameters $k_{s,\rmr,j}, k_{p,\rmr,j}, c_{s,\rmr,j},c_{p,\rmr,j}$ are defined in \eqref{auxiliaryparameters} and \eqref{wave_velocity} by replacing $(\lambda,\mu,\rho)$ with $(\lambda_\rmr,\mu_\rmr,\rho_{\rmr,j})$.
We also introduce some dimensionless contrast parameters:
 \begin{equation}\label{contrastparameter}
 \delta=\frac{\lambda}{\lambda_\rmr} =\frac{\mu}{\mu_\rmr},  
 %\,\, \epsilon = \frac{\rho}{\rho_\rmr},
 \,\, \tau_j=  \frac{c_s}{c_{s,\rmr,j}} = \frac{c_p}{c_{p,\rmr,j}} = \frac{k_{p,\rmr,j}}{k_p} = \frac{k_{s,\rmr,j}}{k_s}. %=\sqrt{\frac{\delta}{\epsilon}}.
 \end{equation}
 We will assume that  $
 c_s,c_p = \mathcal{O}(1)$, $c_{s,\rmr,j},c_{p,\rmr,j} = \mathcal{O}(1)$,  and $\tau_j = \mathcal{O}(1)$; meanwhile
 $0<\delta \ll 1$.
 This high-contrast assumption is the cause of the underlying system’s subwavelength resonant response and will be at the center of our subsequent analysis.

Using \eqref{contrastparameter}, the Lam\'{e} system \eqref{main_equation1} can be reformulated as follows:
 \begin{equation} \label{main_equation}
 \begin{cases}
 \ds\mathcal{L}_{\lambda,\mu } \mathbf{u}  +\rho\omega^2  \mathbf{u} = 0, & \text{in  } \RR^3\setminus \overline{D}, \\
 \ds\mathcal{L}_{\lambda,\mu } \mathbf{u}  +\sum_{j=1}^N\rho\tau_j^2\chi_{D_j}\omega^2  \mathbf{u}  = 0, & \text{in  } D,\\
 \nm
 \ds \mathbf{u}|_+ = \mathbf{u}|_-, & \text{on } \Gamma^\pm_j, \; j =1,2,\ldots,N,\\
 \nm
 \ds \delta\partial_{\bm{\nu}}\mathbf{u}|_+ =  \partial_{\bm{\nu}}\mathbf{u}|_- , & \text{on }\Gamma^+_j,\; j =1,2,\ldots,N, \\
 \nm
 \ds  \partial_{\bm{\nu}}\mathbf{u}|_+  =\delta \partial_{\bm{\nu}}\mathbf{u}|_- , & \text{on }\Gamma^-_j,\; j =1,2,\ldots,N, \\
 \nm
 \ds \mathbf{u}^{\textup{s}} := \mathbf{u} - \mathbf{u}^{\textup{in}} &  \mbox{satisfies the  radiation condition.}
 \end{cases}
 \end{equation}
 In order to understand the resonant behavior of the $N$-layer nested scatterers, we shall give the definition of the subwavelength resonant frequencies and resonant modes of the Lam\'{e} system \eqref{main_equation1}  based on the high contrast $\delta$  between the materials.
 
 \begin{defn}\label{defn:resonance}
 	Given $\delta>0$, a \emph{subwavelength resonant frequency} (or \emph{eigenfrequency}) is a complex number $\omega=\omega(\delta)\in\mathbb{C}$ such that $\omega(\delta)\to 0$ as $\delta\to 0$, and for which, in the case $\mathbf{u}^{\textup{in}}=0$, there exists a nontrivial solution to \eqref{main_equation}. Such a solution is called an \emph{associated resonant mode} (or \emph{eigenmode}).
 \end{defn}

 \subsection{Layer potentials}\label{subsec22}
In this subsection, we introduce the boundary layer potential operators. For more detailed discussions on the layer potential theory, we refer the reader to \cite{AK_book2018,HAmmariElasticityImaging}.

Let $\bm{G}^{\omega}:=(G^{\omega}_{i,j})_{i,j=1}^3$ denote the outgoing fundamental solution to the partial differential operator (PDO) $\mathcal{L}_{\lambda, \mu}+ \rho\omega^{2}$ in $\mathbb{R}^{3}$. It is given by \cite{HAmmariElasticityImaging},
\begin{equation}\label{fundamentalsolution}
G^{\omega}_{i,j}=-\frac{\e^{\mathrm{i} k_{s}|\mathbf{x}|}}{4 \pi \mu|\mathbf{x}|}\delta_{i,j} +\frac{1}{4 \pi \omega^{2} \rho}\partial_i\partial_j \frac{\e^{\mathrm{i} k_{p}|\mathbf{x}|}-\e^{\mathrm{i} k_{s}|\mathbf{x}|}}{|\mathbf{x}|},
\end{equation}
where $\delta_{i,j}$ is Kronecker delta function, $k_{s},k_{p}$ are defined in \eqref{auxiliaryparameters}. If $\omega=0$,   then the fundamental solution to the PDO $\mathcal{L}_{\lambda, \mu}$  is given by
\begin{equation}\label{gamma0}
{G}_{i,j}^0(\mathbf{x})=-\frac{\alpha_{1}}{4 \pi} \frac{1}{|\mathbf{x}|}\delta_{i,j}-\frac{\alpha_{2}}{4 \pi} \frac{x_ix_j}{|\mathbf{x}|^{3}},%\nonumber
\end{equation}
with $\Bx:=(x_1,x_2,x_3)^t$,
\begin{equation}\label{alpha}
\alpha_{1}=\frac{1}{2}\(\frac{1}{\mu}+\frac{1}{\lambda+2 \mu}\), \;\;\mbox{ and }\;\; \alpha_{2}=\frac{1}{2}\(\frac{1}{\mu}-\frac{1}{\lambda+2 \mu}\).
\end{equation}
 Let $\Omega$ be a bounded simply connected domain with a $C^{1,\eta}\, (0<\eta<1)$ boundary $\Gamma$. The single-  and the double-layer
 potentials associated with the operator  $\mathcal{L}_{\lambda, \mu}+ \rho\omega^{2}$ are defined as
 \begin{equation}\label{Single}
 \mathbf{S}_{{\Gamma}}^{\omega} [\bm{\phi}](\Bx) =  \int_{{\Gamma}}\bm{G}^{\omega}(\Bx- \By) \bm{\phi}(\By) ~\mathrm{d}\sigma(\By),  \quad \Bx \in  \mathbb{R}^3,
 \end{equation}
 \begin{equation}\label{Double}
 \mathbf{D}_{\Gamma}^{\omega} [\bm{\phi}](\Bx) =  \int_{{\Gamma}} \partial_ {\bm{\nu}({\By})} \bm{G}^{\omega}(\Bx- \By) \bm{\phi}(\By) ~\mathrm{d}\sigma(\By),  \quad \Bx \in  \mathbb{R}^3\setminus\Gamma,
 \end{equation}
 where $\bm{\phi}\in L^2(\Gamma)^3$ is the density function. The following jump relations hold almost everywhere on $\Gamma$: 
 \begin{equation} \label{singlejumpk}
 \partial_{\bm{\nu}}\mathbf{S}^\omega_{\Gamma} [\bm{\phi}] \big|_{\pm} (\Bx)= \(\pm \frac{\mathbf{I}}{2}+
 \mathbf{K}_{{\Gamma}}^{\omega,*}\)[\bm{\phi}](\Bx) \;\; \mbox{a.e.}\;\; \Bx\in {\Gamma},
 \end{equation}
  \begin{equation} \label{doublejumpk}
 \mathbf{D}_{\Gamma}^{\omega} [\bm{\phi}] \big|_{\pm} (\Bx) = \(\mp \frac{\mathbf{I}}{2}+
 \mathbf{K}_{{\Gamma}}^{\omega}\)[\bm{\phi}](\Bx) \;\; \mbox{a.e.}\;\; \Bx\in {\Gamma},
 \end{equation}
where $\mathbf{I}$ denotes the identity matrix operator in $\RR^3$.  The boundary integral operator $\mathbf{K}_{{\Gamma}}^{\omega}$ is  defined by
\begin{equation}\label{KKK}
\mathbf{K}_{{\Gamma}}^{\omega}[\bm{\phi}]
(\Bx) = \mbox{p.v.}\;\int_{{\Gamma}}\p_ {\bm{\nu}({\By})}\bm{G}^{\omega}(\Bx-\By)\bm{\phi}(\By)~\mathrm{d} \sigma(\By),
\end{equation}
and the operator
$ \mathbf{K}_{{\Gamma}}^{\omega,*}$, known as the  the Neumann--Poincar\'e (NP) operator, is defined by
\begin{equation}\label{KKK*}
 \mathbf{K}_{{\Gamma}}^{\omega,*}[\bm{\phi}]
(\Bx) = \mbox{p.v.}\;\int_{{\Gamma}}\p_ {\bm{\nu}({\Bx})}\bm{G}^{\omega}(\Bx-\By)\bm{\phi}(\By)~\mathrm{d} \sigma(\By).
\end{equation}
Here p.v. stands for the Cauchy principle value. In what follows, we denote by $\mathbf{S}_{\Gamma}$, $\mathbf{D}_{\Gamma}$, $\mathbf{K}_{\Gamma}$ and $\mathbf{K}_{\Gamma}^{*}$ the the static layer potentials, obtained formally by taking $\omega =0$ in \eqref{Single}--\eqref{Double} and \eqref{KKK}-- \eqref{KKK*}, respectively.
 
Since we are interested in low-frequency regime, we first present the asymptotic expansion of the fundamental solution $\bm{G}^{\omega}$ defined in \eqref{fundamentalsolution} (cf. \cite{HAmmariElasticityImaging,LZArxiv}).
 
 \begin{lem}\label{AE_fundmtl}
 	For $\omega\ll 1$, the fundamental solution $\bm{G}^{\omega}$ has the following asymptotic expansion:
 	\begin{equation}
 	\bm{G}^{\omega}(\mathbf{x})=\sum_{n=0}^{+\infty}\omega^n\bm{G}_n(\mathbf{x}),\nonumber
 	\end{equation}
 	where $\bm{G}_n :=({G}_{n;i,j})_{i,j=1}^3 $ is given by
 	\begin{equation}\label{gamman}
 	\begin{aligned}
 	{G}_{n;i,j}(\mathbf{x})&=-\frac{1}{4\pi\rho}\frac{\mathrm{i}^n}{(n+2)n!}\left(\frac{n+1}{c_{s}^{n+2}}+\frac{1}{c_{p}^{n+2}}\right)|\mathbf{x}|^{n-1}\delta_{i,j}\\
 	&\quad +\frac{1}{4\pi\rho}\frac{\mathrm{i}^n(n-1)}{(n+2)n!}\left(\frac{1}{c_{s}^{n+2}}-\frac{1}{c_{p}^{n+2}}\right)|\mathbf{x}|^{n-3}x_ix_j.
 	\end{aligned}
 	\end{equation}
 \end{lem}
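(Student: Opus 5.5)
The plan is to Taylor-expand the two exponentials in \eqref{fundamentalsolution} in powers of $\omega$ and collect terms. Writing $r=|\Bx|$ and $k_s=\omega/c_s$, $k_p=\omega/c_p$, the first term is
\[
-\frac{\e^{\ii k_s r}}{4\pi\mu r}\delta_{i,j}=-\frac{1}{4\pi\rho c_s^2}\sum_{n\ge0}\frac{\ii^n\omega^n}{n!\,c_s^n}r^{n-1}\delta_{i,j},
\]
using $\mu=\rho c_s^2$. For the second term we expand
\[
\frac{\e^{\ii k_p r}-\e^{\ii k_s r}}{r}=\sum_{m\ge1}\frac{\ii^m\omega^m}{m!}\Big(\frac{1}{c_p^m}-\frac{1}{c_s^m}\Big)r^{m-1}.
\]
The $m=0$ term cancels, which is exactly what removes the apparent $\omega^{-2}$ singularity. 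After dividing by $4\pi\rho\omega^2$, we set $n=m-2$. The $m=1$ term has $r^0$ and its second derivatives vanish, so it drops out and only $n\ge0$ survives.

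The key computation is $\partial_i\partial_j r^{m-1}$. Using $\partial_j r^{a}=a r^{a-2}x_j$, we get
\[
\partial_i\partial_j r^{a}=a r^{a-2}\delta_{i,j}+a(a-2)r^{a-4}x_ix_j,
\]
with $a=m-1=n+1$. This gives $(n+1)r^{n-1}\delta_{i,j}+(n+1)(n-1)r^{n-3}x_ix_j$. The coefficient in front is $\frac{\ii^{n+2}}{4\pi\rho(n+2)!}\big(c_p^{-(n+2)}-c_s^{-(n+2)}\big)$. Using $\ii^{n+2}=-\ii^n$ and $(n+1)/(n+2)!=1/((n+2)\,n!)$, the $x_ix_j$ part becomes
\[
\frac{\ii^n(n-1)}{4\pi\rho(n+2)n!}\big(c_s^{-(n+2)}-c_p^{-(n+2)}\big),
\]
which matches \eqref{gamman}.

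For the $\delta_{i,j}$ part we combine the two contributions:
\[
-\frac{\ii^n}{4\pi\rho\, n!}\Big[c_s^{-(n+2)}+\frac{1}{n+2}\big(c_p^{-(n+2)}-c_s^{-(n+2)}\big)\Big]=-\frac{\ii^n}{4\pi\rho(n+2)n!}\Big(\frac{n+1}{c_s^{n+2}}+\frac{1}{c_p^{n+2}}\Big),
\]
as claimed. We can sanity-check the case $n=0$ against \eqref{gamma0}, using $\alpha_1,\alpha_2$ with $\lambda+2\mu=\rho c_p^2$.

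The remaining steps are routine. Termwise differentiation of the series is justified because both series are entire in $\omega r$ and converge locally uniformly together with their derivatives on compact sets avoiding $\Bx=0$. The expansion therefore holds for $\omega$ small, uniformly for $\Bx$ in bounded sets. The only delicate point is the bookkeeping of the index shift and the cancellation of the $m=0,1$ terms.
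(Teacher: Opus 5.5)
Your proposal is correct and is the standard argument. The paper gives no proof of its own for this lemma and defers to \cite{HAmmariElasticityImaging,LZArxiv}, where the expansion is obtained in the same way: Taylor-expand the exponentials in \eqref{fundamentalsolution}, note that the $m=0,1$ terms drop out, and differentiate $|\mathbf{x}|^{n+1}$ twice. Your bookkeeping checks out (the shift $n=m-2$, $\mathrm{i}^{n+2}=-\mathrm{i}^n$, and $(n+1)/(n+2)!=1/((n+2)n!)$), and the $n=0$ and $n=1$ terms reproduce \eqref{gamma0} and the constant $-\mathrm{i}\alpha$ underlying \eqref{SL1}.
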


Form Lemma \ref{AE_fundmtl}, we can obtain the asymptotic expansions for the single-layer potential and
the Neumann-Poincar\'{e} operators.
 
 \begin{lem}\label{lemgamma}
  There hold the following asymptotic expansions:
 \begin{equation}\label{singlelayerasymptotic}
 \mathbf{S}_{\Gamma}^{\omega}=\mathbf{S}_{\Gamma}+\sum_{n=1}^\infty\omega^n\mathbf{S}_{\Gamma,n}, \mbox{ and }\; \mathbf{K}_{\Gamma}^{\omega,*}=\mathbf{K}_{\Gamma}^{*}+\sum_{n=1}^\infty\omega^n\mathbf{K}_{\Gamma,n}^{*},
 \end{equation}
 where 
 \[
 \mathbf{S}_{{\Gamma},n} [\bm{\phi}](\Bx) =  \int_{{\Gamma}}\bm{G}_{n}(\Bx- \By) \bm{\phi}(\By) ~\mathrm{d}\sigma(\By),
 \]
 \[
 \mathbf{K}_{{\Gamma,n}}^{*}[\bm{\phi}]
 (\Bx) = \int_{{\Gamma}}{\p_{\bm{\nu}({\Bx})} \bm{G}_{n}(\Bx-\By)}\bm{\phi}(\By)~\mathrm{d} \sigma(\By),\quad \Bx \in  \Gamma.
 \]
 In particular, one has that
 \[
 \mathbf{K}_{\Gamma,1}^{*}=0,
 \]
 \begin{equation}\label{SL1}
 \mathbf{S}_{\Gamma,1}[\bm{\phi}](\mathbf{x})=-\ii \alpha\int_{\Gamma}\bm{\phi}(\mathbf{y})~\d \sigma(\mathbf{y}),
 \end{equation}
 where
 \begin{equation}\label{alphaa}
 \alpha:=\frac{1}{12\pi\rho}\left(\frac2{c_{s}^3}+\frac1{c_{p}^3}\right).
 \end{equation}
Moreover, the norms
$\| \mathbf{S}_{\Gamma, n} \|_{
	\mathcal{L}(L^2(\Gamma)^3, H^1(\Gamma)^3)}$
and $\| \mathbf{K}_{\Gamma, n}^* \|_{ \mathcal{L}(L^2(\Gamma)^3)}$
are uniformly bounded with respect to $n$, and  the series in \eqref{singlelayerasymptotic} are convergent in
$\mathcal{L}(L^2(\Gamma)^3, H^1(\Gamma)^3)$ and $\mathcal{L}(L^2(\Gamma)^3,L^2(\Gamma)^3)$, respectively.
 \end{lem}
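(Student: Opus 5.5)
The plan is to insert the series of Lemma \ref{AE_fundmtl} into the definitions \eqref{Single} and \eqref{KKK*} and exchange sum and integral; each term then defines $\mathbf{S}_{\Gamma,n}$ and $\mathbf{K}^*_{\Gamma,n}$, and the $n=0$ term reproduces the static operators. To check this, note that for $n=0$ the kernel \eqref{gamman} is
\[
-\frac{1}{8\pi\rho}\Big(\frac{1}{c_s^2}+\frac{1}{c_p^2}\Big)\frac{\delta_{i,j}}{|\Bx|}-\frac{1}{8\pi\rho}\Big(\frac{1}{c_s^2}-\frac{1}{c_p^2}\Big)\frac{x_ix_j}{|\Bx|^3}.
\]
Since $\rho c_s^2=\mu$ and $\rho c_p^2=\lambda+2\mu$, this coincides with ${G}^0_{i,j}$ in \eqref{gamma0}, with $\alpha_1,\alpha_2$ as in \eqref{alpha}.

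\textbf{Explicit first-order terms.} For $n=1$, formula \eqref{gamman} gives
\[
{G}_{1;i,j}=-\frac{\ii}{12\pi\rho}\Big(\frac{2}{c_s^3}+\frac{1}{c_p^3}\Big)\delta_{i,j},
\]
because the $x_ix_j$ term carries the factor $(n-1)=0$. This kernel is the constant matrix $-\ii\alpha\mathbf{I}$, which yields \eqref{SL1} immediately. Its conormal derivative $\p_{\bm{\nu}(\Bx)}$ vanishes, since the traction of a constant field is zero. Hence $\mathbf{K}^*_{\Gamma,1}=0$.

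\textbf{Uniform bounds for $n\ge2$.} Here $\bm{G}_n$ is built from $|\Bx|^{n-1}$ and $|\Bx|^{n-3}x_ix_j$. For $n\ge 2$ these are $C^{1}$ on bounded sets, and for $n\ge3$ they are smooth polynomial-type kernels. On the bounded set $\{\Bx-\By:\Bx,\By\in\Gamma\}$, with $|\Bx-\By|\le d:=\mathrm{diam}\,\Gamma$, the kernel and its first derivatives are bounded by
\[
C\,\frac{n\,(d/c_{\min})^{n}}{n!}.
\]
The derivative of $|\Bx|^{n-1}$ contributes a factor $n$, which is absorbed. For $n=2$ the derivative of $|\Bx|^{-1}x_ix_j$ is still bounded, so there is no singularity. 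A Schur/Hilbert--Schmidt bound then gives $\|\mathbf{S}_{\Gamma,n}\|_{\mathcal{L}(L^2,H^1)}$ and $\|\mathbf{K}^*_{\Gamma,n}\|_{\mathcal{L}(L^2)}$ at most
\[
C\,|\Gamma|\,\frac{n\,(d/c_{\min})^{n}}{n!}.
\]
This is uniformly bounded in $n$ and in fact summable. For the $H^1$ bound I differentiate under the integral in the tangential directions; for $\mathbf{K}^*$ I use the traction formula \eqref{traction1}, which involves only first derivatives.

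\textbf{Convergence and identification with the original operators.} Because $\sum_n |\omega|^n n R^n/n!<\infty$ for every $\omega$, both series converge absolutely in operator norm, for $n\ge2$ and hence overall. Outside the diagonal the pointwise series of Lemma \ref{AE_fundmtl} converges uniformly together with derivatives. The singular parts, from $n=0$ (and $n=1$ for the derivative), are exactly the static kernel, which is already accounted for. So the limit operator has kernel $\bm{G}^\omega$, respectively $\p_{\bm{\nu}}\bm{G}^\omega$, in the principal-value sense.

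\textbf{Main obstacle.} The delicate step is this last one for $\mathbf{K}^{\omega,*}_\Gamma$. There $\p_{\bm\nu(\Bx)}\bm{G}^\omega$ is a principal-value kernel, so I must split off the static part and apply dominated convergence to the remainder $\bm{G}^\omega-\bm{G}_0$. That remainder has a bounded gradient near the diagonal, so the interchange is justified.
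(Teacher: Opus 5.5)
Your proposal is correct and takes the same route as the paper. The paper gives no argument beyond noting that the lemma follows by substituting the kernel expansion of Lemma \ref{AE_fundmtl} into \eqref{Single} and \eqref{KKK*}. Your checks of the $n=0,1$ kernels, the factorial bounds for $n\ge 2$, and the splitting off of the static principal-value part supply exactly the details the paper leaves implicit.

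Two small imprecisions are harmless, since your argument only uses bounded first derivatives for $n\ge 2$. First, for $n=2$ the kernels $|\Bx|$ and $x_ix_j/|\Bx|$ are only Lipschitz, not $C^1$. Second, $\nabla\bm{G}_1=0$, so the $n=1$ term contributes no singular part to the derivative, contrary to your parenthetical.
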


We next introduce the vector space $\mho $ consisting of all linear solutions to the
equation 
\begin{equation}\label{elastostatics}
\begin{cases}
\mathcal{L}_{\lambda,\mu}\mathbf{u}=0,&\mbox{ in } \Omega,\\
\ds \partial_{\bm{\nu}}\mathbf{u}=0,&\mbox{ on }\Gamma.
\end{cases}
\end{equation}
It is straightforward to verify that $\mho$ is the space spanned by the displacement fields corresponding to rigid body motions \cite{HAmmariElasticityImaging}, given by
%By direct computations, we have that the space $\mho $  spanned by the displacement fields of the rigid motions \cite{HAmmariElasticityImaging}
%spanned by the following rigid displacements 
\begin{equation}\label{rigid_motions}
\bm{\varkappa}_{1}=
\begin{pmatrix}
1\\0\\0
\end{pmatrix},\;
\bm{\varkappa}_2=
\begin{pmatrix}
0\\1\\0
\end{pmatrix},\;
\bm{\varkappa}_3=
\begin{pmatrix}
0\\0\\1
\end{pmatrix},\;
\bm{\varkappa}_4=
\begin{pmatrix}
x_2\\-x_1\\0
\end{pmatrix},\;
\bm{\varkappa}_5=
\begin{pmatrix}
x_3\\0\\-x_1
\end{pmatrix},\;
\bm{\varkappa}_6=
\begin{pmatrix}
0\\x_3\\-x_2
\end{pmatrix},
\end{equation}
where $\bm{\varkappa}_p$ for $p = 1, 2, 3$ represent rigid translations, while $\bm{\varkappa}_p$ for $p = 4, 5, 6$ correspond to rigid rotations.

We end this section by collecting some properties of the static layer potential operators, which will be useful in the following.

\begin{lem} [\cite{AK_book2018}]
	The following statements hold:
	\begin{enumerate}
		\item[\textup{(i)}] The single-layer potential $-\mathbf{S}_{\Gamma}:
		L^2(\Gamma)^3 \rightarrow L^2(\Gamma)^3$ is positive and self-adjoint;
		
		\item[\textup{(ii)}] The double layer potential $\mathbf{D}_{\Gamma}$ satisfies that for $1\leq p\leq 6,$
		\begin{equation}\label{0105}
		\mathbf{D}_{\Gamma}[\bm{\varkappa}_p](\mathbf{x}) =\begin{cases} 
		\bm{0},& \mbox{ if } \mathbf{x}\in \RR^3\setminus\overline{\Omega},\\
		\bm{\varkappa}_p,& \mbox{ if } \mathbf{x}\in {\Omega},
		\end{cases} \;\mbox{ and }\, \mathbf{K}_{\Gamma}[\bm{\varkappa}_p](\mathbf{x}) = \frac{1}{2}\bm{\varkappa}_p \,\mbox{ for } \,\mathbf{x}\in \Gamma.
		\end{equation}
		
		\item[\textup{(iii)}]  The kernel space of $-\frac{\mathbf{I}}{2}+\mathbf{K}_{\Gamma}^{*}$ is consist of $\mathbf{S}_{\Gamma}^{-1}[\bm{\varkappa}_p]$ with $1\leq p\leq 6.$
	\end{enumerate}

\end{lem}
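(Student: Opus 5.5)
The lemma has three parts, and I would take them in order. Each rests on the explicit static kernel \eqref{gamma0} together with Green-type identities for $\mathcal{L}_{\lambda,\mu}$ on $\Omega$ and on $\RR^3\setminus\overline{\Omega}$.

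\textbf{Part (i).} Self-adjointness is immediate. The kernel $\bm{G}^0(\Bx-\By)$ is real, symmetric in the indices $i,j$, and even in $\Bx-\By$. Hence
\[
\langle \mathbf{S}_\Gamma[\bm{\phi}],\bm{\psi}\rangle=\langle\bm{\phi},\mathbf{S}_\Gamma[\bm{\psi}]\rangle .
\]
For positivity, set $\mathbf{u}=\mathbf{S}_\Gamma[\bm{\phi}]$. This field satisfies $\mathcal{L}_{\lambda,\mu}\mathbf{u}=0$ off $\Gamma$ and decays like $|\Bx|^{-1}$, with gradient decaying like $|\Bx|^{-2}$. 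By the jump relation \eqref{singlejumpk}, $\bm{\phi}=\partial_{\bm\nu}\mathbf{u}|_+-\partial_{\bm\nu}\mathbf{u}|_-$. Applying Betti's first formula in $\Omega$ and in a large ball minus $\overline\Omega$, then letting the radius tend to infinity, gives
\[
-\langle\mathbf{S}_\Gamma[\bm\phi],\bm\phi\rangle=\int_{\RR^3}\Big(\lambda|\nabla\cdot\mathbf{u}|^2+\tfrac{\mu}{2}|\nabla\mathbf{u}+(\nabla\mathbf{u})^t|^2\Big)\,\d\Bx\ \ge 0 .
\]
The sign is consistent with \eqref{gamma0}, whose leading part is negative. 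The integrand is a nonnegative quadratic form by the strong convexity \eqref{strong_convexity}. Equality forces $\mathbf{u}$ to be a rigid motion in each component. Decay at infinity then gives $\mathbf{u}=0$ outside $\Omega$, continuity across $\Gamma$ gives $\mathbf{u}=0$ inside, and so $\bm\phi=0$.

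\textbf{Part (ii).} I would apply Betti's second identity to $\bm{\varkappa}_p$ and the columns of $\bm{G}^0(\Bx-\cdot)$, using $\mathcal{L}_{\lambda,\mu}\bm\varkappa_p=0$ and $\partial_{\bm\nu}\bm\varkappa_p=0$ (the latter because the symmetric gradient of a rigid motion vanishes). This yields the representation formula: $\mathbf{D}_\Gamma[\bm\varkappa_p]=\bm\varkappa_p$ for $\Bx\in\Omega$ and $\mathbf{D}_\Gamma[\bm\varkappa_p]=0$ for $\Bx\notin\overline\Omega$, up to the sign convention fixed by \eqref{doublejumpk}. Taking either limit in \eqref{doublejumpk} then gives $\mathbf{K}_\Gamma[\bm\varkappa_p]=\frac12\bm\varkappa_p$.

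\textbf{Part (iii).} By duality, $\ker(-\frac{\mathbf I}{2}+\mathbf K^*_\Gamma)$ has the same dimension as $\ker(-\frac{\mathbf I}{2}+\mathbf K_\Gamma)$. The latter contains the six independent $\bm\varkappa_p$ by part (ii). Conversely, take $\bm\phi$ in the kernel of $-\frac{\mathbf I}{2}+\mathbf K^*_\Gamma$. The field $\mathbf{u}=\mathbf S_\Gamma[\bm\phi]$ then has zero interior traction $\partial_{\bm\nu}\mathbf{u}|_-$, so $\mathbf{u}|_\Omega\in\mho$. Therefore $\mathbf S_\Gamma[\bm\phi]\in\operatorname{span}\{\bm\varkappa_p\}$ on $\Gamma$. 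Since $\mathbf S_\Gamma$ is injective on $L^2(\Gamma)^3$ by part (i), $\bm\phi\in\operatorname{span}\{\mathbf S_\Gamma^{-1}[\bm\varkappa_p]\}$. For the reverse inclusion, $\mathbf S_\Gamma^{-1}[\bm\varkappa_p]$ generates a field equal to $\bm\varkappa_p$ in $\Omega$, by uniqueness for the interior Dirichlet problem. Its interior traction is therefore zero, and it lies in the kernel.

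\textbf{Main obstacle.} The delicate point is that $\mathbf S_\Gamma$ is invertible from $L^2(\Gamma)^3$ onto $H^1(\Gamma)^3$, which is needed to define $\mathbf S_\Gamma^{-1}[\bm\varkappa_p]$. For this I would argue via the Fredholm property of $\mathbf S_\Gamma$ (it differs from an elliptic pseudodifferential operator of order $-1$ by a compact term on a $C^{1,\eta}$ boundary) together with injectivity from part (i). Alternatively, one can cite \cite{AK_book2018} for this step.
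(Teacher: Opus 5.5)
Your proposal is correct and is essentially the standard argument behind the cited result; the paper gives no proof of its own, only the reference \cite{AK_book2018}. You use the energy identity over $\Omega$ and $\RR^3\setminus\overline{\Omega}$ for (i), Betti's representation formula with $\partial_{\bm{\nu}}\bm{\varkappa}_p=\bm{0}$ for (ii), and the jump relation together with uniqueness for the interior Neumann and Dirichlet problems for (iii).

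Two minor points. First, the opening ``by duality'' dimension count in (iii) is superfluous, since you prove both inclusions directly; it would also need $-\frac{\mathbf{I}}{2}+\mathbf{K}_{\Gamma}$ to be Fredholm of index zero, which is not automatic because the elastic $\mathbf{K}_{\Gamma}$ is not compact even for smooth $\Gamma$. Second, in your main obstacle, Fredholmness plus injectivity of $\mathbf{S}_{\Gamma}:L^2(\Gamma)^3\to H^1(\Gamma)^3$ yields surjectivity only once the index is known to be zero, so citing the Lipschitz-domain invertibility result used in \cite{AK_book2018} is the cleaner way to close that step.
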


\section{Variational characterization of resonances based on the displacement-to-traction map}\label{D2Nmap}

In this section, we characterize the displacement-to-traction map of the Lam\'e system \eqref{main_equation} in the Matryoshka-type elastic medium $D = \cup_{j=1}^{N} D_{j}$. 
To this end, we first define the matrix-type single- and double-layer potential operators associated with the bounded doubly-connected domain $D'_j$, which lies between the interior boundary $\Gamma_{j+1}^+$ and the
exterior boundary $\Gamma_{j}^-$, respectively, as 
\begin{equation}\label{singlelayermatrix}
\mathbb{S}_{j,j+1}^\omega := \begin{pmatrix}
\mathbf{S}_{{\Gamma_j^-}}^{\omega}  & \mathbf{S}_{{\Gamma_j^-,\Gamma_{j+1}^+}}^{\omega} \\
\mathbf{S}_{{\Gamma_{j+1}^+,\Gamma_j^-}}^{\omega}  & \mathbf{S}_{{\Gamma_{j+1}^+}}^{\omega}
\end{pmatrix}, \mbox{ and } \mathbb{K}^{\omega,*}_{j,j+1} = \begin{pmatrix}
-\mathbf{K}_{{\Gamma_j^-}}^{\omega,*}  & -\mathbf{K}_{{\Gamma_j^-,\Gamma_{j+1}^+}}^{\omega,*} \\
\nm 
\mathbf{K}_{{\Gamma_{j+1}^+,\Gamma_j^-}}^{\omega,*}  & \mathbf{K}_{{\Gamma_{j+1}^+}}^{\omega,*}
\end{pmatrix}.
\end{equation} 
Here, we introduced the operators $\mathbf{S}^{\omega}_{\Gamma_{i},\Gamma_{j}}:L^2(\Gamma_{j})^3\to L^2(\Gamma_i)^3$ and $\mathbf{K}_{\Gamma_{i},\Gamma_{j}}^{\omega, *}:L^2(\Gamma_{j})^3\to L^2(\Gamma_i)^3$ as defined respectively by
\begin{equation}\label{SKGiGj}
\mathbf{S}^{\omega}_{\Gamma_{i},\Gamma_{j}}[\bm{\phi}] = \mathbf{S}^{\omega}_{\Gamma_{j}}[\bm{\phi}] \big|_{\Gamma_i}\;\mbox{ and }\; \mathbf{K}_{\Gamma_{i},\Gamma_{j}}^{\omega, *}[\bm{\phi}] = \partial_{\bm{\nu}}\mathbf{S}^{\omega}_{\Gamma_{j}}[\bm{\phi}]\big|_{\Gamma_i}, \mbox{ for } \forall\, \bm{\phi}\in L^2(\Gamma_{j})^3.
\end{equation}
Moreover, we also introduce the $L^2$-adjoint operator of $\mathbf{K}_{\Gamma_{i},\Gamma_{j}}^{\omega, *}$, $\mathbf{K}_{\Gamma_{j},\Gamma_{i}}^{\omega}:L^2(\Gamma_{i})^3\to L^2(\Gamma_j)^3$, defined by
\begin{equation}\label{Kji}
\mathbf{K}_{\Gamma_{j},\Gamma_{i}}^{\omega}[\bm{\phi}] = \mathbf{D}^{\omega}_{\Gamma_{i}}[\bm{\phi}] \big|_{\Gamma_j}, \mbox{ for } \forall\, \bm{\phi}\in L^2(\Gamma_{i})^3.
\end{equation}
Thus the $L^2$-adjoint of $\mathbb{K}^{\omega,*}_{j,j+1} $, $\mathbb{K}^{\omega}_{j,j+1} $, is given by
\begin{equation}\label{Kjjp1}
\mathbb{K}^{\omega}_{j,j+1} = \begin{pmatrix}
-\mathbf{K}^{\omega}_{{\Gamma_j^-}}  & \mathbf{K}^{\omega}_{{\Gamma_j^-,\Gamma_{j+1}^+}} \\
\nm 
-\mathbf{K}^{\omega}_{{\Gamma_{j+1}^+,\Gamma_j^-}}  & \mathbf{K}^{\omega}_{{\Gamma_{j+1}^+}}
\end{pmatrix}.
\end{equation}
In what follows, we abbreviate  
 $\mathbb{S}_{j,j+1}: = \mathbb{S}_{j,j+1}^0$,   $\mathbb{K}^{*}_{j,j+1} := \mathbb{K}_{j,j+1}^{0,*}$, and $\mathbb{K}_{j,j+1} := \mathbb{K}_{j,j+1}^{0}$
  when $\omega=0$ in \eqref{singlelayermatrix}  and \eqref{Kjjp1}, respectively.

\subsection{Definition of displacement-to-traction map}

The following lemma provides the representation formula for the solution
to exterior displacement problems on $\RR^3\setminus \overline{D}$.

\begin{lem}
	\label{def:DTNGS}
	Assume that $\omega$ is chosen such that  the  operators   $ \mathbf{S}_{\Gamma_1^+}^{\omega}$, $\mathbf{S}_{{\Gamma_N^-}}^{\omega}$, and $\mathbb{S}_{j,j+1}^\omega$	
	are invertible
	for $1\leq j\leq N-1$. Then, for any  $\bm{f}_j^{\pm}\in L^2(\Gamma_j^{\pm})^3$, $j=1,2,\ldots,N$, there exists a unique solution  $\mathbf{v}_{\bm{f}}\in
	H^{1}(\RR^3)^3$ to the following exterior problem:
	\begin{equation} \label{exterior_problemGS}
	\begin{cases}
	\ds\mathcal{L}_{\lambda,\mu } \mathbf{v}_{\bm{f}}  +\rho\omega^2  \mathbf{v}_{\bm{f}} =0, & \text{in  } \RR^3\setminus \overline{D}, \\
	\nm
	\ds  \mathbf{v}_{\bm{f}}|_{\Gamma_j^\pm}   = \bm{f}_j^\pm ,&  j =1,2,\ldots,N, \\
	\nm
	\ds \mathbf{v}_{\bm{f}} &  \mbox{satisfies the radiation condition.}
	\end{cases}
	\end{equation}
	Furthermore, the solution $\mathbf{v}_{\bm{f}}$ admits the representation
	\begin{equation}\label{eq3dqrq}
	\mathbf{v}_{\bm{f}} = \begin{cases}
	\ds \mathbf{S}_{\Gamma_1^+}^{\omega} [\bm\psi_1^+], & x\in D_0', \\
	\nm
	\ds  \mathbf{S}_{{\Gamma_j^-}}^{\omega} [\bm\phi_j^-](x)+\mathbf{S}_{{\Gamma_{j+1}^+}}^{\omega} [\bm\psi_{j+1}^+](x), & x\in D_j'\quad j =1,2,\ldots,N-1, \\
	\nm
	\ds \mathbf{S}_{{\Gamma_N^-}}^{\omega} [\bm\phi_N^-], & x\in D_N', \\
	\end{cases}
	\end{equation}
	where $\bm\psi_1^+ = (\mathbf{S}_{\Gamma_1^+}^{\omega})^{-1} [\bm{f}_1^+]$, $\bm\phi_N^- = (\mathbf{S}_{{\Gamma_N^-}}^{\omega})^{-1} [\bm{f}_N^-]$, and $\bm\phi_j^-$, $\bm\psi_{j+1}^+$, $j=1,2,\ldots,N-1$, are determined by  
	\begin{equation}\label{eqn:pxggr}
	\begin{pmatrix}
	\bm\phi_j^-\\
	\nm 
	\bm\psi_{j+1}^+
	\end{pmatrix} = \(\mathbb{S}_{j,j+1}^\omega\)^{-1}
	\begin{pmatrix}
	\bm{f}_j^{-}\\
	\nm 
	\bm{f}_{j+1}^{+}
	\end{pmatrix}.
	\end{equation}
\end{lem}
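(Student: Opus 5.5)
The proof splits into existence, uniqueness and the representation, and the representation \eqref{eq3dqrq} is itself the existence proof. Define $\mathbf{v}_{\bm f}$ region by region through \eqref{eq3dqrq}, with densities $\bm\psi_1^+=(\mathbf{S}^\omega_{\Gamma_1^+})^{-1}[\bm f_1^+]$, $\bm\phi_N^-=(\mathbf{S}^\omega_{\Gamma_N^-})^{-1}[\bm f_N^-]$ and $(\bm\phi_j^-,\bm\psi_{j+1}^+)$ given by \eqref{eqn:pxggr}. These are well defined by the invertibility hypotheses.

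\textbf{Existence.} Each single-layer potential $\mathbf{S}^\omega_\Gamma[\bm\phi]$ satisfies $(\mathcal{L}_{\lambda,\mu}+\rho\omega^2)\mathbf{S}^\omega_\Gamma[\bm\phi]=0$ off $\Gamma$, since $\bm G^\omega$ is the fundamental solution. Hence the PDE holds in every $D_j'$.

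\emph{Radiation condition.} In $D_0'$ the field is a single-layer potential with the outgoing kernel \eqref{fundamentalsolution}. Splitting it into its $s$- and $p$-parts gives the Kupradze radiation condition.

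\emph{Boundary data.} The single-layer potential is continuous across $\Gamma$ and its trace is given by $\mathbf{S}^\omega_\Gamma$. So on $\Gamma_1^+$ the trace is $\mathbf{S}^\omega_{\Gamma_1^+}[\bm\psi_1^+]=\bm f_1^+$, and on $\Gamma_N^-$ it is $\bm f_N^-$. On the two boundaries of $D_j'$, the traces of $\mathbf{S}^\omega_{\Gamma_j^-}[\bm\phi_j^-]+\mathbf{S}^\omega_{\Gamma_{j+1}^+}[\bm\psi_{j+1}^+]$, written with the off-diagonal operators \eqref{SKGiGj}, are exactly the rows of $\mathbb{S}^\omega_{j,j+1}(\bm\phi_j^-,\bm\psi_{j+1}^+)^t=(\bm f_j^-,\bm f_{j+1}^+)^t$.

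\emph{Regularity.} For $L^2$ densities the single-layer potential lies in $H^1_{loc}$. Piecewise, $\mathbf{v}_{\bm f}$ therefore has $H^1$ regularity in each region, with $H^1_{loc}$ in the exterior. Gluing $\mathbf{v}_{\bm f}$ across the hard layers is not required, since the problem lives only on $\RR^3\setminus\overline D$; I would read the $H^1(\RR^3)^3$ in the statement as local $H^1$ on the complement. Strictly, $H^1$ regularity needs traces in $H^{1/2}$, and one could note that $\mathbf{S}^\omega$ maps $L^2\to H^1(\Gamma)$, so the data lie in its range. I would state this point as understood in the natural trace class.

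\textbf{Uniqueness.} The problem decouples into independent Dirichlet problems on the components $D_0',\dots,D_N'$, so I would argue one component at a time. Let $\mathbf{w}$ be the difference of two solutions, so $\mathbf{w}$ has zero Dirichlet data.

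\emph{Bounded components.} On a bounded $D_j'$ ($1\le j\le N$), assume $\mathbf{w}\neq0$. Then $\rho\omega^2$ is a Dirichlet eigenvalue of $-\mathcal{L}_{\lambda,\mu}$ there. Take an associated eigenfunction $\mathbf{w}$ and extend it by zero, so it vanishes on the other boundary. I would then show this forces $\mathbb{S}^\omega_{j,j+1}$ (or $\mathbf{S}^\omega_{\Gamma_N^-}$ when $j=N$) to have a nontrivial kernel, contradicting the invertibility hypothesis. The argument is the Green representation of $\mathbf{w}$ in $D_j'$. Since its Dirichlet trace vanishes, the representation gives
\[
\mathbf{w}=-\mathbf{S}^\omega_{\Gamma_j^-}[\partial_{\bm\nu}\mathbf{w}]+\mathbf{S}^\omega_{\Gamma_{j+1}^+}[\partial_{\bm\nu}\mathbf{w}],
\]
up to sign conventions. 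Taking the trace on $\partial D_j'$ shows that the pair of conormal derivatives lies in $\ker\mathbb{S}^\omega_{j,j+1}$. This pair is nonzero by unique continuation: Cauchy data $(0,0)$ would force $\mathbf{w}\equiv0$.

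\emph{Exterior component.} On $D_0'$ the same Green-formula argument, using the radiation condition to drop the contribution at infinity, gives $\partial_{\bm\nu}\mathbf{w}|_+\in\ker\mathbf{S}^\omega_{\Gamma_1^+}=\{0\}$. Then $\mathbf{w}$ has zero Cauchy data, so Holmgren / unique continuation gives $\mathbf{w}=0$. For $\omega$ real, Rellich's lemma could serve instead, but the kernel argument covers complex $\omega$ uniformly.

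\textbf{Main obstacle.} The delicate point is the uniqueness step: the Green representation on the doubly-connected region with correct orientations, and the justification that nonzero Cauchy data arise, which needs the conormal traces in $L^2$ and hence boundary regularity. I would cite \cite{AK_book2018,HAmmariElasticityImaging} for these standard layer-potential facts rather than reprove them.
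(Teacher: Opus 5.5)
Your proposal is correct and is essentially the paper's argument: the paper also uses the single-layer ansatz \eqref{eq3dqrq} on each component $D_0',\dots,D_N'$, and determines the densities from the Dirichlet data by inverting $\mathbf{S}^\omega_{\Gamma_1^+}$, $\mathbf{S}^\omega_{\Gamma_N^-}$ and $\mathbb{S}^\omega_{j,j+1}$. The only difference is that the paper merely asserts that every solution has this form (``it is easy to see'', plus the later remark identifying invertibility with $\omega$ not being a Dirichlet eigenvalue), whereas your Green-representation and injectivity argument actually proves uniqueness; your caveats about $H^1(\RR^3)^3$ and about $L^2$ versus $H^{1/2}$ data are imprecisions in the statement, not gaps in your proof.
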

\begin{proof}[\bf Proof]
It is easy to see that the solution $\mathbf{v}_{\bm{f}}$ to \eqref{exterior_problemGS} admits the representation \eqref{eq3dqrq}. By using the Dirichlet boundary conditions of \eqref{exterior_problemGS}, the density functions $\bm\phi_j^-$  and $\bm\psi_{j+1}^+$, $j=1,2,\ldots,N-1$, satisfy
		\[
		\begin{pmatrix}
		\mathbf{S}_{{\Gamma_j^-}}^{\omega}  & \mathbf{S}_{{\Gamma_j^-,\Gamma_{j+1}^+}}^{\omega} \\
		\mathbf{S}_{{\Gamma_{j+1}^+,\Gamma_j^-}}^{\omega}  & \mathbf{S}_{{\Gamma_{j+1}^+}}^{\omega}
		\end{pmatrix}
		\begin{pmatrix}
		\bm\phi_j^-\\
		\nm 
		\bm\psi_{j+1}^+
		\end{pmatrix} = 
		\begin{pmatrix}
		\bm{f}_j^{-}\\
		\nm 
		\bm{f}_{j+1}^{+}
		\end{pmatrix}.
		\]
	Inverting above equality, 
	we can get \eqref{eqn:pxggr}. The proof is complete.
\end{proof}

\begin{defn}
For any $\omega \in \mathbb{C}$ is chosen such that  the  operators   $ \mathbf{S}_{\Gamma_1^+}^{\omega}$, $\mathbf{S}_{{\Gamma_N^-}}^{\omega}$,  $\mathbb{S}_{j,j+1}^\omega$  are invertible	for  $1\leq j\leq N - 1$, the displacement-to-traction map with frequency $\omega$ is
	the linear operator $\mathcal{T}^\omega : \mathbb{C}^{6N} \rightarrow \mathbb{C}^{6N}$ defined by
	\begin{equation}
	\label{eqD2N}
	\mathcal{T}^{\omega}[(\bm{f}_j^{\pm})_{1\leq j\leq N}]=
	\left(  {\partial_{\bm{v}} \mathbf{v}_{\bm{f}}}|_{\Gamma_j^\pm}\right)_{1 \leq j \leq N},
	\end{equation}
	where $\mathbf{v}_f$ is the unique solution to \eqref{exterior_problemGS}.
\end{defn}

\begin{rem}
The map \eqref{eqD2N} is the analogue of the Dirichlet-to-Neumann map for 
%the conductivity problem (see \cite{ADKL_AIHPCAN}) and 
the Helmholtz equation (see \cite{DKZ_JLMS2026}).	The condition that  $\omega$ is chosen such that the operators $ \mathbf{S}_{\Gamma_1^+}^{\omega}$, $\mathbf{S}_{{\Gamma_N^-}}^{\omega}$,  $\mathbb{S}_{j,j+1}^\omega$ for  $1\leq j\leq N - 1$ are invertible is equivalent to assuming that $\omega$ is not a Dirichlet eigenvalue of \eqref{exterior_problemGS} (cf. \cite{WMCbook}). This guarantees the uniqueness of the exterior Dirichlet problem \eqref{exterior_problemGS} and thus the well-defined of displacement-to-traction map \eqref{eqD2N}.	This condition is naturally satisfied in the regime	$\omega \to0$.
\end{rem}

In the next proposition, we give the matrix representation of the displacement-to-traction map $\mathcal{T}^\omega$.

\begin{prop} \label{prop:DTN}
	The displacement-to-traction map
	$\mathcal{T}^{\omega}$ admits the following matrix representation: for any $ \bm{f}:= (\bm{f}_j^{\pm})_{1\leq j\leq N}$, $\mathcal{T}^{\omega}[\bm{f}]:= (\mathcal{T}^{\omega}[\bm{f}]_{j}^{\pm})_{1\leq j\leq N}$ is given by
	\begin{equation}\label{matrixrepre}
	\begin{pmatrix}
	\mathcal{T}^{\omega}[\bm{f}]_1^{+}\\
	\mathcal{T}^{\omega}[\bm{f}]_{1}^{-}\\
	\mathcal{T}^{\omega}[\bm{f}]_2^{+}\\
	\vdots\\
	\mathcal{T}^{\omega}[\bm{f}]_{N-1}^{-}\\
	\mathcal{T}^{\omega}[\bm{f}]_N^{+}\\
	\mathcal{T}^{\omega}[\bm{f}]_{N}^{-}
	\end{pmatrix} =  \begin{pmatrix}
	\(\frac{\mathbf{I}}{2}+\mathbf{K}_{{\Gamma_1^+}}^{\omega,*}\)(\mathbf{S}_{\Gamma_1^+}^{\omega})^{-1}&&&&\\
	&\mathbb{A}^\omega_{1,2}&&&\\
	&&\ddots &&\\
	&&&\mathbb{A}^\omega_{N-1,N}&\\
	&&&&\(-\frac{\mathbf{I}}{2}+\mathbf{K}_{{\Gamma_N^-}}^{\omega,*}\) (\mathbf{S}_{{\Gamma_N^-}}^{\omega})^{-1}
	\end{pmatrix} \begin{pmatrix}
	\bm{f}_1^{+}\\
	\bm{f}_{1}^{-}\\
	\bm{f}_2^{+}\\
	\vdots\\
	\bm{f}_{N-1}^{-}\\
	\bm{f}_N^{+}\\
	\bm{f}_{N}^{-}
	\end{pmatrix}
	\end{equation}
	where 
	\[
	\begin{aligned}
	\mathbb{A}^\omega_{j,j+1} &:=\begin{pmatrix}
	-\frac{\mathbf{I}}{2}+\mathbf{K}_{{\Gamma_j^-}}^{\omega,*}  & \mathbf{K}_{{\Gamma_j^-,\Gamma_{j+1}^+}}^{\omega,*} \\
	\nm 
	\mathbf{K}_{{\Gamma_{j+1}^+,\Gamma_j^-}}^{\omega,*}  & \frac{\mathbf{I}}{2}+\mathbf{K}_{{\Gamma_{j+1}^+}}^{\omega,*}
	\end{pmatrix}
	\begin{pmatrix}
	\mathbf{S}_{{\Gamma_j^-}}^{\omega}  & \mathbf{S}_{{\Gamma_j^-,\Gamma_{j+1}^+}}^{\omega} \\
	\mathbf{S}_{{\Gamma_{j+1}^+,\Gamma_j^-}}^{\omega}  & \mathbf{S}_{{\Gamma_{j+1}^+}}^{\omega}
	\end{pmatrix}^{-1}\\
	& := \begin{pmatrix}
	-\mathbf{I} & \mathbf{O} \\
	\nm
\mathbf{O}  &\mathbf{I}
	\end{pmatrix}\(\frac{\mathbb{I}}{2}+\mathbb{K}^{\omega,*}_{j,j+1}\) (\mathbb{S}^{\omega}_{j,j+1})^{-1},\quad j =1,2,\ldots,N-1,
	\end{aligned}
	\]
	with  
	\[
	\mathbb{I} = \begin{pmatrix}
	\mathbf{I}  & \mathbf{O} \\
	\nm 
	\mathbf{O}  & \mathbf{I}
	\end{pmatrix},
%\;\; \mathbb{K}^{\omega,*}_{j,j+1} = \begin{pmatrix}
%	-\mathbf{K}_{{\Gamma_j^-}}^{\omega,*}  & -\mathbf{K}_{{\Gamma_j^-,\Gamma_{j+1}^+}}^{\omega,*} \\
%	\nm 
%	\mathbf{K}_{{\Gamma_{j+1}^+,\Gamma_j^-}}^{\omega,*}  & \mathbf{K}_{{\Gamma_{j+1}^+}}^{\omega,*}
%	\end{pmatrix},
	\]
	and $\mathbf{O}$ being the zero matrix operator in $\RR^3$.
\end{prop}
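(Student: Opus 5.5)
The plan is to compute the traction of $\mathbf{v}_{\bm f}$ directly from the representation \eqref{eq3dqrq} of Lemma \ref{def:DTNGS}, using the jump relation \eqref{singlejumpk} on each boundary component. Since $\mathcal{T}^\omega$ records the traction of the exterior solution (the host material), on each $\Gamma_j^+$ we take the trace from outside ($+$ side, i.e.\ from $D_{j-1}'$). On each $\Gamma_j^-$ we take the trace from inside ($-$ side, i.e.\ from $D_j'$). The representation decouples region by region: $D_0'$ involves only $\bm\psi_1^+$, $D_N'$ only $\bm\phi_N^-$, and each gap $D_j'$ only the pair $(\bm\phi_j^-,\bm\psi_{j+1}^+)$, which depends only on $(\bm f_j^-,\bm f_{j+1}^+)$. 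This gives the block-diagonal structure immediately, once the unknowns are ordered as $(\bm f_1^+,\bm f_1^-,\bm f_2^+,\dots,\bm f_N^-)$.

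\textbf{Step 1 (outer and inner blocks).} On $\Gamma_1^+$, approached from $D_0'$ (the exterior side), \eqref{singlejumpk} gives
\[
\partial_{\bm\nu}\mathbf{S}^\omega_{\Gamma_1^+}[\bm\psi_1^+]|_+=\(\tfrac{\mathbf I}{2}+\mathbf K^{\omega,*}_{\Gamma_1^+}\)(\mathbf S^\omega_{\Gamma_1^+})^{-1}[\bm f_1^+].
\]
On $\Gamma_N^-$, approached from $D_N'$ (the interior of the curve), one obtains $\(-\frac{\mathbf I}{2}+\mathbf K^{\omega,*}_{\Gamma_N^-}\)(\mathbf S^\omega_{\Gamma_N^-})^{-1}[\bm f_N^-]$.

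\textbf{Step 2 (gap blocks).} In $D_j'$ we have $\mathbf v_{\bm f}=\mathbf S^\omega_{\Gamma_j^-}[\bm\phi_j^-]+\mathbf S^\omega_{\Gamma_{j+1}^+}[\bm\psi_{j+1}^+]$.

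On $\Gamma_j^-$, the region $D_j'$ lies inside $\Gamma_j^-$. The self term therefore jumps with sign $-$, giving $(-\frac{\mathbf I}{2}+\mathbf K^{\omega,*}_{\Gamma_j^-})[\bm\phi_j^-]$. The cross term is smooth there and equals $\mathbf K^{\omega,*}_{\Gamma_j^-,\Gamma_{j+1}^+}[\bm\psi_{j+1}^+]$ by \eqref{SKGiGj}.

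On $\Gamma_{j+1}^+$, the region $D_j'$ lies outside the curve. This gives $\mathbf K^{\omega,*}_{\Gamma_{j+1}^+,\Gamma_j^-}[\bm\phi_j^-]+(\frac{\mathbf I}{2}+\mathbf K^{\omega,*}_{\Gamma_{j+1}^+})[\bm\psi_{j+1}^+]$.

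Substituting \eqref{eqn:pxggr} yields the first expression for $\mathbb A^\omega_{j,j+1}$.

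\textbf{Step 3 (compact form).} The second expression follows from a purely algebraic check. Multiply $\frac{\mathbb I}{2}+\mathbb K^{\omega,*}_{j,j+1}$, with $\mathbb K^{\omega,*}_{j,j+1}$ as in \eqref{singlelayermatrix}, on the left by $\diag(-\mathbf I,\mathbf I)$. The first row becomes $(-\frac{\mathbf I}{2}+\mathbf K^{\omega,*}_{\Gamma_j^-},\ \mathbf K^{\omega,*}_{\Gamma_j^-,\Gamma_{j+1}^+})$ and the second row is unchanged.

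The main point requiring care is the sign and orientation bookkeeping. The normal $\bm\nu$ on every $\Gamma_j^\pm$ is taken as the outward normal of the region the curve encloses, which is the convention in \eqref{singlejumpk}. The "$\pm$" sides must therefore be identified correctly: exterior $=+$ and enclosed side $=-$. With that convention fixed, the remaining work is bookkeeping. No analytic difficulty arises, since invertibility of the single-layer operators is assumed.
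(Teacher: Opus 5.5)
Your proposal is correct and follows the paper's own proof. Like the paper, you apply the jump relation \eqref{singlejumpk} to the representation \eqref{eq3dqrq} region by region: exterior trace on $\Gamma_j^+$, enclosed-side trace on $\Gamma_j^-$, cross terms via \eqref{SKGiGj}, then substitution of the densities from Lemma \ref{def:DTNGS}; your explicit check of the compact form by left multiplication with $\diag(-\mathbf{I},\mathbf{I})$ is a small verification the paper leaves implicit (the $\Gamma_j^\pm$ are surfaces rather than curves, but this changes nothing).
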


\begin{proof}[\bf Proof]
	 By using \eqref{singlejumpk}, and \eqref{eq3dqrq}--\eqref{eqD2N}, we can obtain that
	 \[
	 \mathcal{T}^{\omega}[\bm{f}]_{1}^{+} = \(\frac{\mathbf{I}}{2}+\mathbf{K}_{{\Gamma_1^+}}^{\omega,*}\)(\mathbf{S}_{\Gamma_1^+}^{\omega})^{-1} [\bm{f}_1^+],
	 \]
	\[
	\begin{aligned}
	\begin{pmatrix}
	\mathcal{T}^{\omega}[\bm{f}]_j^{-}\\
	\nm 
	\mathcal{T}^{\omega}[\bm{f}]_{j+1}^{+}
	\end{pmatrix} &= \begin{pmatrix}
	-\frac{\mathbf{I}}{2}+\mathbf{K}_{{\Gamma_j^-}}^{\omega,*}  & \mathbf{K}_{{\Gamma_j^-,\Gamma_{j+1}^+}}^{\omega,*} \\
	\nm 
	\mathbf{K}_{{\Gamma_{j+1}^+,\Gamma_j^-}}^{\omega,*}  & \frac{\mathbf{I}}{2}+\mathbf{K}_{{\Gamma_{j+1}^+}}^{\omega,*}
	\end{pmatrix}
	\begin{pmatrix}
	\bm\phi_j^-\\
	\nm 
	\bm\psi_{j+1}^+
	\end{pmatrix}\\
	& = \begin{pmatrix}
	-\frac{\mathbf{I}}{2}+\mathbf{K}_{{\Gamma_j^-}}^{\omega,*}  & \mathbf{K}_{{\Gamma_j^-,\Gamma_{j+1}^+}}^{\omega,*} \\
	\nm 
	\mathbf{K}_{{\Gamma_{j+1}^+,\Gamma_j^-}}^{\omega,*}  & \frac{\mathbf{I}}{2}+\mathbf{K}_{{\Gamma_{j+1}^+}}^{\omega,*}
	\end{pmatrix}
	\begin{pmatrix}
	\mathbf{S}_{{\Gamma_j^-}}^{\omega}  & \mathbf{S}_{{\Gamma_j^-,\Gamma_{j+1}^+}}^{\omega} \\
	\mathbf{S}_{{\Gamma_{j+1}^+,\Gamma_j^-}}^{\omega}  & \mathbf{S}_{{\Gamma_{j+1}^+}}^{\omega}
	\end{pmatrix}^{-1} \begin{pmatrix}
	\bm{f}_j^{-}\\
	\nm 
	\bm{f}_{j+1}^{+}
	\end{pmatrix}, \;\mbox{ for } j=1,2,\ldots,N-1,
	\end{aligned}
	\]
	and
	\[
	\mathcal{T}^{\omega}[\bm f]_N^{-} = \(-\frac{\mathbf{I}}{2}+\mathbf{K}_{{\Gamma_N^-}}^{\omega,*}\) (\mathbf{S}_{{\Gamma_N^-}}^{\omega})^{-1} [\bm f_N^-].
	\]
	The proof is complete.
\end{proof}

It is noteworthy that, in the low-frequency regime, the displacement data $ \bm{f}:= (\bm{f}_j^{\pm})_{1\leq j\leq N}$ associated with the exterior problem  \eqref{exterior_problemGS}  can be expressed as a linear combination of the rigid motion basis $\{\bm{\varkappa}_{p}\}_{p=1}^6$ defined in \eqref{rigid_motions}. However, the rigid rotations  $\{\bm{\varkappa}_{p}\}_{p=4}^6$ are not orthogonal in the $L^2(D_i)^3$ inner product unless the domain $D_i$ possesses certain geometric symmetries (e.g., the concentric radial configuration considered in Section~\ref{concentric_radial}). For later use, we apply the Gram--Schmidt procedure to $\{\bm{\varkappa}_{p}\}_{p=1}^6$ to construct the orthogonal basis $\{\bm{\xi}_{i,p}\}_{p=1}^6$ satisfying 
\[
\(\bm{\xi}_{i,p},\bm{\xi}_{i,q}\)_{L^2(D_i)^3} = \|\bm{\xi}_{i,p}\|_{L^2(D_i)^3}\delta_{p,q}.
\]
Particularly, the rigid translations $\{\bm{\varkappa}_{p}\}_{p=1}^3$ are orthogonal, but for convenience we still write
$
\bm{\xi}_{i,p} = \bm{\varkappa}_{p}$ for $ p=1,2,3.
$
%\[
%\bm{\xi}_{i,4} = \begin{pmatrix}
%x_2-c_{i,2} \\
%-x_1+ c_{i,1} \\
%0
%\end{pmatrix},\; \; \bm{\xi}_{i,5} = \begin{pmatrix}
%x_3-c_{i,3}-L_{i,1}(x_2-c_{i,2})\\
%L_{i,1}(x_1-c_{i,1}) \\
%-x_1+c_{i,1}
%\end{pmatrix},
%\]
%and 
%\[
%\bm{\xi}_{i,6} = \begin{pmatrix}
%-L_{i,2}(x_2-c_{i,2})-L_{i,3}(x_3-c_{i,3})+L_{i,3} L_{i,1}(x_2-c_{i,2})\\
%x_3-c_{i,3}+L_{i,2}(x_1-c_{i,1})+L_{i,3} L_{i,1}(c_{i,1}-x_1) \\
%-x_2+c_{i,2}+L_{i,3}(x_1-c_{i,1})
%\end{pmatrix},
%\]
%where the constants $c_{i,1}, c_{i,2}, c_{i,3}$ represent the centroid of the domain $D_i$ and are defined as:
%\begin{align*}
%c_{i,1}=\frac{\int_{D_i}x_1~\d \Bx}{|D_i|},\quad c_{i,2}=\frac{\int_{D_i}x_2~\d \Bx}{|D_i|},\quad c_{i,3}=\frac{\int_{D_i}x_3~\d \Bx}{|D_i|}.
%\end{align*}
%The coefficients $L_{i,1}, L_{i,2}, L_{i,3}$ are given, respectively, by
%\[
%L_{i,1}=\frac{\int_{D_i} x_2x_3~\d \Bx-|{D_i}|c_{i,2} c_{i,3}}{\int_{D_i}((x_2-c_{i,2})^2+(x_1-c_{i,1})^2)~\d \Bx},\quad L_{i,2}=\frac{-\int_{D_i}x_1x_3~\d \Bx-|{D_i}|c_{i,1} c_{i,3}}{\int_{D_i}((x_2-c_{i,2})^2+(x_1-c_{i,1})^2)~\d \Bx},
%\]
%and 
%\[ L_{i,3}=\frac{\int_{D_i}(-L_{i,1} x_3(x_2-c_2)-x_2(-x_1+c_{i,1}))~\d \Bx}{\int_{D_i}((x_3-c_{i,3}-L_{i,1} x_2+L_{i,1} c_{i,2})^2+(L_{i,1} x_1-L_{i,1} c_{i,1})^2+(c_{i,1}-x_1)^2)~\d \Bx}.
%\]

 In order to give the definition of  the stiffness $\textbf{Stiff}(D'_j;D_i)$ of $D_j'$, $j=1,2,\ldots,N-1$, under the orthonormal basis $\{\bm{\xi}_{i,p}\}_{p=1}^6$,  we introduce the follow functions for any fixed $i=1,2,\ldots,N$,
 \begin{equation}\label{Uipm}
 \bm\Upsilon_{i,p;1} := \begin{pmatrix}
 \bm{\xi}_{i,p}\\
 \nm
 \bm{0}
 \end{pmatrix},\; \bm\Upsilon_{i,p;2} := \begin{pmatrix}
 \bm{0}\\
 \nm
 \bm{\xi}_{i,p}
 \end{pmatrix}\in L^2(\Gamma_j^-)^3\times L^2(\Gamma_{j+1}^+)^3,\;\mbox{for}\; p=1,2,\ldots,6.
 \end{equation}
Here $\bm{0}$ denotes the three-dimensional zero vector, and so $\bm\Upsilon_{i,p;1}$ and $\bm\Upsilon_{i,p;2}$ are six-dimensional
vector-valued functions. 
%\[
%e_1=(1,0)^T, \;e_2 = (0,1)^T\in L^2(\Gamma_j^-)\times L^2(\Gamma_{j+1}^+).
%\] 
\begin{defn}\label{defnstiff} For any fixed $i=1,2,\ldots,N$, 
	the stiffness $\textbf{Stiff}(D'_j;D_i):=\(\text{Stiff}(D'_j;D_i)_{pq}\)_{p,q=1}^6$ of $D_j'$, $j=1,2,\ldots,N-1$, can be defined by
	\begin{equation}\label{stifD'j}
	\text{Stiff}(D'_j;D_i)_{pq} := \left( -\mathbb{S}_{j,j+1}^{-1} [\bm\Upsilon_{i,p;2}],\bm\Upsilon_{i,q;2}\right)_{L^2(\Gamma_j^-)^3\times L^2(\Gamma_{j+1}^+)^3}.
	\end{equation}
\end{defn}

\begin{lem}
For any fixed $i=1,2,\ldots,N$, the stiffness $\textbf{\textup{Stiff}}(D'_j;D_i)$ of $D'_j$, $j=1,2,\ldots,N-1$, deifined in \eqref{stifD'j} is positive-definite. Moreover, let
	\begin{equation}\label{S^-101GS}
	\begin{pmatrix}
	\hat{\bm\phi}_{j;i,p}^{-}\\
	\nm
	\hat{\bm{\psi}}_{j+1;i,p}^{+}
	\end{pmatrix}
	:=\mathbb{S}_{j,j+1}^{-1}[\bm\Upsilon_{i,p;1}] 
	\;\mbox{ and }\; 
	\begin{pmatrix}
	\hat{\bm\varphi}_{j;i,p}^{-}\\
	\nm
	\hat{\bm{\zeta}}_{j+1;i,p}^{+}
	\end{pmatrix}
	:=\mathbb{S}_{j,j+1}^{-1} [\bm\Upsilon_{i,p;2}].
	\end{equation}
 Then, we have
	\begin{equation}\label{cap_pp}
	\textup{Stiff}(D'_j;D_i)_{pp} = -\int_{\Gamma_{j+1}^+} \hat{\bm{\zeta}}_{j+1;i,p}^{+}\cdot \overline{ \bm{\xi}_{i,p}} ~\d\sigma > 0,
	\end{equation}  
	\begin{equation}\label{cap1_pp}
	-\int_{\Gamma_{j}^-} \hat{\bm{\phi}}_{j;i,p}^{-}\cdot \overline{ \bm{\xi}_{i,p}} ~\d\sigma >0,
	\end{equation}
	and  for any fixed $1\leq m,n\leq N$,
	\begin{equation}\label{cap_pq}
	\int_{\Gamma_{j+1}^+} \hat{\bm{\zeta}}_{j+1;m,p}^{+}\cdot \overline{ \bm{\xi}_{n,q}} ~\d\sigma = \int_{\Gamma_{j+1}^+} \hat{\bm{\zeta}}_{j+1;n,q}^{+}\cdot \overline{ \bm{\xi}_{m,p}} ~\d\sigma,
	\end{equation}
	\begin{equation}\label{cap1_pq}
	\int_{\Gamma_{j}^-} \hat{\bm{\phi}}_{j;m,p}^{-}\cdot \overline{ \bm{\xi}_{n,q}} ~\d\sigma = \int_{\Gamma_{j}^-} \hat{\bm{\phi}}_{j;n,q}^{-}\cdot \overline{ \bm{\xi}_{m,p}} ~\d\sigma,
	\end{equation}
	\begin{equation}\label{cap_selfad}
	\int_{\Gamma_{j+1}^+} \hat{\bm\psi}_{j+1;m,p}^{+} \cdot \overline{\bm{\xi}_{n,q}} ~\d\sigma  = \int_{\Gamma_{j}^-} \hat{\bm\varphi}_{j;n,q}^{-} \cdot \overline{\bm{\xi}_{m,p}} ~\d\sigma.
	\end{equation}
\end{lem}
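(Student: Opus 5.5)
The plan is to reduce everything to an energy identity for the static exterior Dirichlet problem in the gap region $D_j'$, together with the symmetry of the static single-layer matrix $\mathbb{S}_{j,j+1}$.

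\textbf{Symmetry.} Since the static kernel $\bm{G}^0$ in \eqref{gamma0} is real and symmetric, $\bm{G}^0(\Bx-\By)^t=\bm{G}^0(\By-\Bx)$. Hence $\mathbb{S}_{j,j+1}$ is self-adjoint with respect to the bilinear pairing $\langle \cdot,\cdot\rangle$ on $L^2(\Gamma_j^-)^3\times L^2(\Gamma_{j+1}^+)^3$, and therefore so is $\mathbb{S}_{j,j+1}^{-1}$. The basis fields $\bm{\xi}_{i,p}$ are real, being Gram--Schmidt combinations of the rigid motions \eqref{rigid_motions} with real coefficients. So the conjugations in \eqref{cap_pq}--\eqref{cap_selfad} are harmless, and each integral is an entry of $\mathbb{S}_{j,j+1}^{-1}$ tested against some $\bm\Upsilon$:
\begin{itemize}
\item \eqref{cap_pq} is $\langle \mathbb{S}^{-1}\bm\Upsilon_{m,p;2},\bm\Upsilon_{n,q;2}\rangle=\langle \bm\Upsilon_{m,p;2},\mathbb{S}^{-1}\bm\Upsilon_{n,q;2}\rangle$;
\item \eqref{cap1_pq} is the same identity with index $1$ in place of $2$;
\item \eqref{cap_selfad} is $\langle \mathbb{S}^{-1}\bm\Upsilon_{m,p;1},\bm\Upsilon_{n,q;2}\rangle=\langle \bm\Upsilon_{m,p;1},\mathbb{S}^{-1}\bm\Upsilon_{n,q;2}\rangle$.
\end{itemize}
Identity \eqref{cap_pp} is immediate from Definition \ref{defnstiff}, because $\bm\Upsilon_{i,q;2}$ vanishes on $\Gamma_j^-$.

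\textbf{Positivity.} Before anything else I would confirm that $\mathbb{S}_{j,j+1}$ is invertible; this is assumed in the definition, and I would justify it by uniqueness of the static Dirichlet problem in $D_j'$. For a real vector $\mathbf{a}\in\RR^6$, set $\mathbf{g}=\sum_p a_p\bm{\xi}_{i,p}$ and
\[
\mathbf{w}=\mathbf{S}_{\Gamma_j^-}[\hat{\bm\varphi}]+\mathbf{S}_{\Gamma_{j+1}^+}[\hat{\bm\zeta}],
\qquad (\hat{\bm\varphi},\hat{\bm\zeta})=\mathbb{S}_{j,j+1}^{-1}\big[(\bm 0,\mathbf{g})\big].
\]
Then $\mathbf{w}$ solves $\mathcal{L}_{\lambda,\mu}\mathbf{w}=0$ in $\RR^3\setminus(\Gamma_j^-\cup\Gamma_{j+1}^+)$ and is continuous across both surfaces. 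It decays like $|\Bx|^{-1}$ at infinity, with traction decaying like $|\Bx|^{-2}$. Its boundary values are $\mathbf{w}=\bm 0$ on $\Gamma_j^-$ and $\mathbf{w}=\mathbf{g}$ on $\Gamma_{j+1}^+$.

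By the jump relation \eqref{singlejumpk}, each density equals the jump of the traction across its surface. Applying Betti's formula in each of the three regions (outside $\Gamma_j^-$, in $D_j'$, and inside $\Gamma_{j+1}^+$) and summing gives
\[
-\mathbf{a}^t\,\textbf{Stiff}(D_j';D_i)\,\mathbf{a}=-\int_{\Gamma_{j+1}^+}\hat{\bm\zeta}\cdot\mathbf{g}\,\d\sigma=\int_{\RR^3}\mathcal{E}(\mathbf{w},\mathbf{w})\,\d\Bx ,
\]
where $\mathcal{E}(\mathbf{w},\mathbf{w})=\lambda|\nabla\cdot\mathbf{w}|^2+2\mu|\widehat{\nabla}\mathbf{w}|^2\ge 0$ by \eqref{strong_convexity}. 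I need to fix the sign convention carefully here so that the conclusion reads $\textbf{Stiff}>0$, i.e.\ $-\mathbb{S}^{-1}$ positive, consistent with statement (i) that $-\mathbf{S}_\Gamma$ is positive.

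The energy vanishes only if $\mathbf{w}$ is a rigid motion in every region. Decay forces $\mathbf{w}=0$ outside $\Gamma_j^-$, continuity then gives $\mathbf{w}=0$ in $D_j'$, and hence $\mathbf{g}=0$ on $\Gamma_{j+1}^+$. This forces $\mathbf{a}=0$, since the $\bm{\xi}_{i,p}$ restricted to a closed surface are linearly independent. This proves positive-definiteness, and \eqref{cap1_pp} follows by the same argument with data $(\mathbf{g},\bm 0)$.

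\textbf{Main obstacle.} The delicate step is the energy identity. The sign bookkeeping of the jump relations on the two nested surfaces, with normals oriented as in \eqref{singlelayermatrix}, must come out consistently. I also have to control the boundary terms at infinity; static potentials decay like $|\Bx|^{-1}$ and their tractions like $|\Bx|^{-2}$, so the surface term on a large sphere vanishes. Finally, the argument must use the regularity of $C^{1,\eta}$ boundaries so that Green's formula applies.
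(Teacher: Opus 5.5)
Your proposal is correct and follows the paper's route. The identities \eqref{cap_pq}--\eqref{cap_selfad} come from the self-adjointness of the static matrix $\mathbb{S}_{j,j+1}$ (real symmetric kernel, real $\bm{\xi}_{i,p}$), and positive-definiteness comes from positivity of $-\mathbb{S}_{j,j+1}$. The paper simply asserts that positivity, whereas you justify it with the three-region Betti identity. That justification is a welcome addition, since the cited lemma only covers a single surface.

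There is one bookkeeping slip in your display. The correct chain is $\mathbf{a}^t\,\textbf{Stiff}(D_j';D_i)\,\mathbf{a}=-\int_{\Gamma_{j+1}^+}\hat{\bm\zeta}\cdot\mathbf{g}\,\d\sigma=\int_{\RR^3}\mathcal{E}(\mathbf{w},\mathbf{w})\,\d\Bx$, with no leading minus sign on the left. This is exactly the sign fix you anticipated; the rest of the argument, including the equality case, goes through as written.
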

\begin{proof}[\bf Proof]
	By using \eqref{stifD'j}, and the self-adjointness of the operator $-\mathbb{S}_{j,j+1}$, we imediately have that $\textbf{\textup{Stiff}}(D'_j;D_i)$ is symmetric. It follows from the positive definiteness of the operator $-\mathbb{S}_{j,j+1}$ that for any $\bm{a} = (a_p)_{p=1}^6\in \RR^6$, 
	\[
	\begin{aligned}
	\bm{a}^t\textbf{\textup{Stiff}}(D'_j;D_i)\bm{a} &= \sum_{p,q=1}^{6}a_p \( \(-\mathbb{S}_{j,j+1}^{-1} [\bm\Upsilon_{i,p;2}],\bm\Upsilon_{i,q;2}\)_{L^2(\Gamma_j^-)^3\times L^2(\Gamma_{j+1}^+)^3}\) a_q \\
	&=  \( -\mathbb{S}_{j,j+1}^{-1} \left[\sum_{p=1}^{6}a_p\bm\Upsilon_{i,p;2}\right],\sum_{p=1}^{6}a_p\bm\Upsilon_{i,p;2}\)_{L^2(\Gamma_j^-)^3\times L^2(\Gamma_{j+1}^+)^3}\geq  0,
	\end{aligned}
	\]
	with equality if and only if $\bm{a}=0$. Moreover, using again the positive definiteness and self-adjointness of the operator $-\mathbb{S}_{j,j+1}$, we can conclude that \eqref{cap_pp}--\eqref{cap_selfad} hold true.
\end{proof}

Based on \eqref{Uipm},
we introduce the $6$-by-$6$ matrices
\begin{equation}\label{6L6C}
\bm\Upsilon_{i;1}:=\(\bm\Upsilon_{i,1;1},\bm\Upsilon_{i,2;1},\ldots,\bm\Upsilon_{i,6;1}\)\;\;\mbox{ and }\;\;\bm\Upsilon_{i;2}:=\(\bm\Upsilon_{i,1;2},\bm\Upsilon_{i,2;2},\ldots,\bm\Upsilon_{i,6;2}\),
\end{equation}
and let
\[
\begin{pmatrix}
\hat{\bm\phi}_{j;i}^{-}\\
\nm
\hat{\bm{\psi}}_{j+1;i}^{+}
\end{pmatrix}
:=\mathbb{S}_{j,j+1}^{-1}[\bm\Upsilon_{i;1}] 
\;\mbox{ and }\; 
\begin{pmatrix}
\hat{\bm\varphi}_{j;i}^{-}\\
\nm
\hat{\bm{\zeta}}_{j+1;i}^{+}
\end{pmatrix}
:=\mathbb{S}_{j,j+1}^{-1} [\bm\Upsilon_{i;2}].
\]  

\begin{prop}
	If $\omega \in \mathbb{C}$ belongs to a neighborhood of zero, then there exists a family of $6N\times 6N$ matrices $(\mathcal{T}_{n})_{n\geq 0}$ such that the convergent series representation holds:
	\begin{equation}
	\label{DTN_expansion}
	\mathcal{T}^{\omega} =
	\sum_{n=0}^{+ \infty} \omega^{n} \mathcal{T}_{n}.
	\end{equation}
	Particularly,  for any fixed $i=1,2,\ldots,N$, let 
	\[
	\bm{f}_j^{\pm} = \sum_{p=1}^6 t_{j,p}^\pm \bm{\xi}_{i,p} := \bm{\xi}_{i}\bm{t}_j^{\pm},\;\;j=1,2,\ldots,N,
	\]
	where $\bm{\xi}_{i} = (\bm{\xi}_{i,1},\bm{\xi}_{i,2},\ldots,\bm{\xi}_{i,6})$, and $\bm{t}_j^{\pm} = (t_{j,1}^\pm,t_{j,2}^\pm,\ldots,t_{j,6}^\pm)^T$ is an any fixed 6-dimensional vector, then we have
	\begin{equation} \label{T_0expre}
	\begin{cases}
	\ds\mathcal{T}_0[\bm f]_1^{+} = \hat{\bm\zeta}_{1;i}^+ \bm t_1^+, &  \\
	\nm
	\ds   \mathcal{T}_0[\bm f]_j^{+} = \hat{\bm \zeta}_{j;i}^{+} (\bm t_{j}^{+}-\bm t_{j-1}^{-}) , &j =2,3,\ldots,N, \\
	\nm
	\ds   \mathcal{T}_0[\bm f]_j^{-} = \hat{\bm\varphi}_{j;i}^{-}(\bm t_j^{-}-\bm t_{j+1}^{+}) 
	, & j =1,2,\ldots,N-1,  \\
	\nm
	\ds  \mathcal{T}_0[\bm f]_N^{-} = \bm 0,&
	\end{cases}
	\end{equation}
	where $\hat{\bm\zeta}_{1;i}^+ = \(\hat{\bm\zeta}_{1;i,1}^+,\hat{\bm\zeta}_{1;i,2}^+,\ldots,\hat{\bm\zeta}_{1;i,6}^+\) $ with $\hat{\bm\zeta}_{1;i,p}^+= \mathbf{S}_{\Gamma_1^+}^{-1}[\bm{\xi}_{i,p}]$, 
	and 
	\begin{equation} \label{T_1expre}
	\begin{cases}
	\ds\mathcal{T}_1[\bm f]_1^{+} 
	%= \frac{\i}{4\pi}||\zeta_{1}^+||^2_{L^2(\Gamma_1^+)}f_1^+ 
	= %\i\alpha\sum_{p=1}^3 \(\int_{\Gamma_1^+} \mathcal{T}_0[\bm{\xi}_{i,p}]_{1}^{+}\cdot\bm f_1^+  ~\d\sigma\)  \mathcal{T}_0[\bm{\xi}_{i,p}]_{1}^{+} = 
	\ii\alpha\sum_{p=1}^3 \(\int_{\Gamma_1^+} \hat{\bm\zeta}_{1;i,p}^+ \cdot\overline{\bm f_1^+}  ~\d\sigma\)  \hat{\bm\zeta}_{1;i,p}^+, &  \\
	\nm
	\ds   \mathcal{T}_1[\bm f]_j^{+} =\bm  0 , &j =2,3,\ldots,N, \\
	\nm
	\ds   \mathcal{T}_1[\bm f]_j^{-} =\bm  0
	, & j =1,2,\ldots,N.
	\end{cases}
	\end{equation}
%	where $Cap(D'_0) := \langle-\mathcal{S}_{\Gamma_1^+}^{-1}[1],1\rangle_{L^2(\Gamma_1^+)}$.
\end{prop}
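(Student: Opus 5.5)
The plan is to obtain \eqref{DTN_expansion} from the block-diagonal representation \eqref{matrixrepre} of Proposition \ref{prop:DTN}, expanding each block in $\omega$, and then to compute $\mathcal{T}_0$ and $\mathcal{T}_1$ by applying the resulting formulas to rigid-motion data. For convergence, Lemma \ref{lemgamma} gives norm-convergent expansions of $\mathbf{S}^\omega_\Gamma$ and $\mathbf{K}^{\omega,*}_\Gamma$. The off-diagonal operators $\mathbf{S}^\omega_{\Gamma_i,\Gamma_j}$ and $\mathbf{K}^{\omega,*}_{\Gamma_i,\Gamma_j}$ expand the same way, and more easily, because the kernels $\bm G_n(\Bx-\By)$ are smooth when $\Gamma_i\cap\Gamma_j=\emptyset$. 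The static operators $\mathbf{S}_{\Gamma_1^+}$, $\mathbf{S}_{\Gamma_N^-}$ and $\mathbb{S}_{j,j+1}$ are invertible, since $-\mathbb{S}_{j,j+1}$ is positive and self-adjoint. A Neumann series therefore gives $(\mathbb{S}^\omega_{j,j+1})^{-1}=\mathbb{S}_{j,j+1}^{-1}-\omega\,\mathbb{S}_{j,j+1}^{-1}\mathbb{S}_{j,j+1,1}\mathbb{S}_{j,j+1}^{-1}+O(\omega^2)$ for $|\omega|$ small, with analogous expansions for the other two inverses. Multiplying the series yields \eqref{DTN_expansion}, together with
\[
\mathcal{T}_1=\(\text{block of }\mathbb{K}^*_{\cdot,1}\)\mathbb{S}^{-1}-\(\pm\tfrac{\mathbb{I}}{2}+\mathbb{K}^*\)\mathbb{S}^{-1}\mathbb{S}_{1}\mathbb{S}^{-1}
\]
blockwise.

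For $\mathcal{T}_0$ with $\bm f_j^\pm=\bm\xi_i\bm t_j^\pm$, I treat the three types of blocks separately.

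First, on $\Gamma_N^-$: each $\mathbf{S}_{\Gamma_N^-}^{-1}[\bm\xi_{i,p}]$ lies in $\ker(-\frac{\mathbf I}{2}+\mathbf K^*_{\Gamma_N^-})$ by item (iii) of the lemma on static potentials, so $\mathcal{T}_0[\bm f]_N^-=\bm 0$.

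Second, on $\Gamma_1^+$: the same kernel fact gives $(\frac{\mathbf I}{2}+\mathbf K^*)\mathbf{S}^{-1}[\bm\xi_{i,p}]=\mathbf{S}^{-1}[\bm\xi_{i,p}]=\hat{\bm\zeta}^+_{1;i,p}$, which is the first line of \eqref{T_0expre}.

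Third, for a gap block $(j,j+1)$: I split the data as $(\bm\xi_i\bm t_j^-,\bm\xi_i\bm t_{j+1}^+)=\bm t_j^-(\bm\xi_i,\bm\xi_i)+(\bm t_{j+1}^+-\bm t_j^-)\bm\Upsilon_{i;2}$.
\begin{itemize}
\item For the data $(\bm\xi_{i,p},\bm\xi_{i,p})$, the static solution in $D_j'$ is the rigid motion $\bm\xi_{i,p}$ itself, so its traction vanishes.
\item For the data $\bm\Upsilon_{i,p;2}$, set $\mathbf u=\mathbf S_{\Gamma_j^-}[\hat{\bm\varphi}^-_{j;i,p}]+\mathbf S_{\Gamma_{j+1}^+}[\hat{\bm\zeta}^+_{j+1;i,p}]$ in all of $\RR^3$.
\end{itemize}
Inside $\Gamma_{j+1}^+$, $\mathbf u$ has Dirichlet data $\bm\xi_{i,p}$, so $\mathbf u\equiv\bm\xi_{i,p}$ there with zero traction. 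Outside $\Gamma_j^-$, $\mathbf u$ has zero data and decays, so $\mathbf u\equiv0$ there by exterior uniqueness. The single-layer jump relation \eqref{singlejumpk} then yields gap-side tractions $\hat{\bm\zeta}^+_{j+1;i,p}$ on $\Gamma_{j+1}^+$ and $-\hat{\bm\varphi}^-_{j;i,p}$ on $\Gamma_j^-$. Combining the two pieces gives the middle lines of \eqref{T_0expre}.

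For $\mathcal{T}_1$, I use $\mathbf K^*_{\Gamma,1}=0$, and the analogous vanishing for the cross terms, which follows from the explicit $\bm G_1$ because $\bm G_1$ is constant. The key structural fact is \eqref{SL1}: $\mathbf{S}_{\Gamma,1}$ maps any density to the constant vector $-\ii\alpha\int\bm\phi$, and $\mathbb{S}_{j,j+1,1}$ maps $(\bm\phi,\bm\psi)$ to the pair of equal constants $(\bm c,\bm c)$ with $\bm c=-\ii\alpha(\int\bm\phi+\int\bm\psi)$.
\begin{itemize}
\item In a gap block, $\mathcal{T}_1$ is the static traction map applied to $-(\bm c,\bm c)$. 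Constant (translational) data give a rigid solution, so this contribution is zero.
\item In the interior block on $\Gamma_N^-$, the kernel fact again kills it.
\item On $\Gamma_1^+$ we get $\ii\alpha(\frac{\mathbf I}{2}+\mathbf K^*)\mathbf S^{-1}[\bm c']$ with $\bm c'=\int_{\Gamma_1^+}\mathbf S^{-1}[\bm f_1^+]\,\d\sigma=\sum_{p=1}^3 c'_p\bm\xi_{i,p}$. Using the self-adjointness of $\mathbf S_{\Gamma_1^+}$, $c'_p=\int \mathbf S^{-1}[\bm f_1^+]\cdot\bm\xi_{i,p}=\int \bm f_1^+\cdot\hat{\bm\zeta}^+_{1;i,p}$, which matches \eqref{T_1expre} up to the paper's conjugation convention.
\end{itemize}

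The main obstacle I anticipate is bookkeeping rather than analysis. One must track the normal orientation and the signs in $\mathbb A^\omega_{j,j+1}$ (the $\mathrm{diag}(-\mathbf I,\mathbf I)$ factor) against the jump relations, so that the gap tractions come out as exactly $\hat{\bm\zeta}^+$ and $\hat{\bm\varphi}^-$ with the stated signs. I would also verify carefully that the interior and exterior uniqueness arguments hold for the static Lam\'e system with rigid-motion data.
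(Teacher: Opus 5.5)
Your proposal is correct and shares the paper's skeleton. You expand the blocks of \eqref{matrixrepre} and evaluate $\mathcal{T}_0$ on rigid-motion data. For $\mathcal{T}_1$ you use that $\mathbf{S}_{\Gamma,1}$ outputs a constant vector, and $\mathbb{S}_{j,j+1,1}$ outputs the same constant on both surfaces; every static traction block except the one on $\Gamma_1^+$ annihilates such data.

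The genuine difference is how you verify the action of the static blocks on rigid data. The paper works only with boundary operators. Calder\'on's identity gives $\left(\frac12\mathbb{I}+\mathbb{K}^*_{j,j+1}\right)\mathbb{S}_{j,j+1}^{-1}=\mathbb{S}_{j,j+1}^{-1}\left(\frac12\mathbb{I}+\mathbb{K}_{j,j+1}\right)$. The reproducing property \eqref{0105} of the double layer then yields $\left(\frac12\mathbb{I}+\mathbb{K}_{j,j+1}\right)(\bm\Upsilon_{i;1},\bm\Upsilon_{i;2})=(-\bm\Upsilon_{i;2},\bm\Upsilon_{i;2})$ and $\left(\frac12\mathbb{I}+\mathbb{K}_{j,j+1}\right)(\bm b,\bm b)=0$. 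These settle the gap blocks of $\mathcal{T}_0$ and $\mathcal{T}_1$. Your use of item (iii) on $\Gamma_1^+$ and $\Gamma_N^-$ is the one-surface form of the same identity.

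You work at the PDE level instead. Interior and exterior uniqueness for the static Dirichlet problem show that $\mathbf S_{\Gamma_j^-}[\hat{\bm\varphi}^-_{j;i,p}]+\mathbf S_{\Gamma_{j+1}^+}[\hat{\bm\zeta}^+_{j+1;i,p}]$ equals $\bm\xi_{i,p}$ inside $\Gamma_{j+1}^+$ and vanishes outside $\Gamma_j^-$. The jump relation \eqref{singlejumpk} then reads off the gap tractions $\hat{\bm\zeta}^+_{j+1;i,p}$ and $-\hat{\bm\varphi}^-_{j;i,p}$. Your splitting of the data into $\bm t_j^-(\bm\xi_i,\bm\xi_i)$ plus a multiple of $\bm\Upsilon_{i;2}$ is the same linear algebra as the paper's identity, because $\left(\frac12\mathbb{I}+\mathbb{K}_{j,j+1}\right)(\bm\Upsilon_{i;1}+\bm\Upsilon_{i;2})=0$.

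Your route has three advantages:
\begin{itemize}
\item It bypasses the two-surface Calder\'on identity, whose sign conventions for $\mathbb{K}^*_{j,j+1}$ versus $\mathbb{K}_{j,j+1}$ are easy to get wrong. The orientation bookkeeping you worried about therefore comes out automatically.
\item It exhibits exactly the function that reappears as $\mathbf V_{j,q}$ in Lemma \ref{EST}.
\item It makes explicit that the cross terms of $\mathbb{K}^*_{j,j+1,1}$ vanish because $\bm G_1\equiv-\ii\alpha\mathbf I$, which the paper uses silently.
\end{itemize}
The paper's route is shorter once \eqref{0105} and Calder\'on's identity are granted.

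Two small remarks. First, your coefficient $\int_{\Gamma_1^+}\hat{\bm\zeta}^+_{1;i,p}\cdot\bm f_1^+\,\d\sigma$ is also what the last line of the paper's own computation produces. Since $\mathcal{T}_1$ is linear and $\hat{\bm\zeta}^+_{1;i,p}$ is real, the $\overline{\bm f_1^+}$ in the statement is a typo, correct only for real data, not a convention. Second, positivity plus self-adjointness of $-\mathbb{S}_{j,j+1}$ gives only injectivity. Invertibility from $L^2$ onto $H^1$ also needs the standard fact that the static single layer is Fredholm of index zero.
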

\begin{proof}[\bf Proof]
	The convergent series representation \eqref{DTN_expansion} of displacement-to-traction map $\mathcal{T}^{\omega}$ follows immediately from the operators $\(\frac{\mathbf{I}}{2}+\mathbf{K}_{{\Gamma_1^+}}^{\omega,*}\)(\mathbf{S}_{\Gamma_1^+}^{\omega})^{-1}$, $\(-\frac{\mathbf{I}}{2}+\mathbf{K}_{{\Gamma_N^-}}^{\omega,*}\)(\mathbf{S}_{\Gamma_N^-}^{\omega})^{-1}$ and 
	$\mathbb{A}_{j,j+1}^{\omega}$, $j=1,2,\ldots,N-1$, are analytic for $\omega$ small enough. 
	
	Next, we give the proofs of \eqref{T_0expre} and \eqref{T_1expre}. 
%	we have
%	\[
%	\mathcal{T}_0 = \begin{pmatrix}
%	\(\frac{\mathbf{I}}{2}+\mathbf{K}_{{\Gamma_1^+}}^{*}\)\mathbf{S}_{\Gamma_1^+}^{-1}&&&&\\
%	&\mathbb{A}^0_{1,2}&&&\\
%	&&\ddots &&\\
%	&&&\mathbb{A}^0_{N-1,N}&\\
%	&&&&\(-\frac{\mathbf{I}}{2}+\mathbf{K}_{{\Gamma_N^-}}^{*}\) \mathbf{S}_{{\Gamma_N^-}}^{-1}
%	\end{pmatrix}.
%	\]
	By using \eqref{singlelayerasymptotic}, \eqref{0105}, \eqref{matrixrepre}  and  Calder\'on’s identity (cf. \cite{AK_book2018,KZDF_JCP}), we have 
	\begin{equation}\label{T0f1}
	\mathcal{T}_0[\bm{f}]_{1}^{+} = \(\frac{\mathbf{I}}{2}+\mathbf{K}_{{\Gamma_1^+}}^{*}\)\mathbf{S}_{\Gamma_1^+}^{-1}[\bm f_1^+] = \mathbf{S}_{\Gamma_1^+}^{-1}[\bm{\xi}_{i}]\bm{t}_1^{+},
	\end{equation}
	\[
	\mathcal{T}_0[\bm f]_{N}^{-} = \(-\frac{\mathbf{I}}{2}+\mathbf{K}_{{\Gamma_N^-}}^{*}\)\mathbf{S}_{\Gamma_N^-}^{-1}[\bm f_N^-] = 0.
	\]
	Moreover, by using \eqref{Kjjp1} and 
	\[
	\begin{pmatrix}
	\bm f_j^{-}\\
	\nm
	\bm f_{j+1}^{+}
	\end{pmatrix} = \(\bm\Upsilon_{i;1},\bm\Upsilon_{i;2}\)\begin{pmatrix}
	\bm t_j^{-}\\
	\nm
	\bm t_{j+1}^{+}\end{pmatrix},
	\]
	where $\bm\Upsilon_{i;m}$, $m=1,2$, is defined in \eqref{6L6C}, we get 
\[
\begin{aligned}
\begin{pmatrix}
\mathcal{T}_0[\bm f]_j^{-}\\
\nm 
\mathcal{T}_0[\bm f]_{j+1}^{+}
\end{pmatrix} :&=  \begin{pmatrix}
-\mathbf{I} & \mathbf{O} \\
\nm
\mathbf{O}  &\mathbf{I}
\end{pmatrix}\(\frac{1}{2}\mathbb{I}+\mathbb{K}^{*}_{j,j+1}\)\mathbb{S}_{j,j+1}^{-1} \begin{pmatrix}
\bm f_j^{-}\\
\nm
\bm f_{j+1}^{+}
\end{pmatrix}\\ 
&=    \begin{pmatrix}
-\mathbf{I} & \mathbf{O} \\
\nm
\mathbf{O}  &\mathbf{I}
\end{pmatrix}
\mathbb{S}_{j,j+1}^{-1}\(\frac{1}{2}\mathbb{I}+\mathbb{K}_{j,j+1}\) \(\bm\Upsilon_{i;1},\bm\Upsilon_{i;2}\)\begin{pmatrix}
\bm t_j^{-}\\
\nm
\bm t_{j+1}^{+}\end{pmatrix}\\
& = \begin{pmatrix}
-\mathbf{I} & \mathbf{O} \\
\nm
\mathbf{O}  &\mathbf{I}
\end{pmatrix}
\mathbb{S}_{j,j+1}^{-1} \(-\bm\Upsilon_{i;2},\bm\Upsilon_{i;2}\)\begin{pmatrix}
\bm t_j^{-}\\
\nm
\bm t_{j+1}^{+}\end{pmatrix}\\
%&= \begin{pmatrix}
%-\mathbf{I} & \mathbf{O} \\
%\nm
%\mathbf{O}  &\mathbf{I}
%\end{pmatrix}
%\begin{pmatrix}
%-\hat{\bm\varphi}_{j;i}^{-}&\hat{\bm\varphi}_{j;i}^{-}\\
%\nm
%-\hat{\bm{\zeta}}_{j+1;i}^{+}& \hat{\bm{\zeta}}_{j+1;i}^{+}
%\end{pmatrix}\begin{pmatrix}
%\bm t_j^{-}\\
%\nm
%\bm t_{j+1}^{+}\end{pmatrix}\\
&= 
\begin{pmatrix}
\hat{\bm\varphi}_{j;i}^{-}&-\hat{\bm\varphi}_{j;i}^{-}\\
\nm
-\hat{\bm{\zeta}}_{j+1;i}^{+}& \hat{\bm{\zeta}}_{j+1;i}^{+}
\end{pmatrix}\begin{pmatrix}
\bm t_j^{-}\\
\nm
\bm t_{j+1}^{+}\end{pmatrix},
%& = \begin{pmatrix}
%-\mathbf{I} & \mathbf{O} \\
%\nm
%\mathbf{O}  &\mathbf{I}
%\end{pmatrix}
%\mathbb{S}_{j,j+1}^{-1} \begin{pmatrix}
%\bm{0}\\
%\bm f_{j+1}^{+}-\bm f_j^{-}
%\end{pmatrix}\\
%& = (\bm f_{j+1}^{+} - \bm f_j^{-}) \begin{pmatrix}
%-\hat\varphi_{j}^{-}\\
%\hat\zeta_{j+1}^{+}
%\end{pmatrix},
\quad j =1,2,\ldots,N-1.
\end{aligned}
\]	
	Hence, the formula \eqref{T_0expre} holds ture. 
%	Moreover, it is easy to see that 
%	\[
%	\mathcal{T}_1 = 
%	\begin{pmatrix}
%	-\(\frac{\mathbf{I}}{2}+\mathbf{K}_{{\Gamma_1^+}}^{*}\)\mathbf{S}_{\Gamma_1^+}^{-1}\mathbf{S}_{\Gamma_1^+,1}\mathbf{S}_{\Gamma_1^+}^{-1}&&&&\\
%	&\mathbb A_{1,2,(1)}&&&\\
%	&&\ddots &&\\
%	&&&\mathbb A_{N-1,N,(1)}&\\
%	&&&&-\(\frac{\mathbf{I}}{2}-\mathbf{K}_{{\Gamma_N^-}}^{*}\) \mathbf{S}_{{\Gamma_N^-}}^{-1}\mathbf{S}_{{\Gamma_N^-},1}\mathbf{S}_{{\Gamma_N^-}}^{-1}
%	\end{pmatrix},
%	\]
%	with 
%	\[
%\mathbb	A_{j,j+1,(1)} =-  \begin{pmatrix}
%	-\mathbf{I}  & \mathbf{O} \\
%	\nm 
%	\mathbf{O}  & \mathbf{I}
%	\end{pmatrix}\(\frac{1}{2}\mathbb{I}+\mathbb{K}^{*}_{j,j+1}\) \mathbb{S}_{j,j+1}^{-1}\mathbb{S}_{j,j+1;1}\mathbb{S}_{j,j+1}^{-1},\quad j =1,2,\ldots,N-1.
%	\]
Moreover, it follows from \eqref{SL1}, \eqref{0105} and \eqref{T0f1} that
	\[
	\begin{aligned}
	\mathcal{T}_1[\bm f]_{1}^{+} &= -\(\frac{\mathbf{I}}{2}+\mathbf{K}_{{\Gamma_1^+}}^{*}\)\mathbf{S}_{\Gamma_1^+}^{-1}\mathbf{S}_{\Gamma_1^+,1}\mathbf{S}_{\Gamma_1^+}^{-1}[\bm f_1^+]\\
%	&=\ii\alpha \(\frac{\mathbf{I}}{2}+\mathbf{K}_{{\Gamma_1^+}}^{*}\)\mathbf{S}_{\Gamma_1^+}^{-1}\int_{\Gamma_1^+}\mathbf{S}_{\Gamma_1^+}^{-1}[\bm f_1^+]~\d\sigma\\
	& = \ii\alpha\sum_{p=1}^3 \(\frac{\mathbf{I}}{2}+\mathbf{K}_{{\Gamma_1^+}}^{*}\)\mathbf{S}_{\Gamma_1^+}^{-1}\(\int_{\Gamma_1^+}\mathbf{S}_{\Gamma_1^+}^{-1}[\bm f_1^+]\cdot \bm{\xi}_{i,p} ~\d\sigma\)  \bm{\xi}_{i,p}\\
	& = \ii\alpha\sum_{p=1}^3 \(\int_{\Gamma_1^+} \hat{\bm\zeta}_{1;i,p}^+ \cdot\bm f_1^+  ~\d\sigma\)  \hat{\bm\zeta}_{1;i,p}^+,
	\end{aligned}
	\]
	and 
	\[
	\mathcal{T}_1[\bm f]_{N}^{-} = -\(\frac{\mathbf I}{2}-\mathbf{K}_{{\Gamma_N^-}}^{*}\) \mathbf{S}_{{\Gamma_N^-}}^{-1}\mathbf{S}_{{\Gamma_N^-},1}\mathbf{S}_{{\Gamma_N^-}}^{-1}[\bm f]_{N}^{-}  =\bm 0.
	\]
	Moreover, from \eqref{S^-101GS}, we have
	\[
	\begin{aligned}
	\mathbb{S}_{j,j+1}^{-1}
	\begin{pmatrix}
	\bm f_j^{-}\\
	\nm
	\bm f_{j+1}^{+}
	\end{pmatrix} = \mathbb{S}_{j,j+1}^{-1}\(\bm\Upsilon_{i;1},\bm\Upsilon_{i;2}\)\begin{pmatrix}
	\bm t_j^{-}\\
	\nm
	\bm t_{j+1}^{+}\end{pmatrix}= \begin{pmatrix}
	\hat{\bm\phi}_{j;i}^{-}&\hat{\bm\varphi}_{j;i}^{-}\\
	\nm
	\hat{\bm{\psi}}_{j+1;i}^{+}&\hat{\bm{\zeta}}_{j+1;i}^{+}
	\end{pmatrix}\begin{pmatrix}
	\bm t_j^{-}\\
	\nm
	\bm t_{j+1}^{+}\end{pmatrix} = \begin{pmatrix}
	 \hat{\bm\phi}_{j;i}^{-} \bm t_j^{-} + \hat{\bm\varphi}_{j;i}^{-}\bm t_{j+1}^{+}\\
	\nm
	\hat{\bm{\psi}}_{j+1;i}^{+} \bm t_j^{-} + \hat{\bm{\zeta}}_{j+1;i}^{+}\bm t_{j+1}^{+}\end{pmatrix}
	\end{aligned},
	\]
	and by \eqref{SL1}, we get 
	\[
	\begin{aligned}
&\quad	\mathbb{S}_{j,j+1;1}\begin{pmatrix}
	\hat{\bm\phi}_{j;i}^{-} \bm t_j^{-} + \hat{\bm\varphi}_{j;i}^{-}\bm t_{j+1}^{+}\\
	\nm
	\hat{\bm{\psi}}_{j+1;i}^{+} \bm t_j^{-} + \hat{\bm{\zeta}}_{j+1;i}^{+}\bm t_{j+1}^{+}\end{pmatrix} = 	\begin{pmatrix}
	\mathbf{S}_{{\Gamma_j^-},1}  & \mathbf{S}_{{\Gamma_j^-,\Gamma_{j+1}^+},1} \\
	\nm 
	\mathbf{S}_{{\Gamma_{j+1}^+,\Gamma_j^-},1}  & \mathbf{S}_{{\Gamma_{j+1}^+},1}
	\end{pmatrix}\begin{pmatrix}
	\hat{\bm\phi}_{j;i}^{-} \bm t_j^{-} + \hat{\bm\varphi}_{j;i}^{-}\bm t_{j+1}^{+}\\
	\nm
	\hat{\bm{\psi}}_{j+1;i}^{+} \bm t_j^{-} + \hat{\bm{\zeta}}_{j+1;i}^{+}\bm t_{j+1}^{+}\end{pmatrix} \\
	&= -\ii \alpha \begin{pmatrix}
	\int_{\Gamma_j^-}\(\hat{\bm\phi}_{j;i}^{-} \bm t_j^{-} + \hat{\bm\varphi}_{j;i}^{-}\bm t_{j+1}^{+}\)  ~\d\sigma + \int_{\Gamma_{j+1}^{+}}\(\hat{\bm{\psi}}_{j+1;i}^{+} \bm t_j^{-} + \hat{\bm{\zeta}}_{j+1;i}^{+}\bm t_{j+1}^{+}\)  ~\d\sigma\\
	\nm 
	\int_{\Gamma_j^-}\(\hat{\bm\phi}_{j;i}^{-} \bm t_j^{-} + \hat{\bm\varphi}_{j;i}^{-}\bm t_{j+1}^{+}\)  ~\d\sigma + \int_{\Gamma_{j+1}^{+}}\(\hat{\bm{\psi}}_{j+1;i}^{+} \bm t_j^{-} + \hat{\bm{\zeta}}_{j+1;i}^{+}\bm t_{j+1}^{+}\)  ~\d\sigma
	\end{pmatrix}.
	\end{aligned}
	\]
	Hence, 	we finally obtain that
	\[
	\begin{aligned}
	&\quad \begin{pmatrix}
	\mathcal{T}_1[\bm f]_j^{-}\\
	\mathcal{T}_1[\bm f]_{j+1}^{+}
	\end{pmatrix} :=  -\begin{pmatrix}
	-\mathbf{I}  & \mathbf{O} \\
	\nm 
	\mathbf{O}  & \mathbf{I}
	\end{pmatrix}\(\frac{1}{2}\mathbb{I}+\mathbb{K}^{*}_{j,j+1}\)\mathbb{S}_{j,j+1}^{-1}\mathbb{S}_{j,j+1;1}\mathbb{S}_{j,j+1}^{-1}
	\begin{pmatrix}
	\bm f_j^{-}\\
	\bm f_{j+1}^{+}
	\end{pmatrix}\\ 
	&=    \ii\alpha \begin{pmatrix}
	-\mathbf{I}  & \mathbf{O} \\
	\nm 
	\mathbf{O}  & \mathbf{I}
	\end{pmatrix}
	\mathbb{S}_{j,j+1}^{-1}\(\frac{1}{2}\mathbb{I}+\mathbb{K}_{j,j+1}\) 
	\begin{pmatrix}
	\int_{\Gamma_j^-}\(\hat{\bm\phi}_{j;i}^{-} \bm t_j^{-} + \hat{\bm\varphi}_{j;i}^{-}\bm t_{j+1}^{+}\)  ~\d\sigma + \int_{\Gamma_{j+1}^{+}}\(\hat{\bm{\psi}}_{j+1;i}^{+} \bm t_j^{-} + \hat{\bm{\zeta}}_{j+1;i}^{+}\bm t_{j+1}^{+}\)  ~\d\sigma\\
	\nm 
	\int_{\Gamma_j^-}\(\hat{\bm\phi}_{j;i}^{-} \bm t_j^{-} + \hat{\bm\varphi}_{j;i}^{-}\bm t_{j+1}^{+}\)  ~\d\sigma + \int_{\Gamma_{j+1}^{+}}\(\hat{\bm{\psi}}_{j+1;i}^{+} \bm t_j^{-} + \hat{\bm{\zeta}}_{j+1;i}^{+}\bm t_{j+1}^{+}\)  ~\d\sigma
	\end{pmatrix}\\
	& = \begin{pmatrix}
	\bm 0\\
	\nm
	\bm 0\end{pmatrix},\quad j =1,2,\ldots,N-1,
	\end{aligned}
	\]
	where we used the fact that $\(\frac{1}{2}\mathbb{I}+\mathbb{K}_{j,j+1}\)  \begin{pmatrix}
	\bm b\\
	\nm 
	\bm b
	\end{pmatrix} = \begin{pmatrix}
	\bm 0\\
	\nm
	\bm 0\end{pmatrix}$, for any $\bm b\in \RR^3.$
	The proof is complete.
\end{proof}

\subsection{Variational characterization of subwavelength resonances}
Based on the displacement-to-traction map introduced above, we now provide a variational characterization of resonances in the Matryoshka-type elastic medium $D = \cup_{j=1}^{N}D_{j}$, as illustrated in Figure \ref{MLHCCB}.
The  elastic scattering problem \eqref{main_equation} in $D$ can be characterized in terms of the displacement-to-traction map $\mathcal{T}^\omega$ for $\mathbf{u}\in H^{1}(D)^3$ as follows:
\begin{equation} \label{D2N}
\begin{cases}
\ds\mathcal{L}_{\lambda,\mu } \mathbf{u}  +\sum_{j=1}^N\rho\tau_j^2\chi_{D_j}\omega^2  \mathbf{u}  = 0, & \text{in  } D, \\
\nm
\ds   \partial_{\bm{\nu}} \mathbf{u}|_\mp = \delta \mathcal{T}^{\omega}[\mathbf{u}-\mathbf{u}^{\textup{in}}]_j^\pm + \delta \partial_{\bm{\nu}}{\mathbf{u}^{\textup{in}}} , & \text{on }\Gamma^\pm_j, \; j =1,2,\ldots,N.
\end{cases}
\end{equation}

By using variational principle and Green's formula for Lam\'e operator (cf. \cite{HAmmariElasticityImaging}), \eqref{D2N} can be formulated as the following weak form:
\begin{equation}\label{eq:varpro}
\bm{a}(\mathbf{u},\mathbf{v}) = \langle \bm{h},\mathbf{v}\rangle_{{H}^{-1}(D)^3,{H}^1(D)^3},  \text{ for }\forall\, \mathbf{v}\in {H}^1(D)^3,
\end{equation}
where the bilinear form $\bm{a}(\mathbf{u},\mathbf{v})$ for $\mathbf{u},\mathbf{v}\in {H}^1(D)^3 $ is defined by
\[
\bm{a}(\mathbf{u},\mathbf{v}):=
\sum_{i=1}^N\left( Q_{\lambda,\mu,D_i}(\mathbf{u},\mathbf{v}) - \rho\tau_i^2\omega^2 \int_{D_i} \mathbf{u}\cdot \overline{\mathbf{v}} ~\d \mathbf{x}\right)  - \delta\sum_{i=1}^N  \left(\int_{\Gamma_i^+}
\overline{\mathbf{v}}\cdot \mathcal{T}^{\omega}[\mathbf{u}]_i^+~\d \sigma  -\int_{\Gamma_i^-}
\overline{\mathbf{v}}\cdot \mathcal{T}^{\omega}[\mathbf{u}]_i^- ~\d \sigma \right),
\]
and
\begin{equation}\label{rightlinear}
\langle \bm{h},\mathbf{v}\rangle_{{H}^{-1}(D)^3,{H}^1(D)^3} := \delta \sum_{i=1}^N \left(\int_{\Gamma_i^+} \overline{\mathbf{v}}\cdot \(-\mathcal{T}^{\omega}[\mathbf{u}^{\textup{in}}]_i^+ +  \partial_{\bm{\nu}} \mathbf{u}^{\textup{in}} \)~\d \sigma - \int_{\Gamma_i^-} \overline{\mathbf{v}}\cdot \(-\mathcal{T}^{\omega}[\mathbf{u}^{\textup{in}}]_i^- +  \partial_{\bm{\nu}}\mathbf{u}^{\textup{in}}\)~\d \sigma\right),
\end{equation}
where  
\begin{equation}\label{QLambdaMu}
Q_{\lambda,\mu,D_i}(\mathbf{u},\mathbf{v}) =  \int_{D_i}\(\lambda(\nabla \cdot\mathbf{u})(\overline{\nabla\cdot\mathbf{v}}) +\frac{\mu}{2} \(\nabla\mathbf{u}+ (\nabla\mathbf{u})^t\):\(\overline{ \nabla\mathbf{v} +(\nabla\mathbf{v})^t}\) \)~\d \mathbf{x},
\end{equation}
and the notation $\mathbf{A}:\mathbf{B}=\mathrm{tr} (\mathbf{A}\mathbf{B}^t)$ is used for matrices $\mathbf{A}$ and $\mathbf{B}$. Moreover, the strong convexity condition \eqref{strong_convexity} shows that the quadratic form $\mathbf{u}\in  {H}_{0}^1(D)^3  \mapsto  Q_{\lambda,\mu,D_i}(\mathbf{u},\mathbf{u})$ is positive definite \cite{HAmmariElasticityImaging}.

Next, we introduce a new bilinear form:
\begin{equation}\label{NBF}
\bm{a}_{\omega,\delta}(\mathbf{u},\mathbf{v}) := \bm{a}(\mathbf{u},\mathbf{v})+\sum_{i=1}^N \(\sum_{p=1}^6\(\int_{D_i}\mathbf{u}\cdot\overline{\bm{\xi}_{i,p}}~\d \Bx \int_{D_i}\overline{\mathbf{v}} \cdot\bm{\xi}_{i,p}~\d \Bx\)\),
\end{equation}
where $\{\bm{\xi}_{i,p}\}_{p=1}^6$ is an orthogonal basis with respect to the $L^2(D_i)$ inner product, obtained by applying the Gram--Schmidt orthogonalization process to the rigid motion basis $\{\bm{\varkappa}_{p}\}_{p=1}^6$ given by \eqref{rigid_motions}. 
For sufficiently small $\delta$ and $\omega$, the bilinear form $a_{\omega, \delta}$ is an analytic perturbation of the continuous, coercive bilinear form
\[
\bm{a}_{0,0}(\mathbf{u},\mathbf{v}) = \sum_{i=1}^N Q_{\lambda,\mu,D_i}(\mathbf{u},\mathbf{v})   +\sum_{i=1}^N \(\sum_{p=1}^6\(\int_{D_i}\mathbf{u}\cdot\overline{\bm{\xi}}_{i,p}~\d \Bx \int_{D_i}\overline{\mathbf{v}} \cdot\bm{\xi}_{i,p}~\d \Bx\)\),
\]
hence remains coercive.
Consequently, for any $\bm{h}\in H^{-1}(D)^3$, the Lax–Milgram lemma yields a unique solution  $\mathbf{u}_{\bm{h}}(\omega ,\delta )$ to the problem
\begin{equation}\label{Lax_Milgram_solu}
\bm{a}_{\omega,\delta}(\mathbf{u}_{\bm{h}}(\omega,\delta),\mathbf{v}) = \langle \bm{h},\mathbf{v}\rangle_{H^{-1}(D)^3,H^1(D)^3},  \text{ for }\forall\, \mathbf{v}\in H^1(D)^3,
\end{equation}
which is analytic with respect to $\omega$ and $\delta$ (cf. \cite{HW_book,FA_JMPA2024}).
In order to characterize the subwavelength resonances, we introduce $\mathbf{u}_{j,q}(\omega,\delta)$ with $q = 1,2,\ldots,6$ and $j=1,2,\ldots,N$, satisfying the following variational problems
\begin{equation}\label{ujq}
\bm{a}_{\omega,\delta}(\mathbf{u}_{j,q}(\omega,\delta),\mathbf{v})=\int_{D_j}\overline{\mathbf{v}} \cdot\bm{\xi}_{j,q}\d \Bx,\;   \text{ for }\forall\, \mathbf{v}\in H^1(D)^3.
\end{equation}

We next introduce  the $6N$-dimensional vectors
\begin{equation}\label{xF}
\bm{s}(\omega,\delta) = \begin{pmatrix}
\bm{s}_1(\omega,\delta)\\\bm{s}_2(\omega,\delta)\\ \vdots\\ \bm{s}_6(\omega,\delta)
\end{pmatrix}, 
\bm{H}(\omega,\delta) = \begin{pmatrix}
\bm{H}_1(\omega,\delta)\\\bm{H}_2(\omega,\delta)\\ \vdots\\ \bm{H}_6(\omega,\delta)
\end{pmatrix},
\end{equation}
and the $6N$-by-$6N$ matrix
\begin{equation}\label{6NU}
	\bm{U}(\omega,\delta):=(\bm{U}_{pq}(\omega,\delta))_{1\leq p,q\leq 6}.
\end{equation}
Here each sub-vector is an 
$N$-dimensional vector defined as
\[
\bm{s}_p(\omega ,\delta):=\(s_{p,i}(\omega ,\delta)\)_{1\leq i \leq N}:=\(\int_{D_i}\mathbf{u}(\omega ,\delta )\cdot\overline{\bm{\xi}}_{i,p} ~\d \Bx\)_{1\leq i \leq N},
\] 
and 
\[
\bm{H}_p:=\(H_{p,i}(\omega ,\delta)\)_{1\leq i \leq N}:=\(\int_{D_i} \mathbf{u}_{\bm{h}}(\omega ,\delta )\cdot\overline{\bm{\xi}}_{i,p}~\d \Bx\)_{1\leq i \leq N},
\] 
respectively, each sub-matrix $\bm{U}_{pq}(\omega,\delta)$ is an 
$N$-by-$N$ matrix defined as
\begin{equation}\label{6NUpq}
\bm{U}_{pq}(\omega,\delta):= (\bm{U}_{pqij}(\omega,\delta))_{1\leq i,j\leq N}:=
\left(\int_{D_i}\mathbf{u}_{j,q}\cdot\overline{\bm{\xi}_{i,p}}~\d \Bx\right)_{1\leq i,j\leq N}.
\end{equation}

\begin{lem}\label{lem32}
	Let $\omega \in \mathbb{C}$ and $\delta \in \mathbb{R}$ belong to a neighborhood of zero.  For any  $\bm{h}\in H^{-1}(D)^3$, the variational problem \eqref{eq:varpro} has a unique solution $\mathbf{u}:= \mathbf{u}(\omega,\delta)$ if and only if 
	\begin{equation}\label{eq654}
	\det (\bm{I} - \bm{U}(\omega,\delta))\neq 0.
	\end{equation}
	Moreover, when the condition \eqref{eq654} is satisfied, the solution to \eqref{eq:varpro} can be given by
	\begin{equation}\label{eq656}
	\mathbf{u}(\omega,\delta) = \sum_{j=1}^N \sum_{q=1}^6 {s}_{q,j}(\omega ,\delta)\mathbf{u}_{j,q}(\omega ,\delta)+\mathbf{u}_{\bm{h}}(\omega,\delta),
	\end{equation}
where $\mathbf{u}_{\bm{h}}(\omega,\delta)$ and $\mathbf{u}_{j,q}(\omega ,\delta)$  are the unique solutions to the variational problems \eqref{Lax_Milgram_solu} and \eqref{ujq},
respectively, and $\bm{s}(\omega,\delta)$ defined in \eqref{xF} satisfies the following $6N\times 6N$ linear system:
\begin{equation}\label{6Nsystem}
\(\bm{I} - \bm{U}(\omega,\delta)\)\bm{s}(\omega,\delta) = \bm{H}(\omega,\delta).
\end{equation}
\end{lem}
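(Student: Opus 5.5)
The strategy is to reduce \eqref{eq:varpro} to the coercive problem \eqref{Lax_Milgram_solu} plus a finite-rank correction. By \eqref{NBF}, for all $\mathbf{u},\mathbf{v}\in H^1(D)^3$,
\[
\bm{a}(\mathbf{u},\mathbf{v}) = \bm{a}_{\omega,\delta}(\mathbf{u},\mathbf{v}) - \sum_{i=1}^N\sum_{p=1}^6 s_{p,i}(\mathbf{u})\int_{D_i}\overline{\mathbf{v}}\cdot\bm{\xi}_{i,p}\,\d\Bx ,
\qquad s_{p,i}(\mathbf{u}):=\int_{D_i}\mathbf{u}\cdot\overline{\bm{\xi}_{i,p}}\,\d\Bx .
\]
Hence $\mathbf{u}$ solves \eqref{eq:varpro} if and only if
\[
\bm{a}_{\omega,\delta}(\mathbf{u},\mathbf{v}) = \langle \bm{h},\mathbf{v}\rangle + \sum_{i,p} s_{p,i}(\mathbf{u})\int_{D_i}\overline{\mathbf{v}}\cdot\bm{\xi}_{i,p}\,\d\Bx \quad \text{for all } \mathbf{v}.
\]
We use the coercivity of $\bm{a}_{\omega,\delta}$ for $(\omega,\delta)$ near zero, established before the lemma. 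Combined with linearity and uniqueness in \eqref{Lax_Milgram_solu} and \eqref{ujq}, this shows that the preceding identity is equivalent to the representation \eqref{eq656}, with the coefficients ${s}_{q,j}$ equal to the moments $s_{q,j}(\mathbf{u})$ of $\mathbf{u}$ itself.

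Next, take the moments of \eqref{eq656} against $\bm{\xi}_{i,p}$ on $D_i$. This gives
\[
s_{p,i} = \sum_{j,q} \bm{U}_{pqij}\, s_{q,j} + H_{p,i},
\]
which is exactly \eqref{6Nsystem} in the ordering of \eqref{xF}--\eqref{6NUpq}. Conversely, suppose $\bm{s}\in\mathbb{C}^{6N}$ solves \eqref{6Nsystem}, and define $\mathbf{u}$ by \eqref{eq656} with these coefficients. The same moment computation shows that the moments of $\mathbf{u}$ are exactly $\bm{s}$. Therefore $\mathbf{u}$ satisfies the identity above and solves \eqref{eq:varpro}. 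This establishes a bijection between solutions of \eqref{eq:varpro} and solutions $\bm{s}$ of \eqref{6Nsystem}; it is linear in the data, and $\mathbf{u}=0$ corresponds to $\bm{s}=0$ when $\bm h=0$.

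For the equivalence with \eqref{eq654}: if $\det(\bm I-\bm U)\neq0$, then \eqref{6Nsystem} has a unique solution for every $\bm H$, hence \eqref{eq:varpro} has a unique solution for every $\bm h$, given by \eqref{eq656}. Conversely, if $\det(\bm I-\bm U)=0$, pick $\bm{s}\neq0$ in the kernel. Taking $\bm h=0$ (so $\bm H=0$ and $\mathbf{u}_{\bm h}=0$), the function $\sum s_{q,j}\mathbf{u}_{j,q}$ is a solution of the homogeneous problem. It is nontrivial because its moments equal $\bm s\neq0$. Thus uniqueness fails. (If one wants the failure for an arbitrary $\bm h$, add this kernel element to any solution.)

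The main point needing care is the injectivity step, namely that the moment map recovers $\bm{s}$. The consistency computation handles this, so no linear independence of the $\mathbf{u}_{j,q}$ is needed separately. One also has to be careful with complex conjugation conventions in the moments, so that the $(p,i),(q,j)$ entries match \eqref{6NUpq}. Everything else is bookkeeping.
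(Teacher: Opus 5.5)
Your proposal is correct and follows the same route as the paper: rewrite \eqref{eq:varpro} via \eqref{NBF}, use Lax--Milgram uniqueness for $\bm{a}_{\omega,\delta}$ to get the representation \eqref{eq656} with $\bm{s}$ equal to the moments of $\mathbf{u}$, then take moments to obtain \eqref{6Nsystem}. You also fill in two steps the paper only asserts: the converse that any solution $\bm{s}$ of \eqref{6Nsystem} is recovered as the moments of the $\mathbf{u}$ built from it, and the kernel argument showing non-uniqueness when $\det(\bm{I}-\bm{U}(\omega,\delta))=0$.
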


\begin{proof}[\bf Proof]
	From \eqref{NBF} and \eqref{Lax_Milgram_solu}, we have that
	the variational problem \eqref{eq:varpro}  is equivalent to
	\[
	\bm{a}_{\omega,\delta}(\mathbf{u},\mathbf{v})-\sum_{j=1}^N\(\sum_{q=1}^6 \(\bm{a}_{\omega,\delta}(\mathbf{u}_{j,q},\mathbf{v})\int_{D_j}\mathbf{u}\cdot\overline{\bm{\xi}_{j,q}}~\d \Bx \)\)=\bm{a}_{\omega,\delta}(\mathbf{u}_{\bm{h}}(\omega ,\delta ),\mathbf{v}),
	\]
	which implies that
	\begin{equation}\label{eqn:q48jq}
		\mathbf{u}-\sum_{j=1}^N \(\sum_{q=1}^6\(\mathbf{u}_{j,q}\int_{D_j}\mathbf{u}\cdot\overline{\bm{\xi}}_{j,q} ~\d \Bx\) \)=\mathbf{u}_{\bm{h}}(\omega ,\delta ).
	\end{equation}
	It follows that for $i = 1,2,\ldots,N$ and $p = 1,2,\ldots,6$,
	\[
	\int_{D_i}\mathbf{u}\cdot\overline{\bm{\xi}}_{i,p} ~\d \Bx -\sum_{j=1}^N \(\sum_{q=1}^6\(\int_{D_i} \mathbf{u}_{j,q}\cdot\overline{\bm{\xi}}_{i,p} ~\d \Bx \int_{D_j}\mathbf{u}\cdot\overline{\bm{\xi}}_{j,q}~\d \Bx  \)\)= \int_{D_i} \mathbf{u}_{\bm{h}}(\omega ,\delta )\cdot\overline{\bm{\xi}}_{i,p}~\d \Bx,
	\]
	which is precisely the linear system \eqref{6Nsystem} for the vector $\bm{s}(\omega,\delta)$ defined in \eqref{xF}.  Therefore, the variational problem \eqref{eq:varpro} admits a unique solution $\mathbf{u}(\omega,\delta)$ given by \eqref{eq656} if and only if	the condition \eqref{eq654} is satisfied. 
\end{proof}

Therefore, from the Definition \ref{defn:resonance}, we can obtain the following result.

\begin{prop}\label{TCR}
	Subwavelength resonant frequencies $\omega=\omega(\delta)\in\mathbb{C}$ are determined by $\det(\bm{I} - \bm{U}(\omega,\delta))=0$.
\end{prop}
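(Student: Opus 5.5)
The plan is to reduce Proposition \ref{TCR} to Lemma \ref{lem32} with $\bm{h}=0$, once we know that resonances of \eqref{main_equation} correspond exactly to nontrivial solutions of the homogeneous variational problem \eqref{eq:varpro}.

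\emph{Step 1: equivalence of the formulations.} Fix $\omega$ near zero, so that $\mathcal{T}^\omega$ is well defined, and take $\mathbf{u}^{\textup{in}}=0$. Suppose $\mathbf{u}$ is a nontrivial solution of \eqref{main_equation}. Its restriction to $\RR^3\setminus\overline{D}$ solves the exterior problem \eqref{exterior_problemGS} with data $\bm f=\mathbf{u}|_{\Gamma_j^\pm}$. By Lemma \ref{def:DTNGS} and uniqueness, the exterior traction equals $\mathcal{T}^\omega[\mathbf{u}]$. The transmission conditions then give \eqref{D2N}, and Green's formula in each $D_i$ gives \eqref{eq:varpro} with $\bm h=0$.

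Conversely, let $\mathbf{u}\in H^1(D)^3$ solve the homogeneous weak problem. Extend it outside $D$ by $\mathbf{v}_{\bm f}$, the exterior solution with data $\mathbf{u}|_{\Gamma_j^\pm}$. This extension is continuous across the interfaces, satisfies the radiation condition, and has traction jumps matching \eqref{main_equation}. It is nontrivial on $\RR^3$ precisely when $\mathbf{u}\neq 0$ in $D$: if $\mathbf{u}=0$ in $D$, then the exterior data vanish, and uniqueness of the exterior Dirichlet problem forces the extension to vanish. So nontrivial solutions of \eqref{main_equation} correspond bijectively to nontrivial elements of the kernel of the homogeneous problem.

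\emph{Step 2: the kernel via Lemma \ref{lem32}.} With $\bm h=0$, uniqueness of $\mathbf{u}_{\bm h}$ in \eqref{Lax_Milgram_solu} gives $\mathbf{u}_{\bm h}=0$, and hence $\bm H=0$. The argument of Lemma \ref{lem32} shows that any solution $\mathbf{u}$ has the form \eqref{eq656}, with $\bm s$ solving $(\bm I-\bm U)\bm s=0$.

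\begin{itemize}
\item If $\det(\bm I-\bm U)\neq 0$, then $\bm s=0$, so $\mathbf{u}=0$ and there is no resonance.
\item If $\det(\bm I-\bm U)=0$, pick $\bm s\neq 0$ in the kernel and set $\mathbf{u}=\sum_{j,q}s_{q,j}\mathbf{u}_{j,q}$. Taking the moments against $\bm\xi_{i,p}$ gives $\int_{D_i}\mathbf{u}\cdot\overline{\bm\xi_{i,p}}\,\d\Bx=(\bm U\bm s)_{p,i}=s_{p,i}$. So these moments reproduce $\bm s\neq0$, which forces $\mathbf{u}\neq 0$. Since $\bm a_{\omega,\delta}$ differs from $\bm a$ exactly by the moment terms, $\mathbf{u}$ solves the homogeneous problem \eqref{eq:varpro}.
\end{itemize}

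Combining this with Step 1 and Definition \ref{defn:resonance} (resonances being those $\omega(\delta)\to0$ admitting nontrivial modes) yields the claim.

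The main obstacle is Step 1. One must check that the weak formulation encodes the traction conditions with the right signs on $\Gamma_j^\pm$, including the inner boundaries. One must also confirm that a nontrivial mode cannot vanish identically in $D$. That second point needs invertibility of $\mathbf{S}^\omega_{\Gamma_1^+}$, $\mathbf{S}^\omega_{\Gamma_N^-}$ and $\mathbb{S}^\omega_{j,j+1}$ for small $\omega$, which holds by perturbation from the invertible static operators.
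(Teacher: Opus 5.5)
Your proposal is correct and follows the paper's route. The paper obtains Proposition \ref{TCR} directly from Definition \ref{defn:resonance} and Lemma \ref{lem32} applied to the homogeneous problem ($\mathbf{u}^{\textup{in}}=0$, so $\bm{h}=0$ and $\bm{H}=0$); your Step 1 (equivalence of the transmission problem with the weak form via $\mathcal{T}^\omega$) and your construction of a nontrivial kernel element from a null vector of $\bm{I}-\bm{U}$ simply make explicit what the paper leaves implicit.
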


Consequently, to derive the subwavelength resonant frequencies $\omega=\omega(\delta)\in\mathbb{C}$,  it suffices to analyse the strong form of the variational problem \eqref{ujq}
\begin{equation}\label{SFVP}
\begin{cases}
\ds -\mathcal{L}_{\lambda,\mu } \mathbf{u}_{j,q}  -\sum_{i=1}^N\rho\tau_i^2\chi_{D_i}\omega^2  \mathbf{u}_{j,q} + \sum_{i=1}^{N}
\sum_{p=1}^6\left(\int_{D_i} \mathbf{u}_{j,q}\cdot \overline{\bm{\xi}_{i,p}} ~\d \Bx\right)\bm{\xi}_{i,p} \chi_{D_i} =\bm{\xi}_{j,q}
\chi_{D_j}, &  \text{in } D,\\
\nm
\ds \partial_{\bm{\nu}}\mathbf{u}_{j,q}|_\pm = \delta \mathcal{T}^{\omega}[\mathbf{u}_{j,q}]_i^\mp , & \text{on }\Gamma^\mp_i,\; i =1,2,\ldots,N.
\end{cases}
\end{equation}

\begin{prop}\label{prop33}
	The solutions $\mathbf{u}_{j,q}(\omega, \delta)$, $j=1,2,\ldots,N$ and $q=1,2,\ldots,6$,  to the variational problem \eqref{ujq} have the following asymptotic behavior as $\omega,\delta \to 0:$
	\begin{equation}\label{asy_ujwd}
	\begin{aligned}
	&\quad\mathbf{u}_{j,q}(\omega,\delta)\\ &= \(\frac{1}{\|\bm{\xi}_{j,q}\|_{L^2(D_j)}^2}+ \frac{\rho\tau_j^2\omega^2}{\|\bm{\xi}_{j,q}\|_{L^2(D_j)}^4}\)\bm{\xi}_{j,q}\chi_{D_j}\\
	&\quad +\delta\left[\chi_{D_{j-1}} \chi_{\{2\leq j\leq N\}} \sum_{p=1}^6\frac{\int_{\Gamma_{j-1}^+} \hat{\bm{\varphi}}^{-}_{j-1;j,q} \cdot\overline{\bm{\xi}_{j-1,p}}~\d \sigma}{\|\bm{\xi}_{j-1,p}\|_{L^2(D_{j-1})}^4 \|\bm{\xi}_{j,q}\|_{L^2(D_{j})}^2}  \bm{\xi}_{j-1,p}\right.\\
	&\quad\quad\quad +\chi_{D_{j}} \chi_{\{1\leq j\leq N\}} \sum_{p=1}^6\(\frac{\int_{\Gamma_{j}^+} \hat{\bm{\zeta}}^{+}_{j;j,q} \cdot\overline{\bm{\xi}_{j,p}}~\d \sigma}{\|\bm{\xi}_{j,p}\|_{L^2(D_{j})}^4 \|\bm{\xi}_{j,q}\|_{L^2(D_{j})}^2}-\frac{\int_{\Gamma_{j}^-} \hat{\bm{\varphi}}^{-}_{j;j,q} \cdot\overline{\bm{\xi}_{j,p}}~\d \sigma}{\|\bm{\xi}_{j,p}\|_{L^2(D_{j})}^4 \|\bm{\xi}_{j,q}\|_{L^2(D_{j})}^2}\)  \bm{\xi}_{j,p}\\
	&\quad\quad\quad\left.  - \chi_{D_{j+1}} \chi_{\{1\leq j\leq N-1\}} \sum_{p=1}^6\frac{\int_{\Gamma_{j+1}^+} \hat{\bm{\zeta}}^{+}_{j+1;j,q} \cdot\overline{\bm{\xi}_{j+1,p}}~\d \sigma}{\|\bm{\xi}_{j+1,p}\|_{L^2(D_{j+1})}^4 \|\bm{\xi}_{j,q}\|_{L^2(D_{j})}^2}  \bm{\xi}_{j+1,p} + \widetilde{\mathbf{u}}_{j,q;0,1}\right]\\
	&\quad +\omega\delta\(\frac{\ii\alpha\chi_{D_j}  \delta_{1,j}\delta_{1,i}}{\|\bm{\xi}_{j,q}\|_{L^2(D_j)}^2}\sum_{p=1}^6\frac{1}{\|\bm{\xi}_{i,p}\|_{L^2(D_i)}^4 } \sum_{t=1}^{3}\int_{\Gamma_1^+}\hat{\bm\zeta}^{+}_{1;j,t}\cdot \overline{\bm{\xi}_{j,q}}~\d \sigma  \int_{\Gamma_1^+}\hat{\bm\zeta}^{+}_{1;j,t} \cdot \overline{\bm{\xi}_{i,p}}~\d \sigma\bm{\xi}_{j,p}+\widetilde{\mathbf{u}}_{j,q;1,1}\) +\mathcal{O}((\omega^2+\delta)^2),
	\end{aligned}
	\end{equation}
	where  the functions $\widetilde{\mathbf{u}}_{j,q;0,1}$ and $\widetilde{\mathbf{u}}_{j,q;1,1}$  satisfy
	\[
	\int_{D_i}\widetilde{\mathbf{u}}_{j,q;0,1}\cdot\overline{\bm{\xi}_{i,p}} ~~\d \Bx=0,\quad  \int_{D_i}\widetilde{\mathbf{u}}_{j,q;1,1}\cdot\overline{\bm{\xi}_{i,p}} ~\d \Bx=0,\;\mbox{ for }
	i = 1,2,\ldots, N\mbox{ and }p = 1,2,\ldots,6.
	\]

\end{prop}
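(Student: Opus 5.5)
The approach is a regular perturbation expansion of $\mathbf{u}_{j,q}(\omega,\delta)$ in the two small parameters $\omega^2$ and $\delta$. Since $\bm{a}_{\omega,\delta}$ is an analytic perturbation of the coercive form $\bm{a}_{0,0}$, the solution of \eqref{ujq} is analytic in $(\omega,\delta)$. We may therefore write
\[
\mathbf{u}_{j,q}=\mathbf{u}^{(0)}+\omega^2\mathbf{u}^{(2)}+\delta\,\mathbf{u}^{(0,1)}+\omega\delta\,\mathbf{u}^{(1,1)}+\mathcal{O}((\omega^2+\delta)^2).
\]
There is no pure $\omega$ term, because the bulk term depends only on $\omega^2$. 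Likewise the $\omega$-dependence of the boundary term enters only through $\delta\mathcal{T}^\omega=\delta(\mathcal{T}_0+\omega\mathcal{T}_1+\dots)$ from \eqref{DTN_expansion}. We insert this ansatz into the strong form \eqref{SFVP} and match powers. The key structural facts are that the rigid motions $\bm{\xi}_{i,p}$ lie in the kernel of $Q_{\lambda,\mu,D_i}$ (equivalently of $\mathcal{L}_{\lambda,\mu}$ with zero traction), and that the projection term in $\bm{a}_{\omega,\delta}$ acts diagonally on them.

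\emph{Leading orders.} At order $1$ the problem is $-\mathcal{L}\mathbf{u}^{(0)}+\sum_{i,p}(\int_{D_i}\mathbf{u}^{(0)}\cdot\overline{\bm{\xi}_{i,p}})\bm{\xi}_{i,p}\chi_{D_i}=\bm{\xi}_{j,q}\chi_{D_j}$ with zero traction. Testing against $\bm{\xi}_{i,p}$ and using coercivity gives $\mathbf{u}^{(0)}=\bm{\xi}_{j,q}\chi_{D_j}/\|\bm{\xi}_{j,q}\|^2$. At order $\omega^2$ the source is $\rho\tau_j^2\mathbf{u}^{(0)}$, which is again a rigid motion. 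The correction is therefore $\rho\tau_j^2\bm{\xi}_{j,q}\chi_{D_j}/\|\bm{\xi}_{j,q}\|^4$, which is the first line of \eqref{asy_ujwd}. (Here I read the normalization $\|\bm{\xi}\|^2$ in the Gram–Schmidt convention of the paper.)

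\emph{Order $\delta$.} At this order $\mathbf{u}^{(0,1)}$ solves $-\mathcal{L}\mathbf{u}^{(0,1)}+P\mathbf{u}^{(0,1)}=0$ in $D$, with Neumann data $\pm\mathcal{T}_0[\mathbf{u}^{(0)}]$ on $\Gamma_i^\mp$; here $P$ denotes the rigid-motion projection term. I decompose $\mathbf{u}^{(0,1)}=\sum_{i,p}c_{i,p}\bm{\xi}_{i,p}\chi_{D_i}+\widetilde{\mathbf{u}}_{j,q;0,1}$, where the remainder is orthogonal to all rigid motions. Testing the weak form with $\mathbf{v}=\bm{\xi}_{i,p}\chi_{D_i}$ kills $Q$ and gives
\[
c_{i,p}\|\bm{\xi}_{i,p}\|^2=\int_{\Gamma_i^+}\mathcal{T}_0[\mathbf{u}^{(0)}]_i^+\cdot\overline{\bm{\xi}_{i,p}}-\int_{\Gamma_i^-}\mathcal{T}_0[\mathbf{u}^{(0)}]_i^-\cdot\overline{\bm{\xi}_{i,p}}.
\]
Next I evaluate $\mathcal{T}_0[\mathbf{u}^{(0)}]$ from \eqref{T_0expre}. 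Since $\mathbf{u}^{(0)}$ is supported in $D_j$, its traces give $\bm t_j^{\pm}=e_q/\|\bm{\xi}_{j,q}\|^2$ and all other $\bm t=0$. Consequently only the interfaces $\Gamma_{j-1}^-$, $\Gamma_j^{\pm}$ and $\Gamma_{j+1}^+$ receive nonzero traction, producing terms $\hat{\bm\varphi}^-_{j-1;j,q}$, $\hat{\bm\zeta}^+_{j;j,q}$, $\hat{\bm\varphi}^-_{j;j,q}$ and $-\hat{\bm\zeta}^+_{j+1;j,q}$ respectively. This yields the three $\delta$-bracket terms, with the indicator functions recording the boundary cases $j=1$ and $j=N$. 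The signs come from the sign convention in the boundary term of $\bm{a}$. I would check the $j=1$ outer term (from $\hat{\bm\zeta}_{1}^+$, i.e.\ $\mathbf{S}_{\Gamma_1^+}^{-1}$) separately, since it has no partner layer. The remainder $\widetilde{\mathbf{u}}_{j,q;0,1}$ exists by coercivity of $\bm{a}_{0,0}$ on the orthogonal complement of the rigid motions, with a compatible Neumann datum.

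\emph{Order $\omega\delta$ and the error term.} At order $\omega\delta$ the only new source is $\mathcal{T}_1[\mathbf{u}^{(0)}]$. By \eqref{T_1expre} it vanishes except on $\Gamma_1^+$, and there it is $\ii\alpha\sum_{t\le3}(\int\hat{\bm\zeta}^+_{1;\cdot,t}\cdot\overline{\mathbf{u}^{(0)}})\hat{\bm\zeta}^+_{1;\cdot,t}$, which is nonzero only when $j=1$. Projecting onto the rigid motions as before gives the displayed $\omega\delta$ coefficient, plus an orthogonal remainder $\widetilde{\mathbf{u}}_{j,q;1,1}$. For the error, the remaining terms are of order $\omega^3,\omega^4,\delta\omega^2,\delta^2$; all of these are $\mathcal{O}((\omega^2+\delta)^2)$ up to $\omega^3$, which I would absorb by noting that $\omega^3$ arises only combined with $\delta$, since the bulk term is even in $\omega$. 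The bound follows from the uniform invertibility of $\bm{a}_{\omega,\delta}$ together with the convergent expansions in Lemma \ref{lemgamma}.

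The main obstacle is the bookkeeping in the order-$\delta$ step. One must correctly identify which trace coefficients $\bm t$ are nonzero when the rigid motions $\bm{\xi}_{j,q}$ and $\bm{\xi}_{i,p}$ come from different Gram–Schmidt bases, because they are orthogonalized in different layers. The fix I would use is to expand everything in the basis $\bm{\xi}_{i}$ of the testing layer, via the symmetry relations \eqref{cap_pq}–\eqref{cap_selfad}.
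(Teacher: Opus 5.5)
Your proposal follows the paper's proof essentially step by step. Both substitute a double power series in $(\omega,\delta)$ into the strong form \eqref{SFVP} and obtain rigid-motion solutions at orders $1$ and $\omega^2$. Both note that the odd pure-$\omega$ coefficients vanish, which disposes of your $\omega^3$ concern. At orders $\delta$ and $\omega\delta$, both project the corrector onto $\bm{\xi}_{i,p}\chi_{D_i}$ via Green's formula, with Neumann data supplied by \eqref{T_0expre} and \eqref{T_1expre}.

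There are two corrections.

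First, a factor slip in the order-$\delta$ step. Testing the projection term against $\mathbf{v}=\bm{\xi}_{i,p}\chi_{D_i}$ gives $\big(\int_{D_i}\mathbf{u}^{(0,1)}\cdot\overline{\bm{\xi}_{i,p}}\,\d\Bx\big)\|\bm{\xi}_{i,p}\|_{L^2(D_i)}^2=c_{i,p}\|\bm{\xi}_{i,p}\|_{L^2(D_i)}^4$. So your relation should read $c_{i,p}\|\bm{\xi}_{i,p}\|^4=\dots$. As written, it produces $\|\bm{\xi}_{i,p}\|^2$ instead of the $\|\bm{\xi}_{i,p}\|^4$ in the denominators of \eqref{asy_ujwd}. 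The same fix is needed at order $\omega\delta$. Your order-$\omega^2$ computation already uses the correct rule, so this is an internal inconsistency rather than a flaw in the method.

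Second, the basis issue you call the main obstacle does not arise. \eqref{T_0expre} only requires all traces to be expanded in one common basis. Take the source-layer basis $\bm{\xi}_j$, so that only $\bm t_j^{\pm}=\bm e_q/\|\bm{\xi}_{j,q}\|^2$ is nonzero, as you already did. The resulting tractions, each divided by $\|\bm{\xi}_{j,q}\|^2$, are:
\begin{itemize}
\item $\hat{\bm\zeta}^+_{j;j,q}$ on $\Gamma_j^+$;
\item $\hat{\bm\varphi}^-_{j;j,q}$ on $\Gamma_j^-$;
\item $-\hat{\bm\zeta}^+_{j+1;j,q}$ on $\Gamma_{j+1}^+$;
\item $-\hat{\bm\varphi}^-_{j-1;j,q}$ on $\Gamma_{j-1}^-$.
\end{itemize}
Pair these directly with $\overline{\bm{\xi}_{i,p}}$, keeping the minus sign in front of the $\Gamma_i^-$ integral. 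This gives exactly the mixed integrals and signs in \eqref{asy_ujwd}. The $\Gamma_{j-1}^+$ in the first bracket of the statement is a typo for $\Gamma_{j-1}^-$, as in the paper's proof. The symmetry relations \eqref{cap_pq}--\eqref{cap_selfad} are not needed at this stage; the paper uses them only later, in Proposition \ref{ImU}.
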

\begin{proof}[\bf Proof]
It follows from \eqref{DTN_expansion} that  there exist  $(\mathbf{u}_{j,q;m,n})_{m{\geq}0,	n{\geq}0}$ such that $\mathbf{u}_{j,q}(\omega,\delta)$ admits the convergent expansion
\begin{equation}\label{ujcs}
\mathbf{u}_{j,q}(\omega,\delta) = \sum_{m,n=0}^{+\infty} \omega^{m}\delta^n
\mathbf{u}_{j,q;m,n}.
\end{equation}
Substituting \eqref{ujcs} into \eqref{SFVP} and matching powers of $\omega$ and
$\delta$, we obtain
\begin{equation}\label{ujmn}
\begin{cases}
\ds -\mathcal{L}_{\lambda,\mu } \mathbf{u}_{j,q;m,n}  + \sum_{i=1}^{N}\sum_{p=1}^6
\left({\int_{D_i} \mathbf{u}_{j,q;m,n} \cdot\overline{\bm{\xi}_{i,p}} ~\d \Bx}\right) \bm{\xi}_{i,p} \chi_{D_i} =  \sum_{i=1}^N\rho\tau_i^2\chi_{D_i} \mathbf{u}_{j,q;m-2,n}+
\bm{\xi}_{j,q}\chi_{D_j} \delta_{m,0}\delta_{n,0},&  \text{ in } D,\\
\nm
\ds \partial_{\bm{\nu}} \mathbf{u}_{j,q;m,n}|_\pm =  \sum_{s=0}^{m}\mathcal{T}_s[\mathbf{u}_{j,q;m-s,n-1}]_i^\mp , & \text{on }\Gamma^\mp_i,\; i =1,2,\ldots,N,
\end{cases}
\end{equation}
where $\mathbf{u}_{j,q;m,n}=0$ for negative indices $m$ and $n$.
Taking $m=n=0$ in \eqref{ujmn}, we have that $\mathbf{u}_{j,q;0,0}$ satisfies
\begin{equation}\label{uj00}
\begin{cases}
\ds -\mathcal{L}_{\lambda,\mu } \mathbf{u}_{j,q;0,0}  + \sum_{i=1}^{N}\sum_{p=1}^{6}
\left(\int_{D_i} \mathbf{u}_{j,q;0,0} \cdot\overline{\bm{\xi}_{i,p}} ~\d\Bx\right) \bm{\xi}_{i,p} \chi_{D_i} =  
\bm{\xi}_{j,q}\chi_{D_j}, &  \text{ in } D,\\
\nm
\ds \partial_{\bm{\nu}}\mathbf{u}_{j,q;0,0}|_\pm =  0 , & \text{on }\Gamma^\mp_i,\; i =1,2,\ldots,N,
\end{cases}
\end{equation}
which implies that $\mathbf{u}_{j,q;0,0} = \frac{\bm{\xi}_{j,q}\chi_{D_j}}{\|\bm{\xi}_{j,q}\|_{L^2(D_j)}^2}$.
It follows from induction that
\begin{equation}\label{ujevenodd0}
\mathbf{u}_{j,q;2m,0} = \rho^m\tau_j^{2m}\frac{ \bm{\xi}_{j,q} \chi_{D_j}}{ \|\bm{\xi}_{j,q}\|_{L^2(D_j)}^{2m+2}}  \text{ and }\mathbf{u}_{j,q;2m+1,0} = \bm{0} \text{ for	any } m \geq 0.
\end{equation}

Taking $m=0$ and $n=1$ in \eqref{ujmn}, we obtain that $\mathbf{u}_{j,q;0,1}$ satisfies
\begin{equation}\label{uj01}
\begin{cases}
\ds -\mathcal{L}_{\lambda,\mu } \mathbf{u}_{j,q;0,1}  + \sum_{i=1}^{N}\sum_{p=1}^{6}
\left(\int_{D_i} \mathbf{u}_{j,q;0,1}\cdot\overline{\bm{\xi}_{i,p}} ~\d \Bx\right)\bm{\xi}_{i,p} \chi_{D_i} =  0,&  \text{ in } D,\\
\nm
\ds\partial_{\bm{\nu}} \mathbf{u}_{j,q;0,1}|_\pm =  \mathcal{T}_0[\mathbf{u}_{j,q;0,0}]_i^\mp, & \text{on }\Gamma^\mp_i,\; i =1,2,\ldots,N.
\end{cases}
\end{equation}
From \eqref{T_0expre} with $\bm{f}_i^\pm = \mathbf{u}_{j,q;0,0}|_{\Gamma_i^\pm}=\frac{\delta_{i,j}}{\|\bm{\xi}_{j,q}\|_{L^2(D_j)}^2}\bm{\xi}_{j,q}|_{\Gamma_i^\pm}$, we get
\begin{equation}
\begin{cases}
\ds\mathcal{T}_0[\mathbf{u}_{j,q;0,0}]_1^{+} = \frac{\delta_{1,j}}{\|\bm{\xi}_{j,q}\|_{L^2(D_j)}^2}\hat{\bm{\zeta}}^+_{1;j,q},
&  \\
\nm
\ds   \mathcal{T}_0[\mathbf{u}_{j,q;0,0}]_i^{+} =\frac{\delta_{i,j} - \delta_{i-1,j}}{\|\bm{\xi}_{j,q}\|_{L^2(D_j)}^2}\hat{\bm{\zeta}}^+_{i;j,q},
&i =2,3,\ldots,N, \\
\nm
\ds   \mathcal{T}_0[\mathbf{u}_{j,q;0,0}]_i^{-} = \frac{\delta_{i,j} - \delta_{i+1,j}}{\|\bm{\xi}_{j,q}\|_{L^2(D_j)}^2}\hat{\bm{\varphi}}^{-}_{i;j,q},
& i =1,2,\ldots,N-1,  \\
\nm
\ds  \mathcal{T}_0[\mathbf{u}_{j,q;0,0}]_N^{-} = \bm 0.&
\end{cases}
\end{equation}
Multiplying \eqref{uj01} by $ \overline{\bm{\xi}_{i,p}} \chi_{D_i}$ and using Green's second formula \cite{HAmmariElasticityImaging} for Lam\'e operator, we have that
\begin{equation}
\begin{aligned}
&\quad \int_{D_i} \mathbf{u}_{j,q;0,1} \cdot\overline{\bm{\xi}_{i,p}} ~\d \Bx = \frac{1}{\|\bm{\xi}_{i,p}\|_{L^2(D_i)}^2}\(\int_{\Gamma_i^+} \mathcal{T}_0[\mathbf{u}_{j,q;0,0}]_i^{+} \cdot\overline{\bm{\xi}_{i,p}}~\d \sigma - \int_{\Gamma_i^-} \mathcal{T}_0[\mathbf{u}_{j,q;0,0}]_i^{-} \cdot\overline{\bm{\xi}_{i,p}}~\d \sigma\)\\
& = \frac{1}{\|\bm{\xi}_{i,p}\|_{L^2(D_i)}^2}\( \frac{\delta_{1,j}}{\|\bm{\xi}_{j,q}\|_{L^2(D_j)}^2} \int_{\Gamma_1^+} \hat{\bm{\zeta}}^{+}_{1;j,q} \cdot\overline{\bm{\xi}_{1,p}}~\d \sigma \chi_{\{ i=1\}} + \frac{\delta_{i,j} - \delta_{i-1,j}}{\|\bm{\xi}_{j,q}\|_{L^2(D_j)}^2}
\int_{\Gamma_i^+} \hat{\bm{\zeta}}^{+}_{i;j,q} \cdot\overline{\bm{\xi}_{i,p}}~\d \sigma \chi_{\{ 2\leq i\leq N\}}\right.\\
&\left. \quad\quad\quad\quad\quad\quad - \frac{\delta_{i,j} - \delta_{i+1,j}}{\|\bm{\xi}_{j,q}\|_{L^2(D_j)}^2}
\int_{\Gamma_i^-} \hat{\bm{\varphi}}^{-}_{i;j,q} \cdot\overline{\bm{\xi}_{i,p}}~\d \sigma \chi_{\{ 1\leq i\leq N-1\}}
\).
\end{aligned}
\end{equation}
Hence, $\mathbf{u}_{j,q;0,1}$ can be given by
\begin{equation}
\begin{aligned}
 \mathbf{u}_{j,q;0,1} 
&=\chi_{D_{j-1}} \chi_{\{2\leq j\leq N\}} \sum_{p=1}^6\frac{\int_{\Gamma_{j-1}^-} \hat{\bm{\varphi}}^{-}_{j-1;j,q} \cdot\overline{\bm{\xi}_{j-1,p}}~\d \sigma}{\|\bm{\xi}_{j-1,p}\|_{L^2(D_{j-1})}^4 \|\bm{\xi}_{j,q}\|_{L^2(D_{j})}^2}  \bm{\xi}_{j-1,p}\\
&\quad +\chi_{D_{j}} \chi_{\{1\leq j\leq N\}} \sum_{p=1}^6\(\frac{\int_{\Gamma_{j}^+} \hat{\bm{\zeta}}^{+}_{j;j,q} \cdot\overline{\bm{\xi}_{j,p}}~\d \sigma}{\|\bm{\xi}_{j,p}\|_{L^2(D_{j})}^4 \|\bm{\xi}_{j,q}\|_{L^2(D_{j})}^2}-\frac{\int_{\Gamma_{j}^-} \hat{\bm{\varphi}}^{-}_{j;j,q} \cdot\overline{\bm{\xi}_{j,p}}~\d \sigma}{\|\bm{\xi}_{j,p}\|_{L^2(D_{j})}^4 \|\bm{\xi}_{j,q}\|_{L^2(D_{j})}^2}\)  \bm{\xi}_{j,p}\\
&\quad - \chi_{D_{j+1}} \chi_{\{1\leq j\leq N-1\}} \sum_{p=1}^6\frac{\int_{\Gamma_{j+1}^+} \hat{\bm{\zeta}}^{+}_{j+1;j,q} \cdot\overline{\bm{\xi}_{j+1,p}}~\d \sigma}{\|\bm{\xi}_{j+1,p}\|_{L^2(D_{j+1})}^4 \|\bm{\xi}_{j,q}\|_{L^2(D_{j})}^2}  \bm{\xi}_{j+1,p} + \widetilde{\mathbf{u}}_{j,q;0,1},
\end{aligned}
\end{equation}
where $\hat{\bm{\varphi}}^{-}_{N;j,q}=\bm{0}$, and  $\widetilde{\mathbf{u}}_{j,q;0,1}$  satisfies $\int_{D_i}\widetilde{\mathbf{u}}_{j,q;0,1} \cdot\overline{\bm{\xi}_{i,p}} ~\d \Bx=0$ for $i = 1,2,\ldots, N$ and $p=1,2,\ldots,6$.

Taking $m=n=1$ in \eqref{ujmn}, it folllows from \eqref{ujmn} and \eqref{ujevenodd0} that $\mathbf{u}_{j,q;1,1}$ satisfies
\begin{equation}\label{uj11}
\begin{cases}
\ds -\mathcal{L}_{\lambda,\mu } \mathbf{u}_{j,q;1,1}  + \sum_{i=1}^{N}\sum_{p=1}^6
\left(\int_{D_i} \mathbf{u}_{j,q;1,1} \cdot\overline{\bm{\xi}_{i,p}} ~\d \Bx\right) \bm{\xi}_{i,p} \chi_{D_i} =  0,&  \text{ in } D,\\
\nm
\ds\partial_{\bm{\nu}} \mathbf{u}_{j,q;1,1}|_\pm = \mathcal{T}_1[\mathbf{u}_{j,q;0,0}]_i^\mp , & \text{on }\Gamma^\mp_i,\; i =1,2,\ldots,N.
\end{cases}
\end{equation}
By \eqref{T_1expre} and \eqref{ujevenodd0}, we have that
\[
\mathcal{T}_1[\mathbf{u}_{j,q;0,0}]_i^{+} = \begin{cases}
\ii\alpha\frac{\delta_{1,j}}{\|\bm{\xi}_{j,q}\|_{L^2(D_j)}^2}\sum_{t=1}^{3}\int_{\Gamma_1^+}\hat{\bm\zeta}^{+}_{1;j,t}\cdot \overline{\bm{\xi}_{j,q}}~\d \sigma \hat{\bm\zeta}^{+}_{1;j,t}&\text{ if }i=1,\\
\bm 0&\text{ if }i{\geq}2,
\end{cases}
\text{ and } \mathcal{T}_1[\mathbf{u}_{j,q;0,0}]_i^{-} = \bm 0 \text{  for } i=1,2,\ldots,N.
\]
Consequently,  we obtain that
\[
    \int_{D_i} \mathbf{u}_{j,q;1,1} \cdot\overline{\bm{\xi}_{i,p}} ~\d \Bx=\frac{\ii\alpha \delta_{1,j}\delta_{1,i}}{\|\bm{\xi}_{i,p}\|_{L^2(D_i)}^2 \|\bm{\xi}_{j,q}\|_{L^2(D_j)}^2} \sum_{t=1}^{3}\int_{\Gamma_1^+}\hat{\bm\zeta}^{+}_{1;j,t}\cdot \overline{\bm{\xi}_{j,q}}~\d \sigma  \int_{\Gamma_1^+}\hat{\bm\zeta}^{+}_{1;j,t} \cdot \overline{\bm{\xi}_{i,p}}~\d \sigma,
\]
which implies  that
\[
\mathbf{u}_{j,q;1,1}=\frac{\ii\alpha\chi_{D_j}  \delta_{1,j}\delta_{1,i}}{\|\bm{\xi}_{j,q}\|_{L^2(D_j)}^2}\sum_{p=1}^6\frac{1}{\|\bm{\xi}_{i,p}\|_{L^2(D_i)}^4 } \sum_{t=1}^{3}\int_{\Gamma_1^+}\hat{\bm\zeta}^{+}_{1;j,t}\cdot \overline{\bm{\xi}_{j,q}}~\d \sigma  \int_{\Gamma_1^+}\hat{\bm\zeta}^{+}_{1;j,t} \cdot \overline{\bm{\xi}_{i,p}}~\d \sigma\bm{\xi}_{j,p}+\widetilde{\mathbf{u}}_{j,q;1,1},
\]
where $\widetilde{\mathbf{u}}_{j,q;1,1}$  satisfies $\int_{D_i}
\widetilde{\mathbf{u}}_{j,q;1,1}\cdot\overline{\bm{\zeta}_p} ~\d \Bx=0$ for any $1\leq i\leq N$ and $1\leq p\leq 6$. 
The proof is complete.
\end{proof}

%Next, we introduce the definition of the elastic stiffness tensor, which serves as the elastic counterpart of the generalized capacitance matrix \cite{FA_SAM_2022,ADH_SIMA2023}  in the celebrated Minnaert acoustic-cavitation systems \cite{AFGLZ_AIHPCAN,MPS_JMPA2022}.   The notion of stiffness was originally developed to model the spherical bonded rubber bush mountings in elastostatics \cite{HT_IJSS2005,Hill1975} (for Matryoshka-type spherical bonded rubber bush mountings, see Lemma \ref{EST_sphere}). For the first time, we present the stiffness tensor for arbitrarily shaped Matryoshka-type elastic metamaterials.

Next, we introduce, for the first time, the definition of the elastic stiffness tensor for arbitrarily shaped Matryoshka-type elastic metamaterials. This tensor serves as the elastic counterpart of the generalized capacitance matrix \cite{FA_SAM_2022,ADH_SIMA2023} in the celebrated Minnaert acoustic-cavitation systems \cite{AFGLZ_AIHPCAN,MPS_JMPA2022}. The notion of stiffness was originally developed to model spherical bonded rubber bush mountings in elastostatics \cite{HT_IJSS2005,Hill1975} (for Matryoshka-type spherical bonded rubber bush mountings, see Section \ref{concentric_radial}).

\begin{defn}\label{def: Capacitance matrix}
	Consider the solutions $\mathbf{V}_{j,q} : \RR^3\to \mathbb{R}^3$ of the problem
	\begin{equation}\label{V_i}
	\begin{cases}
	\mathcal{L}_{\lambda,\mu} \mathbf{V}_{j,q}(\Bx) =0, & \Bx\in \RR^3 \setminus \overline{D}, \\
	\mathbf{V}_{j,q}(\Bx)=\delta_{ij}{\bm{\xi}_{i,q}}, & \Bx\in D_i ,\\
	\mathbf{V}_{j,q}(\Bx) = \mathcal{O}(|\Bx|^{-1}), & \text{ as }|x|\to\infty,
	\end{cases}
	\end{equation}
	for $1\leq i,j\leq N$ and $1\leq p,q \leq 6$. Then the elastic stiffness tensor  $\bm{\mathcal{S}}:=(\bm{\mathcal{S}}_{pqij})_{1\leq p,q\leq 6;1\leq i,j\leq N}$ is defined coefficient-wise by
	\begin{equation}\label{cap_matrix}
	\bm{\mathcal{S}}_{pqij} = -\(\int_{\Gamma_i^+}{\partial_{\bm\nu} \mathbf{V}_{j,q}}\cdot\overline{\bm{\xi}_{i,p}} \d \sigma- \int_{\Gamma_i^-}{\partial_{\bm\nu} \mathbf{V}_{j,q}} \cdot\overline{\bm{\xi}_{i,p}} \d \sigma\).
	\end{equation}
\end{defn}

\begin{lem}\label{EST}
	The elastic stiffness tensor defined in \eqref{cap_matrix} for the Matryoshka-type elastic medium $D = \cup_{j=1}^{N}D_{j}$ can be given by
	\begin{equation}\label{CapmatrixGS}
	\begin{aligned}
	\bm{\mathcal{S}}_{pqij} &= \delta_{i,j-1}\int_{\Gamma_j^+}  \hat{\bm \zeta}_{j;j,q}^+ \cdot\overline{\bm{\xi}_{j-1,p}} ~\d \sigma + \delta_{i,j} \(\int_{\Gamma_{j+1}^+}    \hat{\bm\psi}_{j+1;j,q}^+ \cdot\overline{\bm{\xi}_{j,p}} ~\d \sigma - \int_{\Gamma_j^+} \hat{\bm \zeta}_{j;j,q}^+~ \cdot\overline{\bm{\xi}_{j,p}}\d \sigma\)\\
	&\quad  - \delta_{i,j+1} \int_{\Gamma_{j+1}^+}  \hat{\bm\psi}_{j+1;j,q}^+  \cdot\overline{\bm{\xi}_{j+1,p}} ~\d \sigma,
	\end{aligned}
	\end{equation}
	for $1\leq i,j\leq N$ and $1\leq p,q\leq 6$, 
	where $\hat{\bm\zeta}_{1;1,q}^+ = \mathbf{S}_{\Gamma_1^+}^{-1}[\bm{\xi}_{1,q}]$, $\hat{\bm\psi}_{N+1;N,q}^+ = 0$, $\hat{\bm\zeta}_{j+1,j+1,q}^+$ and $\hat{\bm\psi}_{j+1,j,q}^-$, $j=1,2,\ldots,N-1$, are determined by \eqref{S^-101GS}.
\end{lem}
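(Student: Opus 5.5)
The plan is to construct $\mathbf{V}_{j,q}$ explicitly with single layer potentials, exactly as in Lemma \ref{def:DTNGS} at $\omega=0$. Then compute its tractions on each $\Gamma_i^\pm$ using the jump relation \eqref{singlejumpk} together with \eqref{0105}, as in the computation of $\mathcal{T}_0$ in \eqref{T_0expre}.

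\emph{Reduction to the gap regions.} Since $\mathbf{V}_{j,q}$ equals $\bm{\xi}_{j,q}$ on $D_j$ and vanishes on the other $D_i$, its boundary data are nonzero only on $\Gamma_j^\pm$. The static exterior problem then decouples region by region. In $D_0'$ we have $\mathbf{V}_{j,q}=\mathbf{S}_{\Gamma_1^+}[\delta_{1,j}\mathbf{S}_{\Gamma_1^+}^{-1}\bm\xi_{1,q}]$, which has the required $\mathcal{O}(|\Bx|^{-1})$ decay. In $D_N'$ we have $\mathbf{V}_{j,q}=\delta_{N,j}\bm\xi_{N,q}$, a rigid motion with zero traction. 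In each gap $D_k'$, $1\le k\le N-1$, we represent $\mathbf{V}_{j,q}$ by $\mathbb{S}_{k,k+1}^{-1}$ applied to $\delta_{k,j}\bm\Upsilon_{j,q;1}+\delta_{k+1,j}\bm\Upsilon_{j,q;2}$. By \eqref{S^-101GS}, the corresponding densities are $(\hat{\bm\phi}^-_{j;j,q},\hat{\bm\psi}^+_{j+1;j,q})$ when $k=j$, and $(\hat{\bm\varphi}^-_{j-1;j,q},\hat{\bm\zeta}^+_{j;j,q})$ when $k=j-1$. Uniqueness follows from invertibility of $\mathbb{S}_{k,k+1}$ and $\mathbf{S}_{\Gamma}$ at $\omega=0$. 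Hence only $i\in\{j-1,j,j+1\}$ can contribute to $\bm{\mathcal{S}}_{pqij}$.

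\emph{Traction computation.} Following the $\mathcal{T}_0$ calculation, on the gap $D_{j-1}'$ the data are $(\bm 0,\bm\xi_{j,q})$. The same manipulation used in the proof of \eqref{T_0expre} gives traction $\hat{\bm\zeta}^+_{j;j,q}$ on $\Gamma_j^+$ and $-\hat{\bm\varphi}^-_{j-1;j,q}$ on $\Gamma_{j-1}^-$, with signs coming from the matrix $\mathrm{diag}(-\mathbf I,\mathbf I)$. For the gap $D_j'$ with data $(\bm\xi_{j,q},\bm 0)$, the analogous identity $(\tfrac12\mathbb{I}+\mathbb{K}_{j,j+1})\bm\Upsilon_{i;1}=\bm0$ on the inner-exterior block gives traction $\hat{\bm\phi}^-_{j;j,q}$-type terms. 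The only reason to use the gap problem rather than expanding directly is to express $\hat{\bm\phi}^-_{j;j,q}$ through $-\hat{\bm\psi}^+_{j+1;j,q}$ tested against rigid motions.

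Substituting into \eqref{cap_matrix}, I would pair each traction with $\overline{\bm\xi_{i,p}}$ and use the symmetry relations \eqref{cap_pq}--\eqref{cap_selfad} to move integrals between $\Gamma_j^-$ and $\Gamma_{j+1}^+$. A key step is the identity $\int_{\Gamma_j^-}\hat{\bm\phi}^-_{j;j,q}\cdot\overline{\bm\xi}+\int_{\Gamma_{j+1}^+}\hat{\bm\psi}^+_{j+1;j,q}\cdot\overline{\bm\xi}=0$ for rigid $\bm\xi$. It holds because the total traction flux of a static solution against a rigid motion vanishes over the closed boundary of $D_j'$, by Green's formula. This converts the $\Gamma_j^-$ contribution into $\int_{\Gamma_{j+1}^+}\hat{\bm\psi}^+_{j+1;j,q}\cdot\overline{\bm\xi_{j,p}}$.

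Likewise, the $i=j-1$ term on $\Gamma_{j-1}^-$ is $-\hat{\bm\varphi}^-_{j-1;j,q}$. The orientation sign in \eqref{cap_matrix} is minus on $\Gamma^-$, and \eqref{cap_selfad} with the same flux identity identifies it with $\int_{\Gamma_j^+}\hat{\bm\zeta}^+_{j;j,q}\cdot\overline{\bm\xi_{j-1,p}}$. The $i=j+1$ term on $\Gamma_{j+1}^+$ is directly $-\int_{\Gamma_{j+1}^+}\hat{\bm\psi}^+_{j+1;j,q}\cdot\overline{\bm\xi_{j+1,p}}$, and the boundary cases $j=1$ and $j=N$ follow from the conventions stated in the lemma.

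The main obstacle is bookkeeping of signs: normal orientations on $\Gamma^\pm$, the $\mathrm{diag}(-\mathbf I,\mathbf I)$ factor, and the overall minus in \eqref{cap_matrix}. The other delicate point is justifying the flux-cancellation identity, using that rigid motions have zero strain and that the $\bm\xi_{i,p}$ differ between layers only by Gram–Schmidt combinations of the same $\bm\varkappa$'s. I would verify the signs against the concentric case of Section \ref{concentric_radial}.
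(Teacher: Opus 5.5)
Your construction of $\mathbf V_{j,q}$ is the paper's \eqref{V_i_solu1}--\eqref{V_i_soluN}, but the step you call key is false.

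\textbf{The density identity fails.} The identity $\int_{\Gamma_j^-}\hat{\bm\phi}^-_{j;j,q}\cdot\overline{\bm\xi}\,\d\sigma+\int_{\Gamma_{j+1}^+}\hat{\bm\psi}^+_{j+1;j,q}\cdot\overline{\bm\xi}\,\d\sigma=0$ does not follow from the vanishing traction flux in $D_j'$. A single-layer density is the \emph{jump} of the traction across its surface, not the traction seen from $D_j'$. Set $\mathbf u:=\mathbf S_{\Gamma_j^-}[\hat{\bm\phi}^-_{j;j,q}]+\mathbf S_{\Gamma_{j+1}^+}[\hat{\bm\psi}^+_{j+1;j,q}]$.
\begin{itemize}
\item On $\Gamma_{j+1}^+$ density and traction agree: the Dirichlet data there are zero, so $\mathbf u\equiv\bm 0$ inside $\Gamma_{j+1}^+$.
\item On $\Gamma_j^-$ they do not. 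Outside $\Gamma_j^-$, $\mathbf u$ is the decaying exterior solution $\mathbf S_{\Gamma_j^-}[\mathbf S_{\Gamma_j^-}^{-1}[\bm\xi_{j,q}]]$.
\end{itemize}
Green's formula in $D_j'$ then shows that your left-hand side equals $\int_{\Gamma_j^-}\mathbf S_{\Gamma_j^-}^{-1}[\bm\xi_{j,q}]\cdot\overline{\bm\xi}\,\d\sigma$. For $\bm\xi=\bm\xi_{j,q}$ this is strictly negative, because $-\mathbf S_{\Gamma_j^-}$ is positive. For concentric spheres and a translation it equals $-\frac{12\pi\mu(\lambda+2\mu)}{2\lambda+5\mu}r_j^-$, as the explicit densities in the proof of Lemma \ref{EST_sphere} confirm.

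\textbf{The traction on $\Gamma_j^-$ is also misidentified.} It is not a $\hat{\bm\phi}$-term. The identity behind \eqref{T_0expre} is $(\tfrac12\mathbb I+\mathbb K_{j,j+1})\bm\Upsilon_{j,q;1}=-\bm\Upsilon_{j,q;2}$; only its first block vanishes. Hence the traction of $\mathbf V_{j,q}$ from $D_j'$ is $\hat{\bm\varphi}^-_{j;j,q}$ on $\Gamma_j^-$, and $-\hat{\bm\zeta}^+_{j+1;j,q}=\hat{\bm\psi}^+_{j+1;j,q}$ on $\Gamma_{j+1}^+$. As written, your $i=j$ and $i=j-1$ conversions rest on an invalid identity.

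\textbf{How the paper avoids this.} It uses duality instead of any conversion. Write the $\Gamma_j^-$ traction as $(-\frac{\mathbf I}{2}+\mathbf K^*_{\Gamma_j^-})[\hat{\bm\phi}^-_{j;j,q}]+\mathbf K^*_{\Gamma_j^-,\Gamma_{j+1}^+}[\hat{\bm\psi}^+_{j+1;j,q}]$, pair it with $\overline{\bm\xi_{j,p}}$, and move the operators onto the rigid motion. By \eqref{0105} and \eqref{Kji}, $(-\frac{\mathbf I}{2}+\mathbf K_{\Gamma_j^-})[\bm\xi_{j,p}]=\bm 0$ and $\mathbf K_{\Gamma_{j+1}^+,\Gamma_j^-}[\bm\xi_{j,p}]=\mathbf D_{\Gamma_j^-}[\bm\xi_{j,p}]|_{\Gamma_{j+1}^+}=\bm\xi_{j,p}$. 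So the $\hat{\bm\phi}$ contribution is annihilated rather than converted, and only $\int_{\Gamma_{j+1}^+}\hat{\bm\psi}^+_{j+1;j,q}\cdot\overline{\bm\xi_{j,p}}\,\d\sigma$ survives. The other three contributions follow the same way, and \eqref{cap_pq}--\eqref{cap_selfad} are never needed.

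\textbf{Repairing your flux idea.} It works if applied to tractions rather than densities. Green's formula in $D_j'$ gives $\int_{\Gamma_j^-}\partial_{\bm\nu}\mathbf V_{j,q}|_-\cdot\overline{\bm\xi}\,\d\sigma=\int_{\Gamma_{j+1}^+}\hat{\bm\psi}^+_{j+1;j,q}\cdot\overline{\bm\xi}\,\d\sigma$. In $D_{j-1}'$ it gives $-\int_{\Gamma_{j-1}^-}\hat{\bm\varphi}^-_{j-1;j,q}\cdot\overline{\bm\xi_{j-1,p}}\,\d\sigma=\int_{\Gamma_j^+}\hat{\bm\zeta}^+_{j;j,q}\cdot\overline{\bm\xi_{j-1,p}}\,\d\sigma$, which is exactly the $i=j-1$ entry.
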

\begin{proof}[\bf Proof]
	The solutions $\mathbf V_{j,q}$ to \eqref{V_i}  can be given by
	\begin{equation}\label{V_i_solu1}
\mathbf	V_{1,q}(\Bx) =
	\begin{cases}
	\ds \mathbf{S}_{{\Gamma_{1}^+}} [\hat{\bm\zeta}_{1;1,q}^+](\Bx), & \Bx\in D_{0}',\\
	\bm{\xi}_{1,q}, & \Bx\in D_1,\\
	\ds \mathbf{S}_{{\Gamma_1^-}} [\hat{\bm\phi}_{1;1,q}^-](\Bx)+\mathbf{S}_{{\Gamma_{2}^+}} [\hat{\bm\psi}_{2;1,q}^+](\Bx), &\Bx\in D_1',\\
	\bm 0, & \text{else},
	\end{cases}
	\end{equation}
	\begin{equation}\label{V_i_soluj2N}
	\mathbf V_{j,q}(\Bx) =
	\begin{cases}
	\ds \mathbf{S}_{{\Gamma_{j-1}^-}} [\hat{\bm\varphi}_{j-1;j,q}^-](\Bx)+\mathbf{S}_{{\Gamma_{j}^+}} [\hat{\bm\zeta}_{j;j,q}^+](\Bx), &\Bx\in D_{j-1}',\\
	\bm{\xi}_{j,q}, & \Bx\in D_j,\\
	\ds \mathbf{S}_{{\Gamma_j^-}} [\hat{\bm\phi}_{j;j,q}^-](\Bx)+\mathbf{S}_{{\Gamma_{j+1}^+}} [\hat{\bm\psi}_{j+1;j,q}^+](\Bx), &\Bx\in D_j',\\
	\bm 0, & \text{else},
	\end{cases}
	\mbox{ for } j =2,3,\ldots,N-1,
	\end{equation}
	\begin{equation}\label{V_i_soluN}
	\mathbf V_{N,q}(\Bx) =
	\begin{cases}
	\ds \mathbf{S}_{{\Gamma_{N-1}^-}} [\hat{\bm\varphi}_{N-1;N,q}^-](\Bx)+\mathbf{S}_{{\Gamma_{N}^+}} [\hat{\bm\zeta}_{N;N,q}^+](\Bx), &\Bx\in D_{N-1}',\\
	\bm{\xi}_{N,q}, & \Bx\in D_N,\\
	\ds \mathbf{S}_{{\Gamma_{N}^-}} [\hat{\bm\phi}_{N;N,q}^-](\Bx), & \Bx\in D_{N}',\\
	\bm 0, & \text{else},
	\end{cases}
	\end{equation}
	where $\hat{\bm\zeta}_{1;1,q}^+ = \mathbf{S}_{\Gamma_1^+}^{-1}[\bm{\xi}_{1,q}]$, $\hat{\bm\phi}_{N;N,q}^- = \mathbf{S}^{-1}_{{\Gamma_{N}^-}}[\bm{\xi}_{N,q}]$, and $\hat{\bm\varphi}_{j;j+1,q}^{-}$, $\hat{\bm\zeta}_{j+1;j+1,q}^{+}$, $\hat{\bm\phi}_{j;j,q}^{-}$,
	$\hat{\bm\psi}_{j+1;j,q}^{+}$, $j=1,2,\ldots,N-1$, are determined by \eqref{S^-101GS}.
	By using \eqref{0105}, \eqref{Kji} and \eqref{cap_matrix}, we have
	\[
	\begin{aligned}
	\bm{\mathcal{S}}_{pqij} &= \delta_{i,j} \int_{\Gamma_i^-}  \(\(-\frac{\mathbf I}{2}+\mathbf{K}_{{\Gamma_j^-}}^{*}\)[\hat{\bm\phi}_{j;j,q}^-] + \mathbf{K}^{*}_{\Gamma_j^-,\Gamma_{j+1}^+}[\hat{\bm\psi}_{j+1;j,q}^+]\)\cdot\overline{\bm{\xi}_{i,p}}~\d \sigma\\
	&\quad  + \delta_{i,j-1}\int_{\Gamma_i^-}  \(\(-\frac{\mathbf I}{2}+\mathbf{K}_{{\Gamma_{j-1}^-}}^{*}\)[\hat{\bm\varphi}_{j-1;j,q}^-] + \mathbf{K}^{*}_{\Gamma_{j-1}^-,\Gamma_{j}^+}[\hat{\bm\zeta}_{j;j,q}^+]\)\cdot\overline{\bm{\xi}_{i,p}}~\d \sigma\\
	&\quad -\delta_{i,j} \int_{\Gamma_i^+}  \(\mathbf{K}^{*}_{\Gamma_{j}^+,\Gamma_{j-1}^-}[\hat{\bm\varphi}_{j-1;j,q}^-]+\(\frac{\mathbf I}{2}+\mathbf{K}_{{\Gamma_{j}^+}}^{*}\)[\hat{\bm\zeta}_{j;j,q}^+]  \) \cdot\overline{\bm{\xi}_{i,p}}~\d \sigma\\
	&\quad  - \delta_{i,j+1} \int_{\Gamma_i^+}  \(\mathbf{K}^{*}_{\Gamma_{j+1}^+,\Gamma_j^-}[\hat{\bm\phi}_{j;j,q}^-]+\(\frac{\mathbf I}{2}+\mathbf{K}_{{\Gamma_{j+1}^+}}^{*}\)[\hat{\bm\psi}_{j+1;j,q}^+]  \) \cdot\overline{\bm{\xi}_{i,p}} ~\d \sigma\\
	%& =\delta_{i,j} \int_{\Gamma_{j+1}^+}    \psi_{j+1}^+ ~\d \sigma + \delta_{i,j-1}\int_{\Gamma_j^+}  \zeta_{j}^+~\d \sigma(x) -\(\delta_{i,j} \int_{\Gamma_j^+} \zeta_{j}^+~\d \sigma(x) + \delta_{i,j+1} \int_{\Gamma_{j+1}^+}  \psi_{j+1}^+ ~\d \sigma(x)\)\\
	& = \delta_{i,j-1}\int_{\Gamma_j^+}  \hat{\bm \zeta}_{j;j,q}^+ \cdot\overline{\bm{\xi}_{j-1,p}} ~\d \sigma + \delta_{i,j} \(\int_{\Gamma_{j+1}^+}    \hat{\bm\psi}_{j+1;j,q}^+ \cdot\overline{\bm{\xi}_{j,p}} ~\d \sigma - \int_{\Gamma_j^+} \hat{\bm \zeta}_{j;j,q}^{+} \cdot\overline{\bm{\xi}_{j,p}}\d \sigma\) - \delta_{i,j+1} \int_{\Gamma_{j+1}^+}  \hat{\bm\psi}_{j+1;j,q}^+  \cdot\overline{\bm{\xi}_{j+1,p}} ~\d \sigma.
	%	& = \delta_{i,j-1}\int_{\Gamma_j^+}  \zeta_{j}^+~\d \sigma + \delta_{i,j} \(\int_{\Gamma_j^-}\varphi_j^- ~\d \sigma - \int_{\Gamma_j^+} \zeta_{j}^+~\d \sigma(x)\) - \delta_{i,j+1} \int_{\Gamma_j^-}\varphi_j^- ~\d \sigma.\\
	\end{aligned}
	\]
	The proof is complete.
\end{proof}

In what follows, we introduce the $6N \times 6N$ matrices
\begin{equation}\label{Volumn_xi}
\bm{V} = \diag({V}_1,{V}_2,\ldots,{V}_6),\mbox{ and } \bm{T} = \diag(T,T,T,T,T,T),
\end{equation}
where 
\begin{equation}\label{element_volumn_xi}
{V}_p = \diag\(\|\bm{\xi}_{1,p}\|_{L^2(D_1)}^2,\|\bm{\xi}_{2,p}\|_{L^2(D_2)}^2,\ldots,\|\bm{\xi}_{N,p}\|_{L^2(D_N)}^2\).
\end{equation}
and 
\begin{equation}\label{element_T}
{T} = \diag\(\tau_1,\tau_2,\ldots,\tau_N\).
\end{equation}
By applying a flattening procedure similar to \eqref{6NU}--\eqref{6NUpq} to the stiffness tensor \eqref{CapmatrixGS}, we obtain a 
$6N$-by-$6N$ matrix, which we still denote by 
$\bm{\mathcal{S}}$.

\begin{prop}\label{ImU}
	The following asymptotic expansion for the matrix $\bm{U}(\omega,\delta)$ defined in \eqref{6NU} holds:
	\begin{equation}\label{bmU}
	\bm{U}(\omega,\delta) = \bm{I}+\rho\omega^2\bm{T}^2\bm{V}^{-1}-\delta \bm{V}^{-1}\bm{\mathcal{S}} \bm{V}^{-1} +\ii\alpha \omega\delta \bm{V}^{-1}\bm{E} \bm{V}^{-1}+\mathcal{O}((\omega^2+\delta)^2),
	\end{equation}
	where $\alpha$ is given by \eqref{alphaa}, and  $\bm{E} = (E_{pqij})_{1\leq p,q\leq 6;1\leq i,j\leq N}$ with 
	\begin{equation}\label{EPQIJ}
	E_{pqij} = \delta_{1,j}\delta_{1,i}\sum_{t=1}^{3}\int_{\Gamma_1^+}\hat{\bm\zeta}^{+}_{1;j,t}\cdot \overline{\bm{\xi}_{j,q}}~\d \sigma  \int_{\Gamma_1^+}\hat{\bm\zeta}^{+}_{1;j,t} \cdot \overline{\bm{\xi}_{i,p}}~\d \sigma.
	\end{equation}
\end{prop}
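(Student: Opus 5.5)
The expansion will be read off entry by entry from Proposition \ref{prop33}. By definition \eqref{6NUpq}, the $(p,q;i,j)$ entry of $\bm U(\omega,\delta)$ is $\int_{D_i}\mathbf{u}_{j,q}\cdot\overline{\bm{\xi}_{i,p}}\,\d\Bx$. I will pair each term of \eqref{asy_ujwd} with $\overline{\bm{\xi}_{i,p}}\chi_{D_i}$ and use three facts: the orthogonality $(\bm\xi_{i,p},\bm\xi_{i,q})_{L^2(D_i)^3}=\|\bm\xi_{i,p}\|^2\delta_{p,q}$; the remainders $\widetilde{\mathbf{u}}_{j,q;0,1}$ and $\widetilde{\mathbf{u}}_{j,q;1,1}$ have vanishing moments against every $\bm\xi_{i,p}$; and the $\mathcal{O}((\omega^2+\delta)^2)$ tail stays of that order after integration, since the series \eqref{ujcs} converges in $H^1(D)^3$.

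\emph{Leading and $\omega^2$ terms.} Pairing with $\bm\xi_{j,q}\chi_{D_j}$ gives $\delta_{ij}\delta_{pq}\bigl(1+\rho\tau_j^2\omega^2/\|\bm\xi_{j,q}\|^2\bigr)$. In matrix form this is $\bm I+\rho\omega^2\bm T^2\bm V^{-1}$, using \eqref{Volumn_xi}--\eqref{element_T}.

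\emph{The $\delta$ term.} Let $\|\cdot\|$ denote the $L^2(D_i)^3$ norm. Pairing the bracket with $\bm\xi_{i,p}$ yields $\delta$ times
\[
\frac{1}{\|\bm\xi_{i,p}\|^2\|\bm\xi_{j,q}\|^2}\Bigl(\delta_{i,j-1}\!\int\hat{\bm\varphi}^-_{j-1;j,q}\cdot\overline{\bm\xi_{j-1,p}}+\delta_{ij}\Bigl(\int_{\Gamma_j^+}\hat{\bm\zeta}^+_{j;j,q}\cdot\overline{\bm\xi_{j,p}}-\int_{\Gamma_j^-}\hat{\bm\varphi}^-_{j;j,q}\cdot\overline{\bm\xi_{j,p}}\Bigr)-\delta_{i,j+1}\!\int_{\Gamma_{j+1}^+}\hat{\bm\zeta}^+_{j+1;j,q}\cdot\overline{\bm\xi_{j+1,p}}\Bigr).
\]
I must show this equals $-\bigl(\bm V^{-1}\bm{\mathcal S}\bm V^{-1}\bigr)_{pqij}$, that is, that the bracket equals $-\bm{\mathcal S}_{pqij}$ as given in Lemma \ref{EST}. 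This is the main obstacle, because the densities in the two formulas do not match literally. The plan is to convert them with the symmetry relations \eqref{cap_pq}--\eqref{cap_selfad}. Relation \eqref{cap_selfad} exchanges $\hat{\bm\psi}^+$-integrals on $\Gamma_{j+1}^+$ with $\hat{\bm\varphi}^-$-integrals on $\Gamma_j^-$. Relation \eqref{cap_pq} swaps the indices $(m,p)\leftrightarrow(n,q)$.

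I will also use the identity obtained in the proof of \eqref{T_0expre}: since $(\tfrac12\mathbb I+\mathbb K_{j,j+1})$ annihilates the pair $(\bm b,\bm b)$ for any rigid motion $\bm b$, we have $\mathbb S^{-1}_{j,j+1}[\bm\Upsilon_{i,p;1}+\bm\Upsilon_{i,p;2}]$ with vanishing traction contributions. This gives $\hat{\bm\phi}=-\hat{\bm\varphi}$ and $\hat{\bm\psi}=-\hat{\bm\zeta}$ at the level of rigid-motion moments.

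With these, each of the three diagonal bands ($i=j-1,\,j,\,j+1$) of the bracket becomes the corresponding band of $\bm{\mathcal S}$ with a minus sign. If a sign convention (the orientation of $\bm\nu$ on $\Gamma^-$) causes trouble, I will recompute $\bm{\mathcal S}$ directly from \eqref{cap_matrix}. For this I use the representations \eqref{V_i_solu1}--\eqref{V_i_soluN}, and note that $\mathbf V_{j,q}$ restricted to the boundaries is exactly the data $\mathbf{u}_{j,q;0,0}\|\bm\xi_{j,q}\|^2$ fed to $\mathcal T_0$. Then $\bm{\mathcal S}_{pqij}=-\|\bm\xi_{j,q}\|^2\bigl(\int_{\Gamma_i^+}\mathcal T_0[\mathbf u_{j,q;0,0}]_i^+\cdot\overline{\bm\xi_{i,p}}-\int_{\Gamma_i^-}\mathcal T_0[\mathbf u_{j,q;0,0}]_i^-\cdot\overline{\bm\xi_{i,p}}\bigr)$. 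This is precisely the Green's-formula moment computed in the proof of Proposition \ref{prop33}, up to the factor $-\|\bm\xi_{i,p}\|^{-2}\|\bm\xi_{j,q}\|^{-2}$, which settles the identification cleanly.

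\emph{The $\omega\delta$ term.} Only $i=j=1$ survives. Pairing gives $\mathrm{i}\alpha\,\|\bm\xi_{1,p}\|^{-2}\|\bm\xi_{1,q}\|^{-2}\sum_{t\le3}\int\hat{\bm\zeta}^+_{1;1,t}\cdot\overline{\bm\xi_{1,q}}\int\hat{\bm\zeta}^+_{1;1,t}\cdot\overline{\bm\xi_{1,p}}$, which is $\mathrm{i}\alpha(\bm V^{-1}\bm E\bm V^{-1})_{pqij}$ by \eqref{EPQIJ}.

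Collecting the four pieces and flattening as in \eqref{6NU}--\eqref{6NUpq} yields \eqref{bmU}.
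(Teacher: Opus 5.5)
Your skeleton matches the paper's: you pair \eqref{asy_ujwd} with $\overline{\bm\xi_{i,p}}\chi_{D_i}$ and use orthogonality together with the vanishing moments of $\widetilde{\mathbf u}_{j,q;0,1},\widetilde{\mathbf u}_{j,q;1,1}$. The $\bm I$, $\omega^2$ and $\omega\delta$ blocks come out exactly as you say.

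The routes differ in how the $\delta$-coefficient is identified with $-\bm V^{-1}\bm{\mathcal S}\bm V^{-1}$. The paper works at the level of densities. It compares the bracket in \eqref{Upqij} with the formula \eqref{CapmatrixGS} of Lemma~\ref{EST}. To convert $\hat{\bm\varphi}$-moments into $\hat{\bm\zeta}$/$\hat{\bm\psi}$-moments, it derives \eqref{zetaeqpsi} from the energy identity $Q_{\lambda,\mu,\RR^3}(\mathbf V_{j,q},\mathbf V_{i,p})=\bm{\mathcal S}_{pqij}$, i.e.\ from the symmetry of $\bm{\mathcal S}$, and combines it with \eqref{cap_pq}--\eqref{cap_selfad}.

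Your fallback argument is a complete proof on its own, and a cleaner one. $\mathbf V_{j,q}$ is the decaying static solution with trace $\|\bm\xi_{j,q}\|^2\mathbf u_{j,q;0,0}$, represented by the same static single layers as in \eqref{V_i_solu1}--\eqref{V_i_soluN}. So its host-side traction is $\mathcal T_0$ of that trace. Hence \eqref{cap_matrix} is literally $-\|\bm\xi_{j,q}\|^2$ times the Green's-formula moment that produces $\|\bm\xi_{i,p}\|^2\int_{D_i}\mathbf u_{j,q;0,1}\cdot\overline{\bm\xi_{i,p}}\,\d\Bx$. This never uses the explicit density form of the bracket and needs no symmetry relation. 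The paper's route additionally yields $\bm{\mathcal S}_{pqij}=\bm{\mathcal S}_{qpji}$, which is needed later for \eqref{GEP} but not for this proposition.

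One correction to your first plan: the claim $\hat{\bm\phi}=-\hat{\bm\varphi}$ is false, even at the level of rigid-motion moments.
By uniqueness for the interior static Dirichlet problem, $\mathbb S_{j,j+1}^{-1}[\bm\Upsilon_{i,p;1}+\bm\Upsilon_{i,p;2}]=(\mathbf S_{\Gamma_j^-}^{-1}[\bm\xi_{i,p}],\bm 0)$. This has two consequences:
\begin{itemize}
\item $\hat{\bm\psi}^+_{j+1;i,p}=-\hat{\bm\zeta}^+_{j+1;i,p}$ holds pointwise, so that half of your claim is correct.
\item $\hat{\bm\phi}^-_{j;i,p}+\hat{\bm\varphi}^-_{j;i,p}=\mathbf S_{\Gamma_j^-}^{-1}[\bm\xi_{i,p}]$, whose moment against $\bm\xi_{i,p}$ is strictly negative because $-\mathbf S_{\Gamma_j^-}$ is positive. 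In the concentric case of Lemma~\ref{EST_sphere}, for $p=1,2,3$, this sum is $-\frac{3\mu(\lambda+2\mu)}{(2\lambda+5\mu)r_j^-}\bm\varkappa_p$.
\end{itemize}
The error does no damage, because $\hat{\bm\phi}$ appears on neither side of the identity. All three bands need only $\hat{\bm\psi}=-\hat{\bm\zeta}$ together with \eqref{cap_pq}--\eqref{cap_selfad}; combined with \eqref{cap_pq}, this is exactly what \eqref{zetaeqpsi} supplies in the paper. Drop the $\hat{\bm\phi}$ half of that claim.
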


\begin{proof}[\bf Proof]
It follows from \eqref{6NU}, \eqref{6NUpq} and \eqref{asy_ujwd}  that
\begin{equation}\label{Upqij}
\begin{aligned}
\bm{U}_{pqij}(\omega,\delta) &= \(1+ \frac{\rho\tau_j^2\omega^2}{\|\bm{\xi}_{j,q}\|_{L^2(D_j)}^2}\)\delta_{p,q}\delta_{i,j}\\
&\quad +\delta\left[ \frac{\delta_{i,j-1}}{\|\bm{\xi}_{i,p}\|_{L^2(D_i)}^2\|\bm{\xi}_{j,q}\|_{L^2(D_j)}^2}\int_{\Gamma_{j-1}^-} \hat{\bm{\varphi}}^{-}_{j-1;j,q} \cdot\overline{\bm{\xi}_{j-1,p}}~\d \sigma
\right.\\
&\quad \quad \quad\left.+ \frac{\delta_{i,j}}{\|\bm{\xi}_{i,p}\|_{L^2(D_i)}^2\|\bm{\xi}_{j,q}\|_{L^2(D_j)}^2} \(\int_{\Gamma_j^+} \hat{\bm{\zeta}}^{+}_{j;j,q} \cdot\overline{\bm{\xi}_{j,p}}~\d \sigma - \int_{\Gamma_j^-} \hat{\bm{\varphi}}^{-}_{j;j,q} \cdot\overline{\bm{\xi}_{j,p}}~\d \sigma \)\right.\\
&\quad \quad \quad \left.-\frac{\delta_{i,j+1}}{\|\bm{\xi}_{i,p}\|_{L^2(D_i)}^2\|\bm{\xi}_{j,q}\|_{L^2(D_j)}^2} \int_{\Gamma_{j+1}^+} \hat{\bm{\zeta}}^{+}_{j+1;j,q} \cdot\overline{\bm{\xi}_{j+1,p}}~\d \sigma \right]\\
&\quad +\omega\delta\(\frac{\ii\alpha \delta_{1,j}\delta_{1,i}}{\|\bm{\xi}_{i,p}\|_{L^2(D_i)}^2 \|\bm{\xi}_{j,q}\|_{L^2(D_j)}^2} \sum_{t=1}^{3}\int_{\Gamma_1^+}\hat{\bm\zeta}^{+}_{1;j,t}\cdot \overline{\bm{\xi}_{j,q}}~\d \sigma  \int_{\Gamma_1^+}\hat{\bm\zeta}^{+}_{1;j,t} \cdot \overline{\bm{\xi}_{i,p}}~\d \sigma\) +\mathcal{O}((\omega^2+\delta)^2).
\end{aligned}
\end{equation}
Moreover, from the definition \eqref{cap_matrix} and variational principle, we have that the solutions $\mathbf V_{j,q}$ to \eqref{V_i} satisfies that  for any  $\mathbf w\in \mathbf{D}^{1,2}(\RR^3\setminus \overline{D}):=\{\mathbf v:\nabla \mathbf v\in L^2(\RR^3\setminus \overline{D})^{3\times 3}\}$,
\begin{equation}
Q_{\lambda,\mu,\RR^3\setminus \overline{D}}(\mathbf{V}_{j,q},\mathbf{w}) = -\sum_{n=1}^N\(\int_{\Gamma_n^+}{\partial_{\bm\nu} \mathbf{V}_{j,q}}\cdot\overline{\mathbf w}~\d \sigma- \int_{\Gamma_n^-}{\partial_{\bm\nu}\mathbf V_{j,q}}\cdot \overline{\mathbf{w}}~\d \sigma\),
\end{equation}
where $Q_{\lambda,\mu,\RR^3\setminus \overline{D}}$ is defined in \eqref{QLambdaMu}.
Taking $\mathbf w = \mathbf V_{i,p}$, we obtain
\begin{equation}\label{symCMGS}
\begin{aligned}
Q_{\lambda,\mu,\RR^3}(\mathbf{V}_{j,q},\mathbf{V}_{i,p})& = Q_{\lambda,\mu,\RR^3\setminus \overline{D}}(\mathbf{V}_{j,q},\mathbf{V}_{i,p})\\
& = -\sum_{n=1}^N\(\int_{\Gamma_n^+}{\partial_{\bm\nu} \mathbf V_{j,q}}\cdot\overline{\mathbf{V}_{i,p}} ~\d \sigma- \int_{\Gamma_n^-}{\partial_{\bm\nu} \mathbf V_{j,q}}\cdot\overline{\mathbf{V}_{i,p}}~\d \sigma\)\\
& = -\(\int_{\Gamma_i^+}{\partial_{\bm\nu} \mathbf V_{j,q}}\cdot\overline{\bm\xi_{i,p}} ~\d \sigma- \int_{\Gamma_i^-}{\partial_{\bm\nu} \mathbf V_{j,q}}\cdot\overline{\bm\xi_{i,p}} ~\d \sigma\) = \bm{\mathcal{S}}_{pqij},
\end{aligned}
\end{equation}
which implies that $\bm{\mathcal{S}}_{pqij} = \bm{\mathcal{S}}_{qpji}$. It follows that
\begin{equation}\label{zetaeqpsi}
\int_{\Gamma_j^+}  \hat{\bm \zeta}_{j;j,q}^+ \cdot\overline{\bm{\xi}_{j-1,p}} ~\d \sigma = -\int_{\Gamma_{j}^+}  \hat{\bm\psi}_{j;j-1,p}^+  \cdot\overline{\bm{\xi}_{j,q}} ~\d \sigma.
\end{equation}
This, together with \eqref{cap_pq}--\eqref{cap_selfad} and \eqref{Upqij}, implies that \eqref{bmU} holds true.
	The proof is complete.
\end{proof}

\subsection{Existence and asymptotics of the subwavelength resonances}
In the previous section, we have provided a variational characterization of subwavelength resonances for the Matryoshka-type elastic medium $D = \cup_{j=1}^{N}D_{j}$ based on the displacement-to-traction map. Nevertheless, it remains unclear whether subwavelength resonance exists for a given high contrast $\delta$. Furthermore, to better understand the limiting behavior of subwavelength resonance as $\delta\to 0$, it is necessary to investigate the asymptotic expansion of resonance with respect to $\delta$. These two issues will be the primary focus of this subsection.

From Propositions \ref{TCR} and \ref{ImU}, it follows that $\omega = \mathcal{O}(\delta^{\frac{1}{2}})$. Hence, we set $\beta := \delta^{\frac{1}{2}}$ and introduce the scaled frequency $\hat{\omega}: = \frac{\omega}{\beta}$. Then, by \eqref{bmU}, we have
\[
\bm{I}-\bm{U}(\hat{\omega}\beta,\beta^2) = -\hat{\omega}^2\beta^2\rho\bm{T}^2\bm{V}^{-1}+\beta^2 \bm{V}^{-1}\bm{\mathcal{S}} \bm{V}^{-1} -\ii\alpha \beta^3\hat{\omega} \bm{V}^{-1}\bm{E} \bm{V}^{-1}+\mathcal{O}((\hat{\omega}^2\beta^2+\beta^2)^2).
\]
Define 
\begin{equation}\label{operator_AE}
\begin{aligned}
\hat{\bm{U}}(\hat{\omega},\beta):&= \beta^{-2}\(\bm{I}-\bm{U}(\hat{\omega}\beta,\beta^2)\)\\
&= \hat{\bm{U}}_0(\hat{\omega})+\beta \hat{\bm{U}}_1(\hat{\omega}) +\mathcal{O}(\beta^2),
\end{aligned}
\end{equation}
where 
\[
\hat{\bm{U}}_0(\hat{\omega}):= -\hat{\omega}^2 \rho\bm{T}^2\bm{V}^{-1}+ \bm{V}^{-1}\bm{\mathcal{S}} \bm{V}^{-1},
\]
\[
\hat{\bm{U}}_1(\hat{\omega}):=-\ii\alpha  \hat{\omega} \bm{V}^{-1}\bm{E} \bm{V}^{-1}.
\]
Clearly,  $\hat{\omega}\beta$ is a  characteristic value of $\bm{I}-\bm{U}(\hat{\omega}\beta,\beta^2)$ if and only if  $\hat{\omega}$ is a characteristic value of $\hat{\bm{U}}(\hat{\omega},\beta)$.

Let $(\lambda_{m},\bm{v}_{m})_{1\leq m\leq 6N}$ be the eigenpairs satisfying the generalized elastic stiffness eigenvalue problem:
%Next, we consider the $6N$ eigenvalues $\lambda_{m}$ and eigenvectors $\bm{a}_{m}$ of the generalized elastic stiffness eigenvalue problem:
\begin{equation}\label{GEP}
\bm{T}^{-2}\bm{\mathcal{S}}\bm{v}_{m} = \lambda_{m} \bm{V}\bm{v}_{m}, \mbox{ for } 1\leq m\leq 6N,
\end{equation}
where the eigenvectors form an orthonormal basis with respect to the following inner product
\begin{equation}\label{orthonormalV}
\bm{v}_{m}^T\bm{V}\bm{v}_{n} = \delta_{m,n}.
\end{equation}
It follows that the leading term of the characteristic values for $\hat{\bm{U}}(\hat{\omega},\beta)$ can be given by 
\[
\hat{\omega}_{m;0}^\pm = \pm\(\frac{\lambda_{m}}{\rho}\)^{\frac{1}{2}},\; \mbox{ for } 1\leq m\leq 6N,
\]
 counted with their multiplicities. Furthermore, by Kato's asymptotic perturbation theory \cite{Katpbook1995},  the eigenvectors $\bm{V}\bm{v}_{m}(\beta)$ of $\hat{\bm{U}}(\hat{\omega},\beta)$ can be represented as
 \begin{equation}\label{eigenvetor_AE}
 \bm{v}_{m}(\beta) = \bm{v}_{m}+\beta \bm{v}_{m;1} +\mathcal{O}(\beta^2).
 \end{equation}
Given that the eigenvalues of the problem \eqref{GEP} exhibit multiplicities, then the method of deriving the eigenfrequencies based on the classical implicit function theorem (cf. \cite{FA_SAM_2022,FCA_SIAP2023,DKLZ_JDE26}) is no longer applicable, since the corresponding Jacobian matrices may not be invertible. Therefore, a much more subtle analysis is required.

Next, we denote the distinct eigenvalues of $\bm{T}^{-2}\bm{V}^{-1}\bm{\mathcal{S}}$ (cf. \eqref{GEP}--\eqref{orthonormalV}) by $\{\lambda_m^{\mathrm{d}}\}_{m=1}^{N_{\mathrm{d}}}$ with $N_{\mathrm{d}}\leq 6N$, ignoring algebraic multiplicity. Let $a_m$ denote the algebraic multiplicity of $\lambda_m^{\mathrm{d}}$ so that $\sum_{m=1}^{N_{\mathrm{d}}} a_m =6N$. Due to the Hermitian property of $\bm{\mathcal{S}}$ (see \eqref{symCMGS}), we know that the algebraic and geometric multiplicities coincide for each eigenvalue. 
Moreover, in view of \eqref{symCMGS} and from the solutions $\mathbf{V}_{j,q}(\Bx)\in {H}_{loc}^1(\RR^3 \setminus \overline{D})^3$, the elastic stiffness tensor \eqref{cap_matrix} is positive-definite.
Hence, the corresponding eigenfrequencies can be rewritten by
\begin{equation}\label{wdmopm}
\hat{\omega}_{m;0}^{\textup{d},\pm} = \pm\(\frac{\lambda_{m}^{\textup{d}}}{\rho}\)^{\frac{1}{2}}, \mbox{ for }  1\leq m\leq N_{\mathrm{d}}.
\end{equation}

The existence of subwavelength resonant frequencies is given as follows; see \cite{dyatlov2019mathematical,Gohberg_book} for a review of Gohberg--Sigal theory and the related concepts used here.

\begin{thm}\label{existence_SWLR}
For an $N$-layered Matryoshka-type elastic structure $D = \bigcup_{j=1}^{N} D_{j}$, there exist $12N$ subwavelength resonant frequencies, counted with multiplicity.
\end{thm}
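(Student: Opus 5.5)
We will apply the generalized Rouché theorem of Gohberg--Sigal to the rescaled matrix-valued function $\hat{\bm{U}}(\hat{\omega},\beta)$ from \eqref{operator_AE}. By Proposition \ref{TCR}, $\omega=\hat{\omega}\beta$ is a subwavelength resonant frequency if and only if $\hat{\omega}$ is a characteristic value of $\hat{\bm{U}}(\cdot,\beta)$. It therefore suffices to count the characteristic values of $\hat{\bm{U}}(\cdot,\beta)$ in a fixed bounded region of the $\hat{\omega}$-plane, uniformly for small $\beta$.

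\textbf{Step 1: the unperturbed problem.} We first study the leading term
\[
\hat{\bm{U}}_0(\hat{\omega})=\bm{V}^{-1}\bigl(-\rho\hat{\omega}^2\bm{T}^2\bm{V}+\bm{\mathcal{S}}\bigr)\bm{V}^{-1}.
\]
Since $\bm{V}$ and $\bm{T}$ are invertible, $\det\hat{\bm{U}}_0(\hat{\omega})$ equals a nonzero constant times $\det(\bm{\mathcal{S}}-\rho\hat{\omega}^2\bm{T}^2\bm{V})$. This is a polynomial in $\hat{\omega}$ of degree exactly $12N$, because its leading coefficient is $\det(-\rho\bm{T}^2\bm{V})\neq0$.

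Next we use the generalized eigenproblem \eqref{GEP}. The stiffness tensor $\bm{\mathcal{S}}$ is symmetric and positive definite (from \eqref{symCMGS}), and $\bm{T}^2\bm{V}$ is positive definite and diagonal. Hence $\bm{\mathcal{S}}-\rho\hat{\omega}^2\bm{T}^2\bm{V}$ can be simultaneously diagonalized. Its zeros are exactly $\hat{\omega}^{\textup{d},\pm}_{m;0}$ from \eqref{wdmopm}, and each one is real and nonzero since $\lambda_m^{\textup{d}}>0$. The zero $\hat{\omega}^{\textup{d},\pm}_{m;0}$ has root multiplicity $a_m$, and the $\pm$ pair never coincide.

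Because $\hat{\bm{U}}_0$ is a matrix polynomial with invertible leading coefficient, its Gohberg--Sigal multiplicity at each zero equals the order of the zero of its determinant. We will verify this directly from the diagonalized form $\operatorname{diag}(\lambda_m-\rho\hat{\omega}^2)$: each simple factor contributes one, so $\hat{\omega}^{\textup{d},\pm}_{m;0}$ has multiplicity $a_m$. Summing over all zeros gives $\sum_m 2a_m=12N$. We then fix a bounded domain $W$, for instance a large disc, containing all $\pm(\lambda^{\textup{d}}_m/\rho)^{1/2}$. Inside $W$ we choose small disjoint discs $W_m^\pm$ around each root, and we note that $\hat{\bm{U}}_0(\hat{\omega})^{-1}$ is bounded on $\partial W$ and on each $\partial W^\pm_m$.

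\textbf{Step 2: uniform perturbation estimate.} This is the main obstacle. Proposition \ref{ImU} only gives $\mathcal{O}((\omega^2+\delta)^2)$ as a formal remainder, and we need an estimate that is uniform for $\hat{\omega}$ in the compact set $\overline{W}$.

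To get it, we will use the joint analyticity of $\mathbf{u}_{j,q}(\omega,\delta)$ in $(\omega,\delta)$ near $(0,0)$, which follows from Lax--Milgram together with the analytic expansion \eqref{DTN_expansion}. This makes $\bm{U}(\omega,\delta)$ a convergent double power series. After substituting $\omega=\hat{\omega}\beta$ and $\delta=\beta^2$, the remainder becomes a holomorphic function of $\hat{\omega}$ bounded by $C\beta^4$ on $\overline{W}$. Dividing by $\beta^2$, we obtain
\[
\|\hat{\bm{U}}(\hat{\omega},\beta)-\hat{\bm{U}}_0(\hat{\omega})\|\le C\beta \quad\text{on } \overline{W},
\]
where the $\hat{\bm{U}}_1$ term contributes $\mathcal{O}(\beta)$ and the rest contributes $\mathcal{O}(\beta^2)$. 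One has to check that no negative powers of $\beta$ appear. This holds because the $\delta$-independent terms of $\bm{I}-\bm{U}$ are exactly $-\rho\omega^2\bm{T}^2\bm{V}^{-1}+\mathcal{O}(\omega^4)$ by \eqref{ujevenodd0}, and every one of these terms is $\mathcal{O}(\beta^2)$ after scaling.

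\textbf{Step 3: Rouché and conclusion.} For $\beta$ small enough we have
\[
\|\hat{\bm{U}}_0^{-1}(\hat{\bm{U}}-\hat{\bm{U}}_0)\|<1 \quad\text{on } \partial W \text{ and on each } \partial W^\pm_m.
\]
The Gohberg--Sigal generalized Rouché theorem then says that $\hat{\bm{U}}(\cdot,\beta)$ has the same total multiplicity of characteristic values in $W$ as $\hat{\bm{U}}_0$, namely $12N$. It also has $a_m$ characteristic values in each $W^\pm_m$, all converging to $\hat{\omega}^{\textup{d},\pm}_{m;0}$ as $\beta\to0$.

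Finally, $\omega=\beta\hat{\omega}\to0$ as $\delta\to0$, so each of these characteristic values gives a subwavelength resonant frequency in the sense of Definition \ref{defn:resonance}. This yields $12N$ resonances counted with multiplicity.
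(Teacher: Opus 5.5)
Your proposal is correct and follows essentially the same route as the paper: you compare $\hat{\bm{U}}(\cdot,\beta)$ with $\hat{\bm{U}}_0$ via the generalized Rouch\'e theorem on small discs around $\pm(\lambda_m^{\mathrm{d}}/\rho)^{1/2}$ (each carrying multiplicity $a_m$) and on one large disc, then sum $2\sum_m a_m=12N$. Your extra details fill in steps the paper only asserts, namely equating the Gohberg--Sigal multiplicity with the order of zero of $\det\hat{\bm{U}}_0$, and deriving the uniform $\mathcal{O}(\beta)$ bound on compact $\hat{\omega}$-sets from the joint analyticity of $\bm{U}(\omega,\delta)$ together with the vanishing of the odd $\delta$-independent terms.
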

\begin{proof}[\bf Proof]

For any fixed $m=1,2,\ldots,N_{\mathrm{d}}$, define the $\varepsilon_0$-neighborhood of $\hat{\omega}_{m;0}^{\textup{d},\pm}$ as
\[
B_{\varepsilon_0}(\hat{\omega}_{m;0}^{\textup{d},\pm}): = \left\{\hat{\omega}\in \mathbb{C}: |\hat{\omega} -\hat{\omega}_{m;0}^{\textup{d},\pm} | <\varepsilon_0 \right\}, \mbox{ for sufficiently small  } \varepsilon_0 >0.
\]
Let $M(\hat{\omega}_{m;0}^{\textup{d},\pm},\hat{\bm{U}}_0(\hat{\omega}))$ denote the algebraic multiplicity of $\hat{\omega}_{m;0}^{\textup{d},\pm}$ for the operator $\hat{\bm{U}}_0(\hat{\omega})$ with respect to $\partial B_{\varepsilon_0}(\hat{\omega}_{m;0}^{\textup{d},\pm})$  (cf. \cite[Page 205]{Gohberg_book}), defined by
\[
M(\hat{\omega}_{m;0}^{\textup{d},\pm},\hat{\bm{U}}_0(\hat{\omega})):=\frac{1}{2\pi \ii}\textup{tr}\int_{\partial B_{\varepsilon_0}(\hat{\omega}_{m;0}^{\textup{d},\pm})} \hat{\bm{U}}_0(\hat{\omega})^{-1} \frac{\d }{\d \hat{\omega}} \hat{\bm{U}}_0(\hat{\omega})~\d \hat{\omega}.
\]
For $\beta\ll 1$, one has that for all $\hat{\omega}\in \partial B_{\varepsilon_0}(\hat{\omega}_{m;0}^{\textup{d},\pm})$, 
\[
\| \hat{\bm{U}}_0(\hat{\omega})^{-1}\(\hat{\bm{U}}(\hat{\omega},\beta) - \hat{\bm{U}}_0(\hat{\omega})\)\|<1.
\]
Hence, by the generalized Rouch\'e theorem \cite[Theorem 1.15]{AK_book2018}, it follows that 
\[
M(\partial B_{\varepsilon_0}(\hat{\omega}_{m;0}^{\textup{d},\pm}),\hat{\bm{U}}(\hat{\omega},\beta)) = M(\hat{\omega}_{m;0}^{\textup{d},\pm},\hat{\bm{U}}_0(\hat{\omega})) = a_m,
\]
where $M(\partial B_{\varepsilon_0}(\hat{\omega}_{m;0}^{\textup{d},\pm}),\hat{\bm{U}}(\hat{\omega},\beta)) $ is given by
\[
M(\partial B_{\varepsilon_0}(\hat{\omega}_{m;0}^{\textup{d},\pm}),\hat{\bm{U}}(\hat{\omega},\beta)):=\frac{1}{2\pi \ii}\textup{tr}\int_{\partial B_{\varepsilon_0}(\hat{\omega}_{m;0}^{\textup{d},\pm})} \hat{\bm{U}}(\hat{\omega},\beta)^{-1} \frac{\partial }{\partial \hat{\omega}} \hat{\bm{U}}(\hat{\omega},\beta)~\d \hat{\omega}.
\]
This implies that there exist $a_m$ characteristic values of of $\hat{\bm{U}}(\hat{\omega},\beta)$ in $B_{\varepsilon_0}(\hat{\omega}_{m;0}^{\textup{d},\pm})$ for each $m=1,2,\ldots,N_{\mathrm{d}}$.
Let $ C_r>0$ be a given constant such that $\hat{\bm{U}}_0(\hat{\omega})$ is invertible on $\partial B_0(C_r)$ and $B_{\varepsilon_0}(\hat{\omega}_{m;0}^{\textup{d},\pm})\subset B_{C_r}(0)$  for any sufficiently small  $\varepsilon_0 >0$. Similarly, we have that these characteristic values near $\hat{\omega}_{m;0}^{\textup{d},\pm}$ are all the characteristic values of  $\hat{\bm{U}}(\hat{\omega},\beta)$ in $B_{C_r}(0)$. Therefore, using Proposition \ref{TCR}, \eqref{operator_AE} and $\sum_{m=1}^{N_{\mathrm{d}}} a_m =6N$, we can conclude that for sufficiently small $\delta>0$, there exist $12N$ subwavelength resonant frequencies, up to multiplicity.
\end{proof}
 
Having established the existence of eigenfrequencies, we now proceed to study their asymptotic expansions.
Specifically,  we focus on the characteristic values of $\hat{\bm{U}}(\hat{\omega},\beta)$ located near $\hat{\omega}_{m;0}^{\textup{d},+}$, which we denote by $\{\hat{\omega}_{m;0,j}^{\textup{d},+}(\beta)\}_{j=1}^{a_m}$.
Our analysis follows the general framework developed in \cite{AF_JDE2004, ALZ_TAMS2023}.
For each integer $\ell\geq 1$,  define the following functions
\begin{equation}\label{pmell}
 p_{m;\ell}(\beta):=\frac{1}{2\pi \ii}\textup{tr}\int_{\partial B_{\varepsilon_0}(\hat{\omega}_{m;0}^{\textup{d},+})} \(\hat{\omega} - \hat{\omega}_{m;0}^{\textup{d},+}\)^{\ell} \hat{\bm{U}}(\hat{\omega},\beta)^{-1} \frac{\partial }{\partial \hat{\omega}} \hat{\bm{U}}(\hat{\omega},\beta)~\d \hat{\omega}.
 \end{equation}
It follows from \cite[Theorem 1.14]{AK_book2018} that $\left\{  p_{m;\ell} \right\}_{\ell\geq 1}$ are the power sum polynomials of the variables $z_{m;j}(\beta):=\hat{\omega}_{m;0,j}^{\textup{d},+}(\beta) - \hat{\omega}_{m;0}^{\textup{d},+}$ with $z_{m;j}(0) = 0$ for $j=1,2,\ldots,a_m$:
 \[
 p_{m;\ell}(\beta) = \sum_{j=1}^{a_m} z_{m;j}(\beta)^{\ell}.
 \] 
From the definition \eqref{operator_AE} of $\hat{\bm{U}}(\hat{\omega},\beta)$, it follows that each $p_{m,\ell}(\beta)$ admits an analytic continuation into $B_{\beta_0}(0)$  for small enough $\beta_0>0$.
Moreover,  for any fixed $1\leq m \leq N_{\mathrm{d}}$,  the function defined by   
 \begin{equation}\label{det:U}
 {f}(\hat{\omega},\beta):=\det(\hat{\bm{U}}(\hat{\omega},\beta)), \;\mbox{ for } (\hat{\omega},\beta)\in B_{\varepsilon_0}(\hat{\omega}_{m;0}^{\textup{d},+})\times B_{\beta_0}(0),
 \end{equation}
is analytic. We next derive the factorization of the function $f(\hat{\omega},\beta)$ using the Weierstrass preparation theorem; see \cite{AM_book1991, KS_book2001} for background on this classical result and related analytic tools.

 \begin{prop}
For any fixed $1\leq m \leq N_{\mathrm{d}}$, let the analytic functions 	$p_{m;\ell}(\beta)$, $\ell\geq 1$, be defined by \eqref{pmell} on $B_{\beta_0}(0)$. Define $ r_{m;j}(\beta) $, $1\leq j\leq a_m$, to satisfy the following recurrence relation
\begin{equation}\label{RRR}
\sum_{s=0}^{j-1} \(p_{m;j-s}(\beta)r_{m;s}(\beta)\)+ jr_{m;j}(\beta) = 0 \;\mbox{ with } r_{m;0}(\beta) = 1. 
\end{equation}
Then we have 
\begin{enumerate}
	\item[\textup{(i)}]
	the functions $r_{m;j}(\beta)$, $1\leq j\leq a_m$, are analytic in $B_{\beta_0}(0)$ and satisfy $r_{m;j}(0) = 0 $;
	\item[\textup{(ii)}]
	$\hat{\omega}_{m;0,j}^{\textup{d},+}(\beta) - \hat{\omega}_{m;0}^{\textup{d},+}$, $1\leq j\leq a_m$, are all the roots of the Weierstrass polynomial:
	\begin{equation}
	P_{m;\beta}(z) =  \sum_{j=0}^{a_m}r_{m;j}(\beta) z^{a_m-j};
	\end{equation}
	\item[\textup{(iii)}]
	the function ${f}(\hat{\omega},\beta)$ defined in \eqref{det:U} admits the following factorization:
	\begin{equation}\label{WPT}
	{f}(\hat{\omega},\beta)=P_{m;\beta}\(\hat{\omega}-\hat{\omega}^{\textup{d},+}_{m;0}\){g}\(\hat{\omega},\beta \),
	\end{equation}
	where ${g}(\hat{\omega},\beta )$ is analytic with respect to both $\hat{\omega}$ and $\beta$, and does not vanish identically.
\end{enumerate}
 \end{prop}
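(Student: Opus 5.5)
The plan is to go through Newton's identities, the Cauchy integral representation of the power sums, and then the Weierstrass preparation theorem.

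\textbf{Step 1 (analyticity of the coefficients).} By \eqref{operator_AE}, $\hat{\bm{U}}(\hat{\omega},\beta)$ is analytic in $(\hat{\omega},\beta)$ on a neighborhood of $\overline{B_{\varepsilon_0}(\hat{\omega}_{m;0}^{\textup{d},+})}\times B_{\beta_0}(0)$. On the circle $\partial B_{\varepsilon_0}(\hat{\omega}_{m;0}^{\textup{d},+})$ it is invertible for all $|\beta|<\beta_0$; this is the same Rouch\'e-type estimate $\|\hat{\bm{U}}_0^{-1}(\hat{\bm{U}}-\hat{\bm{U}}_0)\|<1$ used in Theorem \ref{existence_SWLR}. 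Hence the integrands in \eqref{pmell} are jointly continuous and analytic in $\beta$ on the contour. Differentiating under the integral sign (or invoking Morera) shows each $p_{m;\ell}$ is analytic on $B_{\beta_0}(0)$.

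\textbf{Step 2 (Newton's identities and part (ii)).} The recursion \eqref{RRR} determines $r_{m;j}$ explicitly as polynomials in $p_{m;1},\dots,p_{m;j}$ with rational coefficients, by dividing by $j$. Consequently each $r_{m;j}$ is analytic on $B_{\beta_0}(0)$. By \cite[Theorem 1.14]{AK_book2018},
\[
p_{m;\ell}(\beta)=\sum_{j=1}^{a_m} z_{m;j}(\beta)^\ell .
\]
Newton's identities linking power sums to elementary symmetric polynomials $e_j$ are exactly \eqref{RRR} with $r_{m;j}=(-1)^je_j(z_{m;1},\dots,z_{m;a_m})$. Therefore
\[
P_{m;\beta}(z)=\prod_{j=1}^{a_m}\bigl(z-z_{m;j}(\beta)\bigr),
\]
which is (ii). Since $\hat{\bm{U}}(\cdot,0)=\hat{\bm{U}}_0$ has the single characteristic value $\hat{\omega}_{m;0}^{\textup{d},+}$ in the disk, with multiplicity $a_m$, we get $z_{m;j}(0)=0$. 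Hence $p_{m;\ell}(0)=0$ for all $\ell$, and $r_{m;j}(0)=0$ for $j\ge1$ follows inductively from \eqref{RRR}. This gives (i). Note that (i) relies only on the symmetric-function description, so no regularity of the individual $z_{m;j}(\beta)$, which may branch, is needed.

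\textbf{Step 3 (factorization, part (iii)).} The function $f(\hat{\omega},\beta)=\det\hat{\bm{U}}(\hat{\omega},\beta)$ is analytic. For each fixed $\beta$, its zeros in the disk counted with order coincide with the characteristic values counted with Gohberg--Sigal multiplicity, because the logarithmic-derivative trace satisfies $\operatorname{tr}(\hat{\bm{U}}^{-1}\partial_{\hat\omega}\hat{\bm{U}})=\partial_{\hat\omega}\log f$. Since $f$ has no zeros on the circle,
\[
g:=f/P_{m;\beta}(\hat{\omega}-\hat{\omega}_{m;0}^{\textup{d},+})
\]
has only removable singularities in $\hat\omega$ for each $\beta$. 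To get joint analyticity, write $g$ via the Cauchy integral
\[
g(\hat\omega,\beta)=\frac{1}{2\pi\ii}\oint_{\partial B_{\varepsilon_0}}\frac{f(s,\beta)}{P_{m;\beta}(s-\hat{\omega}_{m;0}^{\textup{d},+})(s-\hat\omega)}\,\d s .
\]
The integrand is analytic in $(\hat\omega,\beta)$, since $P_{m;\beta}$ is nonvanishing on the contour. Thus $g$ is analytic in both variables and zero-free in the disk, so in particular it does not vanish identically. Alternatively, one could invoke the Weierstrass preparation theorem at $(\hat{\omega}_{m;0}^{\textup{d},+},0)$ and identify its Weierstrass polynomial with $P_{m;\beta}$ by uniqueness.

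\textbf{Main obstacle.} The delicate step is matching the multiplicity used in $p_{m;\ell}$ (Gohberg--Sigal, from the trace integral) with the order of zeros of $\det\hat{\bm{U}}$ for every $\beta$. The plan is to settle this with the identity $\operatorname{tr}(A^{-1}A')=(\det A)'/\det A$ together with the argument principle applied to $f(\cdot,\beta)$.
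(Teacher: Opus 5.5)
Your proof is correct, and for (i)–(ii) it is the paper's argument run in the opposite direction. The paper starts from $P_{m;\beta}(z)=\prod_j\bigl(z-z_{m;j}(\beta)\bigr)$ and uses Newton's identities to show that the elementary symmetric functions are analytic. You start from the recursion \eqref{RRR}, which gives analyticity of $r_{m;j}$ immediately, and then identify $r_{m;j}=(-1)^je_j$ because the recursion has a unique solution. The two are equivalent.

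The real difference is in (iii). The paper cites the Weierstrass preparation theorem for the factorization. It then rules out zeros of $g$ by a root-counting contradiction. You instead build $g=f/P_{m;\beta}(\hat\omega-\hat\omega^{\textup{d},+}_{m;0})$ by hand:
\begin{itemize}
\item Jacobi's formula $\textup{tr}(\hat{\bm U}^{-1}\partial_{\hat\omega}\hat{\bm U})=\partial_{\hat\omega}\log f$ shows that, for each $\beta$, the zeros of $f(\cdot,\beta)$ in the disk coincide with those of $P_{m;\beta}$, counted with multiplicity.
\item The Cauchy integral over $\partial B_{\varepsilon_0}$ then gives joint analyticity in $(\hat\omega,\beta)$.
\end{itemize}
This is essentially the standard proof of the preparation theorem, specialized to your setting.

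What your route buys is explicitness on points the paper's one-line citation leaves implicit:
\begin{itemize}
\item the factorization holds on the whole product $B_{\varepsilon_0}(\hat\omega^{\textup{d},+}_{m;0})\times B_{\beta_0}(0)$;
\item the Weierstrass polynomial is exactly the $P_{m;\beta}$ built from the characteristic values, not an abstract one identified afterwards by uniqueness;
\item $g$ is zero-free throughout, which is stronger than the stated ``does not vanish identically'', and you get this without a separate contradiction argument.
\end{itemize}
The paper's route is shorter because it delegates these points to the textbook theorem.
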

\begin{proof}[\bf Proof]
%The proof follows from an argument similar to that of \cite[Proposition 3.13]{ALZ_TAMS2023},  we only sketch the main steps here. 
Define
 \[
 P_{m;\beta}(z) := \prod_{j=1}^{a_m} (z - z_{m;j}(\beta)).
 \]
 Clearly, the values $z_{m;j}(\beta)$, $j = 1,2,\ldots,a_m$, are the roots of $P_{m;\beta}(\theta)$. By elementary algebra, we have
 \[
 P_{m;\beta}(z) = z^{a_m} +\sum_{j=1}^{a_m}(-1)^j \hat{e}_{m;j}(\beta) z^{a_m-j},
 \]
 where $\hat{e}_{m;j}(\beta) $ are elementary symmetric polynomials of the variables $z_{m;j}(\beta)$. 
 It follows that $\hat{e}_{m;j}(0) =0.$ If we set $\hat{e}_{m;0}(\beta)= 1$, then the functions $p_{m;\ell}(\beta)$ and $\hat{e}_{m;j}(\beta)$ satisfy Newton's identities(cf. \cite{AM_book1991}):
 \[
 \sum_{s=0}^{j-1} \((-1)^sp_{m;j-s}(\beta)\hat{e}_{m;s}(\beta)\)+(-1)^j j\hat{e}_{m;j}(\beta) = 0, \;j=1,2,\ldots,a_m.
 \]
This recurrence relation implies that $\hat{e}_{m;j}(\beta)$ are analytic in $B_{\beta_0}(0)$. Let $ r_{m;j}(\beta) :=(-1)^j\hat{e}_{m;j}(\beta)$, $j=1,2,\ldots,a_m.$ Therefore, 
\[
P_{m;\beta}(z) =  \sum_{j=0}^{a_m}r_{m;j}(\beta) z^{a_m-j},
\]
where the functions $r_{m;j}(\beta)$ are analytic in $B_{\beta_0}(0)$ and satisfy $r_{m;j}(0) = 0 $ and $r_{m;0}(\beta)=1$. Thus, $P_{m;\beta}(z)$ is a Weierstrass polynomial (cf. \cite[Definition 6.4.4]{KS_book2001}).

The factorization \eqref{WPT} of the function $f(\hat{\omega},\beta)$ directly follows from Weierstrass preparation theorem \cite[Theorem 6.4.5]{KS_book2001}.
Moreover, if there exists a pair of $(\hat{\omega}_{1},\beta_1)\in B_{\varepsilon_0}(\hat{\omega}^{\mathrm{d}}_{m;0})\times B_{\beta_0}(0)$ such that ${g}(\hat{\omega}_1,\beta_1)=0$, then the total number of the roots of ${f}(\hat{\omega},\cdot)$ at the point $\beta=\beta_1$  would be at least $a_m+1$. which is a contradiction. This completes the proof.
\end{proof}

The following result concerns the factorization property of Weierstrass polynomials. Detailed proofs can be found in \cite[Appendix A2, Theorem 1]{HB_book1985} and \cite[Lemma 6.4.2]{KS_book2001}.

\begin{lem}\label{Weierstrass}
	Let $P_{m;\beta}(\widehat{\omega}-\widehat{\omega}^{\textup{d},+}_{m;0})$ be a Weierstrass polynomial. 
	Then it admits a unique factorization into prime   factors of the form
	\[
	P_{m;\beta}(\hat{\omega}-\hat{\omega}^{\textup{d},+}_{m;0})=\prod^{b_m}_{j=1}p_{m;\beta,j}
	\(\hat{\omega}-\hat{\omega}^{\textup{d},+}_{m;0}\)^{c_{m;j}},
	\]
	where each $p_{m;\beta,j}(\cdot-\hat{\omega}^{\textup{d},+}_{m;0})$ is an irreducible and mutually distinct Weierstrass polynomial in $\hat{\omega}$ of degree $d_{m;j}$ with $\sum^{b_m}_{j=1}c_{m;j}d_{m;j}=a_m$.
\end{lem}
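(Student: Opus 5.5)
The plan is to treat $P_{m;\beta}(z)$ as a monic polynomial in $z$ whose coefficients $r_{m;j}(\beta)$ lie in the ring $\mathcal{O}_0$ of convergent power series in $\beta$ at the origin. By the Proposition above, $r_{m;0}=1$ and $r_{m;j}(0)=0$ for $j\ge1$, so $P_{m;\beta}(z)$ is a Weierstrass polynomial. The factorization then comes from unique factorization in $\mathcal{O}_0[z]$, combined with the observation that every monic factor of a Weierstrass polynomial is again Weierstrass.

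\emph{Step 1: the ring is a UFD.} The ring $\mathcal{O}_0$ is a discrete valuation ring: every nonzero element has the form $\beta^k u(\beta)$ with $u(0)\neq0$, so it is a local PID and in particular a UFD. By Gauss's lemma, $\mathcal{O}_0[z]$ is then a UFD as well. The units of $\mathcal{O}_0[z]$ are exactly the units of $\mathcal{O}_0$. Hence the monic polynomial $P_{m;\beta}$ factors, uniquely up to ordering, as a product of monic irreducible polynomials. Group equal factors to obtain
\[
P_{m;\beta}=\prod_{j=1}^{b_m}p_{m;\beta,j}^{\,c_{m;j}},
\]
with the $p_{m;\beta,j}$ pairwise distinct.

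\emph{Step 2: each factor is a Weierstrass polynomial.} Set $\beta=0$. Then $P_{m;0}(z)=z^{a_m}$, and this equals $\prod_j p_{m;0,j}(z)^{c_{m;j}}$. By unique factorization in $\mathbb{C}[z]$, each monic $p_{m;0,j}$ must be a power of $z$, namely $z^{d_{m;j}}$ with $d_{m;j}=\deg p_{m;\beta,j}$. So all non-leading coefficients of $p_{m;\beta,j}$ vanish at $\beta=0$, i.e.\ each $p_{m;\beta,j}$ is a Weierstrass polynomial. Comparing degrees gives $\sum_j c_{m;j}d_{m;j}=a_m$.

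\emph{Step 3: uniqueness.} Uniqueness is inherited directly from the UFD property. Normalizing factors to be monic removes the unit ambiguity, because the only monic unit is $1$.

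The main obstacle is Step 1, specifically justifying Gauss's lemma for $\mathcal{O}_0[z]$. This is standard commutative algebra, and I would cite it, as the paper does via \cite{HB_book1985,KS_book2001}. A more concrete alternative works over the fraction field $\mathcal{K}$ of $\mathcal{O}_0$. Factor $P_{m;\beta}$ into monic irreducibles over $\mathcal{K}$. Each such factor has roots among the $z_{m;j}(\beta)$, which are bounded and tend to $0$ as $\beta\to0$. Its coefficients are elementary symmetric functions of a subset of these roots, so they are bounded near $\beta=0$; being meromorphic and bounded, they are in fact analytic. Thus each factor lies in $\mathcal{O}_0[z]$, and irreducibility over $\mathcal{K}$ implies irreducibility in $\mathcal{O}_0[z]$ for monic polynomials.
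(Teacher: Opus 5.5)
Your argument is correct: unique factorization in $\mathcal{O}_0[z]$ (with $\mathcal{O}_0$ a discrete valuation ring, so Gauss's lemma applies), followed by reduction at $\beta=0$ to see that every monic factor of $z^{a_m}$ is a power of $z$, gives existence, uniqueness, the Weierstrass property of each factor, and $\sum_j c_{m;j}d_{m;j}=a_m$. The paper gives no proof of its own and only cites Baumg\"artel and Krantz, and your argument is essentially the standard UFD-plus-Gauss's-lemma proof found in such references; with a single parameter the base ring is a DVR, so no induction on the number of variables is needed.
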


It follows from Lemma \ref{Weierstrass} that the equation $P_{m;\beta}(\hat{\omega}-\hat{\omega}^{\textup{d},+}_{m;0})=0$ is equivalent to the system of equations  $p_{m;\beta,j}
(\hat{\omega}-\hat{\omega}^{\textup{d},+}_{m;0})=0,j=1,\cdots,b_m$. The roots of $p_{m;\beta,j}
(\hat{\omega}-\hat{\omega}^{\mathrm{d}}_{m;0})=0$ are given by a $d_{m;j}$-valued algebraic function, which admits a Puiseux series expansion near $\beta=0$ (cf. \cite[Appendix A5, Theorems 2 and 3]{HB_book1985}). Therefore, we can obtain the asymptotic expansion of $\hat{\omega}_{m;0,j}^{\textup{d},+}(\beta) $ for sufficiently small $\beta := \delta^{\frac{1}{2}}$.

\begin{thm}\label{PE_SWLR}
There exists a sufficiently small $\beta_0>0$ such that, for all $\beta\in B_{\beta_0}(0)$,	 the $a_m$ roots of ${f}(\cdot,\beta)$ defined in \eqref{det:U} near $\hat{\omega}^{\textup{d},+}_{m;0}$ take $\tilde{a}_m$ distinct values, independent of $\beta$, denoted by $\left\{\hat{\omega}^{{\textup{d}},+}_{m;0,j}(\beta)\right\}^{\tilde{a}_m}_{j=1}$, with $\tilde{a}_m\leq a_m$.
	Moreover, the set $\left\{\hat{\omega}^{{\textup{d}},+}_{m;0,j}(\beta)\right\}^{\tilde{a}_m}_{j=1}$ can 
	be divided into $b_m$ subsets:
	\[
	\left\{\hat{\omega}^{{\textup{d}},+}_{m;0,j}(\beta)\right\}^{\tilde{a}_m}_{j=1} = \bigcup_{t=1}^{b_m}  \left\{\hat{\omega}^{{\textup{d}},+}_{m;0,j}(\beta)\right\}^{\sum_{s=1}^{t}d_{m;s}}_{j=1+\sum_{s=1}^{t-1}d_{m;s}},
	\]
	where the positive integers $d_{m;s}$ satisfy $\sum_{s=1}^{b_m}d_{m;s} = \tilde{a}_m$. Each subset corresponds to a $d_{m;t}$-valued algebraic function, which admits a Puiseux series expansion of the form
	\begin{align}\label{w:expansion}
	\hat{\omega}^{{\textup{d}},+}_{m;0,j}(\beta)-\hat{\omega}^{\textup{d},+}_{m;0}
	=\sum^{+\infty}_{n=0}C_{m;t,n}\Big(\beta^{\frac{1}{d_{m;t}}}\big(\e^{\mathrm{i}\frac{2\pi}{d_{m;t}}}\big)^{i_m}\Big)^n,\quad j=i_m+\sum^{t-1}_{s=1}d_{m;s},\quad
	i_m=1,\ldots,d_{m;t}.
	\end{align}
\end{thm}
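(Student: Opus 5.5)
The plan is to work entirely with the Weierstrass polynomial $P_{m;\beta}$ from the preceding proposition and its prime factorization in Lemma \ref{Weierstrass}. By the factorization \eqref{WPT}, and since $g$ does not vanish in $B_{\varepsilon_0}(\hat{\omega}^{\textup{d},+}_{m;0})\times B_{\beta_0}(0)$, the roots of $f(\cdot,\beta)$ near $\hat{\omega}^{\textup{d},+}_{m;0}$ are exactly the roots of $P_{m;\beta}(\hat{\omega}-\hat{\omega}^{\textup{d},+}_{m;0})$. By Lemma \ref{Weierstrass}, these coincide with the union of the roots of the distinct irreducible factors $p_{m;\beta,t}$, $t=1,\ldots,b_m$, each taken once regardless of its exponent $c_{m;t}$. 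This gives $\tilde a_m:=\sum_t d_{m;t}\le \sum_t c_{m;t}d_{m;t}=a_m$, and the splitting into $b_m$ subsets indexed by $t$.

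Next I show that for $\beta\neq 0$ small these $\tilde a_m$ values are pairwise distinct, so that the count is independent of $\beta$. Two separate facts are needed.

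First, each irreducible $p_{m;\beta,t}$ has nonzero discriminant as an analytic function of $\beta$. Over the field of convergent Laurent series, irreducibility implies separability, since the characteristic is zero. The discriminant is analytic and not identically zero, so its zeros are isolated. After shrinking $\beta_0$, it vanishes at most at $\beta=0$.

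Second, distinct irreducible factors $p_{m;\beta,s}\ne p_{m;\beta,t}$ are coprime. Their resultant is analytic and not identically zero, so again, after shrinking $\beta_0$, they share no root for $0<|\beta|<\beta_0$.

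Together these give exactly $\tilde a_m$ distinct roots on the punctured disk. At $\beta=0$ all roots collapse to $0$, consistent with $r_{m;j}(0)=0$. I will interpret the statement as holding on the punctured disk, or in the sense of branches.

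For the expansion, I apply the Puiseux theorem (\cite[Appendix A5, Theorems 2 and 3]{HB_book1985}) to each irreducible Weierstrass polynomial $p_{m;\beta,t}$ of degree $d_{m;t}$. It yields a single convergent series $\phi_t(s)=\sum_{n\ge0}C_{m;t,n}s^n$ such that the roots are $\phi_t\bigl(\beta^{1/d_{m;t}}\e^{\ii 2\pi i_m/d_{m;t}}\bigr)$ for $i_m=1,\ldots,d_{m;t}$. Irreducibility ensures a single cycle of length $d_{m;t}$, so the monodromy $\beta\mapsto\e^{2\pi\ii}\beta$ permutes these roots cyclically. Labelling the roots with $j=i_m+\sum_{s<t}d_{m;s}$ produces \eqref{w:expansion}. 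Moreover $C_{m;t,0}=0$ because $z_{m;j}(0)=0$.

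The main obstacle is the distinctness and $\beta$-independence claim, that is, the discriminant and resultant argument together with a uniform choice of $\beta_0$ across the finitely many $t$. Both fail to be trivial only because the Weierstrass polynomial may carry repeated factors $c_{m;t}>1$, which is why the count is $\tilde a_m$ rather than $a_m$. I expect to handle this by passing to the reduced polynomial $\prod_t p_{m;\beta,t}$ and applying the discriminant argument to that product once.
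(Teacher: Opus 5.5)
Your proposal follows the paper's route: you reduce to the roots of the Weierstrass polynomial $P_{m;\beta}$ via \eqref{WPT}, split it into its distinct irreducible factors using Lemma~\ref{Weierstrass}, and apply the Puiseux theorem of \cite[Appendix A5]{HB_book1985} to each factor; your discriminant/resultant argument (equivalently, the discriminant of the reduced product $\prod_t p_{m;\beta,t}$) supplies the distinctness and $\beta$-independence of the $\tilde a_m$ values, which the paper leaves implicit in that citation. Your caveat is also correct: all roots coincide at $\beta=0$, so the count $\tilde a_m$ holds only on the punctured disk $0<|\beta|<\beta_0$, and the statement as written is imprecise on this point.
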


For each fixed  $m=1,2,\ldots,N_{\mathrm{d}}$, let  $\bm{w}_{m;j},j=1,2,\ldots,a_m$, denote the eigenvectors corresponding to the eigenvalue $\lambda^{\textup{d}}_{m}$ of  $\bm{T}^{-2}\bm{V}^{-1}\bm{\mathcal{S}}$ (cf. \eqref{GEP}--\eqref{orthonormalV}). Accordingly, the eigenvectors of $\hat{\bm{U}}(\hat{\omega},\beta)$ near $\hat{\omega}_{m;0}^{\textup{d},+}$  can be expressed as $\bm{V}{\bm{w}}_{m;j}(\beta),j=1,2,\ldots,a_m$, where each ${\bm{w}}_{m;j}(\beta)$ satisfies \eqref{eigenvetor_AE}. By combining \eqref{operator_AE} and \eqref{eigenvetor_AE}, there exists $\tilde{a}\in[1,a_m]$ such that
\[
\hat{\bm{U}}(\hat{\omega}^{{\mathrm{d}},+}_{m;0,j}(\beta),\beta)\bm{V}
{\bm{w}}_{m;\tilde{a}}(\beta)=
\bm{T}^2(-\rho\hat{\omega}^2+\lambda^{\textup{d}}_{m}){\bm{w}}_{m;\tilde{a}}(\beta)\big|_{\hat{\omega}
	=\hat{\omega}^{{\mathrm{d}},+}_{m;0,j}(\beta)}+\mathcal{O}(\beta).
\]
Substituting \eqref{w:expansion} into the equality above, we have
\[
C_{m;t,n}=0,\quad n=0,1,\cdots,d_{m;t}-1.
\]
Consequently, all the characteristic values $\hat{\omega}_{m;0,j}^{\textup{d},+}(\beta),j=1,2,\ldots,a_m$, counted with multiplicity, of $\hat{\bm{U}}(\hat{\omega},\beta)$ near $\hat{\omega}_{m;0}^{\textup{d},+}$ can be represented as
\begin{align}\label{eq:muti}
\hat{\omega}^{\textup{d},+}_{m;0,j}(\beta)=\hat{\omega}^{\textup{d},+}_{m;0}+C_{m;t,d_{m;t}}\beta+o(\beta).
\end{align}

Our main result in this subsection is the following:

\begin{thm}\label{theoasy}
	  The $12N$ subwavelength resonant frequencies $\omega^{\pm}_i(\delta^{\frac{1}{2}})$, counted with their multiplicities,  admit the following asymptotic expansions as $\delta \rightarrow 0^+$:
	\begin{align}\label{full:asymptotic}
	\omega^{\pm}_i(\delta^{\frac{1}{2}})=\pm\sqrt{\frac{\lambda_i}{\rho}}\delta^{\frac{1}{2}}-\ii \frac{\alpha {\bm{v}}_{i}^T \bm{T}^{-2}\bm{E}  {\bm{v}}_{i} }{2\rho} \delta+o(\delta),\quad i=1,2,\ldots,6N,
	\end{align}
	with the relation
	\begin{align}\label{-reson}
	\omega^-_i(\delta^{\frac{1}{2}})=-\overline{\omega^+_i}(\delta^{\frac{1}{2}}),
	\end{align}
	where  $(\lambda_{i},\bm{v}_{i})_{1\leq m\leq 6N}$ are the eigenpairs of generalized elastic stiffness eigenvalue problem \eqref{GEP}, $\bm{T}$ and $\bm{E}$ are defined in \eqref{Volumn_xi} and \eqref{EPQIJ}, respectively,  and $\alpha$ is given by \eqref{alphaa}.
	Moreover, 
	let $\mathbf{u}_{i}$ denote the eigenmode corresponding to $\omega_i^+$, and let $\bm{v}^{(j)}_i$ be the $j$-th entry of the eigenvector $\bm{v}_i$. Then
	\begin{equation}\label{eigenmode}
	\mathbf{u}_{i} = \sum_{j=1}^{N} \sum_{q=1}^{6} \bm{v}^{\(N(q-1)+j\)}_{i}\mathbf{V}_{j;q}+\mathcal{O}(\delta^{\frac{1}{2}}),
	\end{equation}
	where $\mathbf{V}_{j;q}$, for $j=1,2,\ldots,N$ and $q=1,2,\ldots,6$, are given by \eqref{V_i_solu1}--\eqref{V_i_soluN}.
\end{thm}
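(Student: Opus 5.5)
We first locate the characteristic values of $\hat{\bm{U}}(\hat{\omega},\beta)$ with Theorem \ref{existence_SWLR} and Theorem \ref{PE_SWLR}. By \eqref{eq:muti}, every characteristic value near $\hat{\omega}^{\textup{d},+}_{m;0}$ has the form $\hat{\omega}^{\textup{d},+}_{m;0}+C\beta+o(\beta)$; the fractional Puiseux terms vanish. So the task reduces to computing the first-order coefficient $C$ for each eigenpair. Undoing the scaling $\omega=\beta\hat{\omega}$ with $\beta=\delta^{1/2}$ then turns $\hat{\omega}_{i;0}^{+}=\sqrt{\lambda_i/\rho}$ into the leading term $\pm\sqrt{\lambda_i/\rho}\,\delta^{1/2}$. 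The correction $C\beta$ becomes the $\delta$-term.

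To compute $C$, we use the eigenvector expansion \eqref{eigenvetor_AE}. We write $\hat{\omega}=\hat{\omega}_{i;0}^{+}+\beta\hat{\omega}_1+o(\beta)$ and $\bm{v}(\beta)=\bm{v}_i+\beta\bm{v}_{i;1}+o(\beta)$, and impose $\hat{\bm{U}}(\hat{\omega},\beta)\bm{V}\bm{v}(\beta)=0$. It is convenient first to multiply $\hat{\bm{U}}$ on the left by $\bm{T}^{-2}\bm{V}$, which makes the leading part the symmetric pencil $-\rho\hat{\omega}^2\bm{V}+\bm{T}^{-2}\bm{\mathcal{S}}$ in the variable $\bm{V}^{-1}$-conjugated form. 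Collecting the $O(\beta^0)$ terms reproduces \eqref{GEP}. At $O(\beta)$ we get
\[
(-\rho(\hat{\omega}_{i;0}^{+})^2\bm{V}+\bm{T}^{-2}\bm{\mathcal{S}})\bm{v}_{i;1}-2\rho\hat{\omega}_{i;0}^{+}\hat{\omega}_1\bm{V}\bm{v}_i-\ii\alpha\hat{\omega}_{i;0}^{+}\bm{T}^{-2}\bm{E}\bm{v}_i=0 .
\]
Testing with $\bm{v}_i^T$ annihilates the first term, using the symmetry of $\bm{T}^{-2}\bm{\mathcal{S}}$ that follows from \eqref{symCMGS} together with the normalization \eqref{orthonormalV}. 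This yields $\hat{\omega}_1=-\ii\alpha\,\bm{v}_i^T\bm{T}^{-2}\bm{E}\bm{v}_i/(2\rho)$, which is \eqref{full:asymptotic}. For the branch near $-\sqrt{\lambda_i/\rho}$ the same computation gives the same imaginary correction. Together with the real leading terms this gives $\omega^-_i=-\overline{\omega^+_i}$. We can confirm \eqref{-reson} independently from the symmetry $\overline{\bm{U}(-\bar\omega,\delta)}=\bm{U}(\omega,\delta)$, inherited from $\overline{\bm{G}^{-\bar\omega}}=\bm{G}^{\omega}$.

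The main obstacle is degenerate eigenvalues $\lambda^{\textup{d}}_m$ with $a_m>1$. There, $\bm{v}_i$ cannot be chosen arbitrarily in the eigenspace: the $O(\beta)$ solvability condition must hold for every test vector in the eigenspace. This forces $\bm{v}_i$ to diagonalize the restriction of $\bm{T}^{-2}\bm{E}$ to the eigenspace, and the admissible values $\hat{\omega}_1$ are the eigenvalues of $-\ii\alpha(\bm{w}_{m;j}^T\bm{T}^{-2}\bm{E}\bm{w}_{m;k})_{j,k}/(2\rho)$. We would therefore interpret $\bm{v}_i$ in the statement as this adapted basis, which we obtain from Kato's reduction process. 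The Puiseux analysis already guarantees that no fractional orders appear, which justifies the ansatz.

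Finally, for the eigenmode \eqref{eigenmode} we use Lemma \ref{lem32}. With $\bm{h}=0$, the mode is $\mathbf{u}=\sum_{j,q}s_{q,j}\mathbf{u}_{j,q}$, where $\bm{s}$ lies in the kernel of $\bm{I}-\bm{U}$, and hence $\bm{s}\propto\bm{V}\bm{v}_i+O(\beta)$. By Proposition \ref{prop33}, $\mathbf{u}_{j,q}=\bm{\xi}_{j,q}\chi_{D_j}/\|\bm{\xi}_{j,q}\|^2+O(\delta+\omega^2)$ inside $D$. The factor $\bm{V}$ cancels the normalization, so inside $D$ the mode equals $\sum\bm{v}_i^{(N(q-1)+j)}\bm{\xi}_{j,q}\chi_{D_j}+O(\delta^{1/2})$. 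Outside $D$, the field is the solution of the exterior problem \eqref{exterior_problemGS} with these Dirichlet data. By Lemma \ref{def:DTNGS} and the analyticity of the layer potentials in $\omega$, this exterior field is $\sum\bm{v}_i^{(\cdot)}\mathbf{V}_{j,q}+O(\omega)$, where $\mathbf{V}_{j,q}$ are the static extensions \eqref{V_i_solu1}--\eqref{V_i_soluN}. Since $\omega=O(\delta^{1/2})$, this gives \eqref{eigenmode}.
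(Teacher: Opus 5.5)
Your route is the paper's route. You substitute the eigenvector/Puiseux expansions into $\hat{\bm{U}}(\hat{\omega},\beta)\bm{V}\bm{v}(\beta)=0$ and read off the $O(\beta)$ solvability condition, you get \eqref{-reson} from the conjugation symmetry, and you obtain the eigenmode from Lemma~\ref{lem32}, Proposition~\ref{prop33} and the static limit of Lemma~\ref{def:DTNGS}. Your remark that, for a degenerate $\lambda^{\textup{d}}_m$, $\bm{v}_i$ must be the basis diagonalizing the reduced first-order matrix is a genuine refinement that the paper leaves implicit.

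The step that fails as you justify it is the annihilation of the $\bm{v}_{i;1}$ term.
\begin{itemize}
\item \eqref{symCMGS} gives symmetry of $\bm{\mathcal{S}}$, not of $\bm{T}^{-2}\bm{\mathcal{S}}$. The latter is symmetric only if $\bm{T}^2$ commutes with $\bm{\mathcal{S}}$, which essentially means all $\tau_j$ coincide.
\item In general the left null vector of $-\lambda_i\bm{V}+\bm{T}^{-2}\bm{\mathcal{S}}$ is $\bm{T}^2\bm{v}_i$, not $\bm{v}_i$. So testing with $\bm{v}_i^T$ does not remove the $\bm{v}_{i;1}$ term.
\end{itemize}
The fix is to multiply by $\bm{V}$ instead of $\bm{T}^{-2}\bm{V}$. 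Then $\bm{V}\hat{\bm{U}}\bm{V}=\bm{\mathcal{S}}-\rho\hat{\omega}^2\bm{T}^2\bm{V}-\ii\alpha\beta\hat{\omega}\bm{E}+O(\beta^2)$ is a symmetric-definite pencil with a symmetric perturbation, and testing with $\bm{v}_i^T$ gives
\[
\hat{\omega}_1=-\ii\alpha\,\frac{\bm{v}_i^T\bm{E}\bm{v}_i}{2\rho\,\bm{v}_i^T\bm{T}^2\bm{V}\bm{v}_i}.
\]
Since $\bm{E}$ is supported on the first layer, $\bm{v}_i^T\bm{T}^{-2}\bm{E}\bm{v}_i=\tau_1^{-2}\bm{v}_i^T\bm{E}\bm{v}_i$. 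Hence this agrees with \eqref{full:asymptotic} under \eqref{orthonormalV} when all $\tau_j$ are equal, but not in general. For distinct $\tau_j$ the eigenvectors of \eqref{GEP} are $\bm{T}^2\bm{V}$-orthogonal, so \eqref{orthonormalV} cannot even be imposed.

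The paper's own derivation of $C_{m;t,d_{m;t}}$ makes the same tacit identification. This is therefore a limitation of the statement's generality rather than a departure from the paper. Still, you should either assume $\tau_j\equiv\tau$ or state the corrected formula. In the degenerate case, likewise, the admissible $\hat{\omega}_1$ are the eigenvalues of $-\ii\alpha/(2\rho)$ times $(\bm{w}_{m;j}^T\bm{E}\bm{w}_{m;k})_{j,k}$, taken in a $\bm{T}^2\bm{V}$-orthonormal basis of the eigenspace.
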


\begin{proof}[\bf Proof]
	By using \eqref{operator_AE}, \eqref{GEP}, \eqref{wdmopm}, and \eqref{eq:muti}, we can derive that
	\begin{align*}
	\hat{\bm{U}}(\hat{\omega}^{\textup{d},+}_{m;0,j}(\beta),\beta)\bm{V}{\bm{w}}_{m;j}(\beta)
	&=\bm{T}^2(-\lambda^{\textup{d}}_{m}\bm{I}+\bm{T}^{-2}\bm{V}^{-1}\bm{\mathcal{S}}){\bm{w}}_{m;j}
	\\
	&
	\quad+
	\beta(-2\rho\hat{\omega}^{\textup{d},+}_{m;0}C_{m;t,d_{m;t}}\bm{T}^2-\ii\alpha  \hat{\omega}^{\textup{d},+}_{m;0} \bm{V}^{-1}\bm{E} ){\bm{w}}_{m;j}\\
	&\quad+\beta\bm{T}^2(-\lambda^{\textup{d}}_{m}\bm{I}+\bm{T}^{-2}\bm{V}^{-1}\bm{\mathcal{S}}){\bm{w}}_{m;j,1}+o(\beta)=0.
	\end{align*}
	Therefore, we obtain
	\begin{equation*}
	C_{m;t,d_{m;t}}=-\ii \frac{\alpha }{2\rho} {\bm{w}}_{m;j}^T \bm{T}^{-2}\bm{E}  {\bm{w}}_{m;j}.
	\end{equation*}
	From $\beta :=\delta^{\frac{1}{2}}$ and $\omega:=\beta \hat{\omega}$, it follows that
	\begin{align}\label{Omega:delta}
	\omega^{+}_i(\delta^{\frac{1}{2}})=\sqrt{\frac{\lambda_i}{\rho}}\delta^{\frac{1}{2}}-\ii \frac{\alpha {\bm{v}}_{i}^T \bm{T}^{-2}\bm{E}  {\bm{v}}_{i} }{2\rho} \delta+\mathcal{O}(\delta^{1+\frac{1}{2d_{i}}}),\quad i=1,2,\ldots,6N,
	\end{align}
	counted with their multiplicities. Moreover, by taking the complex conjugate on both sides of \eqref{D2N} and using the Difinition \ref{defn:resonance}, we find that \eqref{-reson} holds.
	
	In view of \eqref{operator_AE} and \eqref{eigenvetor_AE}, we obtain the leading term of the $\bm{s}$ in \eqref{6Nsystem} with $\bm{H}=\bm{0}$ is $\bm{V}\bm{v}_i+\mathcal{O}(\delta^{\frac{1}{2}})$.
	By Lemma~\ref{lem32}, the eigenmode inside the resonators $D = \cup_{j=1}^{N}D_{j}$ can be expressed as
	\[
	\mathbf{u}_{(i)} = \sum_{j=1}^{N} \sum_{q=1}^{6}\|\bm{\xi}_{j,q}\|_{L^2(D_j)}^2 \bm{v}^{\(N(q-1)+j\)}_{i}\mathbf{u}_{j;q}.
	\]
	The asymptotic expansion of Proposition \ref{prop33} shows that $\mathbf{u}_{j;q} =\frac{1}{\|\bm{\xi}_{j,q}\|_{L^2(D_j)}^2}\bm{\xi}_{j,q}\chi_{D_j}  +\mathcal{O}(\delta)$, so we obtain the desired result inside the resonators.
	On the other hand, by expanding the result of Lemma~\ref{def:DTNGS} for small $\omega$, we obtain a Lam\'e-free function that interpolates between the boundaries of the resonators, that is, $\mathbf{V}_{j,q}$ outside the resonators.
	The proof is complete. 
\end{proof}

\section{Explicit formulas for layered concentric elastic sphere}\label{concentric_radial}

In this section, we provide some explicit calculations for the radial case of an $N$-layered concentric sphere (i.e., Matryoshka-type spherical bonded rubber bush mountings) concerning the elastic stiffness tensor in \eqref{CapmatrixGS} and  the asymptotic expansions of %displacement-to-traction map in \eqref{T_0expre}--\eqref{T_1expre} and  
subwavelength resonances in \eqref{full:asymptotic}.
Precisely, we give a sequence of resonators, $D_j,$ $j=1,2,\ldots,N$, by
\begin{equation}\label{Dj}
D_j = \{r_{j}^-<r:=|\Bx|< r_{j}^+\},
\end{equation}
the host matrix material $D_j'$, $j=0,1,\ldots,N,$ by
\begin{equation}\label{eq:aj}
D_{0}'=\{r>r_{1}^+\}, \quad D_j'=\{r_{j+1}^+<r< r_{j}^-\}, \quad  j=1,2,\ldots, N-1, \quad D_N'=\{r< r_N^-\},
\end{equation}
and the interfaces between the adjacent layers can be rewritten by
\begin{equation}\label{interface}
\Gamma_j^\pm=\left\{|\Bx|=r_j^\pm\right\}, \quad  j=1,2,\ldots,N,
\end{equation}
where $N\in \mathbb{N}$ and $r_j^\pm>0$. 

We remark that, in the concentric radial configuration, the rigid motion basis 
$\{\bm{\varkappa}_{p}\}_{p=1}^6$ in \eqref{rigid_motions} is automatically orthogonal 
in the $L^2(D_i)^3$ sense for any $i=1,2,\ldots,N$. Consequently, we may simply take 
$\bm{\xi}_{i,p}=\bm{\varkappa}_{p}$. Moreover, the quantities 
$\hat{\bm\phi}_{j;i,p}^{-}, \hat{\bm{\psi}}_{j+1;i,p}^{+}, 
\hat{\bm\varphi}_{j;i,p}^{-}, \hat{\bm{\zeta}}_{j+1;i,p}^{+}$ in \eqref{S^-101GS}, for $j=1,2,\ldots,N-1$, reduce to  
\begin{equation}\label{S^-101GSDi}
\begin{pmatrix}
\hat{\bm\phi}_{j,p}^{-}\\
\nm
\hat{\bm{\psi}}_{j+1,p}^{+}
\end{pmatrix}
:=\mathbb{S}_{j,j+1}^{-1}\begin{pmatrix}
\bm{\varkappa}_{p}\\
\nm
\bm{0}
\end{pmatrix},
\qquad
\begin{pmatrix}
\hat{\bm\varphi}_{j,p}^{-}\\
\nm
\hat{\bm{\zeta}}_{j+1,p}^{+}
\end{pmatrix}
:=\mathbb{S}_{j,j+1}^{-1}\begin{pmatrix}
\bm{0}\\
\nm
\bm{\varkappa}_{p}
\end{pmatrix},
\end{equation}
and $\hat{\bm\zeta}_{1;1,q}^+$ in Lemma \ref{EST} reduces to $\hat{\bm\zeta}_{1,q}^+ = \mathbf{S}_{\Gamma_1^+}^{-1}[\bm{\varkappa}_{p}]$, 
which shows that these quantities are independent of the domain $D_i$.
Moreover, the elastic stiffness tensor \eqref{CapmatrixGS} reduces to 
\begin{equation}\label{Spqij}
\begin{aligned}
\bm{\mathcal{S}}_{pqij} &= \delta_{i,j-1}\int_{\Gamma_j^+}  \hat{\bm \zeta}_{j,q}^+ \cdot \bm{\varkappa}_{p} ~\d \sigma + \delta_{i,j} \(\int_{\Gamma_{j+1}^+}    \hat{\bm\psi}_{j+1,q}^+ \cdot \bm{\varkappa}_{p}  ~\d \sigma - \int_{\Gamma_j^+} \hat{\bm \zeta}_{j,q}^+~ \cdot \bm{\varkappa}_{p}\d \sigma\)\\
&\quad  - \delta_{i,j+1} \int_{\Gamma_{j+1}^+}  \hat{\bm\psi}_{j+1,q}^+  \cdot \bm{\varkappa}_{p} ~\d \sigma.
\end{aligned}
\end{equation}

Let $\Gamma_0: = \left\{|\Bx|=r_0\right\}.$ It is known from \cite{DLbook2024} that for $p=1,2,3$,
\begin{equation}\label{staticTI123}
\mathbf{S}_{\Gamma_0}[\bm{\varkappa}_{p}](\mathbf{x})=
\begin{cases}
\ds -\frac{(2\lambda+5\mu)}{3\mu(\lambda+2\mu)}\frac{r_0^2}{|\mathbf{x}|}\bm{\varkappa}_{p}, & |\Bx|\geq r_0, \\
\nm
\ds  -\frac{(2\lambda+5\mu)r_0}{3\mu(\lambda+2\mu)}\bm{\varkappa}_{p}, & |\Bx|\le r_0, 
\end{cases}\quad \frac{\partial}{\partial\bm{v}
}\mathbf{S}_{\Gamma_0}[\bm{\varkappa}_{p}](\mathbf{x})=\begin{cases}
\ds \frac{r_0^2}{|\mathbf{x}|^2}\bm{\varkappa}_{p}, & |\Bx|\geq r_0, \\
\nm
\ds  0, & |\Bx|\le r_0, 
\end{cases}
\end{equation}
 and  for $p=4,5,6$, 
\begin{equation}\label{staticTI456}
\mathbf{S}_{\Gamma_0}[\bm{\varkappa}_{p}](\mathbf{x})=\begin{cases}
\ds -\frac {r_0^4}{3\mu |\mathbf{x}|^3}\bm{\varkappa}_{p}, & |\Bx|\geq r_0, \\
\nm
\ds  -\frac{r_0}{3\mu}\bm{\varkappa}_{p}, & |\Bx|\le r_0, 
\end{cases},\quad
\frac{\partial}{\partial\bm{v}
}\mathbf{S}_{\Gamma_0}[\bm{\varkappa}_{p}](\mathbf{x})=\begin{cases}
\ds \frac {r_0^4}{ |\mathbf{x}|^4}\bm{\varkappa}_{p}, & |\Bx|\geq r_0, \\
\nm
\ds  0, & |\Bx|\le r_0, 
\end{cases}
\end{equation}
It then follows from \eqref{singlejumpk} that for $p=1,2,\ldots,6$
\begin{equation}\label{NPstaticTI}
\mathbf{K}_{\Gamma_0}^{*}[\bm{\varkappa}_{p}]=\frac{1}{2}\bm{\varkappa}_{p}.
\end{equation}
Moreover, it follows from \eqref{SL1} that for $\Bx\in \RR^3$
\begin{equation}\label{SL1TI}
\mathbf{S}_{\Gamma_0,1}[\bm{\varkappa}_{p}](\mathbf{x}) =\begin{cases}
\ds -4\pi\ii \alpha r_0^2 \bm{\varkappa}_{p}, & \mbox{ if } p=1,2,3, \\
\nm
\ds  0, & \mbox{ if } p=4,5,6.
\end{cases}  
\end{equation}

\begin{lem}\label{EST_sphere}
	The elastic stiffness tensor  \eqref{CapmatrixGS} for an $N$-layered concentric sphere admits the following exact representation:
	\begin{equation}\label{ECM}
	\begin{aligned}
	\ds \bm{\mathcal{S}}_{pqij}: = \begin{cases}
	-\mathrm{S}^{\mathrm{T}}_{j-1} \chi_{\{1\le q\le 3\}} \delta_{p,q}- \mathrm{S}^{\mathrm{R}}_{j-1} \chi_{\{4\le q\le 6\}}\delta_{p,q}, &\text{ if } i=j-1,\\
	\nm
	\(\mathrm{S}^{\mathrm{T}}_{j-1}+\mathrm{S}^{\mathrm{T}}_j\)\chi_{\{1\le q\le 3\}}\delta_{p,q}+ \(\mathrm{S}^{\mathrm{R}}_{j-1}+\mathrm{S}^{\mathrm{R}}_j\) \chi_{\{4\le q\le 6\}}\delta_{p,q},
	&\text{ if }  1<i=j<N,  \\
	\nm
-\mathrm{S}^{\mathrm{T}}_j \chi_{\{1\le q\le 3\}} \delta_{p,q}- \mathrm{S}^{\mathrm{R}}_j \chi_{\{4\le q\le 6\}}\delta_{p,q}, &\text{ if } i=j+1,
	\end{cases}
	\end{aligned}
	\end{equation}
	where the translational  and rotational stiffness $\mathrm{S}^{\mathrm{T}}_j$ and $\mathrm{S}^{\mathrm{R}}_j$ of $D'_j$, for $j=0,1,\ldots,N$, are defined by, respectively,
	\begin{equation}\label{stif_tra}
	\mathrm{S}^{\mathrm{T}}_j =\begin{cases}
	\ds \frac{12\pi \mu(\lambda+2\mu)}{(2\lambda+5\mu)}{r_{1}^+},& \mbox{ for } j = 0,\\
	\nm
	\ds \frac{12\pi \mu(\lambda+2\mu)}{(2\lambda+5\mu)}\frac{r_{j}^- r_{j+1}^+ }{r_{j}^- -r_{j+1}^+},& \mbox{ for } j = 1,2,\ldots,N-1,\\
	\nm 
	\ds 0,& \mbox{ for } j = N,
	\end{cases} 
	\end{equation}
	and 
	\begin{equation}\label{stif_tor}
	\mathrm{S}^{\mathrm{R}}_j =\begin{cases}
	\ds {8\pi \mu}{\(r_{1}^+\)^3},& \mbox{ for } j = 0,\\
	\nm
	\ds {8\pi \mu}\frac{\(r_{j}^-\)^3 \(r_{j+1}^+\)^3}{\(r_{j}^-\)^3 -\(r_{j+1}^+\)^3},& \mbox{ for } j = 1,2,\ldots,N-1,\\
	\nm
	\ds 0, & \mbox{ for } j = N.
	\end{cases} 
	\end{equation}
	That is, $\bm{\mathcal{S}} = \diag\(\bm{\mathcal{S}}^{\mathrm{T}},\bm{\mathcal{S}}^{\mathrm{T}},\bm{\mathcal{S}}^{\mathrm{T}},\bm{\mathcal{S}}^{\mathrm{R}},\bm{\mathcal{S}}^{\mathrm{R}},\bm{\mathcal{S}}^{\mathrm{R}}\)$, where 
	\begin{equation}\label{capmatrix123}
	\begin{split}
	\bm{\mathcal{S}}^{\textup{m}}:= \begin{pmatrix}
	\ds \mathrm{S}^{\mathrm{m}}_{0}+\mathrm{S}^{\mathrm{m}}_1 &  \ds - \mathrm{S}^{\mathrm{m}}_1 & &  &  \\
	\nm
	\ds -\mathrm{S}^{\mathrm{m}}_1 & \ds \mathrm{S}^{\mathrm{m}}_{1}+\mathrm{S}^{\mathrm{m}}_2 & \ds - \mathrm{S}^{\mathrm{m}}_2&  &  \\
	\nm
	& \ddots &\ddots & \ddots& \\
	\nm
	& &\ds-\mathrm{S}^{\mathrm{m}}_{N-2} &\ds \mathrm{S}^{\mathrm{m}}_{N-2}+\mathrm{S}^{\mathrm{m}}_{N-1} &\ds -\mathrm{S}^{\mathrm{m}}_{N-1}\\
	\nm
	& &  &\ds-\mathrm{S}^{\mathrm{m}}_{N-1} &\ds \mathrm{S}^{\mathrm{m}}_{N-1}
	\end{pmatrix},
	\end{split}
	\end{equation}
	with $\mathrm{m}\in\{\mathrm{T,R}\}$.
\end{lem}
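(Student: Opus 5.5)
The plan is to compute the densities in \eqref{S^-101GSDi} explicitly, using the fact that for concentric spheres every single-layer potential of a rigid motion is again a multiple of that rigid motion. By \eqref{staticTI123}--\eqref{staticTI456}, for a sphere $\Gamma_0$ of radius $r_0$ we have $\mathbf{S}_{\Gamma_0}[\bm{\varkappa}_p]=-c_p r_0\bm{\varkappa}_p$ inside and $-c_p r_0^{k_p+1}|\Bx|^{-k_p}\bm{\varkappa}_p$ outside. Here $c_p=\frac{2\lambda+5\mu}{3\mu(\lambda+2\mu)}$ and $k_p=1$ for $p\le3$, while $c_p=\frac1{3\mu}$ and $k_p=3$ for $p\ge4$. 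On $\Gamma_j^-$ and $\Gamma_{j+1}^+$ this means restricting by $|\Bx|=r_j^-$ or $r_{j+1}^+$.

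I will make the ansatz $\hat{\bm\varphi}^-_{j,p}=a\bm{\varkappa}_p$ and $\hat{\bm\zeta}^+_{j+1,p}=b\bm{\varkappa}_p$, and likewise $\hat{\bm\phi}^-_{j,p}=a'\bm{\varkappa}_p$ and $\hat{\bm\psi}^+_{j+1,p}=b'\bm{\varkappa}_p$. With $R=r_j^-$ and $r=r_{j+1}^+<R$, the system $\mathbb{S}_{j,j+1}(a\bm{\varkappa}_p,b\bm{\varkappa}_p)^T=(0,\bm{\varkappa}_p)^T$ becomes the $2\times2$ linear system
\[
-c_p\bigl(Ra+ r^{k_p+1}R^{-k_p}b\bigr)=0,\qquad -c_p\bigl(Ra+rb\bigr)=1 .
\]
The first equation holds because $\Gamma_j^-$ lies outside the sphere of radius $r$ and the potential of the outer sphere is constant inside it; the second holds because $\Gamma_{j+1}^+$ lies inside the sphere of radius $R$. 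Since $\mathbb{S}_{j,j+1}$ is invertible, uniqueness justifies the ansatz. Solving gives
\[
b=-\frac{R^{k_p}}{c_p r(R^{k_p}-r^{k_p})},\qquad a=-\frac{r^{k_p+1}}{R^{k_p+1}}\,b .
\]
The second right-hand side is treated the same way. For $D_0'$ one simply has $\hat{\bm\zeta}^+_{1,p}=-(c_pr_1^+)^{-1}\bm{\varkappa}_p$.

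Next I evaluate the integrals in \eqref{Spqij}. For $p\neq q$, $\int_{\Gamma}\bm{\varkappa}_q\cdot\bm{\varkappa}_p\,\d\sigma=0$ by spherical symmetry, which gives the factor $\delta_{p,q}$ and hence the block-diagonal structure. For the norms, $\int_{|\Bx|=r}|\bm{\varkappa}_p|^2\,\d\sigma$ equals $4\pi r^2$ for translations and $\int_{|\Bx|=r}(x_a^2+x_b^2)\,\d\sigma=\frac{8\pi}{3}r^4$ for rotations. Substituting, the translational case gives
\[
-\int_{\Gamma_{j+1}^+}\hat{\bm\zeta}^+_{j+1,p}\cdot\bm{\varkappa}_p\,\d\sigma=\frac{4\pi r R}{c_p(R-r)}=\mathrm{S}^{\mathrm T}_j ,
\]
and the rotational case gives $\frac{8\pi}{3}\cdot 3\mu\frac{r^3R^3}{R^3-r^3}=\mathrm{S}^{\mathrm R}_j$. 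The $j=0$ case similarly gives $4\pi r_1^+/c_p$ and $8\pi\mu(r_1^+)^3$. The off-diagonal entries use $\hat{\bm\psi}^+_{j+1,p}$. By the symmetry \eqref{zetaeqpsi}, or by direct computation from the second system, one gets $\int_{\Gamma_{j+1}^+}\hat{\bm\psi}^+_{j+1,q}\cdot\bm{\varkappa}_p\,\d\sigma=\mathrm{S}^{\mathrm m}_j\delta_{pq}$. Inserting these values into \eqref{Spqij} yields the three cases of \eqref{ECM}, with $\mathrm{S}_N=0$ because $D_N'$ contributes nothing ($\hat{\bm\psi}^+_{N+1}=0$). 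This assembles into the tridiagonal matrices \eqref{capmatrix123}.

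The main obstacle is bookkeeping, in two places. First, the sign conventions in \eqref{Spqij} for the neighbour entries $i=j\pm1$ need care: I will cross-check them against the symmetry $\bm{\mathcal{S}}_{pqij}=\bm{\mathcal{S}}_{qpji}$ and against the row-sum property. A rigid translation applied to all layers (together with $D_0'$) should leave only the $\mathrm{S}_0$ term, as the first row of \eqref{capmatrix123} shows. Second, I need to confirm that the exterior $r^{-3}$ decay for rotations gives exactly the factor $R^3-r^3$ in the denominator.
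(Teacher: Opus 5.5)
Your route is the same as the paper's proof: a rigid-motion ansatz for the densities, explicit inversion of the $2\times2$ system built from \eqref{staticTI123}--\eqref{staticTI456}, then integration in \eqref{Spqij}. For $p=4,5,6$ it is sound. The field $f(|\Bx|)\bm{\varkappa}_p$ is divergence-free, and $f=A+B|\Bx|^{-3}$ gives exact Lam\'e solutions. By rotation-equivariance and oddness, $\mathbf{S}_{\Gamma_0}[\bm{\varkappa}_p]$ is exactly of this form, so \eqref{staticTI456} holds off $\Gamma_0$ and you get $\mathrm{S}^{\mathrm{R}}_j$ correctly. $\mathrm{S}^{\mathrm{T}}_0$ is also fine, because only the on-sphere value is needed there.

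The gap is the first equation of your system for $p=1,2,3$, i.e.\ the trace on $\Gamma_j^-$ of the potential generated on $\Gamma_{j+1}^+$. The exterior formula in \eqref{staticTI123} is false for $|\Bx|>r_0$. Write $z_iz_k/|z|^3=\delta_{ik}/|z|-\partial_i\partial_k|z|$ and use $\int_{|\By|=r_0}|\Bx-\By|\,\d\sigma(\By)=4\pi r_0^2\big(|\Bx|+r_0^2/(3|\Bx|)\big)$. Then for $|\Bx|>r_0$,
\[
\mathbf{S}_{\Gamma_0}[\bm{\varkappa}_p](\Bx)=-\frac{2\lambda+5\mu}{3\mu(\lambda+2\mu)}\frac{r_0^2}{|\Bx|}\bm{\varkappa}_p-\alpha_2 r_0^2\Big(\frac{1}{|\Bx|}-\frac{r_0^2}{|\Bx|^3}\Big)\Big(\frac{x_p\Bx}{|\Bx|^2}-\frac{1}{3}\bm{\varkappa}_p\Big),\qquad \alpha_2=\frac{\lambda+\mu}{2\mu(\lambda+2\mu)}>0 .
\]
The shell theorem fails here because $x_ix_k/|\Bx|^3$ is not harmonic. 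The extra term vanishes on $\Gamma_0$ and is $L^2$-orthogonal to all rigid motions on every centred sphere, but it is nonzero on $\Gamma_j^-$. Hence no choice of $a,b$ gives $\mathbb{S}_{j,j+1}(a\bm{\varkappa}_p,b\bm{\varkappa}_p)^T=(\bm{0},\bm{\varkappa}_p)^T$, and uniqueness cannot justify the ansatz. The true densities lie in the span of $\bm{\varkappa}_p$ and $\hat{x}_p\hat{\Bx}-\frac13\bm{\varkappa}_p$ on each sphere, which is a $4\times4$ problem. The orthogonality of the second function preserves the $\delta_{pq}$ factor and the translation/rotation block structure. However, it couples back into the rigid component and changes the value.

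This is not a bookkeeping issue. The paper's proof contains the same step: its entry $(r_{j+1}^+)^2/r_j^-$ is exactly this exterior formula. In fact \eqref{stif_tra} for $1\le j\le N-1$ is not the translational stiffness. By \eqref{symCMGS}, each gap must contribute the elastic energy $Q_{\lambda,\mu,D_j'}(\mathbf{V},\mathbf{V})$ of the corresponding shell problem. Take a thin gap $r_j^--r_{j+1}^+=h\to0$ at radius $\approx a$. The field linear across the gap (normal stiffness $(\lambda+2\mu)/h$, tangential $\mu/h$) gives $\frac{4\pi(\lambda+4\mu)a^2}{3h}(1+o(1))$. By contrast, \eqref{stif_tra} gives $\frac{12\pi\mu(\lambda+2\mu)a^2}{(2\lambda+5\mu)h}$, and the two agree only if $(\lambda+\mu)^2=0$.

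One can solve the shell problem with the four $\ell=1$ Lam\'e solutions $\mathbf{e}$, $\bm{G}^0\mathbf{e}$, $\nabla(\mathbf{e}\cdot\nabla|\Bx|^{-1})$ and $|\Bx|^2\mathbf{e}-\frac{\lambda+4\mu}{2\lambda+3\mu}(\mathbf{e}\cdot\Bx)\Bx$. With $a=r_{j+1}^+$ and $b=r_j^-$ this yields
\[
\mathrm{S}^{\mathrm{T}}_j=24\pi\mu(\lambda+2\mu)\Big(\frac{2(2\lambda+5\mu)(b-a)}{ab}-\frac{5(\lambda+\mu)^2(b^2-a^2)^2}{(\lambda+4\mu)(b^5-a^5)}\Big)^{-1}.
\]
This tends to $\mathrm{S}^{\mathrm{T}}_0$ as $b\to\infty$ and has the thin-gap limit above. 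As $\lambda\to\infty$ it reduces to the classical Stokes drag on a sphere translating inside a concentric rigid sphere.

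Your planned cross-checks (symmetry, row sums) cannot detect this, since any positive values $\mathrm{S}^{\mathrm{T}}_j$ give a matrix of the form \eqref{capmatrix123}. With the $4\times4$ correction, your argument does yield the structural part of the lemma: the $\delta_{pq}$ factor, the block-diagonal tridiagonal form, $\mathrm{S}^{\mathrm{R}}_j$, $\mathrm{S}^{\mathrm{T}}_0$, and $\mathrm{S}_N=0$. The explicit formula \eqref{stif_tra} for the interior gaps cannot be proved, because it is not correct.
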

\begin{proof}[\bf Proof]
 By using \eqref{staticTI123} and \eqref{staticTI456}, we have that for $j=1,2,\ldots,N-1$,
 \[
 \begin{aligned}
 \begin{pmatrix}
 \mathbf{S}_{{\Gamma_j^-}}  & \mathbf{S}_{{\Gamma_j^-,\Gamma_{j+1}^+}} \\
 \nm 
 \mathbf{S}_{{\Gamma_{j+1}^+,\Gamma_j^-}}  & \mathbf{S}_{{\Gamma_{j+1}^+}}
 \end{pmatrix}
 \begin{pmatrix}
\bm{\varkappa}_{p}& 0\\
\nm 
 0&\bm{\varkappa}_{p}\\
 \end{pmatrix} =\begin{cases}
 \ds -\frac{(2\lambda+5\mu)}{3\mu(\lambda+2\mu)}\begin{pmatrix}
 \bm{\varkappa}_{p}& 0\\
 \nm 
 0&\bm{\varkappa}_{p}\\
 \end{pmatrix}
 \begin{pmatrix}
 r_j^-& \frac{\(r_{j+1}^+\)^2}{ r_j^-} \\
 r_{j}^- &r_{j+1}^+ \\
 \end{pmatrix},&\mbox{ if } p=1,2,3, \\
 \nm 
\ds -\frac{1}{3\mu}\begin{pmatrix}
\bm{\varkappa}_{p}& 0\\
\nm 
0&\bm{\varkappa}_{p}\\
\end{pmatrix}
\begin{pmatrix}
r_j^-& \frac{\(r_{j+1}^+\)^4}{ \(r_j^-\)^3} \\
r_{j}^- &r_{j+1}^+ \\
\end{pmatrix}, &\mbox{ if } p=4,5,6.
 \end{cases}  
 \end{aligned}
 \]
 Hence, we can get that
 \[
 \begin{aligned}
 \begin{pmatrix}
 \mathbf{S}_{{\Gamma_j^-}}  & \mathbf{S}_{{\Gamma_j^-,\Gamma_{j+1}^+}} \\
 \nm 
 \mathbf{S}_{{\Gamma_{j+1}^+,\Gamma_j^-}}  & \mathbf{S}_{{\Gamma_{j+1}^+}}
 \end{pmatrix}^{-1}
 \begin{pmatrix}
 \bm{\varkappa}_{p}& 0\\
 \nm 
 0&\bm{\varkappa}_{p}\\
 \end{pmatrix} =\begin{cases}
 \ds -\frac{3\mu(\lambda+2\mu)}{(2\lambda+5\mu)}\begin{pmatrix}
 \bm{\varkappa}_{p}& 0\\
 \nm 
 0&\bm{\varkappa}_{p}\\
 \end{pmatrix}
 \begin{pmatrix}
 \frac{1}{r_{j}^- -r_{j+1}^+}& -\frac{r_{j+1}^+}{r_{j}^-\(r_{j}^- -r_{j+1}^+\)} \\
 -\frac{r_{j}^-}{r_{j+1}^+\(r_{j}^- -r_{j+1}^+\)} &\frac{r_{j}^-}{r_{j+1}^+\(r_{j}^- -r_{j+1}^+\)} \\
 \end{pmatrix},&\mbox{ if } p=1,2,3, \\
 \nm 
 \ds -3\mu \begin{pmatrix}
 \bm{\varkappa}_{p}& 0\\
 \nm 
 0&\bm{\varkappa}_{p}\\
 \end{pmatrix}
 \begin{pmatrix}
  \frac{\(r_{j}^-\)^2}{\(r_{j}^-\)^3 -\(r_{j+1}^+\)^3}& -\frac{\(r_{j+1}^+\)^3}{r_j^-\(\(r_{j}^-\)^3 -\(r_{j+1}^+\)^3\)} \\
 -\frac{\(r_{j}^-\)^3}{r_{j+1}^+\(\(r_{j}^-\)^3 -\(r_{j+1}^+\)^3\)} &\frac{\(r_{j}^-\)^3}{r_{j+1}^+\(\(r_{j}^-\)^3 -\(r_{j+1}^+\)^3\)} \\
 \end{pmatrix}, &\mbox{ if } p=4,5,6.
 \end{cases}  
 \end{aligned}
 \]
 In view of \eqref{S^-101GSDi}, we have that for $j=1,2,\ldots,N-1$ and $p=1,2,3$,
 \[
 \hat{\bm\phi}_{j,p}^{-} = -\frac{3\mu(\lambda+2\mu)}{(2\lambda+5\mu)}\frac{\bm{\varkappa}_{p}}{r_{j}^- -r_{j+1}^+}, \quad \hat{\bm{\psi}}_{j+1,p}^{+} = \frac{3\mu(\lambda+2\mu)}{(2\lambda+5\mu)}\frac{r_{j}^- \bm{\varkappa}_{p}}{r_{j+1}^+\(r_{j}^- -r_{j+1}^+\)},
 \]
 \[
 \hat{\bm\varphi}_{j,p}^{-} = \frac{3\mu(\lambda+2\mu)}{(2\lambda+5\mu)}\frac{r_{j+1}^+ \bm{\varkappa}_{p}}{r_{j}^-\(r_{j}^- -r_{j+1}^+\)},\quad \hat{\bm{\zeta}}_{j+1,p}^{+} =  -\frac{3\mu(\lambda+2\mu)}{(2\lambda+5\mu)}\frac{r_{j}^- \bm{\varkappa}_{p}}{r_{j+1}^+\(r_{j}^- -r_{j+1}^+\)},
 \]
 and for $p=4,5,6$,
 \[
 \hat{\bm\phi}_{j,p}^{-} = -3\mu\frac{\(r_{j}^-\)^2 \bm{\varkappa}_{p}}{\(r_{j}^-\)^3 -\(r_{j+1}^+\)^3}, \quad \hat{\bm{\psi}}_{j+1,p}^{+} =  3\mu\frac{\(r_{j}^-\)^3 \bm{\varkappa}_{p}}{r_{j+1}^+\(\(r_{j}^-\)^3 -\(r_{j+1}^+\)^3\)},
 \]
 \[
 \hat{\bm\varphi}_{j,p}^{-} = 3\mu\frac{\(r_{j+1}^+\)^3 \bm{\varkappa}_{p}}{r_j^-\(\(r_{j}^-\)^3 -\(r_{j+1}^+\)^3\)},\quad \hat{\bm{\zeta}}_{j+1,p}^{+} =  -3\mu\frac{\(r_{j}^-\)^3 \bm{\varkappa}_{p}}{r_{j+1}^+\(\(r_{j}^-\)^3 -\(r_{j+1}^+\)^3\)}.
 \]
% \[
% \hat{\bm\phi}_{j,p}^{-} = \begin{cases}
% \ds -\frac{3\mu(\lambda+2\mu)}{(2\lambda+5\mu)}\frac{\bm{\varkappa}_{p}}{r_{j}^- -r_{j+1}^+},&\mbox{ if } p=1,2,3, \\
% \nm 
% \ds -3\mu\frac{\(r_{j}^-\)^2 \bm{\varkappa}_{p}}{\(r_{j}^-\)^3 -\(r_{j+1}^+\)^3},&\mbox{ if } p=4,5,6,
% \end{cases}
% \quad \hat{\bm{\psi}}_{j+1,p}^{+} = \begin{cases}
% \ds \frac{3\mu(\lambda+2\mu)}{(2\lambda+5\mu)}\frac{r_{j}^- \bm{\varkappa}_{p}}{r_{j+1}^+\(r_{j}^- -r_{j+1}^+\)},&\mbox{ if } p=1,2,3, \\
% \nm 
% \ds 3\mu\frac{\(r_{j}^-\)^3 \bm{\varkappa}_{p}}{r_{j+1}^+\(\(r_{j}^-\)^3 -\(r_{j+1}^+\)^3\)},&\mbox{ if } p=4,5,6,
% \end{cases}
% \]
% and 
% \[
% \hat{\bm\varphi}_{j,p}^{-} = \begin{cases}
% \ds \frac{3\mu(\lambda+2\mu)}{(2\lambda+5\mu)}\frac{r_{j+1}^+ \bm{\varkappa}_{p}}{r_{j}^-\(r_{j}^- -r_{j+1}^+\)},&\mbox{ if } p=1,2,3, \\
% \nm 
% \ds 3\mu\frac{\(r_{j+1}^+\)^3 \bm{\varkappa}_{p}}{r_j^-\(\(r_{j}^-\)^3 -\(r_{j+1}^+\)^3\)},&\mbox{ if } p=4,5,6,
% \end{cases}
% \quad 
% \hat{\bm{\zeta}}_{j+1,p}^{+} = \begin{cases}
% \ds -\frac{3\mu(\lambda+2\mu)}{(2\lambda+5\mu)}\frac{r_{j}^- \bm{\varkappa}_{p}}{r_{j+1}^+\(r_{j}^- -r_{j+1}^+\)},&\mbox{ if } p=1,2,3, \\
% \nm 
% \ds -3\mu\frac{\(r_{j}^-\)^3 \bm{\varkappa}_{p}}{r_{j+1}^+\(\(r_{j}^-\)^3 -\(r_{j+1}^+\)^3\)},&\mbox{ if } p=4,5,6.
% \end{cases}
% \]
 Moreover, we have
\[
\hat{\bm{\zeta}}_{1,p}^{+} = \begin{cases}
\ds -\frac{3\mu(\lambda+2\mu)}{(2\lambda+5\mu)}\frac{ \bm{\varkappa}_{p}}{r_{1}^+},&\mbox{ if } p=1,2,3, \\
\nm 
\ds -3\mu\frac{ \bm{\varkappa}_{p}}{r_{1}^+},&\mbox{ if } p=4,5,6.
\end{cases}
\]
Hence, in the concentric radial configuration,  by direct computations, we obtain that
 \[
 \int_{\Gamma_1^+}  \hat{\bm \zeta}_{1,q}^+ \cdot \bm{\varkappa}_{p} ~\d \sigma = \begin{cases}
 \ds -\frac{12\pi \mu(\lambda+2\mu)}{(2\lambda+5\mu)}{r_{1}^+}\delta_{p,q}, & \mbox{ if } p=1,2,3,\\
 \nm 
 \ds 
 -{8\pi \mu}{\(r_{1}^+\)^3}\delta_{p,q}, & \mbox{ if } p=4,5,6,
 \end{cases}
 \]
 and for $j=1,2,\ldots,N-1$,
 \[
 \int_{\Gamma_{j+1}^+}  \hat{\bm \zeta}_{j+1,q}^+ \cdot \bm{\varkappa}_{p} ~\d \sigma = \begin{cases}
 \ds -\frac{12\pi \mu(\lambda+2\mu)}{(2\lambda+5\mu)}\frac{r_{j}^- r_{j+1}^+ }{\(r_{j}^- -r_{j+1}^+\)}\delta_{p,q}, & \mbox{ if } p=1,2,3,\\
 \nm 
 \ds 
 -{8\pi \mu}\frac{\(r_{j}^-\)^3 \(r_{j+1}^+\)^3}{\(\(r_{j}^-\)^3 -\(r_{j+1}^+\)^3\)}\delta_{p,q}, & \mbox{ if } p=4,5,6,
 \end{cases}
 \]
 and 
 \[
 \int_{\Gamma_{j+1}^+}  \hat{\bm \psi}_{j+1,q}^+ \cdot \bm{\varkappa}_{p} ~\d \sigma = -\int_{\Gamma_{j+1}^+}  \hat{\bm \zeta}_{j+1,q}^+ \cdot \bm{\varkappa}_{p} ~\d \sigma, 
 \]
 which in turn verifies \eqref{zetaeqpsi}.
Therefore, it follow from \eqref{Spqij} that the conclusion holds ture. The proof is complete.
\end{proof}

Direct calculation
shows that
\[
\|\bm{\varkappa}_{p}\|_{L^2(D_j)}^2 = \begin{cases}
\ds \frac{4\pi}{3}\(\(r_{j}^+\)^3 -\(r_{j}^-\)^3\):={V}_{\mathrm{T}}(D_j),& \mbox{ if } p=1,2,3, \\
\nm
\ds \frac{8\pi}{15} \(\(r_{j}^+\)^5 -\(r_{j}^-\)^5\): = V_{\mathrm{R}}(D_j),& \mbox{ if } p=4,5,6.
\end{cases}
\]
We refer to $V_{\mathrm{T}}(D_j)$ and $V_{\mathrm{R}}(D_j)$ as the translational norm integral and the rotational norm integral, respectively, which represent the $L^2$-norms of the rigid motion basis functions over the spherical shell $D_j$.
We next introduce $N \times N$ matrices
\[
\bm{V}^{\mathrm{m}} :=\diag\(V_{\mathrm{m}}(D_1),V_{\mathrm{m}}(D_2),\ldots,V_{\mathrm{m}}(D_N)\), \mbox{ with } \mathrm{m}\in\{\mathrm{T,R}\}.
\]
Let $(\lambda^{\textup{m}}_{i},\bm{v}^{\textup{m}}_{i})_{1\leq i\leq N}$ be the eigenpairs satisfying 
\begin{equation}\label{GEP14}
{T}^{-2}\bm{\mathcal{S}}^{\textup{m}}\bm{v}^{\textup{m}}_{i} = \lambda^{\textup{m}}_{i} \bm{V}^{\textup{m}}\bm{v}^{\textup{m}}_{i}, \mbox{ for } 1\leq i\leq N, 	\mbox{ with } \mathrm{m}\in\{\mathrm{T,R}\}.
\end{equation}
where $T$ is given by \eqref{element_T}, and the eigenvectors form an orthonormal basis with respect to the following inner product
\begin{equation}\label{orthonormalV14}
\(\bm{v}_{i}^{\textup{m}}\)^T\bm{V}^{\textup{m}}\bm{v}^{\textup{m}}_{j} = \delta_{i,j}.
\end{equation}

In view of \eqref{capmatrix123} and using \cite[Lemma 7.7.1]{SEP1998}, we have the following lemma.

\begin{lem}\label{eigenvalueECM}
	The $N$ eigenvalues of the matrix $\bm{\mathcal{S}}^{\textup{m}}$ defined by \eqref{capmatrix123}, satisfy the following relations:
	\begin{equation}\label{eigenpositive}
	0<\lambda_{1}^{\mathrm{m}}<\lambda_{2}^{\mathrm{m}}<\cdots<\lambda_{N}^{\mathrm{m}}.
	\end{equation}
\end{lem}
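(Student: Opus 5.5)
The plan is to reduce the generalized eigenvalue problem \eqref{GEP14} to an ordinary eigenvalue problem for a symmetric Jacobi (tridiagonal) matrix. I then show positivity through the quadratic form and simplicity through the nonvanishing off-diagonal entries, which is exactly the content of \cite[Lemma 7.7.1]{SEP1998}.

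\emph{Step 1 (symmetrization).} Write \eqref{GEP14} as $\bm{\mathcal{S}}^{\mathrm m}\bm v=\lambda\, T^2\bm V^{\mathrm m}\bm v$. Here $W:=T^2\bm V^{\mathrm m}=\diag(\tau_j^2 V_{\mathrm m}(D_j))$ is diagonal with strictly positive entries, since $\tau_j>0$ and $r_j^+>r_j^-$. Set $\bm w=W^{1/2}\bm v$. Then the problem becomes $J\bm w=\lambda\bm w$, where
\[
J:=W^{-1/2}\bm{\mathcal{S}}^{\mathrm m}W^{-1/2}.
\]
Conjugation by a positive diagonal matrix preserves symmetry and tridiagonal structure. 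The off-diagonal entries of $J$ are $-\mathrm S^{\mathrm m}_j/(W_jW_{j+1})^{1/2}$ for $j=1,\ldots,N-1$. By \eqref{stif_tra}--\eqref{stif_tor} and $r_j^->r_{j+1}^+$, each $\mathrm S^{\mathrm m}_j>0$ for $0\le j\le N-1$, so these entries are all nonzero. Thus $J$ is an unreduced real symmetric Jacobi matrix. Moreover, the eigenvalues of \eqref{GEP14} coincide with those of $J$, including multiplicities.

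\emph{Step 2 (positivity).} For $\bm x\in\RR^N$, summation by parts applied to \eqref{capmatrix123} gives
\[
\bm x^T\bm{\mathcal{S}}^{\mathrm m}\bm x=\mathrm S^{\mathrm m}_0x_1^2+\sum_{j=1}^{N-1}\mathrm S^{\mathrm m}_j(x_j-x_{j+1})^2 .
\]
Every coefficient is positive. Hence this form vanishes only if $x_1=0$ and all consecutive differences are zero, that is, only if $\bm x=0$. So $\bm{\mathcal{S}}^{\mathrm m}$, and therefore the congruent matrix $J$, is positive definite, and all $\lambda^{\mathrm m}_i>0$. (This is also consistent with the general positive-definiteness of $\bm{\mathcal{S}}$ noted after \eqref{symCMGS}.)

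\emph{Step 3 (simplicity).} Let $J\bm w=\lambda\bm w$ with $\bm w\neq0$. The first row of this equation determines $w_2$ from $w_1$, because the entry $J_{12}\neq0$. Inductively, row $k$ determines $w_{k+1}$ from $w_{k-1}$ and $w_k$, because $J_{k,k+1}\neq0$. Hence $w_1=0$ would force $\bm w=0$. Consequently the eigenspace is the image of the injective map $\bm w\mapsto w_1$ and has dimension at most one. Since $J$ is symmetric, geometric and algebraic multiplicities agree, so every eigenvalue is simple. Ordering the eigenvalues gives \eqref{eigenpositive}.

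The only point needing care is Step 1. One must check that the weighting $T^2\bm V^{\mathrm m}$ does not destroy the tridiagonal, nonzero-off-diagonal structure, and one must keep the ordering $T^{-2}\bm{\mathcal{S}}^{\mathrm m}$ versus $\bm V^{\mathrm m}$ straight. Both are immediate once the diagonal symmetrization is written down, so I do not expect a genuine obstacle.
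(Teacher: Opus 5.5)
Your proof is correct and takes the same route as the paper, which simply invokes \cite[Lemma 7.7.1]{SEP1998} on the unreduced symmetric tridiagonal structure of \eqref{capmatrix123}. Your diagonal symmetrization $W^{-1/2}\bm{\mathcal{S}}^{\mathrm m}W^{-1/2}$ and the identity $\bm x^T\bm{\mathcal{S}}^{\mathrm m}\bm x=\mathrm S^{\mathrm m}_0x_1^2+\sum_{j}\mathrm S^{\mathrm m}_j(x_j-x_{j+1})^2$ make explicit two points the citation leaves implicit: that the conclusion carries over to the weighted problem \eqref{GEP14}, which is what actually defines $\lambda_i^{\mathrm m}$, and that positivity holds because $\mathrm S^{\mathrm m}_j>0$ (for $\mathrm m=\mathrm T$ this uses the strong convexity condition \eqref{strong_convexity}, which gives $\lambda+2\mu>0$ and $2\lambda+5\mu>0$).
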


Combining Theorem \ref{theoasy} with our above preparations, we can obtain the following result.

\begin{prop}\label{propMLCB}
	In the concentric radial configuration given by  \eqref{Dj}--\eqref{interface}, the subwavelength resonant frequencies admit the following explicit representation as $\delta\to 0$:
\begin{equation}\label{Omega:delta_CR}
\begin{aligned}
\omega^{\mathrm{T},\pm}_{i}(\delta^{\frac{1}{2}})&=\pm\sqrt{\frac{\lambda_i^{\mathrm{T}}}{\rho}}\delta^{\frac{1}{2}}- \ii\frac{72\alpha\pi^2 (r_1^+)^2}{\rho\tau_1^2}\(\frac{\mu(\lambda+2\mu)}{2\lambda+5\mu}\)^2 \(\bm{v}^{\mathrm{T},(1)}_{i}\)^2 \delta+o(\delta),\quad i=1,2,\ldots,N,\\
\omega^{\mathrm{R},\pm}_i(\delta^{\frac{1}{2}})&=\pm\sqrt{\frac{\lambda^{\mathrm{R}}_i}{\rho}}\delta^{\frac{1}{2}}+o(\delta),\quad i=1,2,\ldots,N,
\end{aligned}
\end{equation}	
each of which has multiplicity 3 up to order  $\mathcal{O}(\delta)$.
Moreover, 
let $\mathbf{u}^{\mathrm{m}}_{i}$ be a eigenmode corresponding to $\omega_i^{\mathrm{m},+}$, and let $\bm{v}^{\mathrm{m},(j)}_i$ be the $j$-th entry of the eigenvector $\bm{v}^{\mathrm{m}}_i$ with $\mathrm{m}\in\{\mathrm{T,R}\}$. Then we have
\begin{equation}\label{eigenmodesphere}
\mathbf{u}^{\mathrm{T}}_{i} = \sum_{j=1}^{N} \sum_{q=1}^{3} \bm{v}^{\mathrm{T},\(j\)}_{i}\mathbf{V}_{j;q}+\mathcal{O}(\delta^{\frac{1}{2}}), \mbox{ and } \mathbf{u}^{\mathrm{R}}_{i} = \sum_{j=1}^{N} \sum_{q=4}^{6} \bm{v}^{\mathrm{R},\(j\)}_{i}\mathbf{V}_{j;q}+\mathcal{O}(\delta^{\frac{1}{2}}), 
\end{equation}
where $\mathbf{V}_{j;q}$, for $j=1,2,\ldots,N$ and $q=1,2,\ldots,6$, can be explicit given by 
\begin{equation}\label{V_i_solu}
\mathbf{V}_{j,q}(\Bx) =
\begin{cases}
\ds \(- \frac{r_{j+1}^{+}}{|\Bx|} + 1\)\frac{r_j^{-}\bm{\varkappa}_q  \chi_{\{1\le q\le 3\}}}{r_j^{-}-r_{j+1}^{+}}+ \(- \frac{\(r_{j+1}^{+}\)^3}{|\Bx|^3} + 1\) \frac{\(r_j^{-}\)^3\bm{\varkappa}_q \chi_{\{4\le q\le 6\}}}{\(r_j^{-}\)^3-\(r_{j+1}^{+}\)^3} , & \mbox{ for }r_{j+1}^{+}\leq |\Bx|\leq r_j^{-},\\
\nm 
\ds \bm{\varkappa}_q, & \mbox{ for }r_{j}^{-}\leq |\Bx|\leq r_j^{+},\\
\nm 
\ds \( \frac{r_{j-1}^{-}}{|\Bx|} - 1\)\frac{r_j^{+} \bm{\varkappa}_q \chi_{\{1\le q\le 3\}}}{r_{j-1}^{-}-r_{j}^{+}} + \( \frac{\(r_{j-1}^{-}\)^3}{|\Bx|^3} - 1\)\frac{\(r_j^{+}\)^3 \bm{\varkappa}_q \chi_{\{4\le q\le 6\}}}{\(r_{j-1}^{-}\)^3-\(r_{j}^{+}\)^3}, & \mbox{ for }r_{j}^{+}\leq |\Bx|\leq r_{j-1}^{-},\\
\nm
\ds \bm{0}, & \text{else},
\end{cases}
\end{equation}
with $r_{N+1}^+ = 0$ and $r_0^- = +\infty$.
\end{prop}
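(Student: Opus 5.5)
The plan is to specialize Theorem \ref{theoasy} to the concentric radial configuration, using the block-diagonal structure of $\bm{\mathcal{S}}$ from Lemma \ref{EST_sphere} together with the explicit radial formulas \eqref{staticTI123}--\eqref{SL1TI}. Since $\bm{\xi}_{i,p}=\bm{\varkappa}_p$ and $\bm{\mathcal{S}}=\diag(\bm{\mathcal{S}}^{\mathrm{T}},\bm{\mathcal{S}}^{\mathrm{T}},\bm{\mathcal{S}}^{\mathrm{T}},\bm{\mathcal{S}}^{\mathrm{R}},\bm{\mathcal{S}}^{\mathrm{R}},\bm{\mathcal{S}}^{\mathrm{R}})$, the matrices $\bm V$ and $\bm T$ in \eqref{Volumn_xi} are block diagonal with the same pattern: $V_p=\bm V^{\mathrm{T}}$ for $p\le 3$ and $V_p=\bm V^{\mathrm{R}}$ for $p\ge 4$. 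The generalized eigenvalue problem \eqref{GEP} therefore decouples into three copies of \eqref{GEP14} with $\mathrm m=\mathrm T$ and three copies with $\mathrm m=\mathrm R$.

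\textbf{Leading order.} The eigenvectors of the decoupled problem are $\bm e_q\otimes\bm v^{\mathrm m}_i$, with $q\in\{1,2,3\}$ for $\mathrm m=\mathrm T$ and $q\in\{4,5,6\}$ for $\mathrm m=\mathrm R$. By Lemma \ref{eigenvalueECM} (applied to the symmetric-definite pencil, where the $T^{-2}$ factor is harmless), the eigenvalues $\lambda^{\mathrm m}_i$ are positive and simple within each block. Each therefore appears in \eqref{GEP} with multiplicity exactly three, coming from the three spatial directions. This gives the leading terms $\pm\sqrt{\lambda^{\mathrm m}_i/\rho}\,\delta^{1/2}$, and the multiplicity-three claim then follows.

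\textbf{Order $\delta$ correction.} Next I compute $\bm E$ from \eqref{EPQIJ}. By \eqref{Spqij}-type identities, $\hat{\bm\zeta}^+_{1,t}=-\frac{3\mu(\lambda+2\mu)}{(2\lambda+5\mu)r_1^+}\bm\varkappa_t$ for $t\le3$, so
\[
\int_{\Gamma_1^+}\hat{\bm\zeta}^+_{1,t}\cdot\bm\varkappa_p\,\d\sigma=-\frac{12\pi\mu(\lambda+2\mu)}{2\lambda+5\mu}r_1^+\delta_{t,p}\chi_{\{p\le3\}} .
\]
Rotations are orthogonal to translations on spheres, so $E_{pq11}=\big(\frac{12\pi\mu(\lambda+2\mu)}{2\lambda+5\mu}r_1^+\big)^2\delta_{p,q}\chi_{\{p\le 3\}}$ and all other entries vanish. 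This is consistent with \eqref{SL1TI}: the radiation correction sees only translations.

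Inserting this into \eqref{full:asymptotic}, the quantity $\bm v^T\bm T^{-2}\bm E\bm v$ for a translational eigenvector $\bm e_q\otimes\bm v^{\mathrm T}_i$ equals $\tau_1^{-2}(12\pi r_1^+)^2\big(\frac{\mu(\lambda+2\mu)}{2\lambda+5\mu}\big)^2(\bm v^{\mathrm T,(1)}_i)^2$. Multiplying by $-\ii\alpha/(2\rho)$ yields the coefficient $72\alpha\pi^2(r_1^+)^2/(\rho\tau_1^2)$. For rotational modes the correction vanishes, leaving $o(\delta)$.

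\textbf{Main obstacle: degeneracy.} The formula \eqref{full:asymptotic} was derived for some eigenvector within the eigenspace, while here the eigenspaces are three-dimensional. I would observe that $\bm E$ restricted to each eigenspace is a multiple of the identity (it is $\propto\delta_{pq}$ over $q=1,2,3$, and zero for rotations). The first-order perturbation matrix in the Puiseux/Kato argument is therefore scalar on the eigenspace, so all three branches share the same $\mathcal O(\delta)$ coefficient; this is precisely "multiplicity 3 up to order $\mathcal O(\delta)$." The cross-terms between blocks vanish by the orthogonality above.

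\textbf{Eigenmodes.} These follow from \eqref{eigenmode}: I insert the block eigenvectors and compute $\mathbf V_{j,q}$ explicitly. In $D'_j$ I use the densities $\hat{\bm\phi},\hat{\bm\psi}$ obtained in the proof of Lemma \ref{EST_sphere} together with \eqref{staticTI123}--\eqref{staticTI456}. The potentials are $a+b/|\Bx|$ for translations and $a+b/|\Bx|^3$ for rotations, and matching the values $\bm\varkappa_q$ and $\bm 0$ on the two spheres gives \eqref{V_i_solu}, with the conventions $r_{N+1}^+=0$ and $r_0^-=\infty$ for the end layers.
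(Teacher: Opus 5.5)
Your route matches the paper's, which obtains this proposition by combining Theorem~\ref{theoasy} with the preparations of Section~\ref{concentric_radial}. Your decoupling of \eqref{GEP}, your value $E_{pq11}=\big(\frac{12\pi\mu(\lambda+2\mu)}{2\lambda+5\mu}r_1^+\big)^2\delta_{pq}\chi_{\{p\le 3\}}$, and the resulting coefficient $72\alpha\pi^2(r_1^+)^2/(\rho\tau_1^2)$ all agree with it. Your treatment of the threefold degeneracy is more careful than the paper's single-eigenvector argument: you note that the projected first-order perturbation is scalar on each eigenspace and that there are no translation--rotation cross terms. It also covers an accidental coincidence $\lambda^{\mathrm T}_i=\lambda^{\mathrm R}_k$, which your ``exactly three'' does not exclude. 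One caveat comes from Theorem~\ref{theoasy}, not from you. For non-uniform $\tau_j$, first-order perturbation of $\bm V\hat{\bm U}\bm V=\bm{\mathcal S}-\rho\hat\omega^2\bm T^2\bm V-\ii\alpha\beta\hat\omega\bm E+\mathcal O(\beta^2)$ gives the $\delta$-coefficient $-\ii\alpha\,\bm v^T\bm E\bm v/(2\rho\,\bm v^T\bm T^2\bm V\bm v)$. This equals the $\tau_1^{-2}$ form under $\bm v^T\bm V\bm v=1$ only when all $\tau_j$ are equal.

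The genuine gap is the translational half of the eigenmode step. For $q\le 3$, $\bm\varkappa_q/|\Bx|$ is not a Lam\'e solution, since $\mathcal{L}_{\lambda,\mu}(\bm\varkappa_q/|\Bx|)=(\lambda+\mu)\nabla(-x_q/|\Bx|^3)\neq\bm 0$. So no field $(a+b/|\Bx|)\bm\varkappa_q$ solves \eqref{V_i} in a gap, and the translational part of \eqref{V_i_solu} cannot be derived this way; as stated it is false. The error comes from the exterior branch of \eqref{staticTI123}, which is correct only on $|\Bx|=r_0$ and inside. For $|\Bx|=R>r_0$ and $\hat{\Bx}=\Bx/|\Bx|$, a direct computation gives
\[
\mathbf{S}_{\Gamma_0}[\bm\varkappa_q](\Bx)=-\Big(\alpha_1\frac{r_0^2}{R}+\alpha_2\frac{r_0^4}{3R^3}\Big)\bm\varkappa_q-\alpha_2\Big(\frac{r_0^2}{R}-\frac{r_0^4}{R^3}\Big)\big(\hat{\Bx}\cdot\bm\varkappa_q\big)\hat{\Bx}
=4\pi r_0^2\,\bm G^0\bm\varkappa_q-\frac{\alpha_2 r_0^4}{3}\nabla\big(x_q/|\Bx|^3\big),\qquad q\le 3.
\]
Hence the $2\times 2$ reduction of $\mathbb S_{j,j+1}$ in the proof of Lemma~\ref{EST_sphere} is invalid for translations, and \eqref{stif_tra} for $j\ge 1$ is not the true stiffness. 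In the thin-gap limit $r_j^--r_{j+1}^+=\varepsilon\to 0$ the true value is $\frac{4\pi(\lambda+4\mu)}{3\varepsilon}(r_j^-)^2(1+o(1))$. Formula \eqref{stif_tra} instead gives $\frac{12\pi\mu(\lambda+2\mu)}{(2\lambda+5\mu)\varepsilon}(r_j^-)^2(1+o(1))$, and these differ because $(\lambda+4\mu)(2\lambda+5\mu)-9\mu(\lambda+2\mu)=2(\lambda+\mu)^2>0$. A correct translational $\mathbf V_{j,q}$ in a gap must combine four radial families: $\bm\varkappa_q$, $\bm G^0\bm\varkappa_q$, $\nabla(x_q/|\Bx|^3)$, and a regular family quadratic in $\Bx$.

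Several parts survive unchanged:
\begin{itemize}
\item the rotational formulas, since $\bm\varkappa_q/|\Bx|^3$ for $q\ge 4$ is a genuine Lam\'e solution;
\item the $\mathcal O(\delta)$ coefficient, which uses $\mathbf S_{\Gamma_1^+}$ only on $\Gamma_1^+$ itself;
\item the block-diagonal tridiagonal structure with positive simple eigenvalues, provided $\lambda^{\mathrm T}_i$ and $\bm v^{\mathrm T}_i$ are computed from the true translational stiffnesses rather than from \eqref{stif_tra}.
\end{itemize}
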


\begin{rem}
	We now summarize some remarks.
First, if ${D}$ is a single-layered homogeneous resonator, i.e., $N=1$ and $r_1^-=0$, it then follows from \eqref{capmatrix123}, \eqref{GEP14} and \eqref{Omega:delta_CR} that
	\[
	\omega_1^{\mathrm{m},+}(\delta^{\frac{1}{2}}) =   \delta^{\frac{1}{2}}\(\frac{\mathrm{S}^{\mathrm{m}}_{0}}{\tau_1^2 V_{\mathrm{m}}(D_1)}\)^{\frac{1}{2}}(1+o(1)) = \begin{cases}
	\ds \frac{1}{\tau_1 r_1^+}\sqrt{\frac{9 \mu(\lambda+2\mu)}{(2\lambda+5\mu)\rho }} \delta^{\frac{1}{2}}(1+o(1)),& \mathrm{ if \;m = T}, \\
	\nm 
	\ds \frac{1}{\tau_1 r_1^+}\sqrt{\frac{15\mu}{\rho}} \delta^{\frac{1}{2}}(1+o(1)),& \mathrm{ if \;m = R}.
	\end{cases}
	\]
	which is the result obtained in  \cite{LZArxiv}. 
	Moreover, the translational stiffness \eqref{stif_tra} can be seen as the elastic analogue of the capacity in multi-layered high-contrast acoustic resonators \cite{DKLZ_JDE26} governed by scalar Helmholtz systems. In contrast to the scalar case, however, the null space of the elastostatic equation \eqref{elastostatics} is six-dimensional, consisting not only of rigid translations 
	$\bm{\varkappa}_p$, $p = 1, 2, 3$, but also of rigid rotations 
	$\bm{\varkappa}_p$, $p = 4, 5, 6$. These rotational rigid body modes are essential for the rigorous derivation of the rotational stiffness \eqref{stif_tor}, which has no counterpart in scalar wave models.
	Finally,
	from Proposition \ref{propMLCB}, we can see that the translational  and rotational eigenmodes $\mathbf{u}^{\mathrm{T}}_{(i)}$ and $\mathbf{u}^{\mathrm{R}}_{(i)}$ can be approximated as a point scatterer with monopole and dipole modes, respectively. That is $\mathbf{u}^{\mathrm{T}}_{i} = \mathcal{O}\({1}/{|\Bx|}\)$ and $\mathbf{u}^{\mathrm{R}}_{i} = \mathcal{O}\({1}/{|\Bx|^2}\)$ when $|\Bx|$ is sufficiently large.	
\end{rem}

\section{Subwavelength guided modes in the defect}\label{sec5}
In this section, we study finite-layered metamaterials composed of dimer- and monomer-type resonators, considering configurations with geometric and material-parameter defects, respectively, in order to establish a rigorous exponential localization criterion for the associated wave modes. We only consider systems of identically translational or rotational norm integral resonators, that is
\begin{equation}\label{identicalvolume}
V_{\mathrm{m}}(D_1)=V_{\mathrm{m}}(D_2)=\cdots=V_{\mathrm{m}}(D_N) = 1, \mbox{ with } \mathrm{m}\in\{\mathrm{T,R}\}.
\end{equation}
Hence, from \eqref{GEP14}, it suffices to calculate the eigenpairs $(\lambda^{\textup{m}}_{i},\bm{v}^{\textup{m}}_{i})_{1\leq i\leq N}$ of matrix ${T}^{-2}\bm{\mathcal{S}}^{\textup{m}}$, with $\mathrm{m}\in\{\mathrm{T,R}\}$.
Before that, we first recall the elastic stiffness matrix \eqref{capmatrix123} has the  following tridiagonal form:
\begin{equation}\label{Ccap}
\begin{split}
\bm{\mathcal{S}}^{\textup{m}}:=  \begin{pmatrix}
\ds
\alpha^{\textup{m}}_1 &  \ds  \beta^{\textup{m}}_1 & &  & & \\
\nm
\ds \beta^{\textup{m}}_1 & \ds \alpha^{\textup{m}}_2 & \ds  \beta^{\textup{m}}_2&  & &\\
\nm
& \ds\beta^{\textup{m}}_2 & \ds \alpha^{\textup{m}}_3 & \ds\beta^{\textup{m}}_3 & & \\
\nm
& & \ddots &\ddots & \ddots& \\
\nm
&  & &\ds\beta^{\textup{m}}_{N-2} &\ds \alpha^{\textup{m}}_{N-1} &\ds \beta^{\textup{m}}_{N-1}\\
\nm
&  & &  &\ds\beta^{\textup{m}}_{N-1} &\ds \alpha^{\textup{m}}_N
\end{pmatrix},
\end{split}
\end{equation}
with $\mathrm{m}\in\{\mathrm{T,R}\}$, where $\alpha^{\textup{m}}_N = \mathrm{S}^{\mathrm{m}}_{N-1}$ and for $i = 1,2,\ldots,N-1$,
\begin{equation}\label{alphabeta}
\alpha^{\textup{m}}_i = \mathrm{S}^{\mathrm{m}}_{i-1}+\mathrm{S}^{\mathrm{m}}_{i} \mbox{ and }	\beta^{\textup{m}}_i = -\mathrm{S}^{\mathrm{m}}_{i}.
\end{equation}

\subsection{Localized modes generated by geometrical defects}\label{dimerdefect}
In this section, we investigate finite-layered metamaterials composed of dimer-type resonators, focusing on configurations that include a geometric defect. 
We also simplify $\rho_{\rmr,j} = \rho_{\rmr}$ for all $1\leq j\leq N$, i.e., $\tau_j = \tau$ for all $1\leq j\leq N$. One
remarks that in this case the eigenvalue problem  \eqref{GEP14} may be simplified by finding
eigenvalues of $ \bm{\mathcal{S}}^{\textup{m}}$ and multiplying the eigenvalues by $\tau^{-2}$.

We next give the definition of multi-layered dimer-type metamaterials both with and without defects, respectively. 

\begin{defn}
	Let $D$ denote the $N$-layered metamaterial illustrated in Figure \ref{MLHCCB} with material parameters given by \eqref{nestedcomplement}, and let $\rho_{\rmr,j} = \rho_{\rmr}$ for all $1\leq j\leq N$.
	We define $D$ to be an $N$-layered dimer-type metamaterial without defects, if the repeating pattern of alternating stiffness values is strictly  maintained. That is, for $N=2n$, $n\in \mathbb{N}$ large enough, the elastic stiffness matrix \eqref{Ccap} satisfies:
	\begin{equation}\label{dimersetup}
	\beta^{\textup{m}}_i =\begin{cases}
	\ds \beta^{\textup{m}}_1, & \mbox{if } i \mbox{ is odd}, \\
	\nm
	\ds   \beta^{\textup{m}}_2, & \mbox{if } i \mbox{ is even},
	\end{cases} \;
	1\leq i\leq 2n-1,
	\end{equation}
	with $\beta^{\textup{m}}_1<\beta^{\textup{m}}_2<0$. 
	Conversely, we define $D$ to be an $N$-layered dimer-type metamaterial with a defect if the repeating pattern is disrupted. Specifically, for $N=4n+1$, the elastic stiffness matrix \eqref{Ccap} satisfies:
	\begin{equation}\label{dimersetupdefect}
	\beta^{\textup{m}}_i =\begin{cases}
	\ds \beta^{\textup{m}}_1, & \mbox{if } i \mbox{ is odd}, \\
	\nm
	\ds   \beta^{\textup{m}}_2, & \mbox{if } i \mbox{ is even},
	\end{cases} \;
	1\leq i\leq 2n,
	\mbox{ and }
	\beta^{\textup{m}}_i =\begin{cases}
	\ds \beta^{\textup{m}}_1, & \mbox{if } i \mbox{ is even}, \\
	\nm
	\ds   \beta^{\textup{m}}_2, & \mbox{if } i \mbox{ is odd},
	\end{cases} \;
	2n+1\leq i\leq 4n.
	\end{equation}
\end{defn}

\begin{defn}[Asymptotic spectral bulk and gap] \label{def:spectralgap}
	Consider a structure consisting of finite-layered nested resonators. The \emph{asymptotic spectral bulk} $\Xi $ and \emph{asymptotic spectral gap} $\Theta$ of the structure are defined as the spectral bulk and spectral gap of the corresponding infinite-layered structure respectively.
\end{defn}

By using the properties of Chebyshev polynomials, we can prove the existence of a spectral gap for  dimer-type metamaterials without defects.

\begin{prop}\label{propABG}
	Consider an $N$-layered dimer-type metamaterial without defects, which satisfies the setup \eqref{dimersetup}. Let $\bm{\mathcal{S}}^{\textup{m}}\in\mathbb{R}^{2n \times 2n}$ be the associated stiffness matrix of the unperturbed structure. If for $n$ large enough, there holds
	\begin{equation}\label{gapcondition}
	|\beta^{\textup{m}}_0|\leq \left|\frac{4\beta^{\textup{m}}_2(\beta^{\textup{m}}_1+\beta^{\textup{m}}_2)}{\beta^{\textup{m}}_1+2\beta^{\textup{m}}_2}\right|.
	\end{equation}
	Then
	\begin{equation}\label{ASB}
	\Xi = \overline{\lim_{n\to\infty} \sigma(\bm{\mathcal{S}}^{\textup{m}})} = \left[0,-2\beta^{\textup{m}}_2\right] \cup \left[-2\beta^{\textup{m}}_1, -2(\beta^{\textup{m}}_1+\beta^{\textup{m}}_2)\right],
	\end{equation}
	where $\lim$ denotes the Hausdorff limit. Consequently, the asymptotic spectral gap is
	\begin{equation}\label{ASG}
	\Theta = \left(-2\beta^{\textup{m}}_2, -2\beta^{\textup{m}}_1 \right)\subset \RR.
	\end{equation}
\end{prop}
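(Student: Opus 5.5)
The plan is to reduce $\bm{\mathcal{S}}^{\textup{m}}$ to a $2$-periodic Jacobi matrix with two boundary corrections, and then use Chebyshev polynomials and transfer matrices.

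\textbf{Step 1: reduction to a periodic Jacobi matrix.} By \eqref{alphabeta} and \eqref{dimersetup}, every interior diagonal entry equals $\alpha^{\textup{m}}_i=-(\beta^{\textup{m}}_{i-1}+\beta^{\textup{m}}_i)=-(\beta^{\textup{m}}_1+\beta^{\textup{m}}_2)=:a$ for $2\le i\le 2n-1$. Only the two corners differ:
\begin{itemize}
\item $\alpha^{\textup{m}}_1=\mathrm{S}^{\mathrm{m}}_0-\beta^{\textup{m}}_1$. I write $\beta^{\textup{m}}_0:=-\mathrm{S}^{\mathrm{m}}_0$, so that $\alpha^{\textup{m}}_1=-(\beta^{\textup{m}}_0+\beta^{\textup{m}}_1)$; this is how $\beta^{\textup{m}}_0$ enters \eqref{gapcondition}.
\item $\alpha^{\textup{m}}_{2n}=\mathrm{S}^{\mathrm{m}}_{2n-1}=-\beta^{\textup{m}}_1$.
\end{itemize}
Hence $\bm{\mathcal{S}}^{\textup{m}}=aI+J_n+R_n$. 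Here $J_n$ has zero diagonal and off-diagonal entries alternating $\beta^{\textup{m}}_1,\beta^{\textup{m}}_2$. The correction $R_n=(\beta^{\textup{m}}_2-\beta^{\textup{m}}_0)e_1e_1^T+\beta^{\textup{m}}_2 e_{2n}e_{2n}^T$ has rank at most two, and its entries do not depend on $n$.

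\textbf{Step 2: bands of the unperturbed matrix.} For $aI+J_n$ I use the $2\times2$ transfer matrix over one period. Its characteristic polynomial is a Chebyshev polynomial $U_n(x)$ (up to a linear correction) in the variable
\[
x(\lambda)=\frac{(\lambda-a)^2-(\beta^{\textup{m}}_1)^2-(\beta^{\textup{m}}_2)^2}{2\beta^{\textup{m}}_1\beta^{\textup{m}}_2}.
\]
The zeros of $U_n$ are $\cos(k\pi/(n+1))$, and they fill $[-1,1]$ densely as $n\to\infty$. Equivalently, the Bloch symbol gives $\lambda=a\pm|\beta^{\textup{m}}_1+\beta^{\textup{m}}_2\e^{\ii\theta}|$, with $\theta\in[0,2\pi]$. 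Since $\beta^{\textup{m}}_1<\beta^{\textup{m}}_2<0$, the modulus ranges over $[\beta^{\textup{m}}_2-\beta^{\textup{m}}_1,\,-(\beta^{\textup{m}}_1+\beta^{\textup{m}}_2)]$. The minus sign gives the band $[0,-2\beta^{\textup{m}}_2]$ and the plus sign gives $[-2\beta^{\textup{m}}_1,-2(\beta^{\textup{m}}_1+\beta^{\textup{m}}_2)]$. This yields \eqref{ASB} for the unperturbed part, and the Hausdorff limit contains the whole bands.

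\textbf{Step 3: effect of $R_n$.} By Cauchy interlacing, a rank-two perturbation keeps all but at most two eigenvalues interlaced with those of $aI+J_n$. These eigenvalues are dense in the two bands, so they cannot create extra limit points. Any remaining limit point must be an eigenvalue of a semi-infinite problem, namely an edge state that decays exponentially away from the corner $i=1$ or the corner $i=2n$. The exponentially small overlap between the two ends decouples them as $n\to\infty$.

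\textbf{Step 4: excluding edge states (main obstacle).} For $\lambda$ outside the bands, I take the decaying Floquet solution of the transfer recursion, with Floquet multiplier of modulus less than one. I then impose the boundary condition at each corner. This gives a scalar equation in $\lambda$.
\begin{itemize}
\item At the bottom corner, $\alpha^{\textup{m}}_{2n}=-\beta^{\textup{m}}_1$ together with the last bond $\beta^{\textup{m}}_1$ is the Neumann-type boundary. I expect its solution to be $\lambda=0$ or a band edge, so it produces nothing new. The constant vector is the limiting candidate, and it is not decaying.
\item At the top corner, the diagonal shift $\beta^{\textup{m}}_2-\beta^{\textup{m}}_0$ can create an outlier if it is large. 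I would write the boundary equation explicitly in terms of the Floquet ratio. I would then show that its solution leaves the gap $(-2\beta^{\textup{m}}_2,-2\beta^{\textup{m}}_1)$ and the region above $-2(\beta^{\textup{m}}_1+\beta^{\textup{m}}_2)$ exactly when $|\beta^{\textup{m}}_0|$ exceeds $|4\beta^{\textup{m}}_2(\beta^{\textup{m}}_1+\beta^{\textup{m}}_2)/(\beta^{\textup{m}}_1+2\beta^{\textup{m}}_2)|$.
\end{itemize}
This threshold computation is where I expect the main difficulty; in particular, the edge case of equality in \eqref{gapcondition} needs care. Under \eqref{gapcondition}, no eigenvalue accumulates outside the two bands. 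This gives \eqref{ASB}, and \eqref{ASG} follows by taking the complement between the bands.
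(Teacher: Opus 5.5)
Your Steps 1--3 and the bottom-corner part of Step 4 are correct. The corner entries are $a+(\beta^{\textup{m}}_2-\beta^{\textup{m}}_0)$ and $a+\beta^{\textup{m}}_2$, and the bands are as you compute. Only finitely many eigenvalues can leave the bands, and at the free end the boundary condition forces the Floquet multiplier to be $\pm1$.

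The gap is the deferred threshold computation at the top corner: it does not produce the constant in \eqref{gapcondition}. Write $s_j:=-\beta^{\textup{m}}_j>0$, so that $x^t\bm{\mathcal{S}}^{\textup{m}}x=s_0x_1^2+\sum_i s_i(x_i-x_{i+1})^2$ with $s_1>s_2$.
\begin{itemize}
\item At the band edges $\lambda=2s_2$ and $\lambda=2(s_1+s_2)$, the bounded bulk solutions are $(-1,-1,1,1,-1,-1,\dots)$ and $(1,-1,1,-1,\dots)$. Each satisfies every row except the first, where the residual is $\mp(s_0-2s_2)$.
\item Equivalently, let $g(\lambda)=e_1^t(\lambda-H_0)^{-1}e_1$ be the Weyl function of the half-line operator with $s_0=0$. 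It is decreasing, tends to $1/(2s_2)$ at both of these edges, and vanishes at $\lambda=s_1+s_2$.
\item Hence $g(\lambda)=1/s_0$ has one root in $(2s_2,s_1+s_2)$ and one above $2(s_1+s_2)$ precisely when $s_0>2s_2$.
\end{itemize}
The true threshold is therefore $|\beta^{\textup{m}}_0|=2|\beta^{\textup{m}}_2|$. This is strictly below the stated one, since
\[
\left|\frac{4\beta^{\textup{m}}_2(\beta^{\textup{m}}_1+\beta^{\textup{m}}_2)}{\beta^{\textup{m}}_1+2\beta^{\textup{m}}_2}\right|-2|\beta^{\textup{m}}_2|=\frac{2s_1s_2}{s_1+2s_2}>0 .
\]

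So for $2|\beta^{\textup{m}}_2|<|\beta^{\textup{m}}_0|\le 2|\beta^{\textup{m}}_2|+2s_1s_2/(s_1+2s_2)$ the proposition as stated is false, and your Step 4 cannot be completed.

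Here is a concrete counterexample. Take $\beta^{\textup{m}}_1=-2$, $\beta^{\textup{m}}_2=-1$, $\beta^{\textup{m}}_0=-5/2$; the bound in \eqref{gapcondition} equals $3$, so the hypothesis holds. Set $x_{2m-1}=z^m$ and $x_{2m}=-\tfrac32 z^{m+1}$, where $18z^2+5z-8=0$. This vector solves the half-line eigenproblem with $\lambda=\tfrac92+3z$, i.e. $\lambda=(49\pm\sqrt{601})/12\approx 2.040$ and $6.126$. Both roots satisfy $|z|<1$. Truncating the vector to $2n$ entries leaves a residual only in the last row, of size $O(|z|^n)$. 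Hence $\sigma(\bm{\mathcal{S}}^{\textup{m}})$ contains eigenvalues converging to $2.040\in(2,4)=\Theta$ and to $6.126>6$, contradicting \eqref{ASB}.

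The paper gives no proof of this proposition beyond an appeal to Chebyshev polynomials, so nothing there rescues the constant. If you replace the hypothesis by $|\beta^{\textup{m}}_0|\le 2|\beta^{\textup{m}}_2|$, your Steps 1--4 give a complete proof; at equality there is only a band-edge threshold resonance, not an outlier. This does not affect the large-$|\beta^{\textup{m}}_2|$ regime invoked in the subsequent remark.

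The same correction applies to Proposition \ref{propABG1}: its condition should read $|\beta^{\textup{m}}_0|\le 2|\beta^{\textup{m}}|$. This agrees with the standard corner criterion $|\beta^{\textup{m}}-\beta^{\textup{m}}_0|\le|\beta^{\textup{m}}|$ for the matrix $A_N^{(a,b)}$ appearing there.
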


\begin{rem}
	For any fixed outermost boundary~$\Gamma_1^+$, preserving the nested geometry of the layered structure as the number of layers tends to infinity necessitates  that both the thickness of each resonator and the spacing between adjacent resonators be sufficiently small. Specifically, in view of \eqref{alphabeta}, \eqref{dimersetup}, and the definition of the stiffness coefficients $\mathrm{S}_j^{\textup{m}}$ of $D_j'$ for $j =0, 1,2,\ldots,N$, and $\mathrm{m}\in \{\mathrm{T,R}\}$ (see \eqref{stif_tra}--\eqref{stif_tor}), this geometric constraint implies that $|\beta_1^{\mathrm{m}}|$ and $|\beta_2^{\mathrm{m}}|$ must both be sufficiently large. Consequently, for any multi-layered structure with a fixed outermost boundary and a sufficiently large number of layers, condition~\eqref{gapcondition} is automatically satisfied. Furthermore, since the structure consists of finitely many layers made of prescribed materials, it can be fabricated using additive layer manufacturing techniques \cite{MBKPD_PNAS2016}, taking the form of a radially laminated medium with ultra-thin coatings.
\end{rem}

We next consider an $N$-layered dimer-type metamaterial with a defect, which satisfies the setup \eqref{dimersetupdefect}. Following the framework established in recent work \cite{DKZ_JLMS2026}, we can derive a localization criterion based on the position of eigenvalues in the spectrum  (cf. \cite[Proposition 4.3]{DKZ_JLMS2026}), particularly, eigenvector associated with the mid-gap eigenvalue in the asymptotic spectral gap exhibits exponential decay in both directions away from the defect located in the middle layer.  We refer to the corresponding eigenmode as a localized defect mode.
In contrast, eigenvectors corresponding to bulk eigenvalues display oscillatory behavior with trigonometric function characteristics. At the boundary of the asymptotic spectral bulk, eigenvectors show approximately linear growth behavior, representing a transitional state between the exponentially localized and oscillatory regimes.  As shown in \eqref{eigenmodesphere}, eigenmodes, similar to eigenvectors, exhibit three characteristic states depending on the distribution of eigenvalues across the spectrum. Furthermore, for structures with a sufficiently large number of layers, we can prove the existence and uniqueness of an  mid-gap eigenvalue in the spectral gap of the dimer-type structure with a defect, 
which converges to
\begin{equation}\label{Lexpdimer}
 {\frac{1}{2}\(-\sqrt{9 \(\beta_1^{\textup{m}}\)^2-14 \beta_1^{\textup{m}} \beta_2^{\textup{m}}+9 \(\beta_2^{\textup{m}}\)^2}-3\beta_1^{\textup{m}}-3\beta_2^{\textup{m}}\)}\in  \Theta,
\end{equation}
 as $N\to+\infty$, thereby establishing the existence of a unique localized defect mode (cf. \cite[Theorem 4.4]{DKZ_JLMS2026}).

From \eqref{ASG}, we can see that the spectral gap can be tailored by modulating the stiffness values \eqref{stif_tra}--\eqref{stif_tor}, thereby enabling control over its width or the opening and closing of the spectral gap.  Motivated by this observation, it is natural to pose the following question: 
\[
\mbox{{\em When the spectral gap closes} (i.e.,  }\beta^{\textup{m}}_1=\beta^{\textup{m}}_2), \mbox{{\em by what mechanism can a localized defect mode be excited?}}
\] 
To address this question, we next investigate a monomer-type metamaterial containing material-parameter defects in the following subsection.

\subsection{Localized modes generated by material-parameter defects}\label{LM_mpd}

In this section, we investigate finite-layered metamaterials composed of monomer-type resonators, focusing on configurations that include a material-parameter defect.

We first give the definition of finite-layered monomer-type metamaterials both with and without defects, respectively. 

\begin{defn}
	Let $D$ denote the $N$-layered metamaterial illustrated in Figure \ref{MLHCCB} with material parameters given by \eqref{nestedcomplement}.
	We call $D$ an $N$-layered monomer-type metamaterial without defects if the following conditions hold:
	\[
	\tau_i = 1 \mbox{ and }\beta^{\textup{m}}_i = \beta^{\textup{m}}<0 \mbox{ for all } 1\leq i\leq N.
	\]
	Conversely, we call $D$ an $N$-layered monomer-type metamaterial with finite multiple material-parameter defects in $D_{\Lambda}: = \cup_{j\in \Lambda}D_j$ with  $\Lambda := \{i_1,i_2,\ldots,i_M:1<i_1<i_2<,\ldots,<i_M< N\; \mbox{and }\;M\ll N \}$ if  
	\begin{equation}\label{monomersetupdefect1}
	\beta^{\textup{m}}_i = \beta^{\textup{m}}<0 \mbox{ for all } 1\leq i\leq N, \mbox{ and }
	\tau_{i} =\begin{cases}
	\ds 1+\eta_{l}, & \mbox{if } i\in \Lambda, \\
	\nm
	\ds   1, & \mbox{else} ,
	\end{cases}
	\end{equation}
	for some parameters $\eta_{l}>-1$.
\end{defn}

Observe that $\eta_{l} =0$ corresponds to the unperturbed case, and $|\eta_{l}|$ describes the magnitude of the perturbation.
 Hence, defective materials are characterised by the eigenvalue problem
\begin{equation}\label{GEP142}
{T}^{-2}\bm{\mathcal{S}}^{\textup{m}}\bm{v}^{\textup{m}}_{i} = \lambda^{\textup{m}}_{i} \bm{v}^{\textup{m}}_{i}, \mbox{ for } 1\leq i\leq N, 	\mbox{ with } \mathrm{m}\in\{\mathrm{T,R}\}.
\end{equation}

We next recall results from \cite{ANZIAM, CDF_AMS} concerning the spectrum of tridiagonal $1$-Toeplitz matrices with perturbations at the diagonal corners.
Denote by
\begin{equation}\label{1toeplitzCP}
A_{N}^{(a, b)}(\alpha, \beta):=\left(\begin{array}{ccccccc}
\alpha+a & \beta &  &  &  &   \\
\beta & \alpha & \beta &  &  &    \\
& \beta & \alpha & \beta &  &    \\
&	 & \ddots &\ddots & \ddots&  \\
&  &  & \beta & \alpha & \beta \\
&  &  &  & \beta & \alpha+b
\end{array}\right) \in \RR^{N\times N}
\end{equation}
the tridiagonal $1$-Toeplitz matrices with perturbations on the diagonal corners.
The characteristic polynomial of $A_{N}(\alpha, \beta):=A_{N}^{(0, 0)}(\alpha, \beta)$ is
\begin{equation}\label{Pn_chebpoly}
P_N^*(\lambda) := \beta^NU_N\(z(\lambda)\),\mbox{ with } z(\lambda):=\frac{\lambda-\alpha}{2\beta}, 
\end{equation}
where the Chebyshev polynomial of the second kind  satisfying the three-term recurrence relations
\begin{equation}\label{3termChebyshev}
U_{n+1}(s)  =2 s U_n(s)-U_{n-1}(s), \;\mbox{ for all } n=1,2,\ldots
\end{equation}
with initial conditions $U_0(s) =1$ and $U_1(s)  =2s$.
More specifically,  the Chebychev polynomials of second kind admit the following form
\begin{equation}\label{Chebypoly}
U_{n}(s) =\begin{cases}
\ds (\textup{sgn}(s))^{n}\frac{\sinh\((n+1)\textup{arccosh}(|s|)\)}{\sqrt{s^2-1}}, & \mbox{if } |s|>1, \\
\nm
\ds   \frac{\sin\((n+1)\textup{arccos}(s)\)}{\sqrt{1-s^2}}, & \mbox{if } |s|<1,\\
\nm
\ds s^{n}(n+1), &\mbox{if } |s|=1.
\end{cases}
\end{equation}

\begin{lem}\label{eigenvalue}
 The characteristic polynomial $\mathcal{P}_{A_{N}^{(a, b)}(\alpha, \beta)}(\lambda)$ of $A_{N}^{(a, b)}(\alpha, \beta)$ can be given by
 \[
 \mathcal{P}_{A_{N}^{(a, b)}(\alpha, \beta)}(\lambda) = P_N^*(\lambda) - (a+b)P_{N-1}^*(\lambda)+abP_{N-2}^*(\lambda).
 \]
\end{lem}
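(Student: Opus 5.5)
The identity follows from multilinearity of the determinant in the first and last rows, combined with the fact that deleting the first and/or last row and column of $A_N(\alpha,\beta)$ leaves a matrix of the same tridiagonal Toeplitz type. Throughout we use the convention $\mathcal{P}_M(\lambda)=\det(\lambda I - M)$ (or equivalently $\det(M-\lambda I)$, applied consistently), and we take $P_N^*$ from \eqref{Pn_chebpoly} to be the characteristic polynomial of $A_N(\alpha,\beta)$ in that same convention. The sign conventions must be matched carefully: with $\det(\lambda I-M)$ the perturbation enters row one as $-a$, which produces exactly the coefficients $-(a+b)$ and $+ab$ in the statement.

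Write $\lambda I - A_N^{(a,b)}(\alpha,\beta)$. Its first row equals the first row of $\lambda I - A_N(\alpha,\beta)$ plus $(-a,0,\ldots,0)$. Expanding the determinant linearly in this row gives
\[
\det(\lambda I - A_N^{(a,b)}) = \det(\lambda I - A_N^{(0,b)}) - a\,\det(\lambda I - A_{N-1}^{(0,b)}),
\]
because the cofactor of the $(1,1)$ entry is the determinant of the lower-right $(N-1)\times(N-1)$ block. That block is again of the form $\lambda I - A_{N-1}^{(0,b)}(\alpha,\beta)$.

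Next apply the same splitting to the last row in both terms, now with the perturbation $-b$. The $(N,N)$ cofactor of $\lambda I - A_N$ is $\lambda I - A_{N-1}$. The $(N,N)$ cofactor of $\lambda I - A_{N-1}$, which is the $(N,N)$ entry after the first row and column have been removed, is $\lambda I - A_{N-2}$. This yields
\[
\mathcal{P}_{A_N^{(a,b)}} = P_N^* - bP_{N-1}^* - aP_{N-1}^* + ab\,P_{N-2}^*,
\]
which is the claimed formula.

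The only real obstacle is consistency of conventions. With the Chebyshev form in \eqref{Pn_chebpoly}, $\beta^N U_N\bigl((\lambda-\alpha)/(2\beta)\bigr)$ equals $\det(\lambda I - A_N)$ only up to the factor $(-1)^N$ (or, depending on the sign convention, exactly). We will check this via the three-term recurrence \eqref{3termChebyshev}: expanding $\det(\lambda I-A_N)$ along the last row gives $D_N=(\lambda-\alpha)D_{N-1}-\beta^2D_{N-2}$, and this matches $\beta^N U_N(z)$ through $U_{N}(z)=2zU_{N-1}(z)-U_{N-2}(z)$ with $2z\beta=\lambda-\alpha$. The initial values are $D_0=1$ and $D_1=\lambda-\alpha=\beta U_1(z)$. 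Hence $P_N^*=\det(\lambda I-A_N)$ exactly, so all three terms carry compatible normalisations.

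Small cases, for example $N=2$ where $P_0^*=1$, are covered by the convention $P_0^*=1$ (and $P_{-1}^*=0$ if needed).
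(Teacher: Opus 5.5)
Your proof is correct, but it takes a different route from the paper's.

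\textbf{The paper's route.} The paper writes $A_N^{(a,b)}(\alpha,\beta)=A_N(\alpha,\beta)+\bm{X}\,\mathrm{diag}(a,b)\,\bm{X}^t$ with $\bm{X}=(\bm{e}_1,\bm{e}_N)$. It applies the determinant formula for a rank-two update, which factors out $P_N^*(\lambda)$ times a $2\times 2$ determinant. That determinant is built from the corner entries of the resolvent $(\lambda I-A_N)^{-1}$: the diagonal corners are $P_{N-1}^*/P_N^*$ and the off-diagonal corners are $\beta^{N-1}/P_N^*$. The paper then needs the Chebyshev identity $U_{N-1}^2-U_NU_{N-2}=1$ to turn $\bigl((P_{N-1}^*)^2-\beta^{2N-2}\bigr)/P_N^*$ into $P_{N-2}^*$.

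\textbf{What your route gains.} Your multilinear expansion in the first and last rows avoids all of this. It is a polynomial identity valid for every $\lambda$, so you never restrict to $\lambda\notin\sigma(A_N)$ and extend afterwards. You also need neither the resolvent-entry formulas nor the Cassini-type identity. Your argument further shows the structure is general. For any square matrix, perturbing the two corner diagonal entries gives the determinant, minus $a$ times the principal minor with row and column $1$ deleted, minus $b$ times the one with row and column $N$ deleted, plus $ab$ times the one with both deleted. The Toeplitz structure only serves to identify these minors with $P_{N-1}^*$ and $P_{N-2}^*$.

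\textbf{What the paper's route gains.} It is continuous with the rest of Section 5. The same low-rank determinant formula and resolvent entries are exactly what drive \eqref{Rls} and the proof of Theorem \ref{Propiff2}.

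\textbf{One small slip.} Your parenthetical claim that $\det(M-\lambda I)$ works ``equivalently'' is not right. If all characteristic polynomials are taken as $\det(M-\lambda I)$, the middle coefficient comes out as $+(a+b)$. However, you then fix the convention $\det(\lambda I-M)$ and verify through the recurrence that $P_N^*=\det(\lambda I-A_N)$ exactly. So the argument as actually carried out is sound.
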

\begin{proof}[\bf Proof]
	It follows from \eqref{1toeplitzCP} that
	\[
	A_{N}^{(a, b)}(\alpha, \beta) = A_{N}(\alpha, \beta)+\bm{X}\begin{pmatrix}
	a&0\\
	\nm
	0&b
	\end{pmatrix}\bm{X}^t,
	\]
where $\bm{X} := \(\bm{e}_1,\bm{e}_N\)$ is the $N$-by-2 matrix and  
${\bm{e}}_i $ denotes the $N$-dimensional vector whose $i$-th entry is equal to one.
Then, by straightforward algebraic manipulations, we obtain
\begin{equation}\label{PAab}
\mathcal{P}_{A_{N}^{(a, b)}(\alpha, \beta)}(\lambda) =  P_N^*(\lambda)\begin{vmatrix}
1-a\(A^{-1}_{N}(\alpha, \beta)\)_{11} &-a \(A^{-1}_{N}(\alpha, \beta)\)_{1N}\\
\nm 
-b \(A^{-1}_{N}(\alpha, \beta)\)_{N1}&1-b\(A^{-1}_{N}(\alpha, \beta)\)_{NN}
\end{vmatrix}.
\end{equation}\label{invA11}
	From \eqref{1toeplitzCP} and \eqref{Pn_chebpoly}, we obtain
	\begin{equation}
	\(A^{-1}_{N}(\alpha, \beta)\)_{11} =  \(A^{-1}_{N}(\alpha, \beta)\)_{NN} = \frac{P_{N-1}^*(\lambda)}{P_N^*(\lambda)},
	\end{equation}
	and 
	\begin{equation}\label{invA1N}
		\(A^{-1}_{N}(\alpha, \beta)\)_{1N} = \(A^{-1}_{N}(\alpha, \beta)\)_{N1} = \frac{\beta^{N-1}}{P_N^*(\lambda)}.
	\end{equation}
	Therefore, combining \eqref{PAab}--\eqref{invA1N} with the identity
	\[
	U_N^2(\lambda) -U_{N-1}(\lambda) U_{N+1}(\lambda) = 1,
	\]
	completes the proof.
\end{proof}

By Taking $\beta^{\textup{m}}_1=\beta^{\textup{m}}_2: =\beta^{\textup{m}} $ in Proposition \ref{propABG}, we obtain the following result.

\begin{prop}\label{propABG1}
	Consider an $N$-layered monomer-type metamaterial without defects. Let $\bm{\mathcal{S}}^{\textup{m}}\in\mathbb{R}^{N \times N}$ be the associated stiffness matrix of the unperturbed structure. If for $N$ large enough, there holds 
	$
	-3\beta^{\textup{m}}_0\leq -8\beta^{\textup{m}}.
	$
	Then
	\begin{equation}\label{ASB1}
	\Xi = \overline{\lim_{N\to\infty} \sigma(\bm{\mathcal{S}}^{\textup{m}})} = \left[0,-4\beta^{\textup{m}}\right],
	\end{equation}
	where $\lim$ denotes the Hausdorff limit. 
\end{prop}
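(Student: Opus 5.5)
The plan is to identify $\bm{\mathcal{S}}^{\textup{m}}$ as a corner-perturbed tridiagonal $1$-Toeplitz matrix and then apply Lemma \ref{eigenvalue} together with the Chebyshev representation \eqref{Chebypoly}. In the monomer case \eqref{alphabeta} gives $\mathrm{S}^{\mathrm{m}}_i=-\beta^{\textup{m}}$ for $1\le i\le N-1$. Writing $\beta^{\textup{m}}_0:=-\mathrm{S}^{\mathrm{m}}_0<0$, the diagonal entries are $\alpha^{\textup{m}}_1=-\beta^{\textup{m}}_0-\beta^{\textup{m}}$, $\alpha^{\textup{m}}_i=-2\beta^{\textup{m}}$ for $1<i<N$, and $\alpha^{\textup{m}}_N=-\beta^{\textup{m}}$. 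Hence
\[
\bm{\mathcal{S}}^{\textup{m}}=A_N^{(a,b)}(-2\beta^{\textup{m}},\beta^{\textup{m}}),\qquad a=\beta^{\textup{m}}-\beta^{\textup{m}}_0,\quad b=\beta^{\textup{m}}.
\]
By Lemma \ref{eigenvalue}, dividing by $(\beta^{\textup{m}})^N$, the eigenvalues are exactly the zeros of
\[
F_N(z):=U_N(z)-\Big(\tfrac{a}{\beta^{\textup{m}}}+1\Big)U_{N-1}(z)+\tfrac{a}{\beta^{\textup{m}}}U_{N-2}(z),\qquad z=\frac{\lambda+2\beta^{\textup{m}}}{2\beta^{\textup{m}}}.
\]
The range $z\in[-1,1]$ corresponds exactly to $\lambda\in[0,-4\beta^{\textup{m}}]$.

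\emph{Step 1: the eigenvalues fill the candidate bulk.} For $|z|<1$ I set $z=\cos\theta$ and use \eqref{Chebypoly}. This turns $F_N(z)\sin\theta$ into a trigonometric expression $\sin((N+1)\theta)-c_1\sin(N\theta)+c_2\sin((N-1)\theta)$, where $c_1=a/\beta^{\textup{m}}+1$ and $c_2=a/\beta^{\textup{m}}$ are fixed. Counting sign changes on a grid of spacing $\pi/N$ (an intermediate value argument) gives at least $N-2$ roots in $(0,\pi)$, and their mesh tends to zero as $N\to\infty$. Every point of $[0,-4\beta^{\textup{m}}]$ is therefore a limit of eigenvalues. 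Combined with Lemma \ref{eigenvalueECM}, which gives positivity (and $\mathrm{S}^{\mathrm{m}}_0>0$ makes the matrix definite), this yields the inclusion $[0,-4\beta^{\textup{m}}]\subset\Xi$ and shows that there are no limit points below $0$.

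\emph{Step 2: exclude outliers above $-4\beta^{\textup{m}}$.} For $|z|>1$ I write $2z=x+x^{-1}$ with $|x|<1$. Then $U_n(z)=(x^{-(n+1)}-x^{n+1})/(x^{-1}-x)$, so
\[
x^{N}(x^{-1}-x)F_N(z)=(1-x)\Big(1-\tfrac{a}{\beta^{\textup{m}}}x\Big)+\mathcal{O}(|x|^{2N}).
\]
By Hurwitz's theorem, the zeros with $|x|<1$ converge to the zeros of the limit factorization. The factor $(1-x)$ has no admissible root. The second factor gives the single candidate $x_*=\beta^{\textup{m}}/a$, which is admissible only if $|a|>|\beta^{\textup{m}}|$; it would produce the eigenvalue
\[
\lambda_*=-2\beta^{\textup{m}}+\beta^{\textup{m}}\big(x_*+x_*^{-1}\big)>-4\beta^{\textup{m}}.
\]
So the crux is to show that, under the hypothesis on $\beta^{\textup{m}}_0$, one has $|a|=|\beta^{\textup{m}}-\beta^{\textup{m}}_0|\le|\beta^{\textup{m}}|$. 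Equivalently, $x_*$ must stay on or outside the unit circle, so that no zero of $F_N$ with $|z|>1$ survives as $N\to\infty$. Together with Gershgorin, which bounds all eigenvalues uniformly in $N$, this gives the Hausdorff limit \eqref{ASB1}.

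The main obstacle is Step 2. The boundary case $|x_*|=1$ requires care, because there the roots may approach $z=\pm1$ from outside; I would handle this with the $|s|=1$ branch of \eqref{Chebypoly} and show that the limit point is still $0$ or $-4\beta^{\textup{m}}$. I also need to check carefully that the stated condition $-3\beta^{\textup{m}}_0\le-8\beta^{\textup{m}}$ really implies the no-outlier inequality $|a|\le|\beta^{\textup{m}}|$ obtained above. I would do this by specializing the computation behind \eqref{gapcondition} to $\beta^{\textup{m}}_1=\beta^{\textup{m}}_2$. If the thresholds disagree, I would expect the sharp statement to be the one coming from $|x_*|\le1$, namely no outlier iff $|\beta^{\textup{m}}_0|\le2|\beta^{\textup{m}}|$, and I would adjust the hypothesis accordingly.
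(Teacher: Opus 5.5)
Your closing suspicion is correct, and it is decisive. Under the stated hypothesis, Step 2 cannot be completed by any argument, because the proposition is false in the window $2|\beta^{\textup{m}}|<|\beta^{\textup{m}}_0|\le\frac{8}{3}|\beta^{\textup{m}}|$.

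Take $\beta^{\textup{m}}=-1$ and $\beta^{\textup{m}}_0=-\frac52$, so that $-3\beta^{\textup{m}}_0=\frac{15}{2}\le 8=-8\beta^{\textup{m}}$. Then $a=\frac32$ and your $x_*=\beta^{\textup{m}}/a=-\frac23$ lies inside the unit disk. The vector with entries $v_i=(-\frac23)^{i-1}$ satisfies rows $1,\dots,N-1$ of $\bm{\mathcal{S}}^{\textup{m}}v=\frac{25}{6}v$ exactly:
\begin{itemize}
\item row $1$ gives $\frac72+\frac23=\frac{25}{6}$;
\item the interior rows give $\frac32+2+\frac23=\frac{25}{6}$.
\end{itemize}
The last row leaves a residual of modulus $\frac53(\frac23)^{N-1}$. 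Since $\bm{\mathcal{S}}^{\textup{m}}$ is symmetric and $\|v\|\ge1$, we get $\mathrm{dist}(\frac{25}{6},\sigma(\bm{\mathcal{S}}^{\textup{m}}))\le\frac53(\frac23)^{N-1}$. Hence $\frac{25}{6}>4=-4\beta^{\textup{m}}$ belongs to the Hausdorff limit, and \eqref{ASB1} fails.

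The paper gives no independent argument here. It sets $\beta^{\textup{m}}_1=\beta^{\textup{m}}_2=\beta^{\textup{m}}$ in Proposition \ref{propABG}, whose condition \eqref{gapcondition} then becomes $|\beta^{\textup{m}}_0|\le\frac83|\beta^{\textup{m}}|$. The constant $\frac83$ is inherited from there and is not the right one. The same top-edge computation in the dimer case gives the sharp threshold $|\beta^{\textup{m}}_0|\le2|\beta^{\textup{m}}_2|$, which is strictly below the right-hand side of \eqref{gapcondition}. In the paper's physical regime one has $|\beta^{\textup{m}}|\gg|\beta^{\textup{m}}_0|$, so the later sections are unaffected. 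The proposition itself, however, should assume $|\beta^{\textup{m}}_0|\le2|\beta^{\textup{m}}|$, i.e. $0<\mathrm{S}^{\mathrm{m}}_0\le2\mathrm{S}^{\mathrm{m}}_1$.

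With that corrected hypothesis, your route is a complete, self-contained proof. It is also sharp, since for $|\beta^{\textup{m}}_0|>2|\beta^{\textup{m}}|$ it produces the outlier $\lambda_*$, which is more than the paper's reduction provides. A few points of detail:
\begin{itemize}
\item Step 1 works as stated. On the grid $\theta_k=k\pi/N$ one gets $\sin\theta_k\,F_N(\cos\theta_k)=(-1)^k(1-c_2)\sin\theta_k$, and $c_2\neq1$ because $\beta^{\textup{m}}_0\neq0$. So consecutive zeros are at most $2\pi/N$ apart.
\item In Step 2 the prefactor should be $x^{N+1}$ rather than $x^{N}$. This turns the left-hand side into the polynomial $(1-x)(1-c_2x)-x^{2N}(c_2-c_1x+x^2)$.
\item The boundary case $|x_*|=1$ needs no separate treatment via the $|s|=1$ branch of \eqref{Chebypoly}. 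Applying Hurwitz (or a direct lower bound on $|(1-x)(1-c_2x)|$) on every closed disk $|x|\le r<1$ already forces every limit point to satisfy $|z|\le1$, i.e. to lie in $[0,-4\beta^{\textup{m}}]$.
\end{itemize}
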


For such a layered monomer-type metamaterial, the associated stiffness matrix has the  following tridiagonal form:
\begin{equation}\label{CcapWOD}
\begin{split}
\bm{\mathcal{S}}^{\textup{m}}:=  \begin{pmatrix}
\ds
-\beta_0^{\textup{m}}-\beta^{\textup{m}} &  \ds  \beta^{\textup{m}} & &  & & \\
\nm
\ds \beta^{\textup{m}} & \ds -2\beta^{\textup{m}} & \ds  \beta^{\textup{m}}&  & &\\
\nm
& \ds\beta^{\textup{m}} & \ds -2\beta^{\textup{m}} & \ds\beta^{\textup{m}} & & \\
\nm
& & \ddots &\ddots & \ddots& \\
\nm
&  & &\ds\beta^{\textup{m}} &\ds -2\beta^{\textup{m}} &\ds \beta^{\textup{m}}\\
\nm
&  & &  &\ds\beta^{\textup{m}} &\ds -\beta^{\textup{m}}
\end{pmatrix} = A_{N}^{(\beta^{\textup{m}}- \beta^{\textup{m}}_0, \beta^{\textup{m}})}(-2\beta^{\textup{m}},\beta^{\textup{m}}),
\end{split}
\end{equation}
with $\mathrm{m}\in\{\mathrm{T,R}\}$. It is easy to see that 
\begin{equation}\label{1Toeplitz}
\bm{\mathcal{S}}^{\textup{m}} 
= A_{N}^{(0, 0)}(-2\beta^{\textup{m}},\beta^{\textup{m}}) + \(\bm{e}_1,\bm{e}_N\) 
\begin{pmatrix}
\beta^{\textup{m}}-\beta_0^{\textup{m}}&0\\
\nm
0&\beta^{\textup{m}}
\end{pmatrix}\begin{pmatrix}
\bm{e}_1^t\\
\nm
\bm{e}_N^t\end{pmatrix}:=A_{N}^{(0, 0)}(-2\beta^{\textup{m}},\beta^{\textup{m}})+\bm{X}\bm{R}^{\textup{m}}\bm{X}^t, 
\end{equation}
where $\bm{X} := \(\bm{e}_1,\bm{e}_N\)$ is the $N$-by-2 matrix, $\bm{R}^{\textup{m}}:=\diag\(\beta^{\textup{m}}-\beta_0^{\textup{m}},\beta^{\textup{m}}\)$,  
 ${\bm{e}}_i $ is the $N$-dimensional vector with the $i$-th entry being one.

\begin{lem}\label{Lemm52}
		Consider an $N$-layered monomer-type metamaterial with multiple material-parameter defects in $D_{\Lambda} = \cup_{j=i_1}^{i_M}D_j$ with $1<i_1<i_2<\ldots<i_M< N$ and $M\ll N$, which satisfies the setup \eqref{monomersetupdefect1}.
		Let $\bm{\mathcal{S}}^{\textup{m}}\in\mathbb{R}^{N \times N}$ be the stiffness matrix \eqref{CcapWOD} of the monomer-type structure and let $(\lambda^{\textup{m}},\bm{v}^{\textup{m}})$ be an eigenpair of ${T}^{-2}\bm{\mathcal{S}}^{\textup{m}}$ but $\lambda^{\textup{m}}\notin \sigma (\bm{\mathcal{S}}^{\textup{m}})$.
		Then the eigenvector $\bm{v}^{\textup{m}}$ admits the following representation
		\[
		\bm{v}^{\textup{m}} 
%		=  \lambda^{\textup{m}}\eta(2+\eta) \bm{v}^{\textup{m},(j)}\(\bm{\mathcal{S}}^{\textup{m}} - \lambda^{\textup{m}}\bm{I} \)^{-1}\bm{e}_j
		 = \sum_{l=1}^{M}\((1+\eta_l)^{-2}-1\) \(\bm{\mathcal{S}}^{\textup{m}}  \bm{v}^{\textup{m}}\)^{(i_l)} \(\lambda^{\textup{m}}\bm{I}_N-\bm{\mathcal{S}}^{\textup{m}}  \)^{-1}\bm{e}_{i_l},
		\]
where $\bm{I}_N$ is the $N$-by-$N$ identity matrix.
\end{lem}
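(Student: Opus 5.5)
The identity is a resolvent rewriting of the eigenvalue equation, in the spirit of Lemma \ref{eigenvalue}, where the corner perturbation was also written as a finite-rank correction to a Toeplitz matrix. The plan is to view $T^{-2}$ as a finite-rank diagonal perturbation of the identity and move the perturbation to the right-hand side. By \eqref{monomersetupdefect1}, $T=\diag(\tau_1,\ldots,\tau_N)$ with $\tau_i=1$ for $i\notin\Lambda$ and $\tau_{i_l}=1+\eta_l$. Hence
\[
T^{-2}=\bm{I}_N+\sum_{l=1}^{M}\big((1+\eta_l)^{-2}-1\big)\bm{e}_{i_l}\bm{e}_{i_l}^t .
\]
This matrix is well defined because $\eta_l>-1$.

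Next, substitute this into the eigenvalue equation $T^{-2}\bm{\mathcal{S}}^{\textup{m}}\bm{v}^{\textup{m}}=\lambda^{\textup{m}}\bm{v}^{\textup{m}}$ from \eqref{GEP142}. This gives
\[
\bm{\mathcal{S}}^{\textup{m}}\bm{v}^{\textup{m}}+\sum_{l=1}^{M}\big((1+\eta_l)^{-2}-1\big)\big(\bm{e}_{i_l}^t\bm{\mathcal{S}}^{\textup{m}}\bm{v}^{\textup{m}}\big)\bm{e}_{i_l}=\lambda^{\textup{m}}\bm{v}^{\textup{m}} .
\]
Here $\bm{e}_{i_l}^t\bm{\mathcal{S}}^{\textup{m}}\bm{v}^{\textup{m}}=(\bm{\mathcal{S}}^{\textup{m}}\bm{v}^{\textup{m}})^{(i_l)}$ is a scalar. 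Rearranging yields
\[
(\lambda^{\textup{m}}\bm{I}_N-\bm{\mathcal{S}}^{\textup{m}})\bm{v}^{\textup{m}}=\sum_{l=1}^{M}\big((1+\eta_l)^{-2}-1\big)(\bm{\mathcal{S}}^{\textup{m}}\bm{v}^{\textup{m}})^{(i_l)}\bm{e}_{i_l}.
\]

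The hypothesis $\lambda^{\textup{m}}\notin\sigma(\bm{\mathcal{S}}^{\textup{m}})$ makes $\lambda^{\textup{m}}\bm{I}_N-\bm{\mathcal{S}}^{\textup{m}}$ invertible. Applying its inverse to both sides, and using linearity to pass it through the finite sum, gives exactly the claimed representation.

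No step is a genuine obstacle; the only care needed is sign bookkeeping and keeping the factor $(\bm{\mathcal{S}}^{\textup{m}}\bm{v}^{\textup{m}})^{(i_l)}$ rather than $\bm{v}^{\textup{m},(i_l)}$. The two agree up to the factor $\lambda^{\textup{m}}\tau_{i_l}^2$, since the $i_l$-th row of the eigen-equation reads $\tau_{i_l}^{-2}(\bm{\mathcal{S}}^{\textup{m}}\bm{v}^{\textup{m}})^{(i_l)}=\lambda^{\textup{m}}\bm{v}^{\textup{m},(i_l)}$. If desired, the lemma can be remarked in that equivalent form, which is the one useful for the subsequent $M\times M$ reduced system obtained by evaluating at the indices $i_1,\ldots,i_M$.
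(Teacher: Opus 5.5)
Your proposal is correct and matches the paper's proof: both write $T^{-2}=\bm{I}_N+\sum_{l=1}^{M}\big((1+\eta_l)^{-2}-1\big)\bm{e}_{i_l}\bm{e}_{i_l}^t$, move the rank-$M$ part of the eigen-equation to the right-hand side, and invert $\lambda^{\textup{m}}\bm{I}_N-\bm{\mathcal{S}}^{\textup{m}}$. Your closing observation that $(\bm{\mathcal{S}}^{\textup{m}}\bm{v}^{\textup{m}})^{(i_l)}=\lambda^{\textup{m}}(1+\eta_l)^2\bm{v}^{\textup{m},(i_l)}$ is also correct. It gives the equivalent coefficient $-\lambda^{\textup{m}}\eta_l(2+\eta_l)\bm{v}^{\textup{m},(i_l)}$.
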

\begin{proof}[\bf Proof]
	Note that $T$ is diagonal and is given by
	\begin{equation}\label{T}
	T_{ii} =\begin{cases}
	\ds 1+\eta_{l}, & \mbox{if } i=i_l, l = 1,2,\ldots,M, \\
	\nm
	\ds   1, & \mbox{else } ,
	\end{cases}
	\end{equation}
which implies that 
	\[
	T^{-2} = \bm{I}_N + \sum_{l=1}^M\((1+\eta_l)^{-2}-1\)\bm{e}_{i_l}\bm{e}_{i_l}^t.
	\]
	A direct computation yields, 
\[
{T}^{-2}\bm{\mathcal{S}}^{\textup{m}}\bm{v}^{\textup{m}} = \lambda^{\textup{m}} \bm{v}^{\textup{m}} \Leftrightarrow  \(\lambda^{\textup{m}}\bm{I}_N-\bm{\mathcal{S}}^{\textup{m}}  \)\bm{v}^{\textup{m}} =   \sum_{l=1}^{M}\((1+\eta_l)^{-2}-1\) \bm{e}_{i_l} \bm{e}_{i_l}^t\bm{\mathcal{S}}^{\textup{m}}  \bm{v}^{\textup{m}},
\]	
which concludes the proof.
\end{proof}

\begin{thm}\label{Propiff2}
	Consider an $N$-layered monomer-type metamaterial containing  $M$ material-parameter defects in $D_{\Lambda} = \cup_{j=i_1}^{i_M}D_j$ with $1<i_1<i_2<\ldots<i_M\ll N$, satisfying the setup \eqref{monomersetupdefect1}, where the perturbation parameters $\eta_l$ are mutually distinct.
	Let $\bm{\mathcal{S}}^{\textup{m}}\in\mathbb{R}^{N \times N}$ be the stiffness matrix \eqref{CcapWOD} of the monomer-type structure and let $(\lambda^{\textup{m}},\bm{v}^{\textup{m}})$ be an eigenpair of ${T}^{-2}\bm{\mathcal{S}}^{\textup{m}}$. 
	If the following conditions holds: 
	\begin{equation}
-1<\eta_l<-1+\frac{\sqrt{4i_l-1}}{2\sqrt{i_l}},\;\mbox{ and}\; \min_{2\leq l\leq M}|i_l-i_{l-1}|\gg 1,
	\end{equation}
	then for $N$ large  enough, there exist  $M$ distinct eigenvalues   above the spectrum bulk $\Xi = \left[0,-4\beta^{\textup{m}}\right]$, denoted by $\lambda^{\textup{m}}_{(i_l,\eta_l),N}$, $l=1,2,\ldots,M$.
Each eigenvalue $\lambda^{\textup{m}}_{(i_l,\eta_l),N}$, $l=1,2,\ldots,M$, converges at leading order to the unique solution   $\lambda^{\textup{m}}_{(i_l,\eta_l)}$ of the transcendental equation in $\lambda$:
	\begin{equation}\label{Lexp}
	\frac{\sqrt{\lambda}}{\sqrt{\lambda+4\beta^{\textup{m}}}}\(1 - z_+^{2i_l}\) = -\frac{1}{\eta_l(\eta_l+2)}, \mbox{ with } z_+ = \frac{\lambda+2\beta^{\textup{m}}-\sqrt{\lambda(\lambda+4\beta^{\textup{m}})}}{2\beta^{\textup{m}}} \mbox{ and } |z_+|<1,
%	\frac{\sqrt{\lambda}}{\sqrt{4\beta^{\textup{m}}+\lambda}}\left[1 - \left(\frac{2\beta^{\textup{m}}+\lambda - \sqrt{\lambda(4\beta^{\textup{m}}+\lambda)}}{2\beta^{\textup{m}}}\right)^{2i_l}\right] = -\frac{1}{\eta_l(\eta_l+2)},
	\end{equation}
	as $N\to+\infty$. 
%	Then, only for $-1<\eta_l<0$, there exist  $M$ eigenvalues   above the spectrum bulk $\Xi = \left[0,-4\beta^{\textup{m}}\right]$, denoted by $\lambda^{\textup{m}}_{(i_l,\eta_l),N}$, $l=1,2,\ldots,M$.   These eigenvalues converge  to, up to leading order:
%%	\[
%%	{\frac{\sqrt{\lambda}}{\sqrt{4\beta+\lambda}}\left[1 - \left(\frac{2\beta+\lambda - \sqrt{\lambda(4\beta+\lambda)}}{2\beta}\right)^{2i_l}\right]} = -\frac{1}{\eta_l(\eta_l+2)}
%%	\]
%	\begin{equation}\label{Lexp}
%	\lambda^{\textup{m}}_{(i_l,\eta_l),\textup{exp}} %= \sqrt{\frac{-2\beta^{\textup{m}}(\eta+1)^2+\sqrt{(\eta+1)^2\(4\(\eta\beta^{\textup{m}}\)^2+8\eta\(\beta^{\textup{m}}\)^2+4\(\beta^{\textup{m}}\)^2\)}}{2\eta+1}} 
%	=\frac{-4\beta^{\textup{m}}}{1-\eta_l^2(2+\eta_l)^2}
%	\end{equation}
%	exponentially as $N\to+\infty$, and for $N$ large enough,
%	\begin{align}\label{CVG_EF}
%	| \lambda^{\textup{m}}_{\textup{exp}} - \lambda^{\textup{m}}_{\textup{exp},N}|
%	< A\e^{-BN},
%	\end{align}
%	for some $A,B>0$ independent of $N$.
	Denote $\xi = \xi(\lambda^{\textup{m}}):= \(2\beta^{\textup{m}} + \lambda^{\textup{m}}\)/(2\beta^{\textup{m}})$. Moreover, the following characterizations hold for the eigenvectors:
	\begin{itemize}
		\item[\textup{(i)}] if $\lambda^{\textup{m}}\notin \Xi$ (i.e., $\xi<-1$), then the set
		\[
		\{\lambda^{\textup{m}}:\lambda^{\textup{m}}\notin \Xi \mbox{ and  } \lambda^{\textup{m}}\in \sigma({T}^{-2}\bm{\mathcal{S}}^{\textup{m}})\} = \{  \lambda^{\textup{m}}_{(i_l,\eta_l),N}:l=1,2,\ldots,M \},
		\]  
		the eigenvector $\bm{v}^{\textup{m}}_{(i_l,\eta_l)}$ corresponding to $\lambda^{\textup{m}}_{(i_l,\eta_l),N}$  is exponentially localized near the defect site $i_l$:
		\[
		\bm{v}_{(i_l,\eta_l)}^{\textup{m},(i)}= A(-\e)^{-|i-i_l|\textup{arccosh}(|\xi|)},
		\]
		with $A$  bounded as $N\to\infty$;
		
		\item[\textup{(ii)}] if $\lambda^{\textup{m}}\in \Xi\setminus \partial\Xi$ (i.e., $|d|<1$), then
		\begin{align*}
		\bm{v}^{\textup{m},(i)} = A\cos(i\arccos(d))+B\sin(i\arccos(d)),
		\end{align*}
		with  $A,B$  bounded as $N\to\infty$;
		\item[\textup{(iii)}] if $\lambda^{\textup{m}}\in \partial\Xi$ (i.e., $d = \pm1$), then
		\begin{align*}
		\bm{v}^{\textup{m},(i)} = Ad^i+Bd^i\cdot i,
		\end{align*}
		with  $A,B,$  bounded as $N\to\infty$.
	\end{itemize}
\end{thm}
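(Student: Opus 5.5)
The plan is to reduce the $N\times N$ eigenproblem \eqref{GEP142} to an $M\times M$ secular equation using Lemma \ref{Lemm52}, and then to pass to the limit $N\to\infty$ through explicit Chebyshev-type formulas for the resolvent of $\bm{\mathcal{S}}^{\textup{m}}$.

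\emph{Reduction.} Take $\lambda\notin\sigma(\bm{\mathcal{S}}^{\textup{m}})$. Write $c_l:=(1+\eta_l)^{-2}-1$ and $G(\lambda):=(\lambda\bm{I}_N-\bm{\mathcal{S}}^{\textup{m}})^{-1}$. Componentwise, ${T}^{-2}\bm{\mathcal{S}}^{\textup{m}}\bm{v}=\lambda\bm{v}$ gives $(\bm{\mathcal{S}}^{\textup{m}}\bm{v})^{(i_l)}=\lambda(1+\eta_l)^2\bm{v}^{(i_l)}$. Substituting this into Lemma \ref{Lemm52} and evaluating at $i=i_k$ gives the closed system
\[
\bm{v}^{(i_k)}=-\sum_{l=1}^M \eta_l(\eta_l+2)\,\lambda\, G_{i_k i_l}(\lambda)\,\bm{v}^{(i_l)},\qquad k=1,\ldots,M .
\]
Hence the eigenvalues outside $\sigma(\bm{\mathcal{S}}^{\textup{m}})$ are exactly the zeros of
\[
F_N(\lambda):=\det\big(\bm{I}_M+\big[\eta_l(\eta_l+2)\lambda G_{i_k i_l}(\lambda)\big]_{k,l}\big).
\]

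\emph{Resolvent asymptotics.} By \eqref{CcapWOD}, $\bm{\mathcal{S}}^{\textup{m}}=A_N^{(\beta^{\textup{m}}-\beta^{\textup{m}}_0,\beta^{\textup{m}})}(-2\beta^{\textup{m}},\beta^{\textup{m}})$. I would compute $G$ in two steps. First, write the resolvent of $A_N(-2\beta^{\textup{m}},\beta^{\textup{m}})$ via the usual cofactor formula with \eqref{Pn_chebpoly}, $\big(A_N-\lambda\big)^{-1}_{ik}\propto U_{i-1}U_{N-k}/U_N$ for $i\le k$. Second, add the rank-two corner term $\bm{X}\bm{R}^{\textup{m}}\bm{X}^t$ through the Woodbury identity, as in the proof of Lemma \ref{eigenvalue}. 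For $\lambda>-4\beta^{\textup{m}}$ we have $\xi<-1$, and \eqref{Chebypoly} gives $U_n$ in terms of the roots $z_\pm$ of $\beta^{\textup{m}} z^2-(\lambda+2\beta^{\textup{m}})z+\beta^{\textup{m}}=0$. Since $|z_+|<1$, every contribution from the far corner $N$ is $\mathcal{O}(|z_+|^{N})$.

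In the limit this yields
\[
\lambda G_{ii}(\lambda)\to -\frac{\sqrt{\lambda}}{\sqrt{\lambda+4\beta^{\textup{m}}}}\big(1-z_+^{2i}\big),
\]
together with the off-diagonal bound $|\lambda G_{ik}|\lesssim |z_+|^{|i-k|}$. The coefficient of $z_+^{2i}$ is a reflection coefficient from the first corner. I would check that the corner data $a=\beta^{\textup{m}}-\beta^{\textup{m}}_0$, with $\beta^{\textup{m}}_0$ negligible for large $N$ by the remark after Proposition \ref{propABG}, produces exactly the factor appearing in \eqref{Lexp}.

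\emph{Locating the eigenvalues.} Because $\min_l|i_l-i_{l-1}|\gg1$, the matrix in $F_N$ is diagonal up to terms of size $|z_+|^{\min|i_l-i_{l-1}|}$. At leading order, therefore, $F_N=0$ splits into the $M$ scalar equations \eqref{Lexp}. For each $l$ I would show that $h_i(\lambda):=\frac{\sqrt\lambda}{\sqrt{\lambda+4\beta^{\textup{m}}}}(1-z_+^{2i})$ is strictly monotone on $(-4\beta^{\textup{m}},\infty)$, with limits
\[
h_i(\lambda)\to \frac{2i-1}{2i}\cdot(\text{const}) \quad(\lambda\downarrow-4\beta^{\textup{m}}),\qquad h_i(\lambda)\to 1\quad(\lambda\to\infty).
\]
The lower limit follows from $z_+\to-1$ via l'H\^opital: $1-z_+^{2i}\sim 2i\,(1+z_+)$ while the prefactor blows up. The hypothesis $(1+\eta_l)^2<\frac{4i_l-1}{4i_l}$ should be exactly the condition that $-1/(\eta_l(\eta_l+2))$ lies in the range of $h_{i_l}$. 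This gives a unique simple root $\lambda^{\textup{m}}_{(i_l,\eta_l)}$. Distinctness of the $\eta_l$ makes these roots distinct. A Rouch\'e argument for $F_N$ near each root, with perturbation $\mathcal{O}(|z_+|^{N}+|z_+|^{\min|i_l-i_{l-1}|})$, then gives the $M$ eigenvalues $\lambda^{\textup{m}}_{(i_l,\eta_l),N}$ and their convergence.

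To show there are no other eigenvalues above $\Xi$, I would note that $T^{-2}\bm{\mathcal{S}}^{\textup{m}}$ is similar to the symmetric matrix $T^{-1}\bm{\mathcal{S}}^{\textup{m}}T^{-1}$. I would then argue that any eigenvalue above the bulk must be a zero of $F_N$, and that $F_N$ has no zeros there other than the $M$ already found, using the monotonicity of the diagonal terms.

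\emph{Eigenvectors.} Part (i) follows from Lemma \ref{Lemm52}: $\bm{v}$ is a combination of the columns $G e_{i_l}$, whose entries behave like $z_+^{|i-i_l|}$. Since $\xi<-1$, we have $z_+=-e^{-\operatorname{arccosh}|\xi|}$, which gives the stated alternating exponential decay. Parts (ii) and (iii) follow because, away from the defect sites, $\bm{v}$ satisfies the constant-coefficient three-term recurrence. Its general solution is then trigonometric when $|\xi|<1$ and of the form $A\xi^i+Bi\,\xi^i$ when $\xi=\pm1$, with constants normalized to stay bounded.

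I expect the main obstacle to be the exact corner and finite-$i_l$ asymptotics of $G_{i_li_l}$, including the reflection factor. I also expect the monotonicity and range computation behind the condition on $\eta_l$ to be delicate, together with the proof that no further eigenvalues lie above $\Xi$.
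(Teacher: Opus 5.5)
Your route is the paper's. Your $F_N$ equals the paper's $f_N=\det(\bm{\mathcal{C}}_\eta-(\bm{\mathcal{C}}_\eta-\bm{I})\bm{\mathcal{R}}^{\textup{m}})$ up to the nonzero factor $\det\bm{\mathcal{C}}_\eta$. The remaining steps also match the paper: Chebyshev asymptotics of $\lambda(\lambda\bm{I}_N-\bm{\mathcal{S}}^{\textup{m}})^{-1}_{i_li_s}$, the diagonal approximation, monotonicity and range of the scalar function, and eigenvectors from Lemma~\ref{Lemm52}.

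There are two slips. First, $(\lambda\bm{I}_N-\bm{\mathcal{S}}^{\textup{m}})^{-1}$ is positive definite above $\sigma(\bm{\mathcal{S}}^{\textup{m}})$, so the limit of $\lambda G_{ii}$ carries a plus sign. With your minus sign the diagonal equation $1+\eta_l(\eta_l+2)\lambda G_{i_li_l}=0$ has no root. Second, as $\lambda\downarrow-4\beta^{\textup{m}}$ the prefactor behaves like $2/(1+z_+)$ while $1-z_+^{2i}\sim 2i(1+z_+)$, so the endpoint value is $4i$, not ``$\frac{2i-1}{2i}\cdot$const''. The condition $-1/(\eta_l(\eta_l+2))\in(1,4i_l)$ is then exactly the stated restriction on $\eta_l$.

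\textbf{The genuine gap is the step you flagged as the main obstacle; it fails as planned.} Put $\kappa:=\beta^{\textup{m}}_0/\beta^{\textup{m}}>0$ and recall $z_-=1/z_+$. Carrying the outer-corner entry $\alpha_1=-\beta^{\textup{m}}_0-\beta^{\textup{m}}$ through the limit gives
\[
\lim_{N\to\infty}\lambda G_{ii}(\lambda)=\frac{\sqrt{\lambda}}{\sqrt{\lambda+4\beta^{\textup{m}}}}\left(1-z_+^{2i-2}\,\frac{z_+-1+\kappa}{z_--1+\kappa}\right).
\]
For $i=1$ this is the one-step continued fraction $\lambda/(\lambda-\alpha_1-\beta^{\textup{m}}z_+)$. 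The expression reduces to the factor $1-z_+^{2i}$ in \eqref{Lexp} only when $\kappa=1$, i.e.\ $\beta^{\textup{m}}_0=\beta^{\textup{m}}$, so that the first diagonal entry is the unperturbed value $-2\beta^{\textup{m}}$.

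If instead you let $\beta^{\textup{m}}_0$ be negligible, as you propose, then $\kappa\to0$. You get the factor $1+z_+^{2i-1}$ and range $(1,4i-2)$; in general the endpoint is $4i-4+\frac{4}{2-\kappa}$. The threshold then becomes $(1+\eta_l)^2<\frac{4i_l-3}{4i_l-2}$. Letting $N\to\infty$ cannot remove this, because $i_l$ stays fixed.

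The paper reaches \eqref{Lexp} only because its passage from \eqref{Rls} to \eqref{Rlsinf} silently drops the $(\beta^{\textup{m}}-\beta^{\textup{m}}_0)U_{\min-2}/\beta^{\textup{m}}$ and $\beta^{\textup{m}}_0$ terms. Its limit $\lambda z_+^{\max}(z_-^{\min}-z_+^{\min})/(\beta^{\textup{m}}(z_--z_+))$ is precisely the half-chain Green's function with an unperturbed first corner. So to prove the statement as written, you must impose $\beta^{\textup{m}}_0=\beta^{\textup{m}}$ explicitly; with it, your computation reproduces \eqref{Lexp} and the threshold exactly. Otherwise the correct target is the equation above.

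Table~\ref{table-results} shows this. For $(4,-0.3)$ one has $\kappa\approx0.046$, and the reported eigenvalue $40535.44$ solves the corrected equation, whereas \eqref{Lexp} gives $40542.51$.

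Your plan to exclude further eigenvalues above $\Xi$ via monotonicity of the diagonal terms is also insufficient. Near $\lambda\downarrow-4\beta^{\textup{m}}$ one has $|z_+|\to1$, so the off-diagonal entries are not small there. A clean replacement is as follows. Let $P$ be the $M\times N$ matrix with rows $\bm{e}_{i_l}^t$, and let $\bm{K}:=\diag(\gamma_1,\ldots,\gamma_M)$ with $\gamma_l=-\eta_l(\eta_l+2)\in(0,1)$. Then $\bm{\mathcal{R}}^{\textup{m}}(\lambda)=\lambda P(\lambda\bm{I}_N-\bm{\mathcal{S}}^{\textup{m}})^{-1}P^t$ is strictly decreasing in the Loewner order, with derivative $-P\bm{\mathcal{S}}^{\textup{m}}(\lambda\bm{I}_N-\bm{\mathcal{S}}^{\textup{m}})^{-2}P^t$, and it tends to $\bm{I}_M$ as $\lambda\to\infty$. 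Hence each eigenvalue of $\bm{K}^{1/2}\bm{\mathcal{R}}^{\textup{m}}\bm{K}^{1/2}$ decreases strictly to some $\gamma_l<1$ and crosses $1$ at most once. This gives at most $M$ eigenvalues above $\max\sigma(\bm{\mathcal{S}}^{\textup{m}})$, provided $\sigma(\bm{\mathcal{S}}^{\textup{m}})\subset\Xi$, which holds by Gershgorin when $\kappa\le2$. The paper does not address this point beyond the determinant identity.
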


\begin{proof}[\bf Proof]
	
Let $N$-by-$M$ matrices $\bm{X}_{\Lambda}$ and $\bm{X}_{\Lambda_\eta}$  be difined as
\[
\bm{X}_{\Lambda} := \(\bm{\mathcal{S}}^{\textup{m}}\bm{e}_{i_l} \)_{1\leq l\leq M}\;\mbox{ and }\; \bm{X}_{\Lambda_{\eta}}: = \(\(\frac{1}{(1+\eta_l)^2}-1\)\bm{e}_{i_l} \)_{1\leq l\leq M}. 
\]
By using \eqref{T}, Lemma \ref{eigenvalue} and applying Cauchy’s formula for the determinant of a rank-one perturbation (cf. \cite[identity (0.8.5.11)]{HJMA2013}), the characteristic polynomial $\mathcal{P}_{{T}^{-2}\bm{\mathcal{S}}^{\textup{m}}}(\lambda^{\textup{m}})$ of ${T}^{-2}\bm{\mathcal{S}}^{\textup{m}}\in\RR^{N\times N}$ can be written as
\[
\begin{aligned}
\mathcal{P}_{{T}^{-2}\bm{\mathcal{S}}^{\textup{m}}}(\lambda^{\textup{m}}):&=\det\(\lambda^{\textup{m}} \bm{I}_N- {T}^{-2}\bm{\mathcal{S}}^{\textup{m}}\) = \det\(\lambda^{\textup{m}} \bm{I}_N- \bm{\mathcal{S}}^{\textup{m}} - \sum_{l=1}^M \(\frac{1}{(1+\eta_l)^2}-1\) \bm{e}_{i_l}  \(\bm{\mathcal{S}}^{\textup{m}}\bm{e}_{i_l}\)^t \)\\
&  = \det\(\lambda^{\textup{m}} \bm{I}_N- \bm{\mathcal{S}}^{\textup{m}} - \bm{X}_{\Lambda_{\eta}} \bm{X}_{\Lambda}^t \)\\
& = \det\(\lambda^{\textup{m}} \bm{I}_N - \bm{\mathcal{S}}^{\textup{m}}\) \det\(\bm{I}_M - \bm{X}_{\Lambda}^t\(\lambda^{\textup{m}} \bm{I}_N - \bm{\mathcal{S}}^{\textup{m}}\)^{-1} \bm{X}_{\Lambda_\eta}\),
\end{aligned}
\]
where 
\[
\begin{aligned}
\(\bm{X}_{\Lambda}^t\(\lambda^{\textup{m}} \bm{I}_N - \bm{\mathcal{S}}^{\textup{m}}\)^{-1} \bm{X}_{\Lambda_\eta}\)_{l,s} &=\(\frac{1}{(1+\eta_s)^2}-1\) \bm{e}_{i_l}^t \bm{\mathcal{S}}^{\textup{m}}  \(\lambda^{\textup{m}} \bm{I}_N - \bm{\mathcal{S}}^{\textup{m}}\)^{-1} \bm{e}_{i_s}\\
& = \(\frac{1}{(1+\eta_s)^2}-1\) \bm{e}_{i_l}^t
\(\lambda^{\textup{m}}\(\lambda^{\textup{m}} \bm{I}_N - \bm{\mathcal{S}}^{\textup{m}}\)^{-1} - \bm{I}_N\)
\bm{e}_{i_s}\\
& = \(\frac{1}{(1+\eta_s)^2}-1\) 
\(\lambda^{\textup{m}}\(\lambda^{\textup{m}} \bm{I}_N - \bm{\mathcal{S}}^{\textup{m}}\)_{i_l,i_s}^{-1} - \delta_{l,s}\).
\end{aligned}
\]
Consequently,
\[
\begin{aligned}
\quad \bm{I}_M - \bm{X}_{\Lambda}^t\(\lambda^{\textup{m}} \bm{I}_N - \bm{\mathcal{S}}^{\textup{m}}\)^{-1} \bm{X}_{\Lambda_\eta} = \bm{\mathcal{C}}_\eta -\( \bm{\mathcal{C}}_\eta - \bm{I}_M\) \bm{\mathcal{R}}^{\textup{m}},
\end{aligned}
\]
with $\bm{\mathcal{C}}_\eta := \diag\(\frac{1}{(1+\eta_1)^2},\frac{1}{(1+\eta_2)^2},\ldots,\frac{1}{(1+\eta_M)^2}\)$ and  $\bm{\mathcal{R}}^{\textup{m}} = \bm{\mathcal{R}}^{\textup{m}}(\lambda^{\textup{m}})
: = \(\lambda^{\textup{m}}\(\lambda^{\textup{m}} \bm{I}_N - \bm{\mathcal{S}}^{\textup{m}}\)_{i_l,i_s}^{-1}\)_{1\leq l,s\leq M}$.
Hence, a value $\lambda^{\textup{m}}\notin \Xi$ is an eigenvalue of ${T}^{-2}\bm{\mathcal{S}}^{\textup{m}}$  if and only if
\[
f_N(\lambda^{\textup{m}}):=\det(\bm{\mathcal{C}}_{\eta} -\( \bm{\mathcal{C}}_{\eta} - \bm{I}\) \bm{\mathcal{R}}^{\textup{m}}) = 0.
\]
After straightforward algebraic manipulations using \eqref{Pn_chebpoly} and Lemma \ref{eigenvalue}, we obtain 
\begin{equation}\label{Rls}
\begin{aligned}
&\quad   \(\bm{\mathcal{R}}^{\textup{m}}\)_{l,s} = \frac{ (-1)^{i_l+i_s}\lambda^{\textup{m}}(-\beta^\textup{m})^{|i_l-i_s|} \mathcal{P}_{A_{\min\{i_l,i_s\}-1}^{(\beta^{\textup{m}}-\beta_0^{\textup{m}}, 0)}(-2\beta^{\textup{m}}, \beta^{\textup{m}})} (\lambda^{\textup{m}}) \mathcal{P}_{A_{N-\max\{i_l,i_s\}}^{(0, \beta^{\textup{m}})}(-2\beta^{\textup{m}}, \beta^{\textup{m}})}(\lambda^{\textup{m}}) }{\mathcal{P}_{A_{N}^{(\beta^{\textup{m}}-\beta_0^{\textup{m}}, \beta^{\textup{m}})}(-2\beta^{\textup{m}}, \beta^{\textup{m}})}(\lambda^{\textup{m}})}\\
& = \frac{ \lambda^{\textup{m}}
	\(U_{\min\{i_l,i_s\}-1}(d) - (\beta^{\textup{m}}-\beta_0^{\textup{m}} )U_{\min\{i_l,i_s\}-2}(d)/\beta^{\textup{m}}\) \(U_{N-\max\{i_l,i_s\}}(d) -  U_{N-\max\{i_l,i_s\}-1}(d)\) }{\beta^{\textup{m}}U_N(d) - (2\beta^{\textup{m}}-\beta_0^{\textup{m}} )U_{N-1}(d)+\(\beta^{\textup{m}}-\beta_0^{\textup{m}}\) U_{N-2}(d)},
\end{aligned}
\end{equation}
where $\xi:= \(2\beta^{\textup{m}} + \lambda^{\textup{m}}\)/(2\beta^{\textup{m}}).$ Notice that $\lambda^{\textup{m}}\notin \Xi$  is equivalent to $\xi<-1$. 
Define 
\begin{equation}\label{zpm}
z_\pm := \xi\pm\sqrt{\xi^2-1} = -\e ^{\mp \textup{arccosh}(|\xi|)}.
\end{equation} For $\xi<-1$, we have  $|z_+|<1$,  $|z_-|>1$ and $z_+ z_- = 1$.
From \eqref{Chebypoly}, we have that for $\xi<-1$ and $N$ large enough,
\begin{equation}\label{Uninf}
U_N(\xi) = \frac{z_-^{N+1} - z_+^{N+1}}{z_- - z_+} =\frac{z_-^{N+1}}{z_- - z_+} \(
1+O\(\(|{z_+}/{z_-}|\)^{N+1}\)
\).
\end{equation}
Therefore,  by using \eqref{Rls} and \eqref{Uninf}, we have that for $N$ and $N-i_M$ large enough,
\begin{equation}\label{Rlsinf}
\begin{aligned}
\lim_{N\to\infty } \(\bm{\mathcal{R}}^{\textup{m}}\)_{l,s}& = \lambda^{\textup{m}}  \frac{z_+^{\max\{i_l,i_s\}}\(z_+^{-\min\{i_l,i_s\}}- z_+^{\min\{i_l,i_s\}}\)}{\beta^{\textup{m}}(z_- - z_+)} \frac{}{}  \\
&	= \lambda^{\textup{m}} \frac{z_+^{|i_l-i_s|}- z_+^{(i_l+i_s)}}{\beta^{\textup{m}}(z_- - z_+)}  =\begin{cases} 
\ds %\frac{\lambda^{\textup{m}}\(1 - z_+^{2i_l} \)}{\beta^{\textup{m}}(z_- - z_+)} = 
\frac{\sqrt{\lambda^{\textup{m}}}}{\sqrt{\lambda^{\textup{m}}+4\beta^{\textup{m}}}}\(1 - z_+^{2i_l}  \), & \text{if } l = s, \\ 
\nm 
\ds\mathcal{O}\(|z_+|^{|i_l-i_s|}\) = \mathcal{O} (\e^{-|i_s-i_l|\textup{arccosh}(|\xi|)}), & \text{if } l \neq s.
\end{cases}
\end{aligned}
\end{equation}
Hence, from $\min_{2\leq l\leq M}|i_l-i_{l-1}|\gg 1$, we can use the diagonal approximation to obtain
\begin{equation}\label{eigenabove2}
f_{\infty}(\lambda^{\textup{m}}): =\lim_{N\to\infty}f_N(\lambda^{\textup{m}})  = \prod_{l=1}^{M}\frac{1}{(1+\eta_l)^{2}}\(1+\eta_l (2+\eta_l )\frac{\sqrt{\lambda^{\textup{m}}}}{\sqrt{\lambda^{\textup{m}}+4\beta^{\textup{m}}}}\(1 - z_+^{2i_l}  \)\).
\end{equation}
Combining \eqref{eigenpositive}  with   \eqref{eigenabove2}, we conclude that $f_{\infty}(\lambda^{\textup{m}}) = 0$ is equivalent to
\begin{equation}\label{Lexpequation}
g(\lambda^{\textup{m}}) := \frac{\sqrt{\lambda^{\textup{m}}}}{\sqrt{4\beta^{\textup{m}}+\lambda^{\textup{m}}}}\(1 - z_+^{2i_l}\) = -\frac{1}{\eta_l(\eta_l+2)}, \mbox{ with } z_+ = \frac{\lambda^{\textup{m}}+2\beta^{\textup{m}}-\sqrt{\lambda^{\textup{m}}(\lambda^{\textup{m}}+4\beta^{\textup{m}})}}{2\beta^{\textup{m}}} \mbox{ and } |z_+|<1.
\end{equation}
for $l=1,2,\ldots,M$. Since $g(\lambda^{\textup{m}})$ is monotonically decreasing for $\lambda^{\textup{m}}>-4\beta^{\textup{m}}$ with range $(1,4i_l)$, the equation $f_\infty(\lambda^{\textup{m}})=0$ admits $M$ distinct roots $\lambda^{\textup{m}}_{(i_l,\eta_l)}$ lying  above the spectrum bulk $\Xi = \left[0,-4\beta^{\textup{m}}\right]$  if $-1<\eta_l<-1+\frac{\sqrt{4i_l-1}}{2\sqrt{i_l}}$, with each root satisfying \eqref{Lexpequation}.
Owing to the sign-preserving nature of the limiting process, it follows that for sufficiently large $N$, there exist corresponding roots $\lambda^{\textup{m}}_{(i_l,\eta_l),N}$ above $\Xi$ such that $f_N(\lambda^{\textup{m}}_{(i_l,\eta_l),N})=0$.

We now  show that for $N$ large enough, the eigenvector $\bm{v}^{\textup{m}}_{(i_l,\eta_l)}$ corresponding to $\lambda^{\textup{m}}_{(i_l,\eta_l),N}$  is exponentially localized near the defect site $i_l$. Set $a_l:= \((1+\eta_s)^{-2}-1\)  \(\bm{\mathcal{S}}^{\textup{m}}  \bm{v}^{\textup{m}}\)^{(i_s)}$ for $s=1,2,\ldots,M$. It follows from Lemma \ref{Lemm52} that 
\begin{equation}\label{vmieta}
\bm{v}^{\textup{m}} 
= \sum_{s=1}^{M}a_s \(\lambda^{\textup{m}}\bm{I}-\bm{\mathcal{S}}^{\textup{m}}  \)^{-1}\bm{e}_{i_s},
\end{equation}
this, together with  \eqref{CcapWOD} and the identity  $\bm{\mathcal{S}}^{\textup{m}}\(\lambda^{\textup{m}}\bm{I}-\bm{\mathcal{S}}^{\textup{m}}  \)^{-1} = \lambda^{\textup{m}}\(\lambda^{\textup{m}}\bm{I}-\bm{\mathcal{S}}^{\textup{m}}  \)^{-1}- \bm{I}$, implies that
\begin{equation}\label{svietal}
\begin{aligned}
\(\bm{\mathcal{S}}^{\textup{m}}  \bm{v}^{\textup{m}}\)^{(i_l)}&  = \sum_{j=i_l-1}^{i_l+1} \bm{\mathcal{S}}^{\textup{m}}_{i_l,j} \bm{v}^{\textup{m},(j)} =  \sum_{j=i_l-1}^{i_l+1} \bm{\mathcal{S}}^{\textup{m}}_{i_l,j} \sum_{s=1}^{M}a_s \(\lambda^{\textup{m}}\bm{I}-\bm{\mathcal{S}}^{\textup{m}}  \)^{-1}_{j,i_s}\\
& = \sum_{j=i_l-1}^{i_l+1} \bm{\mathcal{S}}^{\textup{m}}_{i_l,j} \sum_{s\neq l}^{M}a_s \(\lambda^{\textup{m}}\bm{I}-\bm{\mathcal{S}}^{\textup{m}}  \)^{-1}_{j,i_s} + \sum_{j=i_l-1}^{i_l+1} a_l\bm{\mathcal{S}}^{\textup{m}}_{i_l,j}  \(\lambda^{\textup{m}}\bm{I}-\bm{\mathcal{S}}^{\textup{m}}  \)^{-1}_{j,i_l}\\
& = \sum_{s=1}^{M}a_s \(\bm{\mathcal{R}}_{i_l,i_s}^{\textup{m}}(\lambda^{\textup{m}})-\delta_{l,s}\).
\end{aligned}
\end{equation}
From \eqref{Rlsinf} and \eqref{svietal}, the vector $\bm{a}:=\(a_l\)_{1 \leq l\leq M}$ satisfies, for $N$ large enough,  the diagonal linear system
\begin{equation}\label{Dddy0}
\bm{D}\bm{a} = 0, \mbox{ with } \bm{D} = \diag\(\( (1+\eta_l)^{-2}\(1+\eta_l (2+\eta_l )\frac{\sqrt{\lambda^{\textup{m}}}}{\sqrt{\lambda^{\textup{m}}+4\beta^{\textup{m}}}}\(1 - z_+^{2i_l}  \)\)\)_{l=1}^M\).
\end{equation}
Finally, using  \eqref{Rlsinf}, \eqref{vmieta} and \eqref{Dddy0}, we have that for $N$ large enough
\begin{equation}
\bm{v}_{(i_l,\eta_l)}^{\textup{m},(i)} 
= A \(\lambda^{\textup{m}}\bm{I}-\bm{\mathcal{S}}^{\textup{m}}  \)^{-1}_{i,i_l} = Az_+^{|i-i_l|} = A(-\e)^{-|i-i_l|\textup{arccosh}(|\xi|)},
\end{equation}
where $A$ remains bounded as $N\to\infty$. 
In a similar manner, employing \eqref{Chebypoly}, one readily establishes the corresponding conclusions (ii)–(iii) for eigenvectors inside or on the boundary of the spectral bulk $\Xi$.
\end{proof}

\begin{rem}\label{rem51}
	Theorem \ref{Propiff2} shows that a defect at $(i_l,\eta_l)$ generates an eigenvector localized around the perturbation position $i_l$, with the corresponding eigenvalue increasing as the perturbation magnitude $|\eta_l|$ grows. It is noteworthy that, in fact, $\lambda^{\textup{m}}_{(i_l,\eta_l),N}$ converges  to the root of the equation \eqref{Lexp}.
 Consequently, by selecting multiple distinct yet comparable perturbation parameters, one may engineer a broadband of defect states above the spectrum bulk.
	Moreover, 
	even when the perturbation parameters $\eta_l$ are equal, the resulting eigenvalues differ due to the distinct defect positions $i_l$.  
	Through appropriate choice of $(i_l,\eta_l)$ for $l=1,2,\ldots,M$, one may engineer quasi-degenerate eigenvalues, yielding eigenvectors with simultaneous localization at multiple defect sites. %This property proves advantageous for constructing multimode localized states in metamaterial architectures.
	This phenomenon facilitates the design of multimode localized states in metamaterial architectures, opening avenues for multi-channel filtering and related wave-manipulation functionalities.
\end{rem}

The above observations suggest that, through appropriate choices of $(i_l,\eta_l)$ for $l=1,2,\ldots,M$, one may engineer quasi-degenerate eigenvalues, yielding eigenvectors with simultaneous localization at multiple defect sites. 
The following theorem, based on a more refined analysis, establishes precise conditions for this simultaneous localization phenomenon and derives sharper asymptotics for the quasi-degenerate eigenvalues together with the amplitude ratios of the eigenvector at each defect site.

\begin{thm}\label{thm_multimode}
	Consider an $N$-layered monomer-type metamaterial containing  $M$ material-parameter defects in $D_{\Lambda} = \cup_{j=i_1}^{i_M}D_j$,  with equidistant defect sites and identical perturbation parameter, i.e., 
	\[
	i_l = i_1+(l-1)d, \;\mbox{ and }\; \eta_l = \eta \;\mbox{ for }\; l=1,2,\ldots,M.
	\]
	Denote $\xi:= \(2\beta^{\textup{m}} + \lambda^{\textup{m}}\)/(2\beta^{\textup{m}})$.	 
	If the following conditions holds: 
	\begin{equation}
	\min\{2d,2i_1, N-i_M\}\gg (\textup{arccosh}(|\xi|))^{-1},\;\mbox{ and}\; -1<\eta<0,
	\end{equation}
	then for $N$ large  enough, there exist $M$ eigenvalues above the spectrum bulk $\Xi = \left[0,-4\beta^{\textup{m}}\right]$, denoted by $\lambda^{\textup{m}}_{(i_l,\eta),N}$, for $l = 1,\dots,M$. These eigenvalues are quasi-degenerate in the sense that
	\begin{equation}\label{quasi-degenerate}
	\max_{l\neq s}|\lambda^{\textup{m}}_{(i_l,\eta),N} - \lambda^{\textup{m}}_{(i_s,\eta),N}| = \mathcal{O}\(\beta^{\textup{m}} \(\frac{1-\gamma}{1+\gamma}\)^{d}\),
	\end{equation}
	with $\gamma = -\eta(\eta+2)\in (0,1)$, and as $N \to +\infty$, they converge at leading order to
	\begin{equation}\label{MLexp}
	\lambda^{\textup{m}}_{(i_l,\eta),d} = \frac{-4\beta^{\textup{m}}}{1-\gamma^2}\(1-\frac{4\gamma^2}{1-\gamma^2}\(\frac{1-\gamma}{1+\gamma}\)^{d}\cos\frac{l\pi}{M+1}   \) +o(1), \mbox{ for } l=1,2,\ldots,M. %\mathcal{O}\( \beta^{\textup{m}} \e ^{- 2\min\{d,i_1\} \textup{arccosh}(|\xi|)}\),
	\end{equation}
	Moreover, 	the eigenvector $\bm{v}^{\textup{m}}_{(i_l,\eta)}$ corresponding to $\lambda^{\textup{m}}_{(i_l,\eta),N}$  becomes, in the limit $N \to \infty$, exponentially localized at multiple defect sites:
	\begin{equation}\label{vmi}
	\bm{v}_{(i_l,\eta)}^{\textup{m},(i)}= \sum_{s=1}^M A_s(-\e)^{-|i-i_s|\textup{arccosh}(|\xi|)},
	\end{equation}
	where the amplitude $A_s$ of the corresponding eigenvector $\bm{v}^{\textup{m}}_{(i_l,\eta)}$ at the defect sites can be given by
	\begin{equation}\label{amplitude_ratio}
	(A_s)_{1\leq s\leq M}  \propto 
	\begin{cases}
	\(\sin\frac{(M+1-l)s\pi}{M+1} \)_{1\leq s\leq M},&\mbox{if }\  d \ \mbox{is even},\\
	\(\sin\frac{ls\pi}{M+1} \)_{1\leq s\leq M},&\mbox{if }\ d \ \mbox{is odd}.\\
	\end{cases}
	\end{equation}
\end{thm}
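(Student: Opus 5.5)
The plan is to reuse the determinant reduction from the proof of Theorem \ref{Propiff2}, but without taking the diagonal approximation; the $\mathcal{O}(|z_+|^{d})$ inter-defect couplings are kept and treated as a structured perturbation. With $\eta_l=\eta$ for all $l$, we have $\bm{\mathcal{C}}_\eta=c\,\bm{I}_M$ with $c=(1+\eta)^{-2}$. The condition $f_N(\lambda^{\textup{m}})=0$ for $\lambda^{\textup{m}}\notin\Xi$ then says that $\bm{\mathcal{R}}^{\textup{m}}(\lambda^{\textup{m}})$ has eigenvalue $c/(c-1)=-1/(\eta(\eta+2))=1/\gamma$. From \eqref{Rlsinf}, with $\kappa(\lambda):=\sqrt{\lambda}/\sqrt{\lambda+4\beta^{\textup{m}}}$, the limit matrix has entries $\kappa(\lambda)\big(z_+^{|i_l-i_s|}-z_+^{i_l+i_s}\big)$. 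By equidistance, $|i_l-i_s|=|l-s|d$. Under $\min\{2d,2i_1,N-i_M\}\gg(\textup{arccosh}|\xi|)^{-1}$, we write
\[
\bm{\mathcal{R}}^{\textup{m}}(\lambda)=\kappa(\lambda)\big(\bm{I}_M+z_+^{d}\bm{J}_M\big)+\bm{E}_N(\lambda),\qquad \|\bm{E}_N\|=\mathcal{O}\big(|z_+|^{2d}+|z_+|^{2i_1}+|z_+|^{N-i_M}\big).
\]
Here $\bm{J}_M$ is the $M\times M$ tridiagonal matrix with zero diagonal and unit off-diagonals. We diagonalize $\bm{J}_M$ explicitly: its eigenvalues are $2\cos\frac{k\pi}{M+1}$ and its eigenvectors are $(\sin\frac{ks\pi}{M+1})_{s}$. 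The $\lambda$-dependence of the leading part enters only through the scalars $\kappa(\lambda)$ and $z_+(\lambda)^d$, so the eigenvectors do not depend on $\lambda$. The secular equation therefore splits, up to $\bm{E}_N$, into $M$ scalar equations $\kappa(\lambda)\big(1+2z_+(\lambda)^d\cos\frac{k\pi}{M+1}\big)=1/\gamma$.

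Next I would solve these scalar equations perturbatively. At zeroth order, $\kappa(\lambda_0)=1/\gamma$ gives $\lambda_0=-4\beta^{\textup{m}}/(1-\gamma^2)$, which lies above $\Xi$ exactly when $0<\gamma<1$, i.e.\ $-1<\eta<0$. At this point $\xi_0=-(1+\gamma^2)/(1-\gamma^2)$, and \eqref{zpm} gives $z_+(\lambda_0)=-(1-\gamma)/(1+\gamma)$. Hence $z_+^d=(-1)^d\big(\frac{1-\gamma}{1+\gamma}\big)^d$. A one-step Newton/implicit-function expansion uses $\kappa'(\lambda_0)=2\beta^{\textup{m}}\lambda_0^{-1/2}(\lambda_0+4\beta^{\textup{m}})^{-3/2}$, which I will evaluate in terms of $\gamma$. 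This gives
\[
\lambda_k=\lambda_0-\frac{2\kappa(\lambda_0)z_+^d\cos\frac{k\pi}{M+1}}{\kappa'(\lambda_0)}+\cdots,
\]
and I expect this to reduce to \eqref{MLexp} after simplification. The factor $(-1)^d$ combined with $\cos\frac{k\pi}{M+1}=-\cos\frac{(M+1-k)\pi}{M+1}$ accounts for the relabelling $k\mapsto M+1-k$ in the even case. Relabelling so that $\lambda^{\textup{m}}_{(i_l,\eta),N}$ is the $l$-th root in the ordering used in \eqref{MLexp} then fixes the parity-dependent choice in \eqref{amplitude_ratio}. The splitting bound \eqref{quasi-degenerate} follows immediately, because all roots differ from $\lambda_0$ by $\mathcal{O}(\beta^{\textup{m}}|z_+|^d)$.

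For the eigenvectors I would use Lemma \ref{Lemm52} and \eqref{svietal} as in Theorem \ref{Propiff2}. The coefficient vector $\bm a=(a_s)$ lies in the kernel of $c\bm{I}-(c-1)\bm{\mathcal{R}}^{\textup{m}}$, so up to $\mathcal{O}(|z_+|^d)$ it is the $\bm{J}_M$-eigenvector $(\sin\frac{ks\pi}{M+1})_s$. Substituting into $\bm v=\sum_s a_s(\lambda\bm{I}-\bm{\mathcal{S}}^{\textup{m}})^{-1}\bm e_{i_s}$, whose entries behave like $z_+^{|i-i_s|}/(\beta^{\textup{m}}(z_--z_+))$ by \eqref{Rls}--\eqref{Uninf}, gives \eqref{vmi} with $A_s\propto a_s$.

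The main obstacle is making the cluster analysis rigorous. The $M$ roots are separated only by $\mathcal{O}(|z_+|^d)$, so I must show that the neglected error $\bm{E}_N$, of size $\mathcal{O}(|z_+|^{2d})$ plus boundary terms, cannot merge or destroy them. I would first make the substitution $\lambda=\lambda_0+|z_+(\lambda_0)|^d\mu$ and rescale. After the orthogonal change of basis that diagonalizes $\bm{J}_M$, the determinant equation becomes $\prod_k(\text{affine in }\mu)+o(1)$ uniformly on compact $\mu$-sets. I would then apply Rouché's theorem (or the Gohberg--Sigal argument of Theorem \ref{existence_SWLR}) on small disks around each limiting $\mu_k$. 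The $\mu_k$ are distinct because the values $\cos\frac{k\pi}{M+1}$ are distinct, so each disk contains exactly one root, and the eigenvector statement follows by standard perturbation of a simple eigenvalue.
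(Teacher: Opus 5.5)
Your proposal is correct and follows the paper's proof essentially step by step. The shared steps are the kernel condition $(\bm{I}-\gamma\bm{\mathcal{R}}^{\textup{m}})\bm{a}=0$, the nearest-neighbour tridiagonal Toeplitz approximation diagonalized by the sine vectors, the first-order expansion about $\gamma\kappa(\lambda_0)=1$, the parity relabelling $l\mapsto M+1-l$, and the eigenvector representation via Lemma~\ref{Lemm52}; your $\kappa$ is exactly the paper's $t$, and expanding in $\lambda$ rather than $t$ gives the same correction.

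Your rescaled Rouch\'e step improves on the paper, which gets existence of the $M$ roots by citing Theorem~\ref{Propiff2}, although that theorem assumes distinct $\eta_l$ and its diagonal approximation discards couplings of exactly the size of the splitting. Note, however, that your step closes only if $\|\bm{E}_N\|=o(|z_+|^{d})$, i.e.\ $2i_1-d\gg(\textup{arccosh}|\xi|)^{-1}$; otherwise the boundary detuning $z_+^{2i_1}$ of the first defect dominates the coupling $z_+^{d}$. This condition is stronger than the stated hypothesis, and the paper's $\mathcal{O}(|z_+|^{2\min\{d,i_1\}})$ remainder tacitly needs it too.
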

\begin{proof}[\bf Proof] 
	The existence of $M$ eigenvalues, $\lambda^{\textup{m}}_{(i_l,\eta),N}$, for $l = 1,\dots,M$,   above the spectrum bulk $\Xi = \left[0,-4\beta^{\textup{m}}\right]$, for $N$ large  enough, can be proved by Theorem \ref{Propiff2}. We next to show \eqref{quasi-degenerate}--\eqref{amplitude_ratio}. By using \eqref{svietal}, we have that the vector $\bm{a}:=\(a_l\)_{1\leq l\leq M}$, with $a_l:= \((1+\eta)^{-2}-1\)  \(\bm{\mathcal{S}}^{\textup{m}}  \bm{v}^{\textup{m}}\)^{(i_l)}$ satisfies the linear system
	\begin{equation}\label{linearA}
	\(\bm{I} -\gamma \bm{\mathcal{R}}^{\textup{m}}\)\bm{a} = 0,\;\mbox{ with }\;\ \gamma =  -\eta(\eta+2)\in (0,1).
	\end{equation}
	From $i_l = i_1+(l-1)d$, and by using \eqref{Rlsinf}, we have that for $N$ large enough,
	\[
	\(\bm{I} -\gamma \bm{\mathcal{R}}^{\textup{m}}\)_{l,s} = \begin{cases} 
	\ds %\frac{\lambda^{\textup{m}}\(1 - z_+^{2i_l} \)}{\beta^{\textup{m}}(z_- - z_+)} = 
	1-\gamma t\(1 - z_+^{2i_l}  \), & \text{if } l = s, \\ 
	\nm 
	\ds -\gamma t \(z_+^{jd }- z_+^{(i_l+i_s)}\), & \text{if } |l-s| = j.
	\end{cases}
	\]
	where $ t:=\frac{\sqrt{\lambda^{\textup{m}}}}{\sqrt{\lambda^{\textup{m}}+4\beta^{\textup{m}}}}>1$. 
	From $\min\{2d,2i_1\}\gg (\textup{arccosh}(|\xi|))^{-1}$ and $z_+<0$, we can use the nearest-neighbor approximation to obtain 
	\[
	\(\bm{I}  -\gamma  \bm{\mathcal{R}}^{\textup{m}}\) = A_{M}^{(0, 0)}\(1-\gamma t,  -\gamma t(-|z_+|)^{d } \) +\mathcal{O}(|z_+|^{2\min\{d,i_1\}}). 
	\]
	By using the spectral theory of $1$-Toeplitz matrices \eqref{1toeplitzCP} (cf. \cite{ANZIAM, CDF_AMS}), we have that for the case that $d$ is odd,
	\[
	 A_{M}^{(0, 0)}\(1-\gamma t,  \gamma t|z_+|^{d } \) \bm{a}_l =  \(1-\gamma t   +2\gamma t|z_+|^{d } \cos\frac{l\pi}{M+1} \)  \bm{a}_l, 
	\]
	with $\bm{a}_l = \(\sin\frac{ls\pi}{M+1}\)_{1\leq s\leq M}$.
	Hence, the linear system \eqref{linearA} admits nontrival solutions if and only if 
	\begin{equation}\label{matrixvanish}
	\gamma t\(1-2|z_+|^{d } \cos\frac{l\pi}{M+1}\)  = 1.
	\end{equation}
	From \eqref{zpm} and $  t=\frac{\sqrt{\lambda^{\textup{m}}}}{\sqrt{\lambda^{\textup{m}}+4\beta^{\textup{m}}}}>1$, we have that $|z_+| = \frac{t-1}{t+1}<1$ and $\lambda^{\textup{m}}(t) = \frac{-4\beta^{\mathrm{m}}t^2}{t^2-1}$. Suppose that $t = t_0+\varepsilon t$ with $\varepsilon t = \mathcal{O}(|z_+|^{d })$ and $\gamma t_0 = 1$. Substituting  these into \eqref{matrixvanish}, we have\[
	\varepsilon t = |z_+|^{d } \frac{2}{\gamma}\cos\frac{l\pi}{M+1} = \(\frac{1-\gamma}{1+\gamma}\)^{d } \frac{2}{\gamma}\cos\frac{l\pi}{M+1}+o(1).
	\]
	Therefore, we have that for odd $d$, the root $\lambda^{\textup{m}}_{(i_l,\eta),d}$, for $l=1,2,\ldots,M$, to \eqref{matrixvanish} has the asymptotic expansion
	\begin{equation}\label{dodd}
	\begin{aligned}
	\lambda^{\textup{m}}_{(i_l,\eta),d} &= \lambda^{\textup{m}}(t_0)+\(\lambda^{\textup{m}}\)'(t_0)(t-t_0) +O(|t-t_0|^2)\\
	%	& = \frac{-4\beta^{\textup{m}}}{1-\gamma^2}+\(\frac{16\beta^{\textup{m}}\gamma^2}{(1-\gamma^2)^2}\)\(\frac{1-\gamma}{1+\gamma}\)^{d } \cos\(\frac{l\pi}{M+1}\)+o(1)\\
	& = \frac{-4\beta^{\textup{m}}}{1-\gamma^2}\(1-\frac{4\gamma^2}{1-\gamma^2}\(\frac{1-\gamma}{1+\gamma}\)^{d } \cos\frac{l\pi}{M+1}\)+o(1).
	\end{aligned}
	\end{equation}
In a similar manner, for the case that $d$ is even,  the linear system \eqref{linearA} admits nontrival solutions if and only if 
\[
\gamma t\(1+2|z_+|^{d } \cos\frac{l\pi}{M+1}\)  = 1.
\]
Therefore, we have that for even $d$, the root $\lambda^{\textup{m}}_{(i_l,\eta),d}$, for $l=1,2,\ldots,M$, to \eqref{matrixvanish} has the asymptotic expansion
\begin{equation}\label{deven}
\begin{aligned}
\lambda^{\textup{m}}_{(i_l,\eta),d} 
& = \frac{-4\beta^{\textup{m}}}{1-\gamma^2}\(1+\frac{4\gamma^2}{1-\gamma^2}\(\frac{1-\gamma}{1+\gamma}\)^{d } \cos\frac{l\pi}{M+1}\)+o(1).
\end{aligned}
\end{equation}
Consequently, \eqref{dodd} and \eqref{deven} can be simplified and written compactly as \eqref{MLexp} with 
\[
\bm{a}_l^{(s)}=\begin{cases}
\sin\frac{(M+1-l)s\pi}{M+1}, & \mbox{ if } d \ \mbox{ is even},\\
\sin\frac{ls\pi}{M+1}, & \mbox{ if } d \ \mbox{ is odd},\\
\end{cases}
\]
Finally, it follows from   \eqref{Rlsinf} and \eqref{vmieta} that \eqref{vmi}--\eqref{amplitude_ratio} hold. The proof is complete.
\end{proof}

\begin{rem}\label{rem53}
Theorem \ref{thm_multimode} developed for $M$  equidistant defects with identical perturbation parameter admits a natural generalization to hierarchical configurations.
 Specifically, the defect set $\{(i_l,\eta_l)\}_{l=1}^M$ may be partitioned into $b$  blocks, 
 \[
 \{(i_l,\eta_l)\}_{l=1}^M = \bigcup_{t=1}^{b}  \left\{(i_l,\eta^{(t)}):i_{l+1}-i_{l} = d_t,\;1+\sum_{s=1}^{t-1}M_{s}\leq l \leq  \sum_{s=1}^{t}M_{s}-1  \right\}^{\sum_{s=1}^{t}M_{s}}_{l=1+\sum_{s=1}^{t-1}M_{s}},
 \]
 where the positive integers $ M_t$ satisfy $\sum_{t=1}^b M_t = M$.
 This means that in the intra-block structure, $M_t$ defects within block $t$ are equidistant with spacing $d_t$ and share identical perturbation parameter $\eta^{(t)}\in(-1,0)$. Moreover, if the distance of inter-block separation is much greater than the spacing of intra-block structure, i.e., 
 \begin{equation}\label{inter-block-weak}
  \min_{2\leq t\leq b-1}\(i_{1+\sum_{s=1}^{t-1}M_s} - i_{\sum_{s=1}^{t-1}M_s} \)\gg \max_{1\leq t\leq b}d_t,
 \end{equation}
 then at the intra-block level, each block $t$  
 generates a miniband of $M_t$ 
 quasi-degenerate   eigenvalues with width $\mathcal{O}\(\beta^{\textup{m}} \(\frac{1- \gamma^{(t)}}{1+\gamma^{(t)}}\)^{d_t}\)$ 
 centered at $\frac{-4\beta^{\textup{m}}}{1-\(\gamma^{(t)}\)^2}$ with $\gamma^{(t)} = -\eta^{(t)}(\eta^{(t)}+2)\in (0,1)$, precisely as characterized by
 \begin{equation}\label{MLexpblock}
 \lambda^{\textup{m}}_{(i_l,\eta^{(t)}),d_t} = \frac{-4\beta^{\textup{m}}}{1-\(\gamma^{(t)}\)^2}\(1-\frac{4\(\gamma^{(t)}\)^2}{1-\(\gamma^{(t)}\)^2}\(\frac{1-\(\gamma^{(t)}\)}{1+\(\gamma^{(t)}\)}\)^{d_t}\cos\frac{l\pi}{M_t+1}   \) +o(1), \mbox{ for } 1\leq t\leq b, 1\leq l\leq M_t. %\mathcal{O}\( \beta^{\textup{m}} \e ^{- 2\min\{d,i_1\} \textup{arccosh}(|\xi|)}\),
 \end{equation}
   At the inter-block level, weak tunneling couplings \eqref{inter-block-weak}
 between blocks induce further splittings, leading to $b$ super-minibands.
\end{rem}

\section{Numerical computations}

In this section, we conduct numerical computations to corroborate our theoretical findings in the previous sections. To enhance computational efficiency,  we focus on the multi-layer concentric sphere structure.  First, we  analyse the resonant mode splitting in multilayered concentric spheres.
Moreover, we verify the existence and convergence of a number of eigenvalues above the bulk spectrum that equals the number of defects, together with the localization criterion based on eigenvalue position, thereby supporting the conclusions from Subsection \ref{LM_mpd}. Finally, we perform numerical simulations of the localized defect modes.

\subsection{Resonant mode splitting}

In this subsection, we shall compute  the eigenfrequencies based on the elastic multipole   expansion method.  
Let $j_n(t)$ and $h_n(t)$ be the spherical Bessel and Hankel functions of the
first kind of order $n$, respectively, and let $Y_{n}^{m}$ denote spherical harmonic functions of the order $n$ with the degree $m$ defined on the unit sphere $\mathbb{S}$. 
Recall that \cite{LZArxiv,DLbook2024}, the family $(\mathcal{I}_{n}^{m},\mathcal{T}_{n}^{m},\mathcal{N}_{n}^{m})$ are the vectorial spherical harmonics of order $n$,
\begin{equation}\label{vectorial_spherical_harmonics}
\begin{aligned}
\mathcal{T}_{n}^{m}&=\nabla_{\mathbb{S}}Y_{n}^{m}\times \bm{\nu},\ n\geq1,\ n\geq m\geq-n,\\
\mathcal{I}_{n}^{m}&=\nabla_{\mathbb{S}}Y_{n+1}^{m}+(n+1)Y_{n+1}^{m}\bm{\nu},\ n\geq0,n+1\geq m\geq-(n+1),\\
\mathcal{N}_{n}^{m}&=-\nabla_{\mathbb{S}}Y_{n-1}^{m}+nY_{n-1}^{m}\bm{\nu},\ n\geq1,n-1\geq m\geq-(n-1),
\end{aligned}
\end{equation}
forms an orthogonal basis of $(L^2(\mathbb{S}))^3$. 
By applying certain spectral results of layer potentials (cf. \cite{LZArxiv,DLbook2024}) along with some recursion formulas for $j_n(t)$ and $h_n(t)$ (cf. \cite{hongyuliu_JE2021,Morsebook1953}), 
we define, in a novel and concise manner, the exterior and interior solutions to the equation $\left( \mathcal{L}_{\lambda,\mu}+\omega^2\rho\right)\mathbf{u}=0$ in $\mathbb{R}^3\backslash\{\bm{0}\}$ and $\mathbb{R}^3$, respectively, as
\begin{equation}\label{ext_solution}
\begin{cases}
\mathbf{L}_{nm}^{e} \left(k_{s}|\mathbf{x}|\right) = h_{n}\left(k_{s}|\mathbf{x}|\right)\mathcal{T}_{n}^{m},&n\geq1, n\geq m\geq-n,\\
\mathbf{P}_{nm}^e \left(k_{p}|\mathbf{x}|\right) = h_{n-1}(k_{p}|\mathbf{x}|)\mathcal{I}_{n-1}^{m} -  h_{n+1}(k_{p}|\mathbf{x}|)\mathcal{N}_{n+1}^{m}, &n\geq1,n\geq m\geq-n,\\
{\mathbf{Q}}_{nm}^e \left(k_{s}|\mathbf{x}|\right) = (n+1)h_{n-1}\left(k_{s}|\mathbf{x}|\right)\mathcal{I}_{n-1}^{m} + n  h_{n+1}\left(k_{s}|\mathbf{x}|\right)\mathcal{N}_{n+1}^{m}, &n\geq1,n\geq m\geq-n,
\end{cases}
\end{equation} 
%are radiating solutions to the equation $\left( \mathcal{L}_{\lambda,\mu}+\omega^2\rho\right)\mathbf{u}=0$ in $\mathbb{R}^3\backslash\{\bm{0}\}$
and
\begin{equation}\label{int_solution}
\begin{cases}
\mathbf{L}_{nm}^{i} \left(k_{s}|\mathbf{x}|\right) = j_{n}\left(k_{s}|\mathbf{x}|\right)\mathcal{T}_{n}^{m},&n\geq1,n\geq m\geq-n,\\
\mathbf{P}_{nm}^{i }\left(k_{p}|\mathbf{x}|\right) = j_{n-1}(k_{p}|\mathbf{x}|)\mathcal{I}_{n-1}^{m} -  j_{n+1}(k_{p}|\mathbf{x}|)\mathcal{N}_{n+1}^{m}, &n\geq1,n\geq m\geq-n,\\
{\mathbf{Q}}_{nm}^{i} \left(k_{s}|\mathbf{x}|\right) = (n+1)j_{n-1}(k_{s}|\mathbf{x}|)\mathcal{I}_{n-1}^{m} + n  j_{n+1}(k_{s}|\mathbf{x}|)\mathcal{N}_{n+1}^{m}, &n\geq1,n\geq m\geq-n.\\
\end{cases}
\end{equation} 
%are entire solutions to the equation $\left( \mathcal{L}_{\lambda,\mu}+\omega^2\rho\right)\mathbf{u}=0$ in $\mathbb{R}^3$. 
In what follows, we denote by 
\[
\begin{aligned}
D^i_{L,n}(k_{s}|\Bx|) &= j_n\left(k_s|\mathbf{x}|\right),\\
D^{i,1}_{P,n}(k_{p}|\Bx|) & = j_{n-1}(k_{p}|\mathbf{x}|),\;
D^{i,2}_{P,n}(k_{p}|\Bx|)  =-j_{n+1}(k_{p}|\mathbf{x}|), \\
D^{i,1}_{Q,n}(k_{s}|\Bx|) & = (n+1)j_{n-1}(k_{s}|\mathbf{x}|),\;
D^{i,2}_{Q,n}(k_{s}|\Bx|)  = n j_{n+1}(k_{s}|\mathbf{x}|),\\
\end{aligned}
\]
and $D^e_{L,n}(k_{s}|\Bx|), D^{e,1}_{P,n}(k_{p}|\Bx|), D^{e,2}_{P,n}(k_{p}|\Bx|), D^{e,1}_{Q,n}(k_{s}|\Bx|), D^{e,2}_{Q,n}(k_{s}|\Bx|)$   can be obtained by replacing $j_n$ with $h_n$.

By using spherical coordinates,  the elastic displacement field $\mathbf{u}(\Bx)$ to \eqref{main_equation}, with $\mathbf{u}^{\textup{in}}(\Bx) = 0$ and the material parameters given in \eqref{nestedcomplement}--\eqref{contrastparameter}, can be written as
\begin{equation}\label{lame_solution2}
\mathbf{u}(\Bx) = \begin{cases}
\ds \sum_{n=1}^{+\infty} \sum_{m=-n}^{n} \(a^+_{1,n}{\mathbf{L}}_{nm}^{e} \left(k_{s}|\mathbf{x}|\right) + c^+_{1,n}\mathbf{P}_{nm}^e (k_{p}|\mathbf{x}|) + e^+_{1,n}\mathbf{Q}_{nm}^e (k_{s}|\mathbf{x}|)\), & \quad x \in D_0',\\
\nm
\ds \sum_{n=1}^{+\infty} \sum_{m=-n}^{n} \(b^+_{j,n}\mathbf{L}_{nm}^i (k_{s,\rmr,j}|\mathbf{x}|) + d^+_{j,n}\mathbf{P}_{nm}^i (k_{p,\rmr,j}|\mathbf{x}|) + f^+_{j,n}\mathbf{Q}_{nm}^i (k_{s,\rmr,j}|\mathbf{x}|)\) \\
\ds+  \sum_{n=1}^{+\infty}\sum_{m=-n}^{n} \(a^-_{j,n}\mathbf{L}_{nm}^e (k_{s,\rmr,j}|\mathbf{x}|) + c^-_{j,n}\mathbf{P}_{nm}^e (k_{p,\rmr,j}|\mathbf{x}|) + e^-_{j,n}\mathbf{Q}_{nm}^e (k_{s,\rmr,j}|\mathbf{x}|)\),  & \quad x \in {D_j},\; j=1,2,\ldots,N,\\
\nm
\ds \sum_{n=1}^{+\infty}\sum_{m=-n}^{n} \(b^-_{j,n}\mathbf{L}_{nm}^i (k_{s}|\mathbf{x}|) +  d^-_{j,n}\mathbf{P}_{nm}^i (k_{p}|\mathbf{x}|) + f^-_{j,n}\mathbf{Q}_{nm}^i (k_{s}|\mathbf{x}|)\)\\
\ds +\sum_{n=1}^{+\infty}\sum_{m=-n}^{n} \( a^+_{j+1,n}\mathbf{L}_{nm}^e (k_{s}|\mathbf{x}|) + c^+_{j+1,n}\mathbf{P}_{nm}^e (k_{p}|\mathbf{x}|) + e^+_{j+1,n}\mathbf{Q}_{nm}^e (k_{s}|\mathbf{x}|)\),  & \quad x \in {D_j'},\; j=1,2,\ldots,N,
\end{cases}
\end{equation}
where $a^+_{N+1,n}=c^+_{N+1,n}=e^+_{N+1,n}=0$.  
Moreover,  from \eqref{traction1}, we have  that (cf. \cite{LZArxiv,DLbook2024})  the surface traction of the interior and exterior multipole elastic fields can be written by
\begin{align*}
\partial_{\bm{\nu}}\({\mathbf{L}}_{nm}^{f} (k_{s}|\mathbf{x}|)\)&=T^f_{L,n}(k_{s}|\mathbf{x}|)\mathcal{T}_{n}^{m},\\
\partial_{\bm{v}}\(\mathbf{P}_{nm}^f (k_{p}|\mathbf{x}|)\) &= T^{f,1}_{P,n}(k_{p}|\Bx|)\mathcal{I}_{n-1}^{m} + T^{f,2}_{P,n}(k_{p}|\Bx|)\mathcal{N}_{n+1}^{m},\\
\partial_{\bm{v}}\(\mathbf{Q}_{nm}^f (k_{s}|\mathbf{x}|)\) &= T^{f,1}_{Q,n}(k_{s}|\Bx|)\mathcal{I}_{n-1}^{m} + T^{f,2}_{Q,n}(k_{s}|\Bx|)\mathcal{N}_{n+1}^{m}, \;\mbox{ for }\; f\in\{i,e\},
\end{align*}
where 
\[
\begin{aligned}
T^i_{L,n}(k_{s}|\Bx|) &= \mu \(k_s j'_n \left(k_s|\mathbf{x}|\right) - |\Bx|^{-1}j_n\left(k_s|\mathbf{x}|\right) \),\\
T^{i,1}_{P,n}(k_{p}|\Bx|) & = {2(n-1) \mu|\mathbf{x}|^{-1} } {j_{n-1}(k_{p} |\mathbf{x}|)  } -(\lambda+2\mu )k_{p}j_{n}(k_{p} |\mathbf{x}|),\\
T^{i,2}_{P,n}(k_{p}|\Bx|) & ={2 (n+2)\mu|\mathbf{x}|^{-1} } {j_{n+1}(k_{p} |\mathbf{x}|) }-(\lambda+2\mu )k_{p}j_{n}(k_{p} |\mathbf{x}|), \\
T^{i,1}_{Q,n}(k_{s}|\Bx|) & = \mu \({2(n^2-1)|\mathbf{x}|^{-1} }{j_{n-1}(k_{s} |\mathbf{x}|) }  -(n+1) k_{s} j_{n}(k_{s} |\mathbf{x}|) \),\\
T^{i,2}_{Q,n}(k_{s}|\Bx|) & = \mu \( {-2 n(n+2)|\mathbf{x}|^{-1}  }{j_{n+1}\left( k_{s} |\mathbf{x}|\right) }  +n k_{s} j_{n}\left(k_{s} |\mathbf{x}|\right) \),\\
\end{aligned}
\]
and $T^e_{L,n}(k_{s}|\Bx|), T^{e,1}_{P,n}(k_{p}|\Bx|), T^{e,2}_{P,n}(k_{p}|\Bx|), T^{e,1}_{Q,n}(k_{s}|\Bx|), T^{e,2}_{Q,n}(k_{s}|\Bx|)$   can be obtained by replacing $j_n$ with $h_n$.
By using the transmission conditions across $\Gamma^\pm_j$, $j=1,2,\ldots,N$ and making use of the orthogonality of the vectorial spherical
harmonics,  we find that the constants
$
\bm{a}^\pm = \(\bm{a}_1^{\pm},\bm{a}_2^{\pm},\ldots,\bm{a}_N^{\pm}\)^T$ and $ \bm{c}^{\pm} = \(\bm{c}_1^{\pm},\bm{c}_2^{\pm},\ldots,\bm{c}_N^{\pm}\)^T,
$
with 
$\bm{a}_j^{\pm} = \(a^+_{j,n},b^+_{j,n}, a^-_{j,n},b^-_{j,n}\)$
 and $\bm{c}^{\pm} = \(c^+_{j,n},d^+_{j,n}, e^+_{j,n},f^+_{j,n},c^-_{j,n},d^-_{j,n}, e^-_{j,n},f^-_{j,n}\)^T$, satisfy, respectively, 
 \begin{equation}
 \bm{A}^{\mathrm{R}}_{(n)}(\omega,\delta)
 [\bm{a}^\pm] = 0, \mbox{ and } \bm{A}^{\mathrm{T}}_{(n)}(\omega,\delta)
 [\bm{c}^\pm] = 0,
 \end{equation}
for all $n\in \mathbb{N}$. Here   the $4N$-by-$4N$ matrix $\bm{A}^{\mathrm{R}}_{(n)}(\omega,\delta)$ and the $8N$-by-$8N$ matrix  $\bm{A}^{\mathrm{T}}_{(n)}(\omega,\delta)$ all have the block tridiagonal form
\begin{equation}\label{4Nby4N}
\bm{A}^{\textup{m}}_{(n)}(\omega,\delta):=\begin{pmatrix}
{M}^+_{1,n} & {R}^{+,-}_{1,1,n} & &  & && \\
\nm
{L}^{-,+}_{1,1,n} & {M}^-_{1,n} &{R}^{-,+}_{1,2,n}& &&&\\
\nm
&{L}^{+,-}_{2,1,n} & {M}^+_{2,n} & {R}^{+,-}_{2,2,n} & &&\\
\nm
&&{L}^{-,+}_{2,2,n} & {M}^-_{2,n} & {R}^{-,+}_{2,3,n} & &&\\
&& & \ddots &\ddots & \ddots& \\
&&  & & {L}^{+,-}_{N,N-1,n} & {M}^+_{N,n} & {R}^{+,-}_{N,N,n}\\
\nm
&&  & &  &{L}^{-,+}_{N,N,n} & {M}^-_{N,n}
\end{pmatrix}, \; \mbox{for } \mathrm{m}\in\{\mathrm{T,R} \},
\end{equation}
where for $\mathrm{m = R} $,
\[
{M}_{j,n}^+ = \begin{pmatrix}
-D^e_{L,n}(k_{s}r_j^+) & D^i_{L,n}(k_{s,\rmr,j}r_j^+) \\
\nm
-\delta T^e_{L,n}(k_{s}r_j^+) & T^i_{L,n}(k_{s,\rmr,j}r_j^+)
\end{pmatrix},
\quad 
{M}_{j,n}^- = \begin{pmatrix}
-D^e_{L,n}(k_{s,\rmr,j}r_j^-) & D^i_{L,n}(k_{s}r_j^-) \\
\nm
-T^e_{L,n}(k_{s,\rmr,j}r_j^-) & \delta T^i_{L,n}(k_{s}r_j^-)
\end{pmatrix},
\]
\[
{R}^{+,-}_{j,j,n} = \begin{pmatrix}
D^e_{L,n}(k_{s,\rmr,j}r_j^+)& 0\\
\nm
T^e_{L,n}(k_{s,\rmr,j}r_j^+)& 0
\end{pmatrix},
\quad
{L}^{-,+}_{j,j,n} =  \begin{pmatrix}
0&-D^i_{L,n}(k_{s,\rmr,j}r_j^-)\\
\nm
0&-T^i_{L,n}(k_{s,\rmr,j}r_j^-)
\end{pmatrix},
\]
and
\[
{L}^{+,-}_{j,j-1,n} = \begin{pmatrix}
0&-D^i_{L,n}(k_{s}r_j^+)\\
\nm
0&-\delta T^i_{L,n}(k_{s}r_j^+)
\end{pmatrix},
\quad
{R}^{-,+}_{j,j+1,n} = \begin{pmatrix}
D^e_{L,n}(k_{s}r_j^-)& 0\\
\nm
\delta T^e_{L,n}(k_{s}r_j^-)& 0
\end{pmatrix};
\]
for  $\mathrm{m = T} $,
\[
{M}_{j,n}^+ = \begin{pmatrix}
-D^{e,1}_{P,n}(k_{p}r_j^+) & D^{i,1}_{P,n}(k_{p,\rmr,j}r_j^+)&-D^{e,1}_{Q,n}(k_{s}r_j^+) & D^{i,1}_{Q,n}(k_{s,\rmr,j}r_j^+) \\
\nm
-\delta T^{e,1}_{P,n}(k_{p}r_j^+) & T^{i,1}_{P,n}(k_{p,\rmr,j}r_j^+)&-\delta T^{e,1}_{Q,n}(k_{s}r_j^+) & T^{i,1}_{Q,n}(k_{s,\rmr,j}r_j^+)\\
\nm
-D^{e,2}_{P,n}(k_{p}r_j^+) & D^{i,2}_{P,n}(k_{p,\rmr,j}r_j^+)&-D^{e,2}_{Q,n}(k_{s}r_j^+) & D^{i,2}_{Q,n}(k_{s,\rmr,j}r_j^+) \\
\nm
-\delta T^{e,2}_{P,n}(k_{p}r_j^+) & T^{i,2}_{P,n}(k_{p,\rmr,j}r_j^+)&-\delta T^{e,2}_{Q,n}(k_{s}r_j^+) & T^{i,2}_{Q,n}(k_{s,\rmr,j}r_j^+)\\
\end{pmatrix},
\]
\[
{M}_{j,n}^- = \begin{pmatrix}
-D^{e,1}_{P,n}(k_{p,\rmr,j}r_j^-) & D^{i,1}_{P,n}(k_{p}r_j^-)&-D^{e,1}_{Q,n}(k_{s,\rmr,j}r_j^-) & D^{i,1}_{Q,n}(k_{s}r_j^-) \\
\nm
- T^{e,1}_{P,n}(k_{p,\rmr,j}r_j^-) & \delta T^{i,1}_{P,n}(k_{p}r_j^-)&- T^{e,1}_{Q,n}(k_{s,\rmr,j}r_j^-) & \delta T^{i,1}_{Q,n}(k_{s}r_j^-)\\
\nm
-D^{e,2}_{P,n}(k_{p,\rmr,j}r_j^-) & D^{i,2}_{P,n}(k_{p}r_j^-)&-D^{e,2}_{Q,n}(k_{s,\rmr,j}r_j^-) & D^{i,2}_{Q,n}(k_{s}r_j^-) \\
\nm
- T^{e,2}_{P,n}(k_{p,\rmr,j}r_j^-) & \delta T^{i,2}_{P,n}(k_{p}r_j^-)&- T^{e,2}_{Q,n}(k_{s,\rmr,j}r_j^-) & \delta T^{i,2}_{Q,n}(k_{s}r_j^-)\\
\end{pmatrix},
\]
\[
{R}^{+,-}_{j,j,n} = \begin{pmatrix}
D^{e,1}_{P,n}(k_{p,\rmr,j}r_j^+)& 0&D^{e,1}_{Q,n}(k_{s,\rmr,j}r_j^+)& 0\\
\nm
T^{e,1}_{P,n}(k_{p,\rmr,j}r_j^+)& 0&T^{e,1}_{Q,n}(k_{s,\rmr,j}r_j^+)& 0\\
\nm
D^{e,2}_{P,n}(k_{p,\rmr,j}r_j^+)& 0&D^{e,2}_{Q,n}(k_{s,\rmr,j}r_j^+)& 0\\
\nm
T^{e,2}_{P,n}(k_{p,\rmr,j}r_j^+)& 0&T^{e,2}_{Q,n}(k_{s,\rmr,j}r_j^+)& 0
\end{pmatrix}.
\quad
{L}^{-,+}_{j,j,n} =  \begin{pmatrix}
0&-D^{i,1}_{P,n}(k_{p,\rmr,j}r_j^-)& 0&-D^{i,1}_{Q,n}(k_{s,\rmr,j}r_j^-)\\
\nm
0&-T^{i,1}_{P,n}(k_{p,\rmr,j}r_j^-)& 0&-T^{i,1}_{Q,n}(k_{s,\rmr,j}r_j^-)\\
\nm 
0&-D^{i,2}_{P,n}(k_{p,\rmr,j}r_j^-)& 0&-D^{i,2}_{Q,n}(k_{s,\rmr,j}r_j^-)\\
\nm
0&-T^{i,2}_{P,n}(k_{p,\rmr,j}r_j^-)& 0&-T^{i,2}_{Q,n}(k_{s,\rmr,j}r_j^-)
\end{pmatrix},
\]
and
\[
{L}^{+,-}_{j,j-1,n} = \begin{pmatrix}
0&-D^{i,1}_{P,n}(k_{p}r_j^+)& 0&-D^{i,1}_{Q,n}(k_{s}r_j^+)\\
\nm
0&-\delta T^{i,1}_{P,n}(k_{p}r_j^+)& 0&-\delta T^{i,1}_{Q,n}(k_{s}r_j^+)\\
\nm 
0&-D^{i,2}_{P,n}(k_{p}r_j^+)& 0&-D^{i,2}_{Q,n}(k_{s}r_j^+)\\
\nm
0&-\delta T^{i,2}_{P,n}(k_{p}r_j^+)& 0&-\delta T^{i,2}_{Q,n}(k_{s}r_j^+)
\end{pmatrix},
\quad
{R}^{-,+}_{j,j+1,n} = \begin{pmatrix}
D^{e,1}_{P,n}(k_{p}r_j^-)& 0&D^{e,1}_{Q,n}(k_{s}r_j^-)& 0\\
\nm
\delta T^{e,1}_{P,n}(k_{p}r_j^-)& 0&\delta T^{e,1}_{Q,n}(k_{}r_j^-)& 0\\
\nm
D^{e,2}_{P,n}(k_{p}r_j^-)& 0&D^{e,2}_{Q,n}(k_{s}r_j^-)& 0\\
\nm
\delta T^{e,2}_{P,n}(k_{p}r_j^-)& 0&\delta T^{e,2}_{Q,n}(k_{s}r_j^-)& 0
\end{pmatrix}.
\]
It is important to emphasize that, from a physical perspective, we are concerned with the subwavelength resonance of nested elastic materials, which corresponds to the system's lowest resonant frequency. At this frequency, the system exhibits a factor corresponding to $n = 1$, as the lowest resonance is characterized by the fewest number of oscillations \cite{DKLLZ_SAPM2025,LZArxiv}.
Consequently,  the  matrices
\begin{equation}\label{MSW}
\bm{A}^{\textup{m}}(\omega,\delta):=\bm{A}^{\textup{m}}_{(1)}(\omega,\delta), \; \mbox{for } \mathrm{m}\in\{\mathrm{T,R} \},
\end{equation}
become singular. 
In what follows, we set $\rho = \lambda = \mu = 1$, $\rho_{\rmr,j}  = 8000$, and $\lambda_{\rmr} = \mu_{\rmr} = 1/\delta$ with $\delta =1/10000$.
Let $ f^{\textup{m}}(\omega) = \det(\bm{A}^{\textup{m}}(\omega,\delta) )$ for $  \mathrm{m}\in\{\mathrm{T,R} \}$.  Hence,  calculating the subwavelength resonant frequency $\omega^{\textup{m}}$ is equivalent to determining the following complex root-finding problem
\begin{equation}\label{CRFP}
\omega^{\textup{m}} = \min\limits_{\omega \in \mathbb{C}}\{\omega:  f^{\textup{m}}(\omega) = 0\},
\end{equation}
which can be calculated by using Muller's method \cite{AK_book2018}.
We consider the radii of the layers to be 
equidistant.  For $N$-layer structure, set
\begin{equation}\label{str01}
r_i^+=(N-i+1)\;\text{ and }\;r_i^-=(N-i+0.5), \; \quad i=1, 2, \ldots N.
\end{equation}
It can be seen that, for fixed small $\delta>0$,  the $N$-layered nested elastic resonators possesses $2N$ eigenfrequencies with negative imaginary
part. The eigenfrequencies of 15-layer  nested resonators are shown in Figure \ref{15layerfrqc}. It is observed that these eigenfrequencies lie in the lower half of the complex plane and exhibit symmetry about the imaginary axis. 
To assign a physical meaning to the complex subwavelength resonance frequencies, we next select those with a positive real part.
The negative imaginary part corresponds to the loss of energy to the far field.
From the difinition of the resonant quality factor $Q_{\omega}:=\frac{\Re \omega}{-2\Im\omega}$, 
notably, compared to the translational subwavelength resonant frequencies $\omega^{\mathrm{T}}$, the rotational ones $\omega^{\mathrm{R}}$  exhibit lower scattering losses $|\Im(\omega^{\mathrm{R}})|$, indicating higher quality factors and longer lifetimes.
\begin{figure}
	\centering
	\includegraphics[scale=0.17]{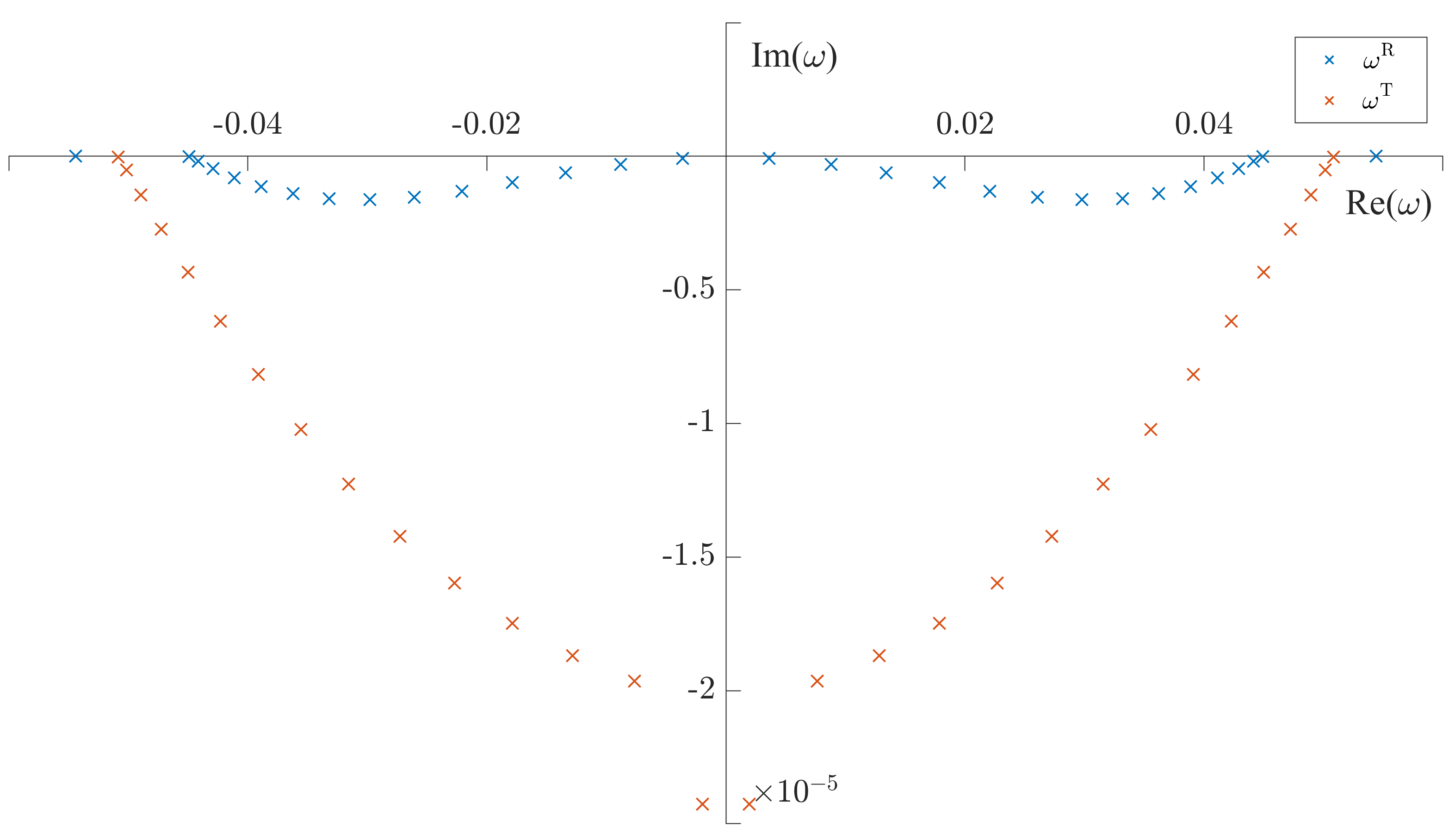}
	\caption{The subwavelength resonant frequencies of the 15-layered  nested resonators designed by \eqref{str01}. }\label{15layerfrqc}
\end{figure}

Next, we examine the spatial distribution of the eigenmodes $\mathbf{u}^{\mathrm{m}}_{j}$ associated with each eigenfrequency $\omega^{\mathrm{m},+}_{j}$.
We refer to the eigenmodes $\mathbf{u}^{\mathrm{T}}_{j}$ and $\mathbf{u}^{\mathrm{R}}_{j}$ corresponding to $\omega^{\mathrm{T},+}_{j}$ and $\omega^{\mathrm{R},+}_{j}$ as the translational and rotational eigenmodes, respectively.
 In order to use  \eqref{lame_solution2} and \eqref{MSW}, we first apply \eqref{vectorial_spherical_harmonics} to derive 
\begin{equation}\label{eq:I}
\mathcal{I}_0^{0}=\sqrt{\frac{3}{4\pi}}
(0,0,1)^t,
%\;\mathcal{I}_0^{-1}=\sqrt{\frac{3}{8\pi}}(1,-\mathrm{i},0)^t,
\;     \mathcal{I}_0^{1}=-\overline{\mathcal{I}_0^{-1}}= \sqrt{\frac{3}{8\pi}}
(-1,-\mathrm{i},0)^t,
\end{equation}
\begin{equation}\label{eq:T}
\mathcal{T}_1^{0}=\sqrt{\frac{3}{4\pi}}\frac{(-x_2,x_1,0)^t}{r}
,%\;\mathcal{T}_1^{-1}=\sqrt{\frac{3}{8\pi}}  \frac{(-\mathrm{i} x_3,-x_3,\mathrm{i} x_1 + x_2)^t}{r},
\;   \mathcal{T}_1^{1}=-\overline{\mathcal{T}_1^{-1}}=\sqrt{\frac{3}{8\pi}}\frac{(-\mathrm{i} x_3,x_3,\mathrm{i} x_1 - x_2)^t}{r},
\end{equation}
and 
\begin{equation}\label{eq:N}
	\mathcal{N}_2^{0}=\scalebox{1.2}{$\sqrt{\frac{3}{4\pi}}\frac{\(3x_1x_3, 3x_2x_3, 3x_3^2-r^2\)^t}{r^2}$}, %\mathcal{N}_2^{-1}=\sqrt{\frac{3}{8\pi}} & \frac{\(3x_1(x_1-\mathrm{i} x_2)-r^2, 3x_2(x_1-\mathrm{i} x_2)+\mathrm{i} r^2, 3x_3(x_1-\mathrm{i} x_2)\)^t}{r^2}, \quad   \\
 \mathcal{N}_2^{1}= -\overline{\mathcal{N}_2^{-1}}= \scalebox{1.2}{$\sqrt{\frac{3}{8\pi}}  \frac{\(3x_1(-x_1-\mathrm{i} x_2)+r^2, 3x_2(-x_1-\mathrm{i} x_2)+\mathrm{i} r^2, 3x_3(-x_1-\mathrm{i} x_2)\)^t}{r^2}.$}
\end{equation}
Using \eqref{eq:I}--\eqref{eq:N}, and substituting the eigenfrequencies obtained from \eqref{CRFP} together with the associated coefficients from \eqref{MSW} into \eqref{lame_solution2}, we numerically compute the corresponding eigenmodes. 

\begin{figure}[H]
	\centering
	\includegraphics[scale=0.72]{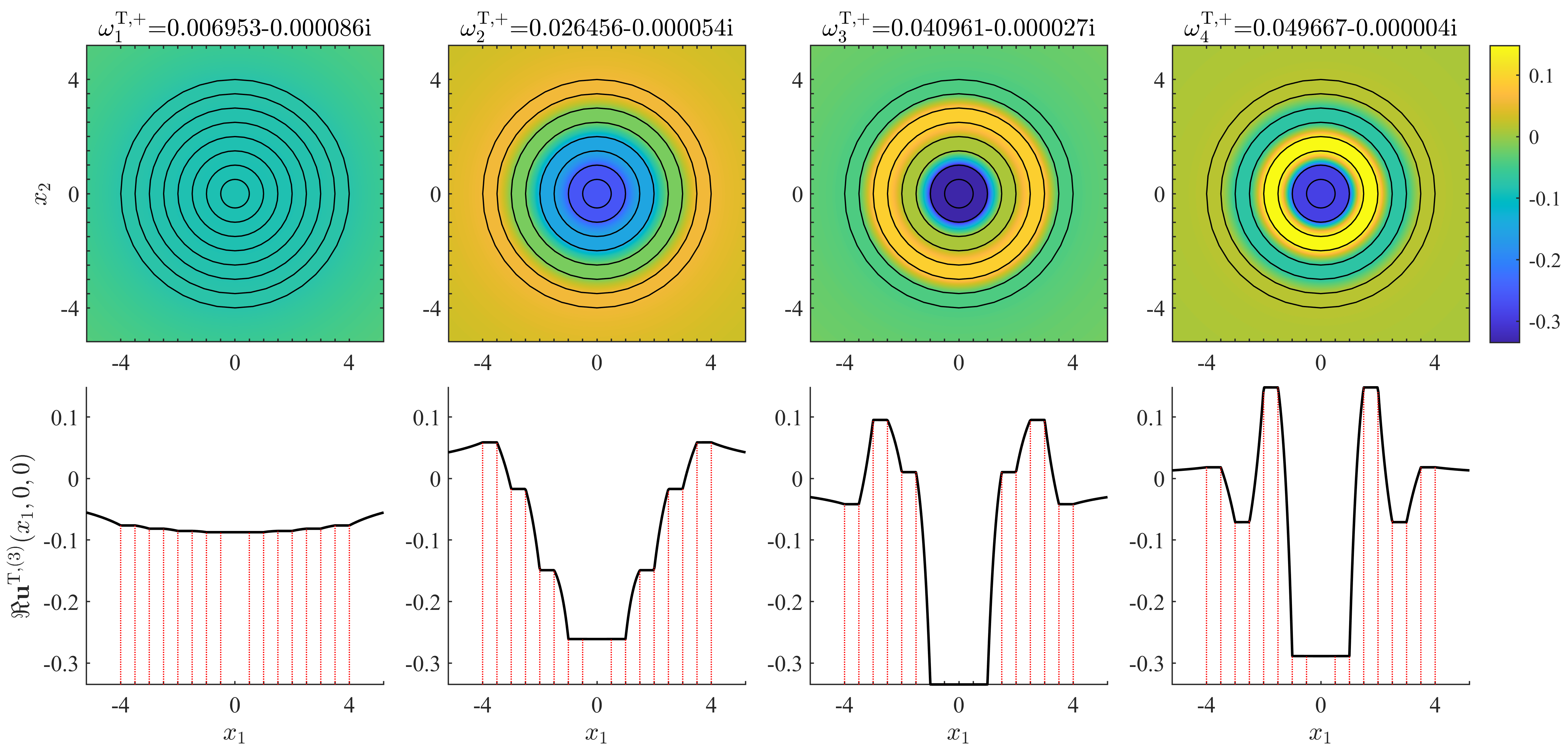}
	\caption{The third components of the translational elastic displacement field distributions $\mathbf{u}_1^{\mathrm{T},(3)},\mathbf{u}_2^{\mathrm{T},(3)},\mathbf{u}_3^{\mathrm{T},(3)},\mathbf{u}_4^{\mathrm{T},(3)}$ for the four-layer nested resonators designed by \eqref{str01}. Each pair of plots corresponds to one of the four eigenfrequencies $\omega^{\mathrm{T},+}_{j}$, $j=1,2,3,4$. The upper panel displays a contour plot of the function $\Re \mathbf{u}_j^{\mathrm{T},(3)}(x_1, x_2,0)$, with the four-layer concentric sphere designed by \eqref{str01} represented as solid black lines. The lower panel shows the cross section of the upper plot, taken along the line $x_2 = 0$ (passing through the centres of the multi-layered structures). Additionally, red dotted lines represent vertical lines at the coordinates of the radii.}\label{4layereigenmodeT}
	
	\vspace{2mm}
	
	\includegraphics[scale=0.72]{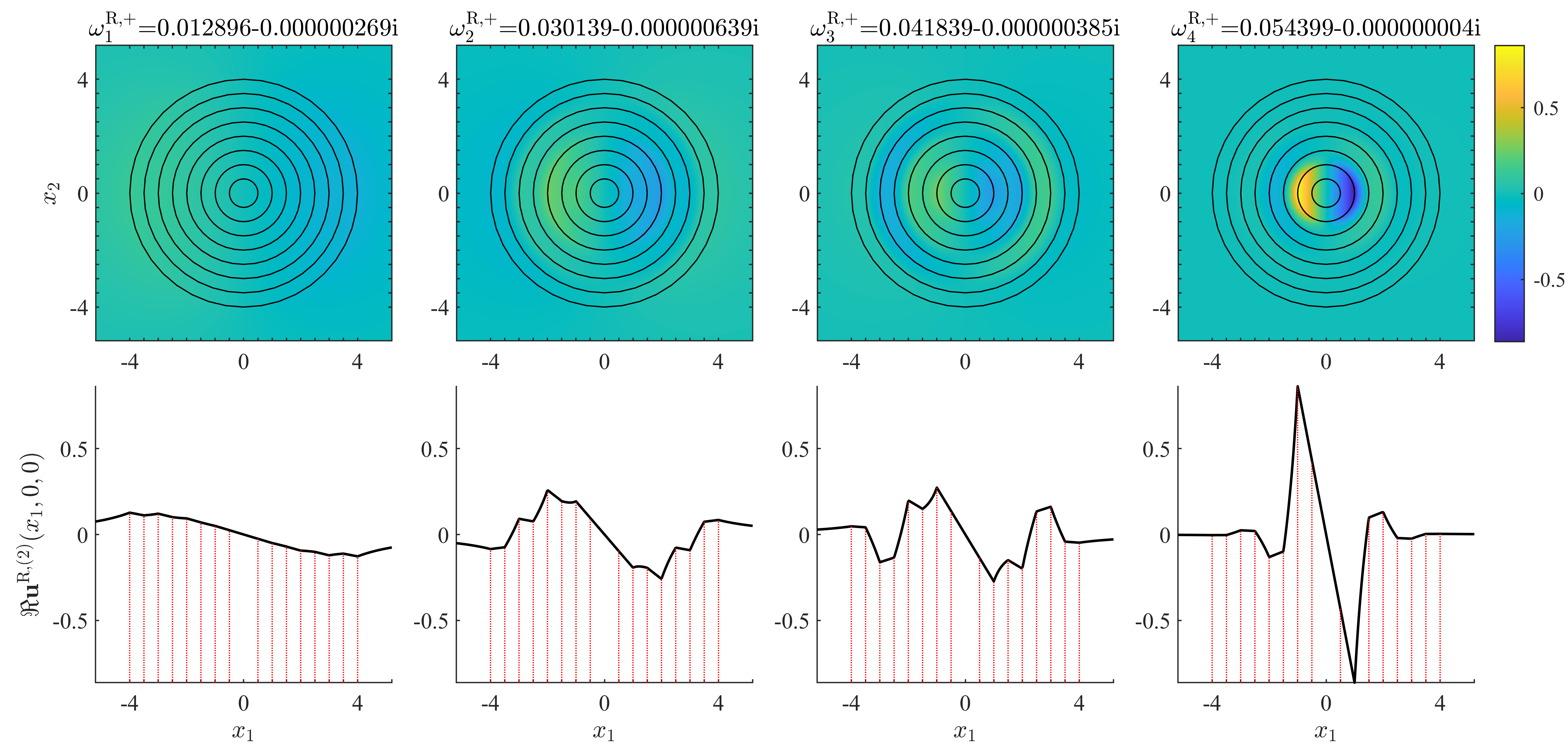}
	\caption{The second components of the rotational elastic displacement field distributions $\mathbf{u}_1^{\mathrm{R},(2)},\mathbf{u}_2^{\mathrm{R},(2)},\mathbf{u}_3^{\mathrm{R},(2)},\mathbf{u}_4^{\mathrm{R},(2)}$ for the four-layer nested resonators designed by \eqref{str01}. 
	}\label{4layereigenmodeR}
\end{figure}

To facilitate visualization of the results, we focus on four-layer nested resonators and examine the spatial distribution of selected components of the eigenmodes. The  third components of the translational elastic displacement distributions $\mathbf{u}_j^{\mathrm{T},(3)}$  and  the  second components of the rotational elastic displacement distributions $\mathbf{u}_j^{\mathrm{R},(2)}$ for the four-layer nested resonators, as designed by \eqref{str01}, are shown in Figures \ref{4layereigenmodeT} and \ref{4layereigenmodeR}, respectively.  The eigenfrequencies $\omega^{\mathrm{m},+}_j$, with $\mathrm{m}\in \{\mathrm{T,R}\}$ and $j=1,2,3,4$, are listed in ascending order of their real parts. 
It is evident from the lower panel of Figure \ref{4layereigenmodeT} that the translational elastic displacement is approximately constant within each resonator and exhibits a symmetric profile about its center, whereas the lower panel of Figure \ref{4layereigenmodeR} shows that the rotational elastic displacement varies approximately linearly within each resonator and exhibits an antisymmetric profile about its center.
The nearly constant profile characterizes a monopole-type mode, while the antisymmetric linear profile characterizes a dipole-type mode, in agreement with the modal decomposition established in Proposition \ref{propMLCB}.
In particular, the nearly constant displacement profile observed in the translational eigenmode reflects an isotropic compression–expansion pattern characteristic of monopole sources, whereas the antisymmetric linear profile of the rotational eigenmode represents a shear-dominated dipole field. This behavior is consistent with the elastic multipole expansion in \eqref{ext_solution}–\eqref{int_solution} at subwavelength scales.

\subsection{Localization criterion based on eigenvalue position}

In this subsection, we present numerical simulations to verify the exponential localization of defect modes, as well as to validate the localization criterion based on the position of the eigenvalue. As mentioned in subsection \ref{dimerdefect}, the theory and computation for the exponential localization criterion of dimer-type resonators include a geometric defect have been established in \cite{DKZ_JLMS2026}. Next, we mainly consider an $N$-layered monomer-type metamaterial with multiple material-parameter defects, under the assumptions \eqref{interface}, \eqref{identicalvolume}, and \eqref{monomersetupdefect1}. 
Since \eqref{stif_tra} and \eqref{stif_tor} show that the matrix $\bm{\mathcal{S}}^{\textup{m}}$ takes the similar form for both $\mathrm{m}=\mathrm{T}$ and $\mathrm{m}=\mathrm{R}$, we proceed, without loss of generality, with numerical verification only for the rotational case $\mathrm{m}=\mathrm{R}$.

We first consider material-parameter defects with multiple different perturbation intensities $(i_l,\eta_l)$ for $1\leq l\leq M$.
We set
$N=45$, $r_1^+ = 2.4$, $\beta^{\textup{R}} = 7500$,
so that the geometric configuration of the 45-layered concentric spheres is uniquely determined.  We further take $\rho=\lambda=\mu=1$, and $\lambda_{\rmr}=\mu_{\rmr}=1/\delta$ with $\delta=1/10000$. The reference densities are chosen as $\rho_{\rmr,j}=10000$ for all layers except $\rho_{\rmr,4}=4900$, $\rho_{\rmr,22}=8100$, and $\rho_{\rmr,34}=6400$. Under this setting, the 45-layered structure possesses three material-parameter defects $\{(i_l,\eta_l)\}_{l=1}^3=\{(22,-0.1),(34,-0.2),(4,-0.3)\}$.

We compare the numerical roots $\lambda^{\mathrm{R}}_{(i_l,\eta_l)}$ of transcendental equation \eqref{Lexp} and the eigenvalues $\lambda^{\mathrm{R}}_{N-M+l}$  of the generalized elastic stiffness eigenvalue problem \eqref{GEP14}. The comparison between the two methods for computing the defect eigenvalues, together with the corresponding relative errors for selected values of $(i_l,\eta_l)$, is presented  in Table \ref{table-results}.
The results indicate that the relative error decreases as  $i_l$ increases, while the defect eigenvalues grow with perturbation magnitude  $|\eta_l|$, in full agreement with the theoretical predictions.

\begin{table}[h]% ed d=1
	\centering
	\begin{tabular}{cccc}
		\toprule
		$(i_l,\eta_l)$ & $\lambda^{\mathrm{R}}_{42+l}$ & $\lambda^{\mathrm{R}}_{(i_l,\eta_l)}$ & \text{Relative error} \\
		\midrule
		$(22,-0.1)$ &   31123.0730289473 &31123.5604313636 & 1.5660$\times 10^{-5}$  \\
		$(34,-0.2)$ & 34466.9116208634   & 34466.9117647049    & 4.1733 $\times 10^{-9}$ \\
		$(4,-0.3)$ & 40535.4432657877 &40542.5104042893& 1.7434$\times 10^{-4}$  \\
		\bottomrule
	\end{tabular}
	\caption{A comparison between the eigenvalues $\lambda^{\mathrm{R}}_{(i_l,\eta_l)}$ of \eqref{Lexp} and the eigenvalues $\lambda^{\mathrm{R}}_{42+l}$  of the generalized elastic stiffness eigenvalue problem \eqref{GEP14} with $N=45$, $r_1^+ = 2.4$, and $\beta^{\textup{R}} = 7500$, over several values of $(i_l,\eta_l)$.}
	\label{table-results}
\end{table}

The spectral analysis of the generalized elastic stiffness eigenvalue problem \eqref{GEP14}, illustrated in Figure \ref{Eigenvector_With_diffDefects}, clearly demonstrates the existence of several eigenvalues located above the bulk spectrum. The number of these eigenvalues equals the number of defects, and they are clearly separated from the bulk eigenvalues shown as black dots.
The behavior of the eigenvectors exhibits three characteristic regimes depending on the spectral location of the corresponding eigenvalues, as illustrated in Figure \ref{Eigenvector_With_diffDefects}.
Eigenvectors associated with eigenvalues lying above the spectral bulk exhibit exponential decay in both directions away from the defect site. Moreover, the degree of localization and the amplitude increase with the magnitude of the perturbation magnitude 
$|\eta_l|$.
In contrast, eigenvectors corresponding to bulk eigenvalues display oscillatory behavior resembling trigonometric functions.
At the boundary of the asymptotic spectral bulk, the eigenvectors exhibit an approximately linear profile, representing a transitional regime between the exponentially localized and oscillatory states.
These numerical results show excellent agreement with the theoretical predictions of Theorem \ref{Propiff2}, confirming the fundamental relationship between the spectral position of an eigenvalue and the spatial localization properties of its associated eigenvector.
It is worth emphasizing that, in contrast to the higher-order plasmonic resonant modes in multilayered negative-index structures, which are typically concentrated at the outermost interface \cite{KZDF_JCP}, the present framework enables the placement of localized modes at arbitrary layers within the structure.

\begin{figure}[H]
	\centering  %图片全局居中
	\subfigbottomskip=-5pt %两行子图之间的行间距
	\subfigcapskip=-5pt %设置子图与子标题之间的距离
	\subfigure[]{
		\includegraphics[width=0.322\linewidth]{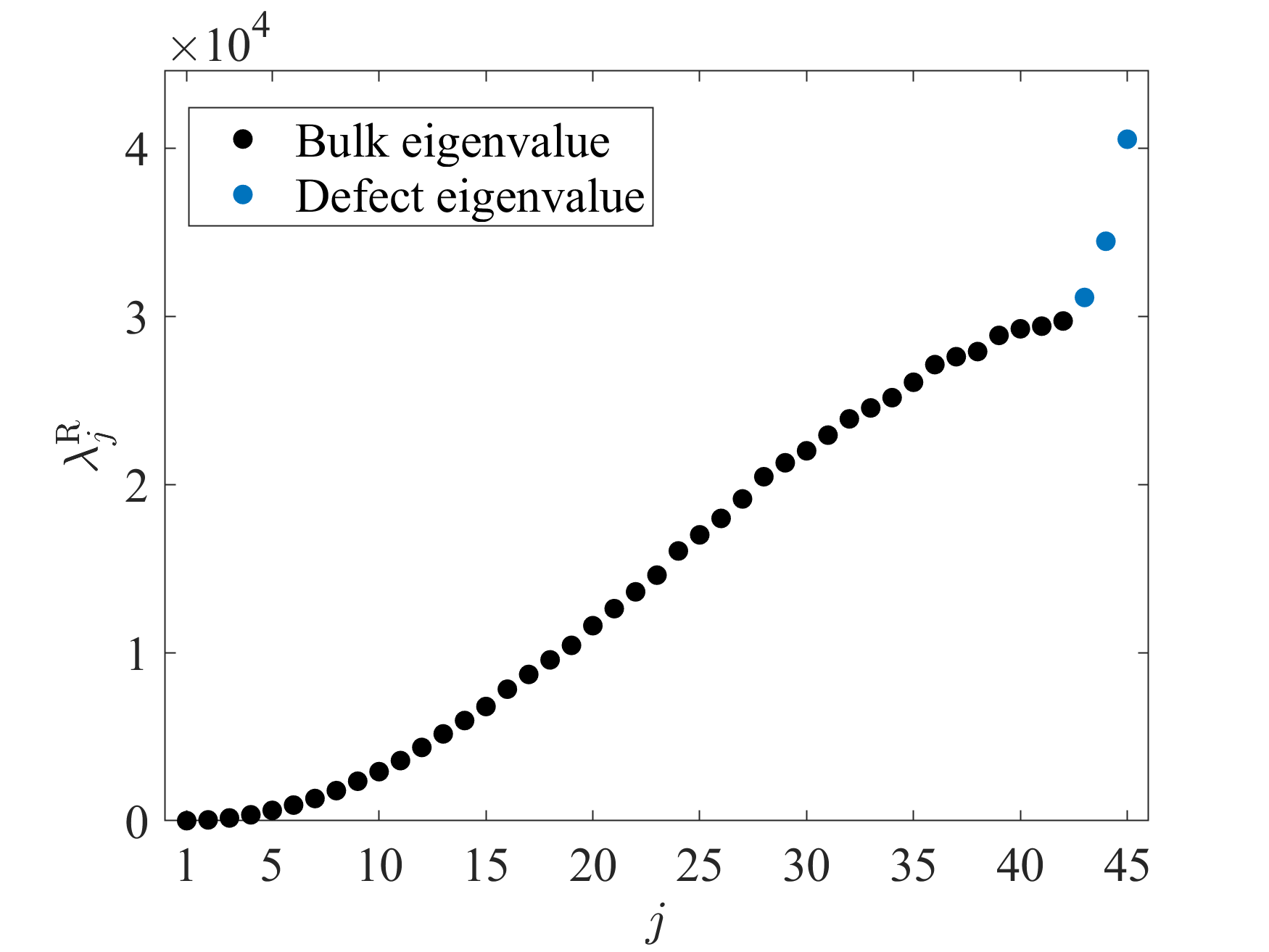}}
	\subfigure[]{
		\includegraphics[width=0.322\linewidth]{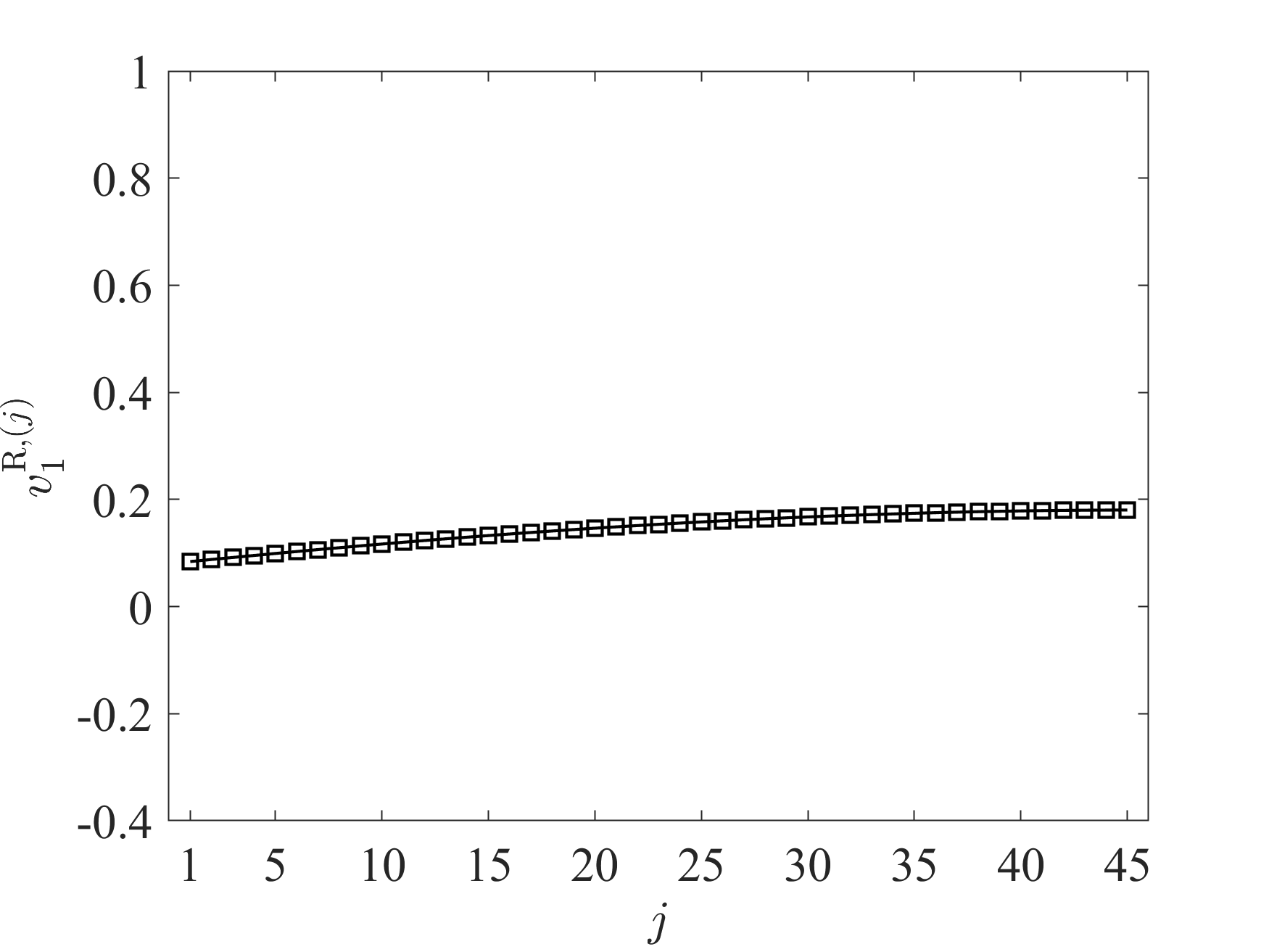}}
	%\quad
	\subfigure[]{
		\includegraphics[width=0.322\linewidth]{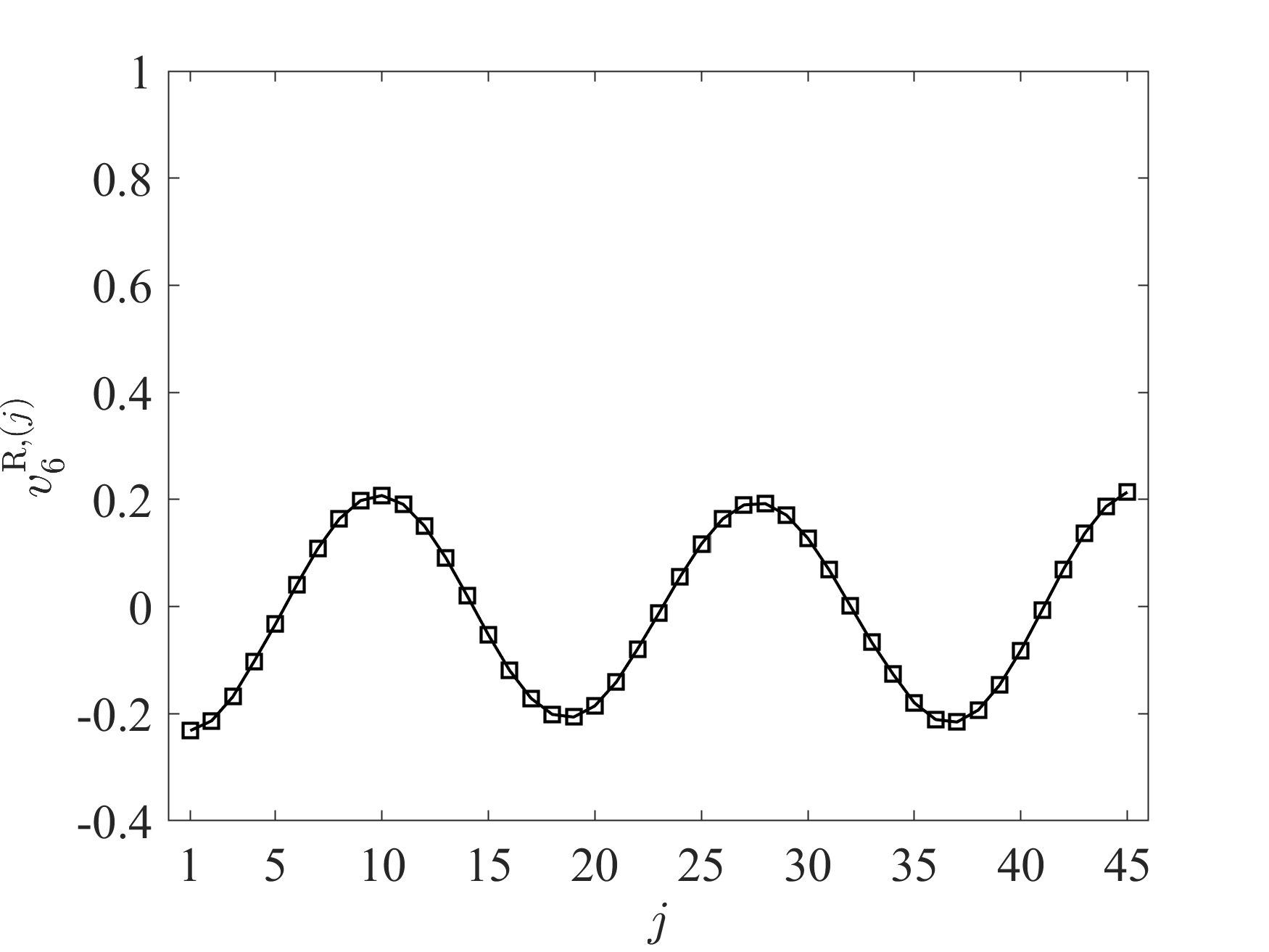}}
	\subfigure[]{
		\includegraphics[width=0.322\linewidth]{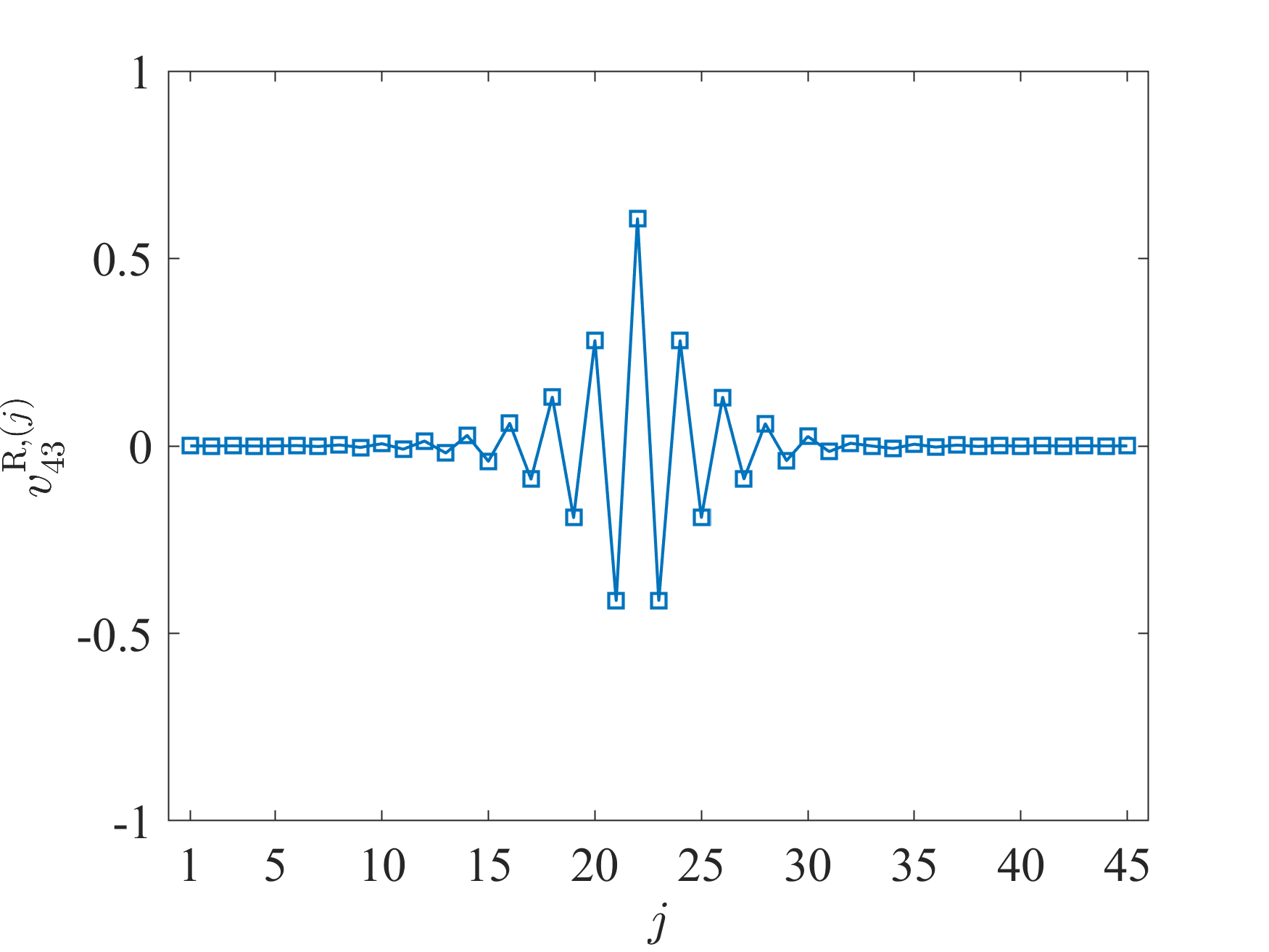}}
	\subfigure[]{
		\includegraphics[width=0.322\linewidth]{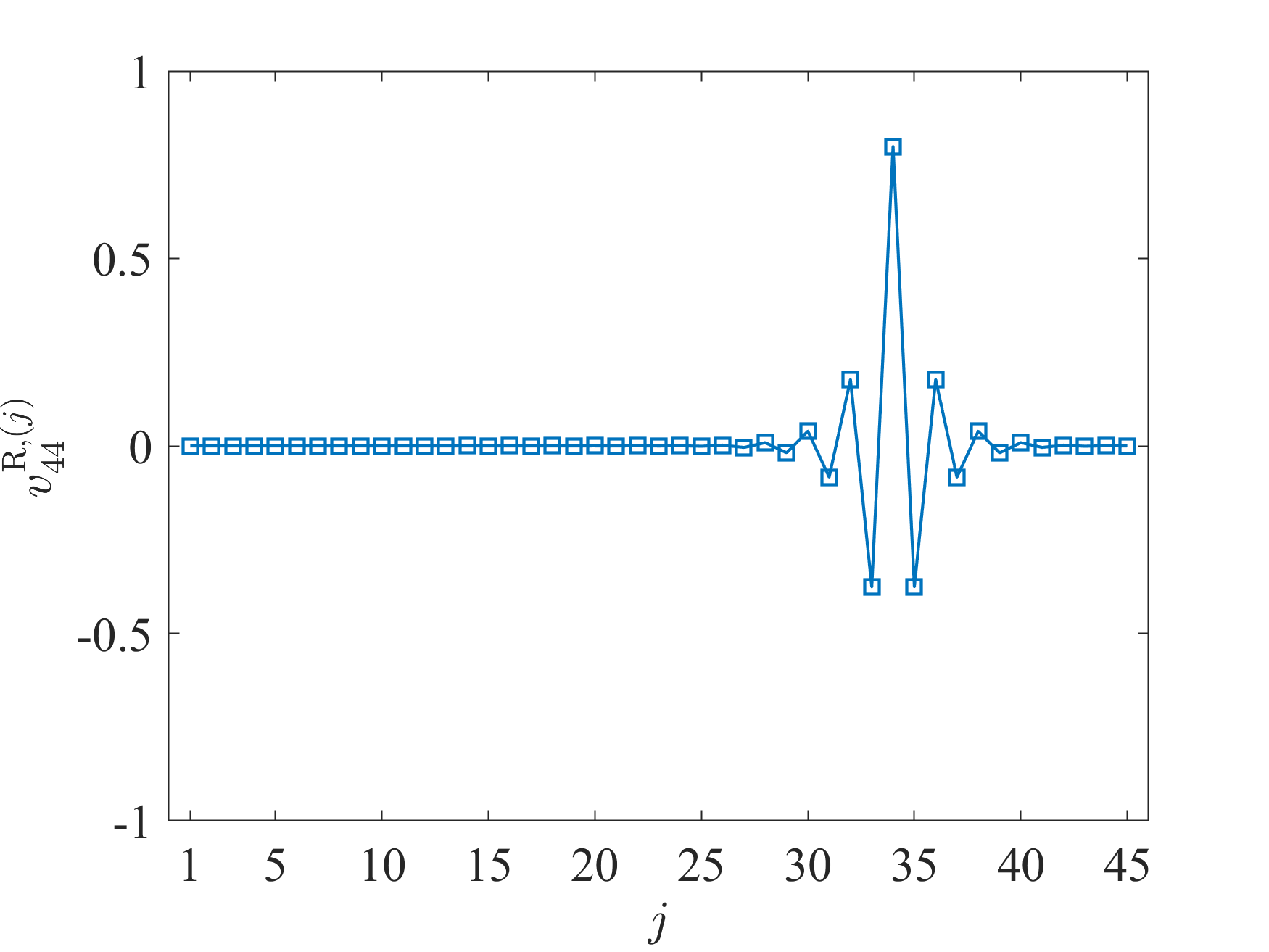}}
	\subfigure[]{
		\includegraphics[width=0.322\linewidth]{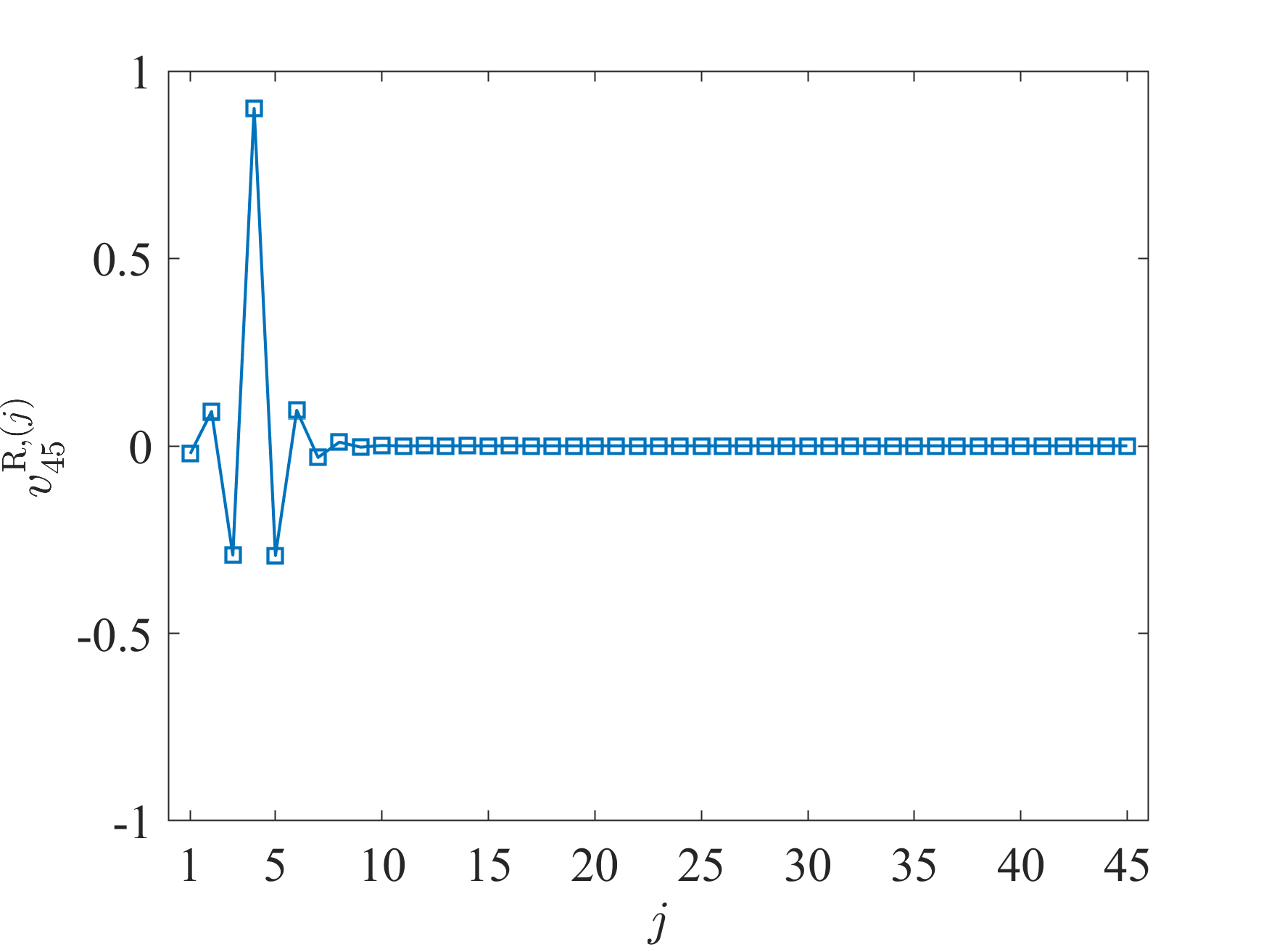}}
	\caption{Eigenpair profiles of the generalized elastic stiffness eigenvalue problem \eqref{GEP14} with $N=45$, $r_1^+=2.4$, $\beta^{\textup{R}} = 7500$ and $\{(i_l,\eta_l)\}_{l=1}^3=\{(22,-0.1),(34,-0.2),(4,-0.3)\}$. 
		(a) Distribution of eigenvalues, where black dots represent bulk eigenvalues and blue dots represent defect eigenvalues.
		(b) An approximately linear eigenvector corresponding to an eigenvalue located at the boundary of the spectral bulk.
		(c) A trigonometric-like oscillatory eigenvector corresponding to a bulk eigenvalue. 
		(c)--(f)  Exponentially localized eigenvectors associated with defect eigenvalues, showing localization around the corresponding defect sites. 
	}\label{Eigenvector_With_diffDefects}
\end{figure}

\subsection{Simultaneously localized defect modes at multiple defect sites}

In this subsection, we present numerical simulations to verify the existence of quasi-degenerate eigenvalues and their sharper asymptotic expansions \eqref{MLexp}, together with the amplitude ratios \eqref{amplitude_ratio} of the corresponding eigenvectors at each defect site. Moreover, we include numerical validations for hierarchical configurations (see Remark \ref{rem53}), in which the set $\{(i_l,\eta_l)\}_{l=1}^M$ is partitioned into $b$ blocks, each consisting of equally spaced defect locations with identical perturbation parameters, while the spacing and perturbation parameters may vary across different blocks. In addition, we numerically compute the associated localized defect modes using the elastic multipole expansion method \eqref{ext_solution}--\eqref{lame_solution2}.

We first consider material-parameter defects with equidistant defect sites and identical perturbation parameter $(i_l,\eta)$ for $1\leq l\leq M$.
We set
$N=50$, $r_1^+ = 2.4$, $\beta^{\textup{R}} = 9500$, We further take  four material-parameter defects $\{(i_l,\eta)\}_{l=1}^4=\{(10,-0.25),(20,-0.25),(30,-0.25),(40,-0.25)\}$. 

The spectral analysis of the generalized elastic stiffness eigenvalue problem \eqref{GEP14}, illustrated in Figure \ref{Eigenvector_With_sameDefects}(a), clearly reveals the presence of several quasi-degenerate eigenvalues located above the bulk spectrum. The number of such eigenvalues coincides with the number of defects, and they are well separated from the bulk eigenvalues.  
We further compare the asymptotic predictions \eqref{MLexp} for $\lambda^{\mathrm{m}}_{(i_l,\eta),d}$ with the numerically computed eigenvalues $\lambda^{\mathrm{R}}_{N-M+l}$ of \eqref{GEP14}. The comparison, presented in Figure \ref{Eigenvector_With_sameDefects}(b), shows excellent agreement between the two approaches, thereby validating the accuracy of the asymptotic expansions \eqref{MLexp}.
Moreover, as displayed in Figure \ref{Eigenvector_With_sameDefects}(c)–(f), the amplitudes of the eigenvectors corresponding to the defect eigenvalues, evaluated at the defect sites, are given respectively by
\[
\(\bm{v}_{47}^{\mathrm{R},{(10s)}}\)_{1\leq s\leq 4} \approx 0.54\(\sin\frac{4s\pi}{5}\)_{1\leq s\leq 4},\quad (\bm{v}_{48}^{\mathrm{R},{(10s)}})_{1\leq s\leq 4} \approx  0.54\(\sin\frac{3s\pi}{5}\)_{1\leq s\leq 4},
\]
\[(\bm{v}_{49}^{\mathrm{R},{(10s)}})_{1\leq s\leq 4} \approx  0.54\(\sin\frac{2s\pi}{5}\)_{1\leq s\leq 4},\quad (\bm{v}_{50}^{\mathrm{R},{(10s)}})_{1\leq s\leq 4} \approx  0.54 \(\sin\frac{s\pi}{5}\)_{1\leq s\leq 4}.\]
These numerical results are in excellent agreement with the theoretical amplitude ratio \eqref{amplitude_ratio} for the even case $d=10$. 

From a broader perspective, this behavior can be viewed as a macroscopic analogue of the classical vibrational analysis of conjugated molecules such as butadiene \cite{AF_book2011}, where molecular orbital theory combined with the H\"{u}ckel approximation predicts similar sine-type amplitude distributions. In this sense, our results provide a natural extension from molecular-scale vibrational modes to multimodal defect-induced patterns in macroscopic structured media, thereby offering a unified framework that connects subwavelength metamaterials with classical quantum-inspired models.

\begin{figure}[H]
	\centering  %图片全局居中
	\subfigbottomskip=-5pt %两行子图之间的行间距
	\subfigcapskip=-5pt %设置子图与子标题之间的距离
	\subfigure[]{
		\includegraphics[width=0.322\linewidth]{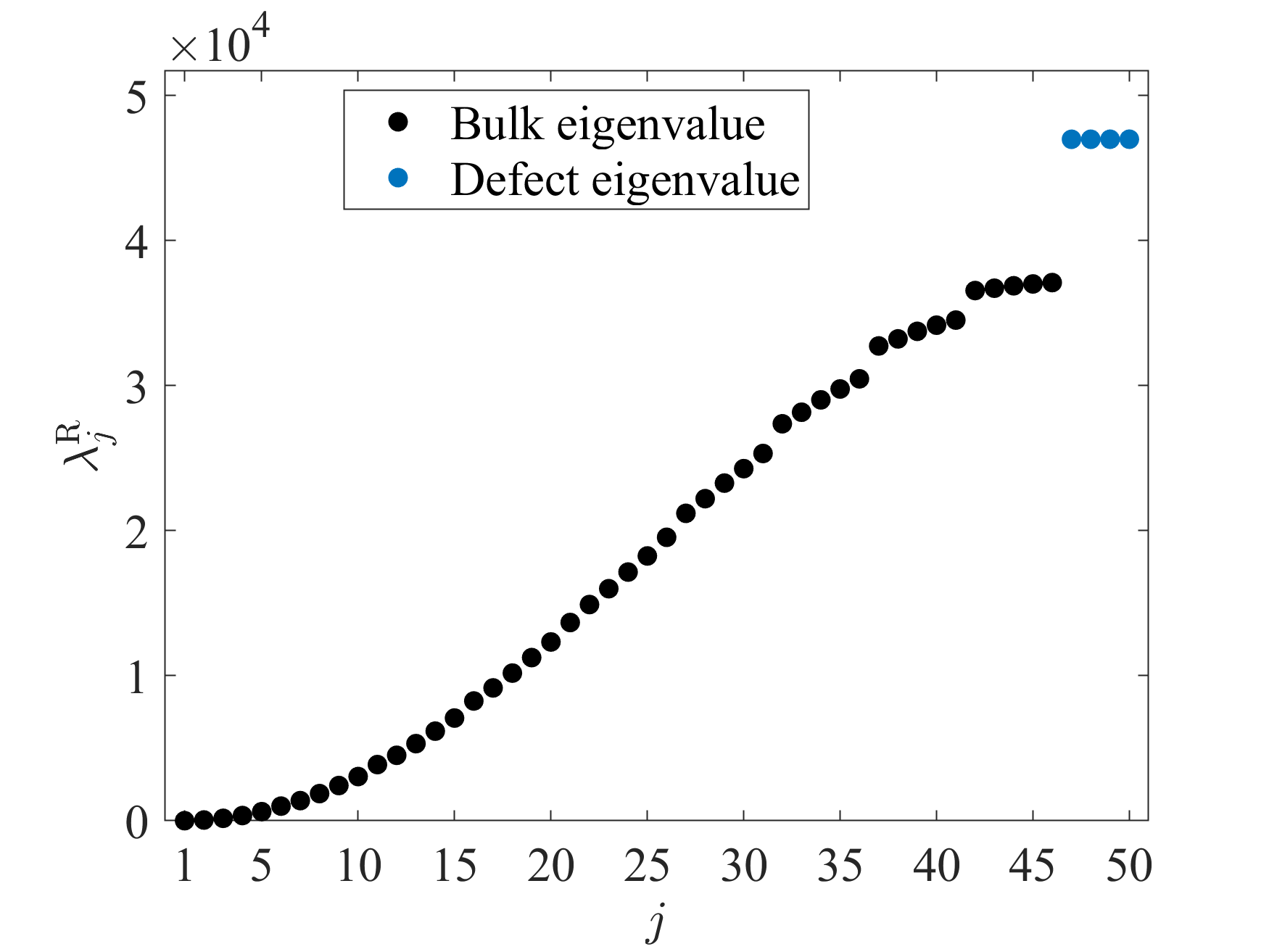}}
	\subfigure[]{
		\includegraphics[width=0.33\linewidth]{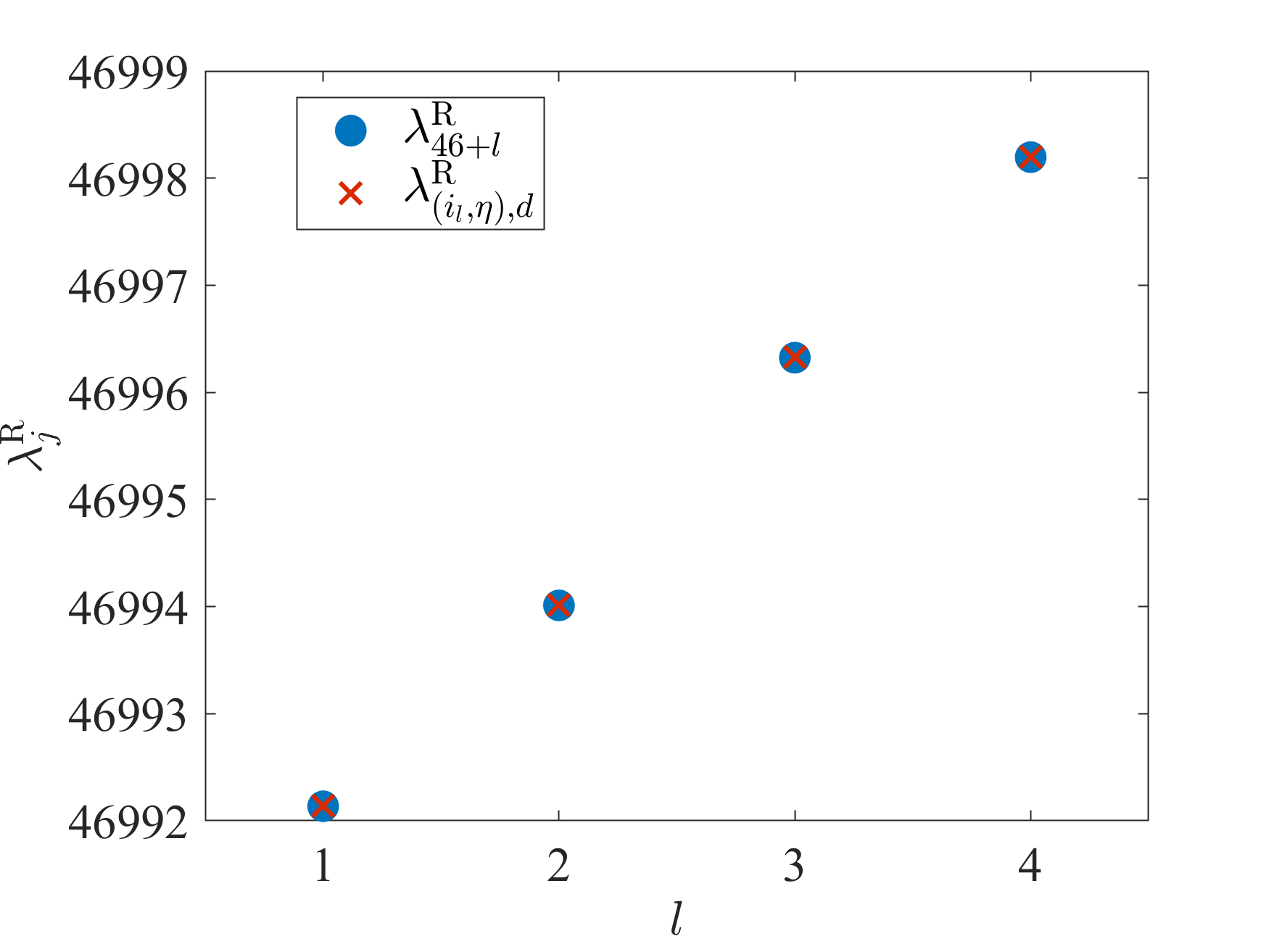}}
	\subfigure[]{
		\includegraphics[width=0.322\linewidth]{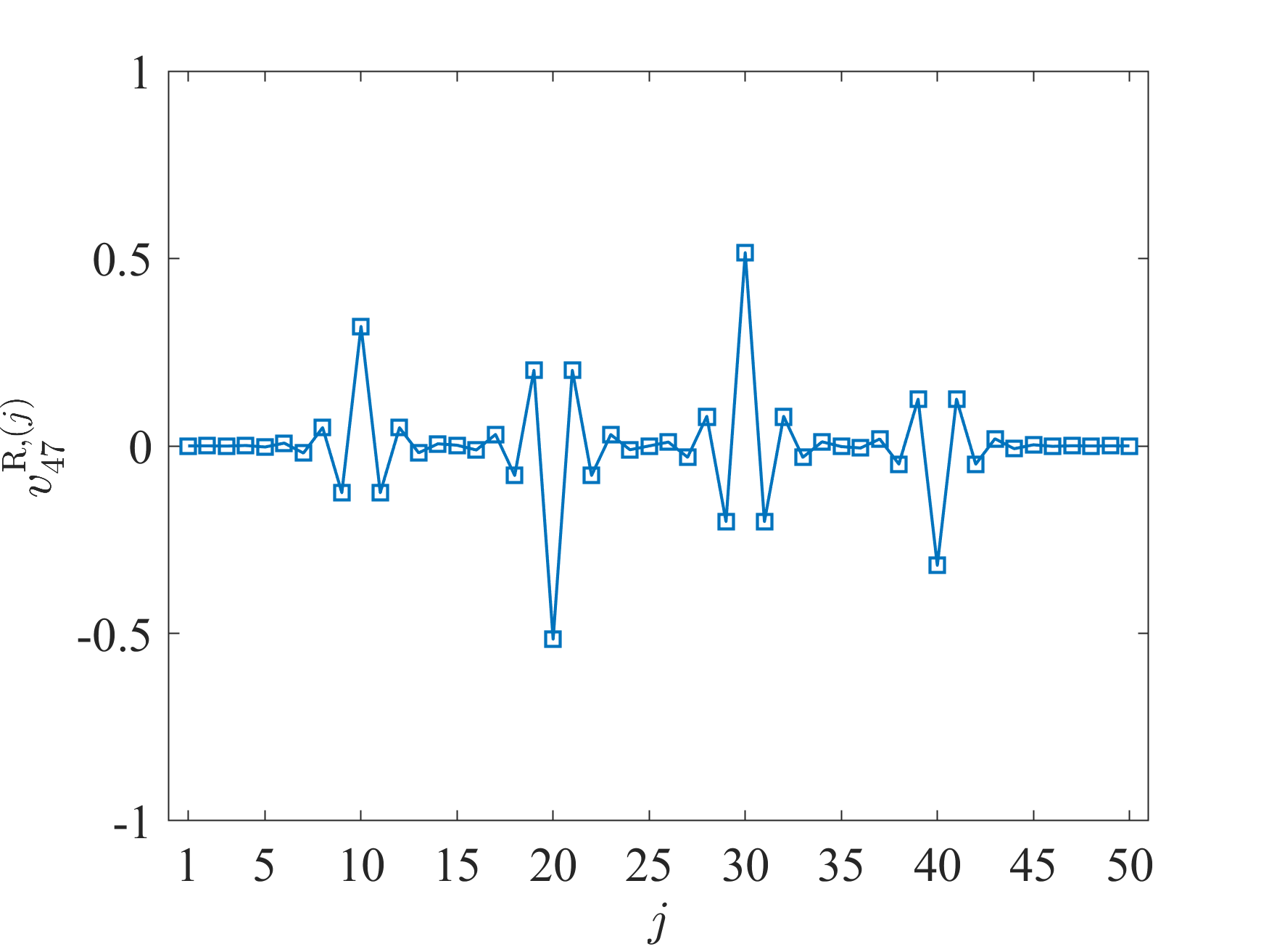}}
	%\quad
	\subfigure[]{
		\includegraphics[width=0.322\linewidth]{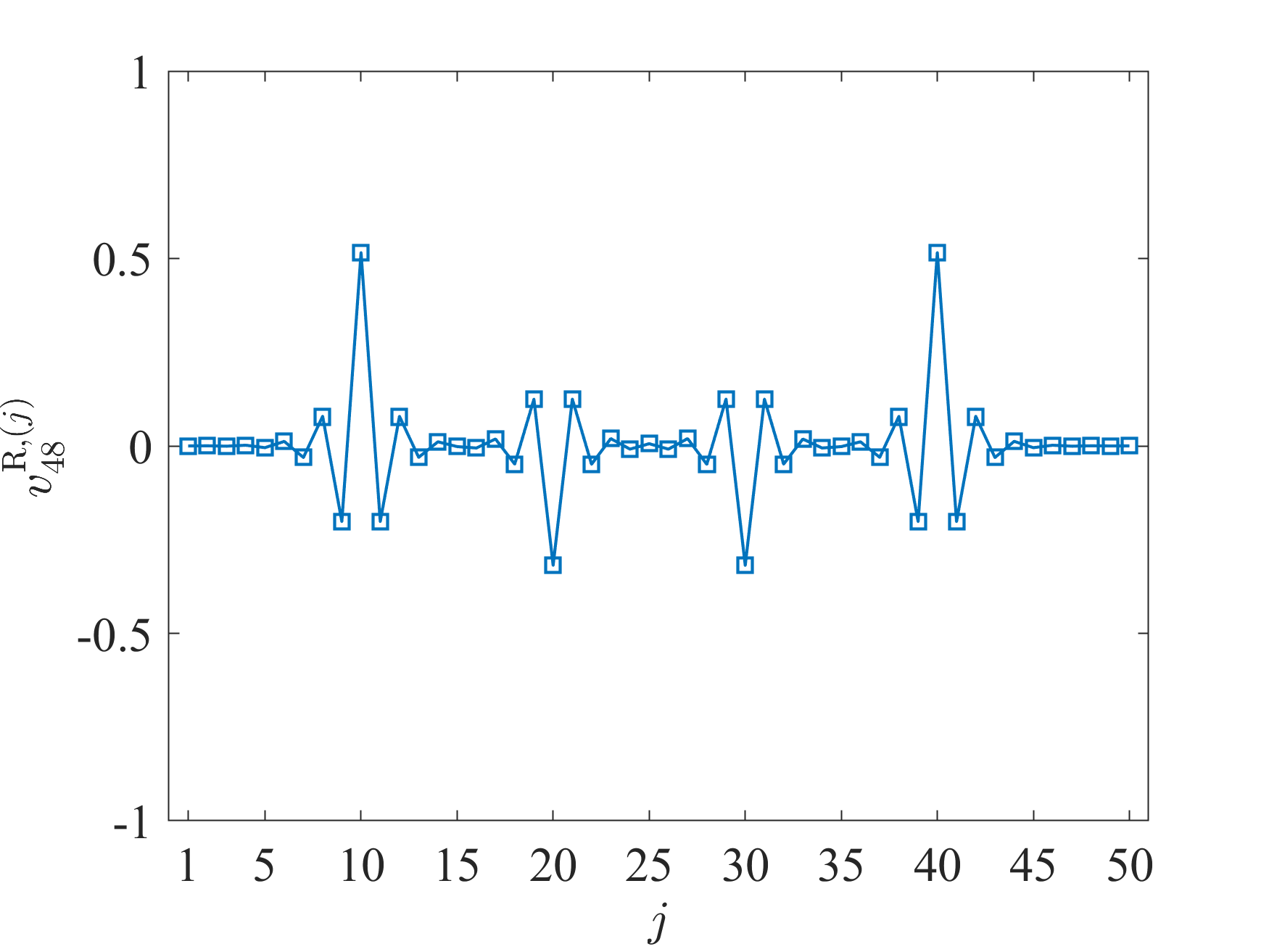}}
	\subfigure[]{
		\includegraphics[width=0.322\linewidth]{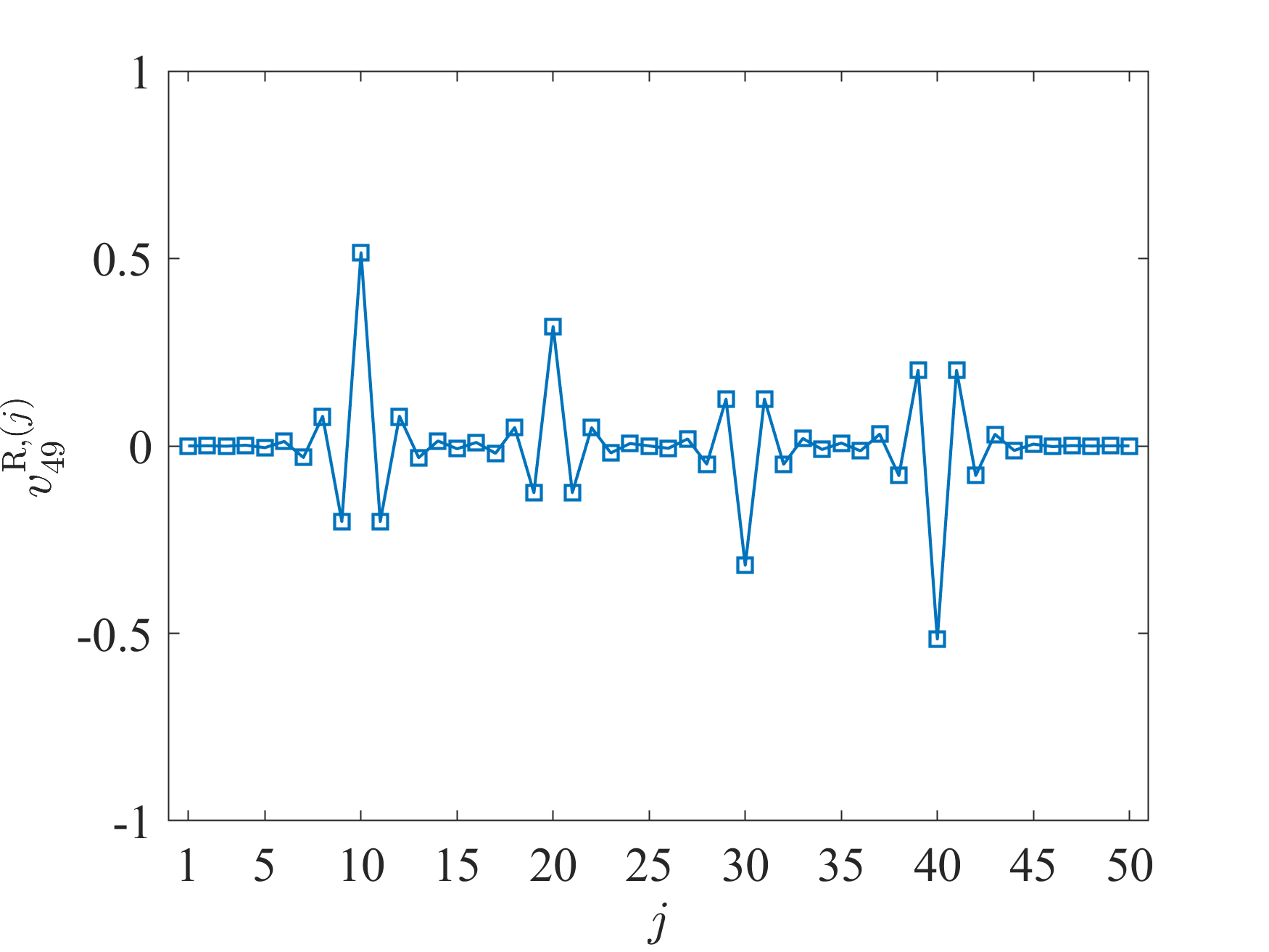}}
	\subfigure[]{
		\includegraphics[width=0.322\linewidth]{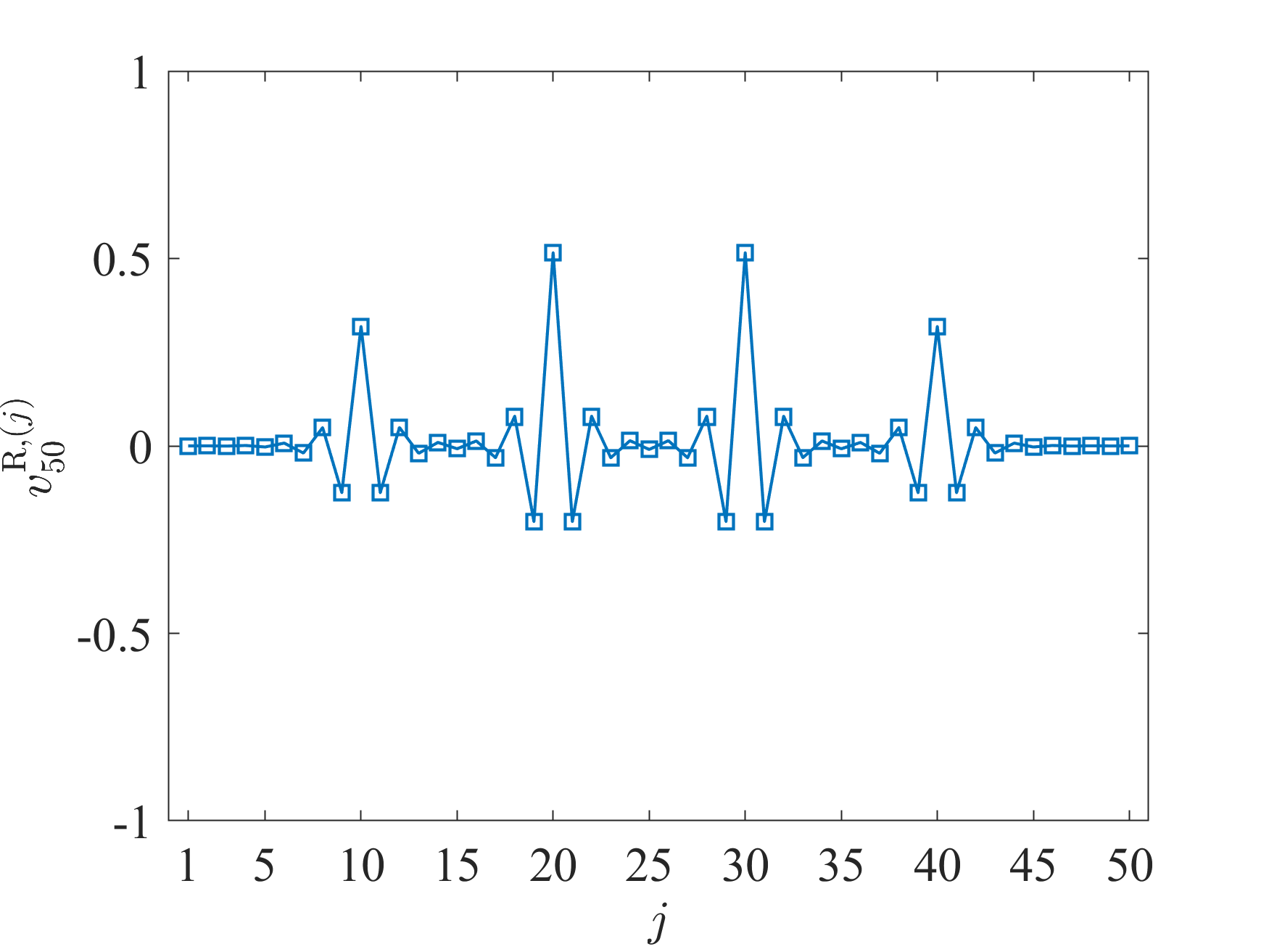}}
	\caption{Eigenpair profiles of the generalized elastic stiffness eigenvalue problem \eqref{GEP14} with $N=50$, $r_1^+=2.4$, $\beta^{\textup{R}} = 9500$ and $\{(i_l,\eta_l)\}_{l=1}^4=\{(10,-0.25),(20,-0.25),(30,-0.25),(40,-0.25)\}$. 
		(a) Distribution of eigenvalues, where black dots represent bulk eigenvalues and blue dots represent quasi-fourfold degenerate defect eigenvalues.
		(b) 
		Comparison between the asymptotic predictions \eqref{MLexp} for $\lambda^{\mathrm{m}}_{(i_l,\eta),d}$ and the numerically computed eigenvalues $\lambda^{\mathrm{R}}_{45+l}$ of \eqref{GEP14}.
		(c)--(f)  Exponentially localized eigenvectors associated with defect eigenvalues, showing localization around the corresponding defect sites, and exhibiting amplitude ratios consistent with the theoretical prediction \eqref{amplitude_ratio} for the even case $d=10$.
	}\label{Eigenvector_With_sameDefects}
\end{figure}

%Simultaneously Localized Defect Modes at Multiple Defect Sites

We next consider material-parameter defects with hierarchical configurations (see Remark \ref{rem53}).
We also set
$N=50$, $r_1^+ = 2.4$, $\beta^{\textup{R}} = 9500$, We further take  five material-parameter defects $\{(i_l,\eta_l)\}_{l=1}^5=\{(10,-0.2),(20,-0.2)\}\cup\{(32,-0.3),(37,-0.3),(42,-0.3)\}$, i.e., $b=2$, $M_1 = 2$, $M_2 = 3$, $d_1 = 10$, $d_2 = 5$,  $\eta^{(1)} = -0.2$,  and $\eta^{(2)} = -0.3$.

The spectral analysis of the generalized elastic stiffness eigenvalue problem \eqref{GEP14}, illustrated in Figure \ref{Eigenvector_With_same0304Defects}(a), clearly reveals the presence of several hierarchical quasi-degenerate eigenvalues located above the bulk spectrum. The number of such eigenvalues coincides with the number of defects in each block, and they are well separated from the bulk eigenvalues.  
We further compare the asymptotic predictions \eqref{MLexpblock} for $\lambda^{\mathrm{R}}_{(i_l,\eta^{(t)},d_t)}$, denoted by $\lambda^{\mathrm{R}}{(i_l,\eta_l)}$ and arranged in ascending order, with the numerically computed eigenvalues $\lambda^{\mathrm{R}}_{N-M+l}$ of \eqref{GEP14}. As shown in Table \ref{table-results2}, the two sets of results are in excellent agreement, thus confirming the validity and accuracy of the asymptotic expansions \eqref{MLexpblock}. The results indicate that the relative error decreases as the intra-block spacing  $d_t$ increases, while the defect eigenvalues grow with perturbation magnitude  $|\eta^{(t)}|$, in full agreement with the theoretical predictions.
Moreover, as displayed in Figure \ref{Eigenvector_With_sameDefects}(b)–(f), the amplitudes of the eigenvectors corresponding to the defect eigenvalues, evaluated at the defect sites, are given respectively by
\[
\(\bm{v}_{46}^{\mathrm{R},{(10s)}}\)_{1\leq s\leq 2} \approx  0.65 \(\sin\frac{2s\pi}{3}\)_{1\leq s\leq 2},\quad \(\bm{v}_{47}^{\mathrm{R},{(10s)}}\)_{1\leq s\leq 2} \approx  0.65\(\sin\frac{s\pi}{3}\)_{1\leq s\leq 2},
\]
\[\scalebox{0.95}{$\(\bm{v}_{48}^{\mathrm{R},{(27+5s)}}\)_{1\leq s\leq 3} \approx  0.64\(\sin\frac{s\pi}{4}\)_{1\leq s\leq 3},\, \(\bm{v}_{49}^{\mathrm{R},{(27+5s)}}\)_{1\leq s\leq 3} \approx  0.64\(\sin\frac{2s\pi}{4}\)_{1\leq s\leq 3},\, \(\bm{v}_{50}^{\mathrm{R},{(27+5s)}}\)_{1\leq s\leq 3} \approx  0.64\(\sin\frac{3s\pi}{4}\)_{1\leq s\leq 3}.$}\]
These numerical results are in excellent agreement with the theoretical amplitude ratio \eqref{amplitude_ratio} for the even case $d=10$ and the odd case $d=5$, respectively.

\begin{table}[h]% ed d=1
	\centering
	\begin{tabular}{cccc}
		\toprule
		$(i_l,\eta_l)$ & $\lambda^{\mathrm{R}}_{45+l}$ & $\lambda^{\mathrm{R}}_{(i_l,\eta_l)}$ & \text{Relative error} \\
		\midrule
		$(10,-0.2)$ & 43651.1317233221  & 43651.1636864129 & 7.3224$\times 10^{-7}$  \\
		$(20,-0.2)$ & 43664.9812623262   & 43665.0127841719   &  7.2190 $\times 10^{-7}$ \\
		$(32,-0.3)$ & 51169.2124137222 &51174.5453827585  & 1.0442$\times 10^{-4}$   \\	
		$(37,-0.3)$ & 51357.8239773711 &51358.2916610333 & 9.1064$\times 10^{-6}$  \\	
		$(42,-0.3)$ & 51537.1721098435 &51542.0379393080& 9.4414 $\times 10^{-5}$  \\
		\bottomrule
	\end{tabular}
	\caption{A comparison between the eigenvalues
		$\lambda^{\mathrm{R}}_{(i_l,\eta^{(t)},d_t)}$ (see \eqref{MLexpblock}), denoted by $\lambda^{\mathrm{R}}_{(i_l,\eta_l)}$ and arranged in ascending order,
		and the eigenvalues $\lambda^{\mathrm{R}}_{45+l}$  of the generalized elastic stiffness eigenvalue problem \eqref{GEP14} with $N=50$, $r_1^+ = 2.4$, and $\beta^{\textup{R}} = 9500$, over several values of $\{(i_l,\eta_l)\}_{l=1}^5=\{(10,-0.2),(20,-0.2)\}\cup\{(32,-0.3),(37,-0.3),(42,-0.3)\}$.}
	\label{table-results2}
\end{table}

This parity-dependent sinusoidal structure is not only of theoretical interest but also has important practical implications. In particular, the existence of multiple quasi-degenerate defect modes with distinct amplitude profiles enables multi-channel filtering, where different defect modes can selectively localize and guide waves at different spatial channels or frequency bands. Furthermore, in the presence of hierarchical configurations (as described in Remark \ref{rem53}), the superposition of such sinusoidal patterns across different defect groups leads to a rich multi-scale modal structure, offering a systematic way to design broadband and multi-functional wave manipulation devices.
\begin{figure}[H]
	\centering  %图片全局居中
	\subfigbottomskip=-5pt %两行子图之间的行间距
	\subfigcapskip=-5pt %设置子图与子标题之间的距离
	\subfigure[]{
		\includegraphics[width=0.322\linewidth]{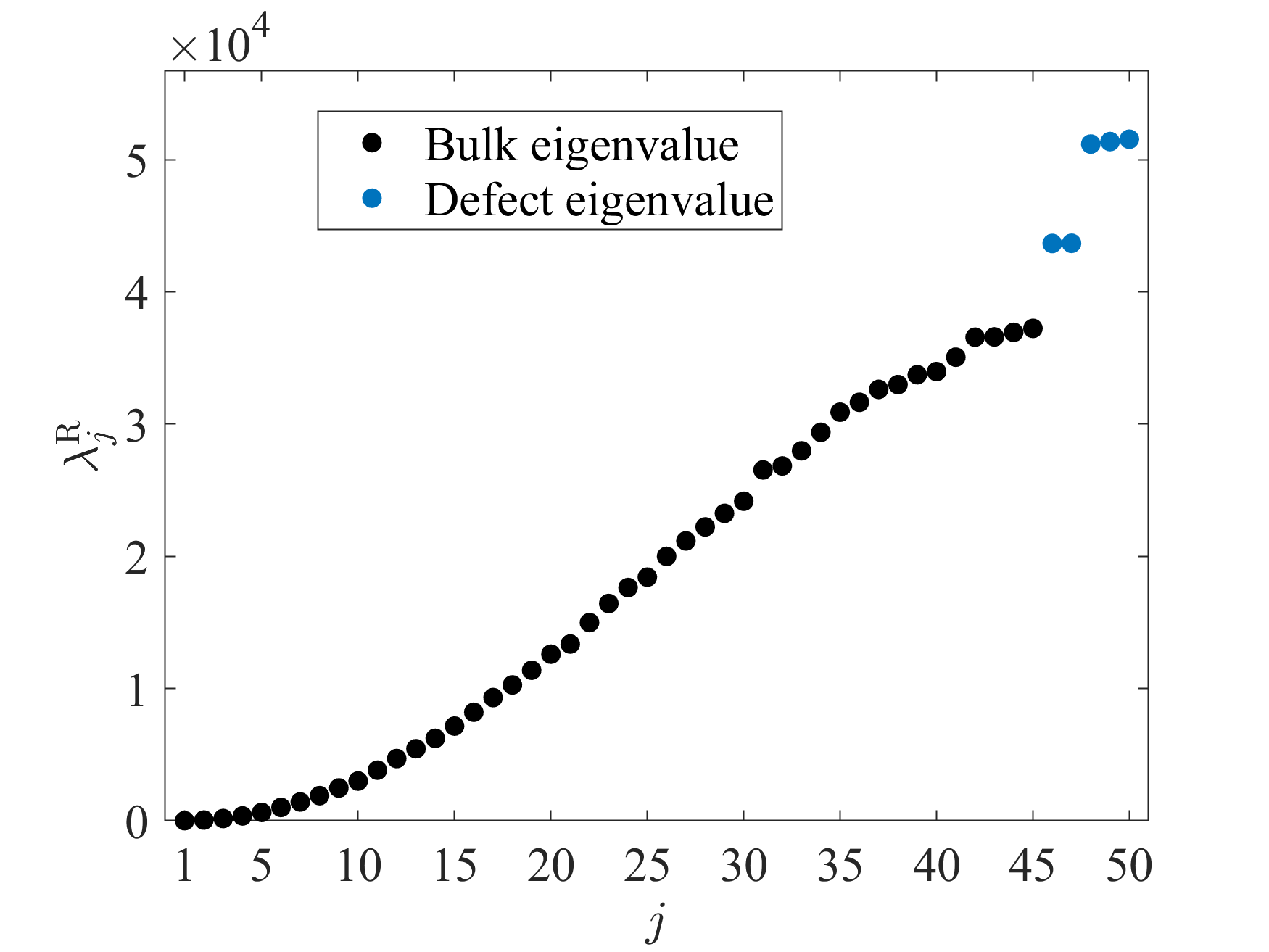}}
	\subfigure[]{
		\includegraphics[width=0.33\linewidth]{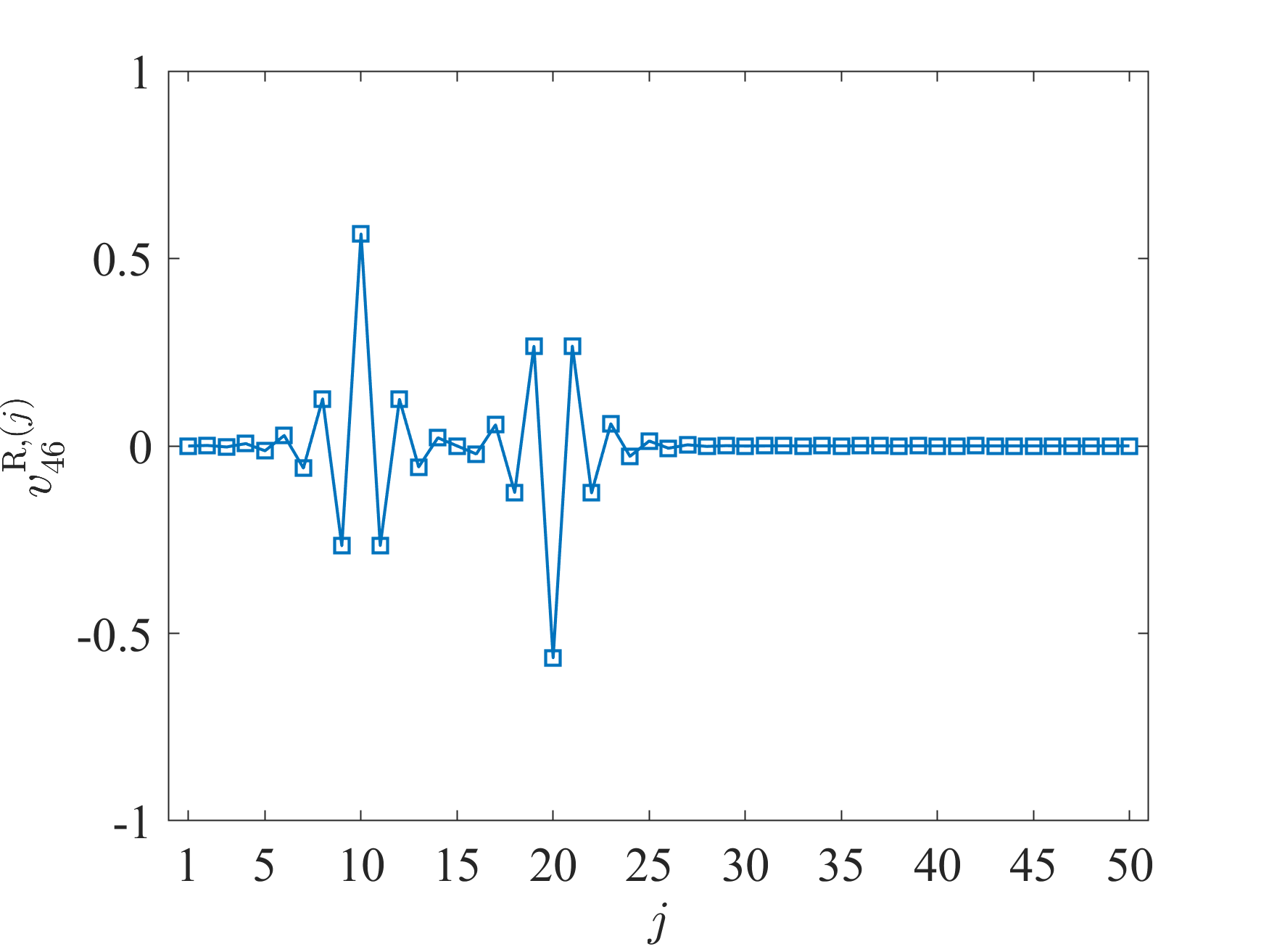}}
	\subfigure[]{
		\includegraphics[width=0.322\linewidth]{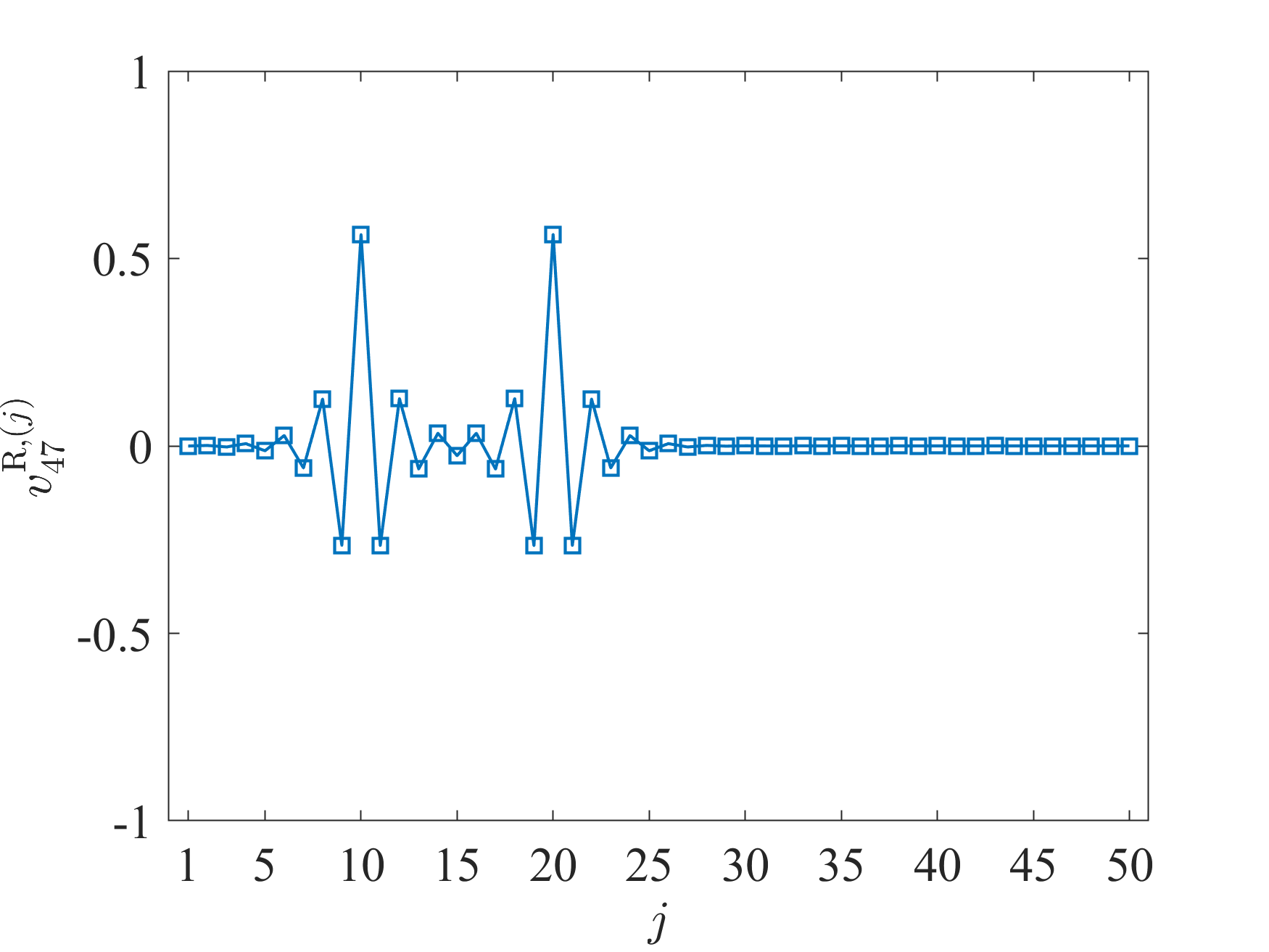}}
	%\quad
	\subfigure[]{
		\includegraphics[width=0.322\linewidth]{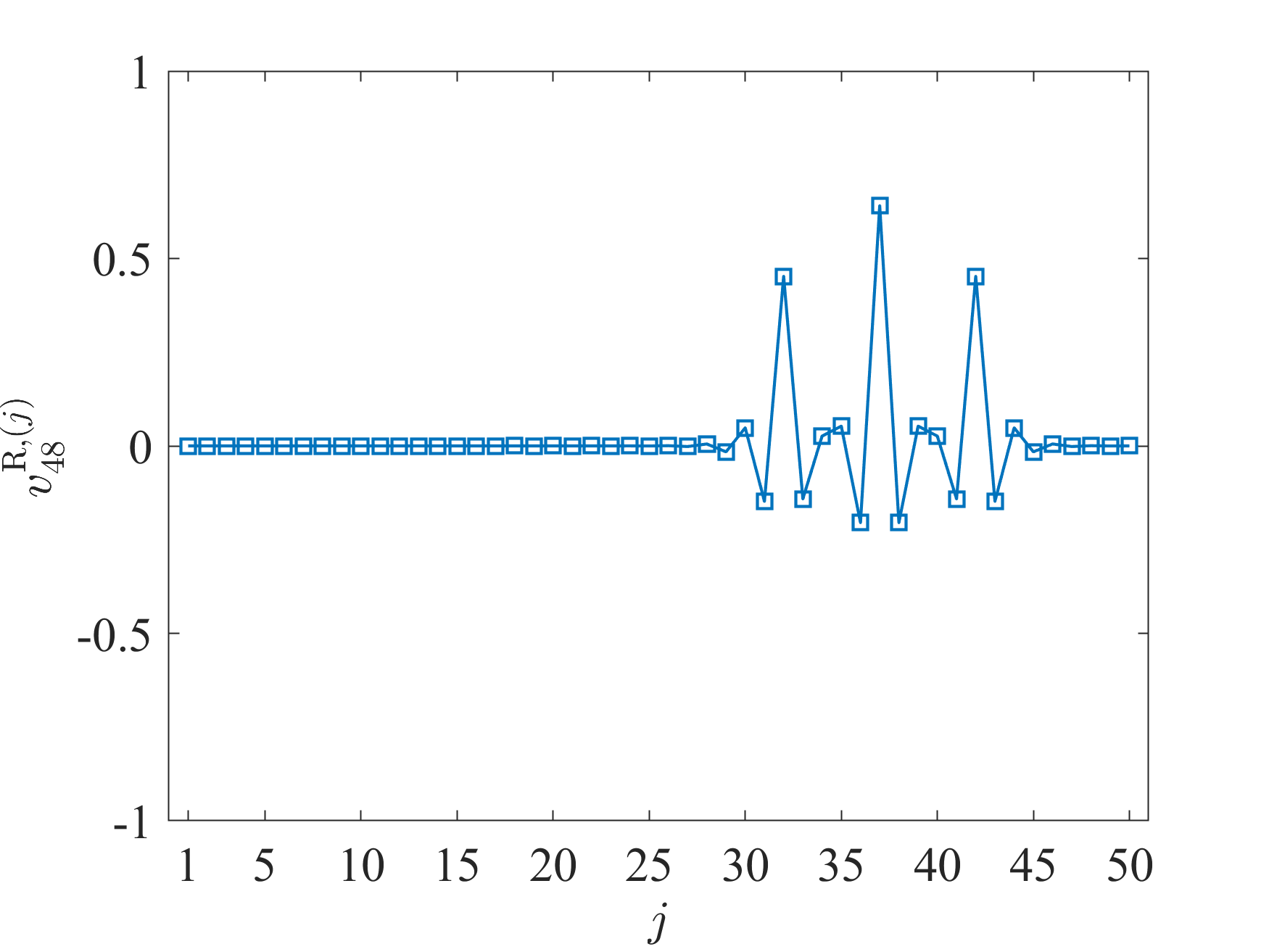}}
	\subfigure[]{
		\includegraphics[width=0.322\linewidth]{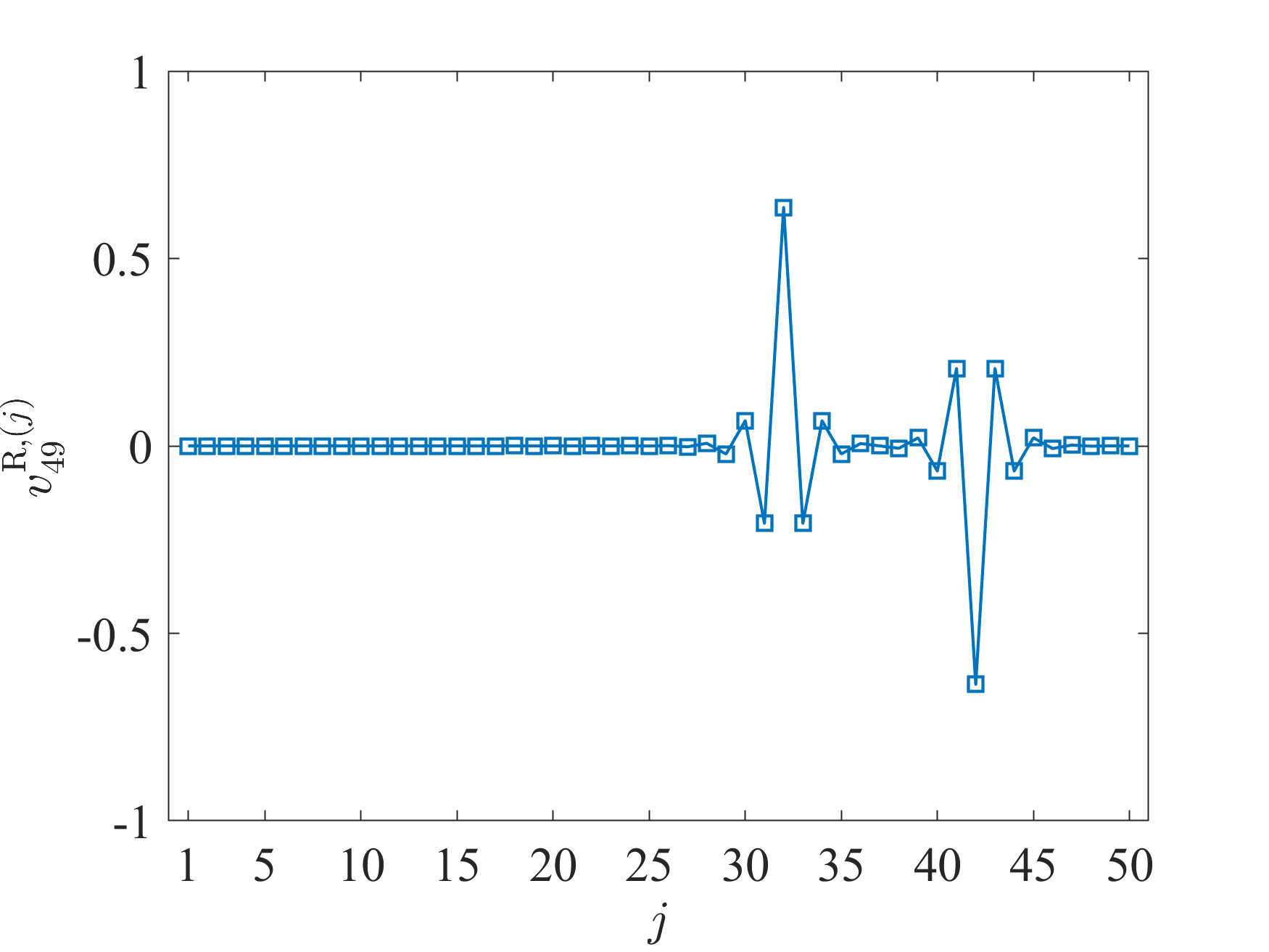}}
	\subfigure[]{
		\includegraphics[width=0.322\linewidth]{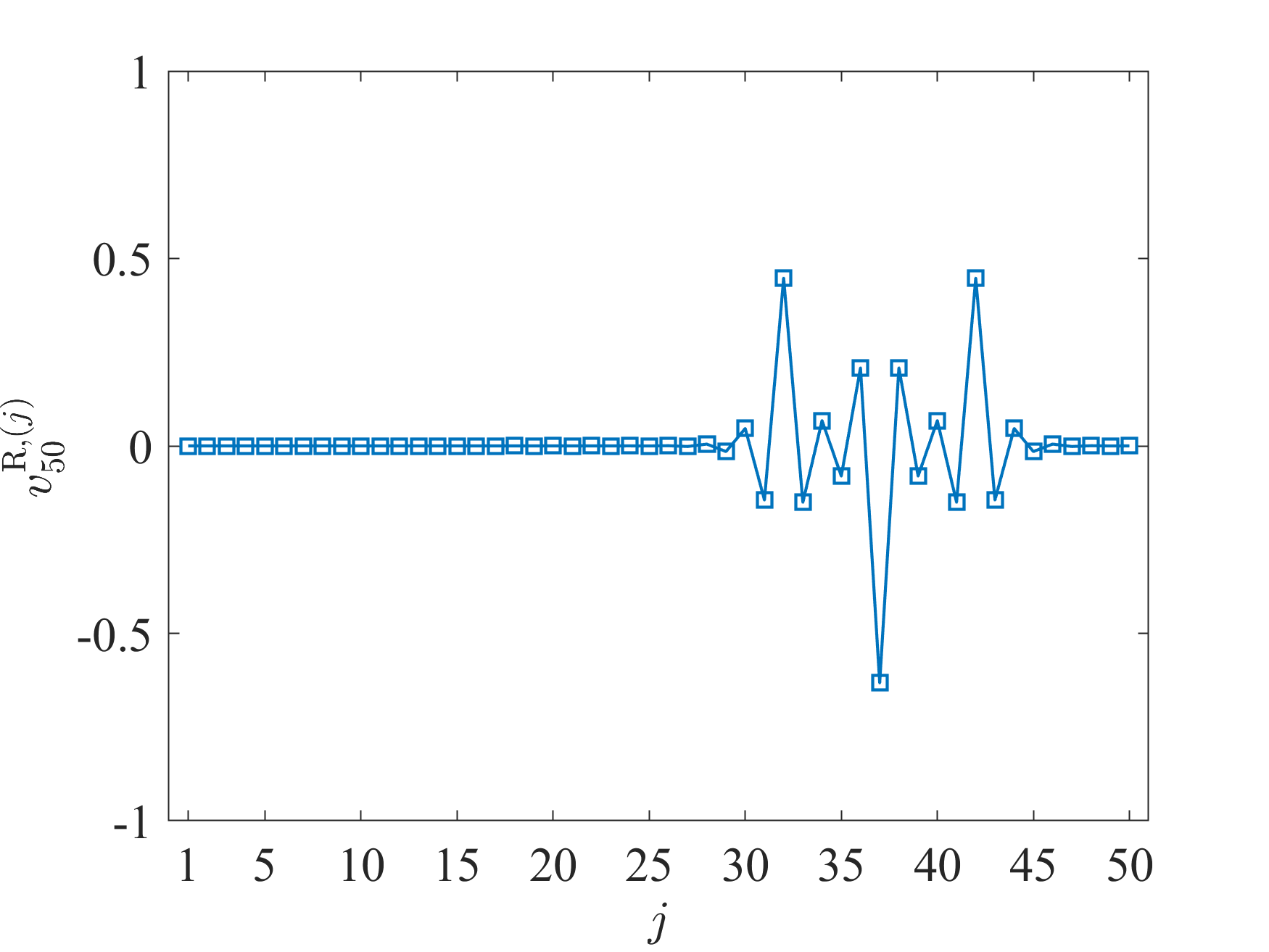}}
	\caption{Eigenpair profiles of the generalized elastic stiffness eigenvalue problem \eqref{GEP14} with $N=50$, $r_1^+=2.4$, $\beta^{\textup{R}} = 9500$ and $\{(i_l,\eta_l)\}_{l=1}^5=\{(10,-0.2),(20,-0.2)\}\cup\{(32,-0.3),(37,-0.3),(42,-0.3)\}$. 
		(a) Distribution of eigenvalues, where black dots represent bulk eigenvalues and blue dots represent quasi-fourfold degenerate defect eigenvalues.
		(b)--(f)  Exponentially localized eigenvectors associated with defect eigenvalues, showing localization around the corresponding defect sites, and exhibiting amplitude ratios consistent with the theoretical prediction \eqref{amplitude_ratio} for the even case $d=10$ and the odd case $d=5$, respectively. 
	}\label{Eigenvector_With_same0304Defects}
\end{figure}

\begin{figure}[H]
	\centering
	\includegraphics[scale=0.5]{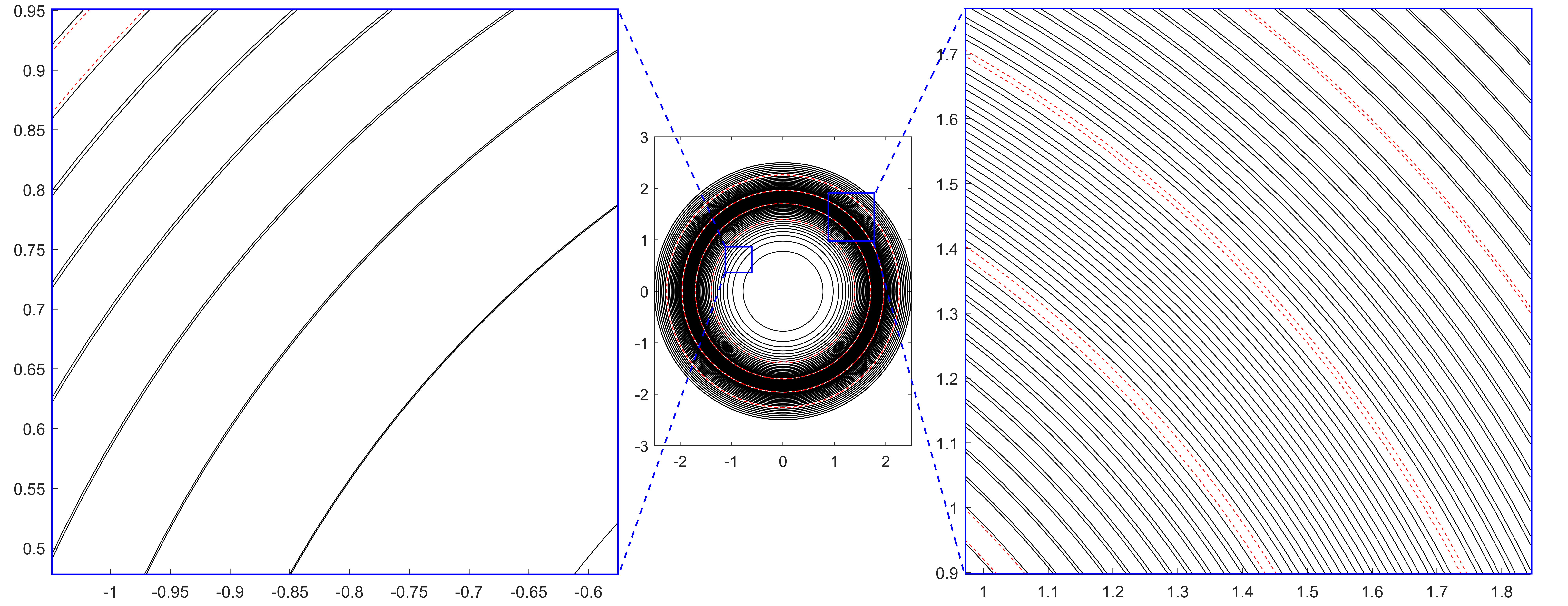}
	\caption{
		Schematic cross-section of the $51$-layered concentric sphere structure with $r_1^+=2.5$ and  $\beta^{\textup{R}} = 8800$. The dashed red lines indicate four ring defects $\{(i_l,\eta_l)\}_{l=1}^4=\{(8,-0.25),(20,-0.25), (32,-0.25),(44,-0.25)\}$.
	}\label{45LHCCBdefect}
\end{figure}

\begin{figure}[H]
	\centering  %图片全局居中
	\subfigbottomskip=-2pt %两行子图之间的行间距
	\subfigcapskip=-2pt %设置子图与子标题之间的距离
	\subfigure[]{
		\includegraphics[width=0.49\linewidth]{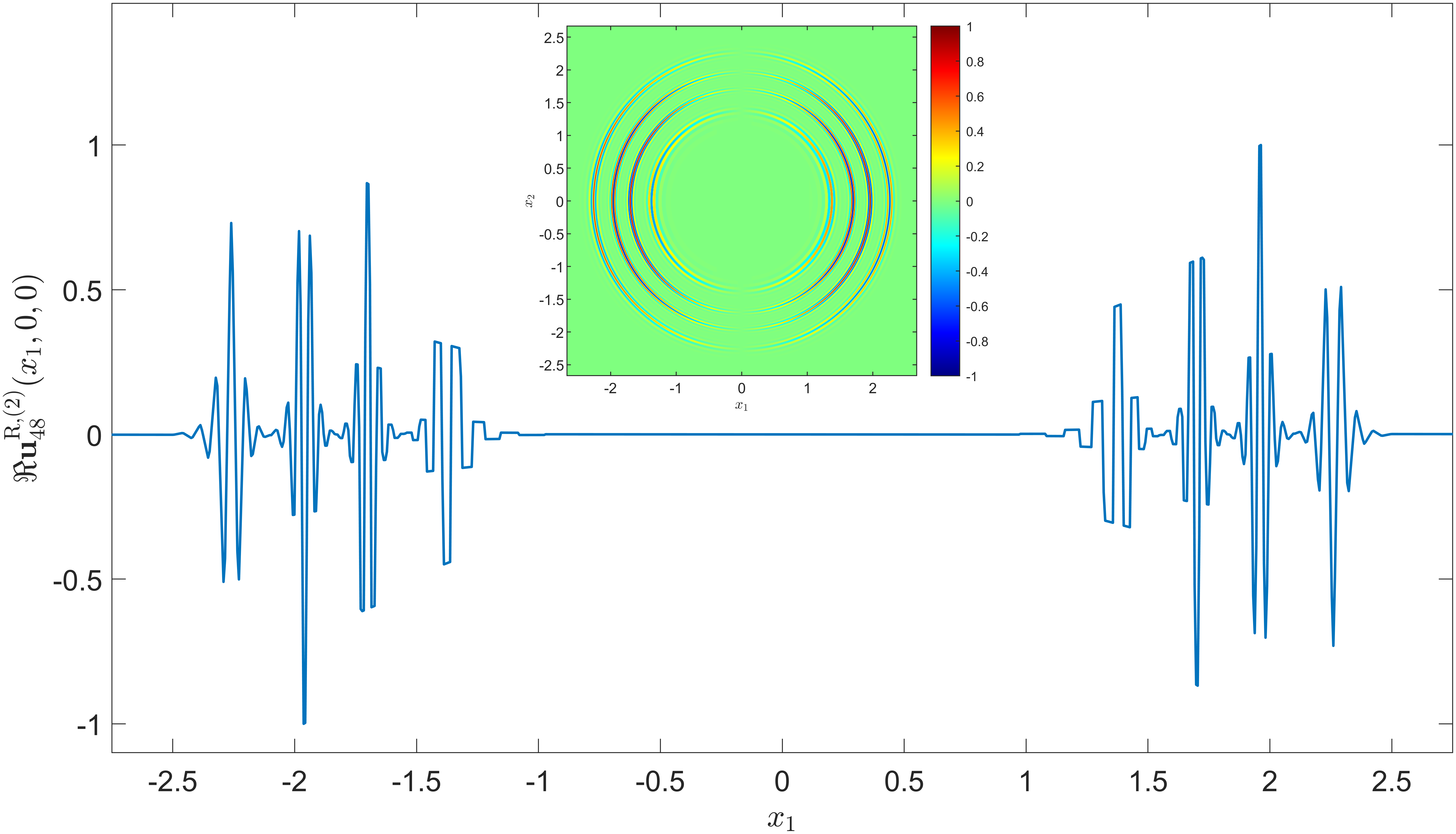}}
	\subfigure[]{
		\includegraphics[width=0.49\linewidth]{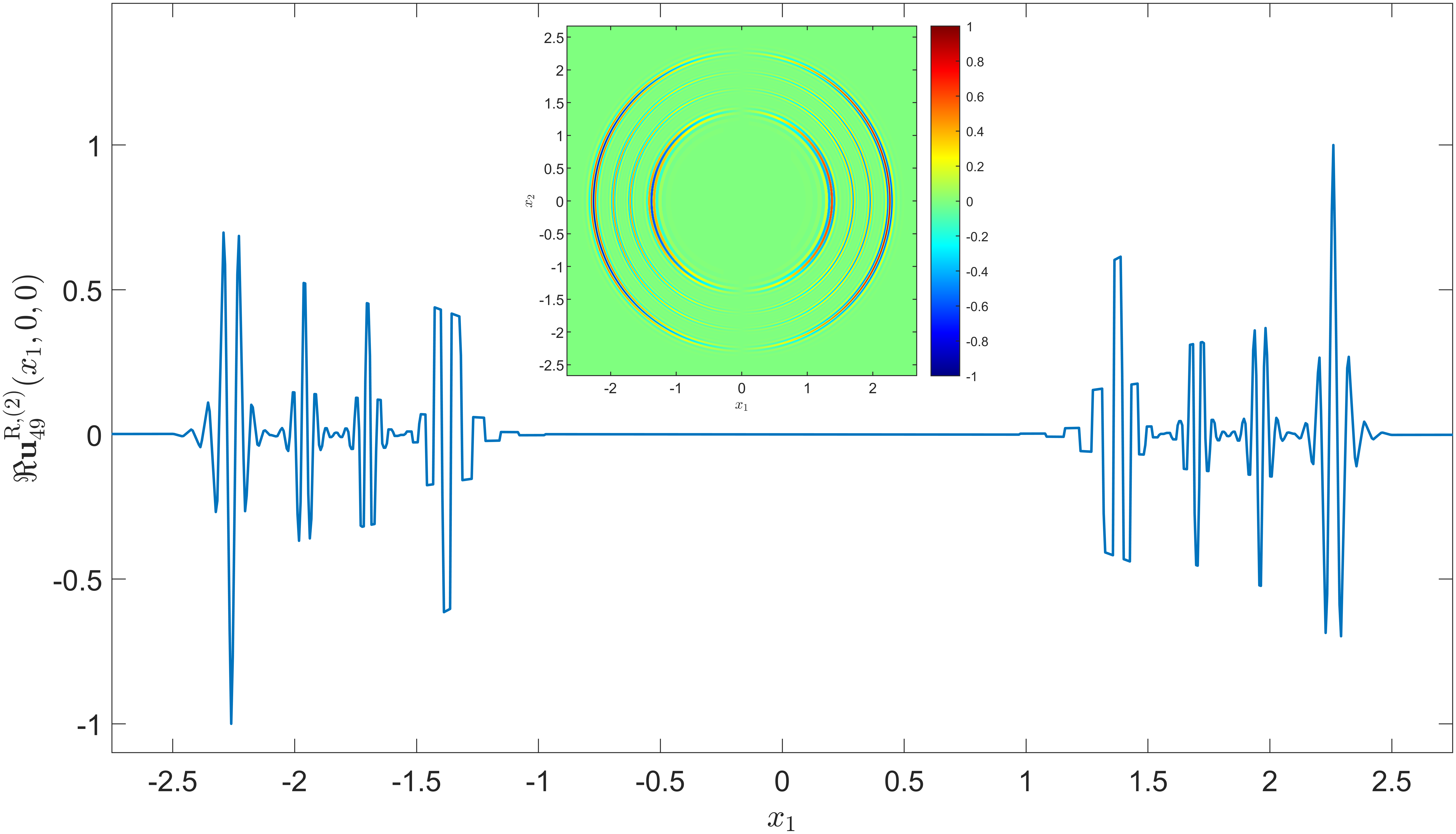}}
	\subfigure[]{
		\includegraphics[width=0.49\linewidth]{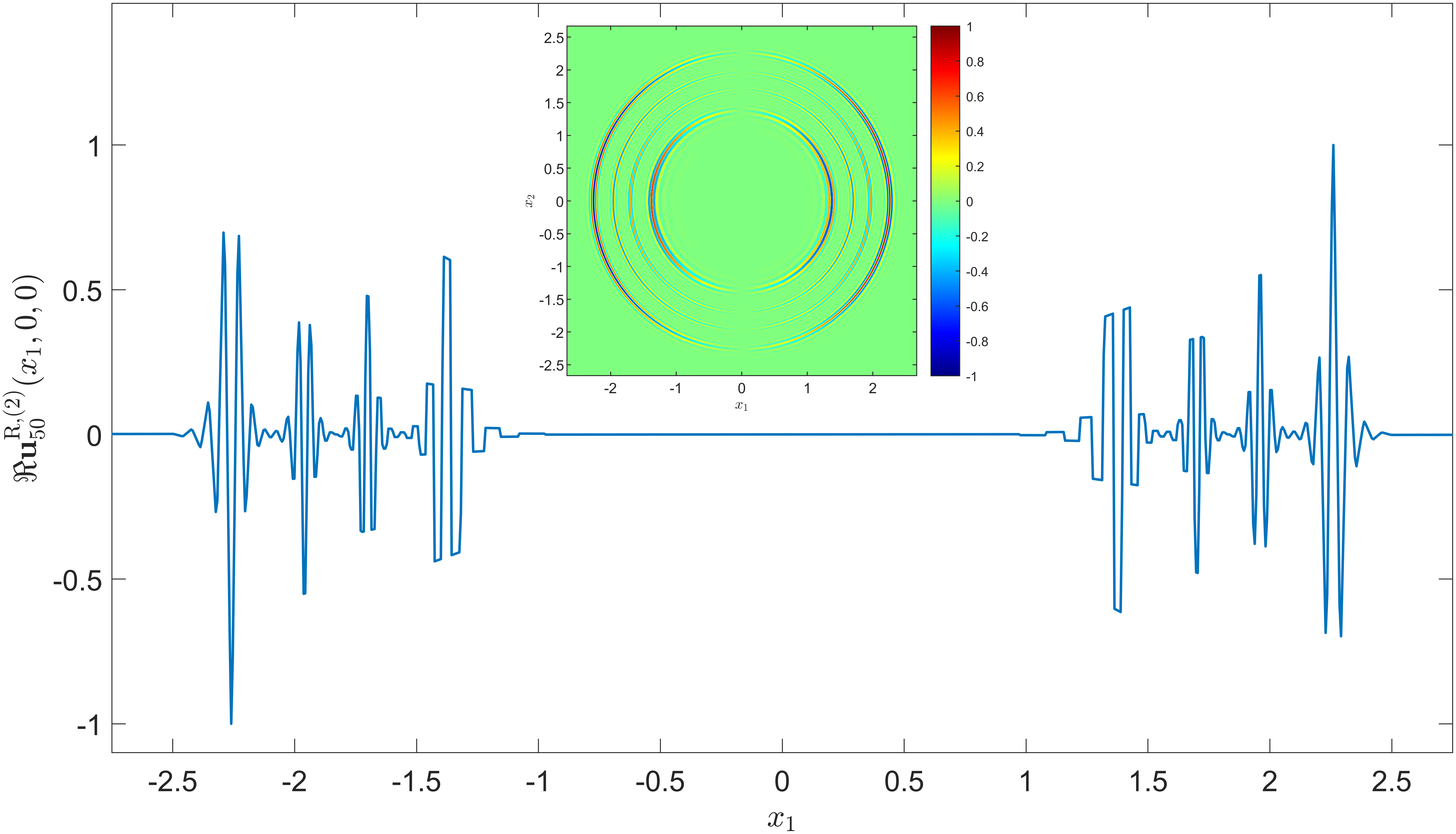}}
	\subfigure[]{
		\includegraphics[width=0.49\linewidth]{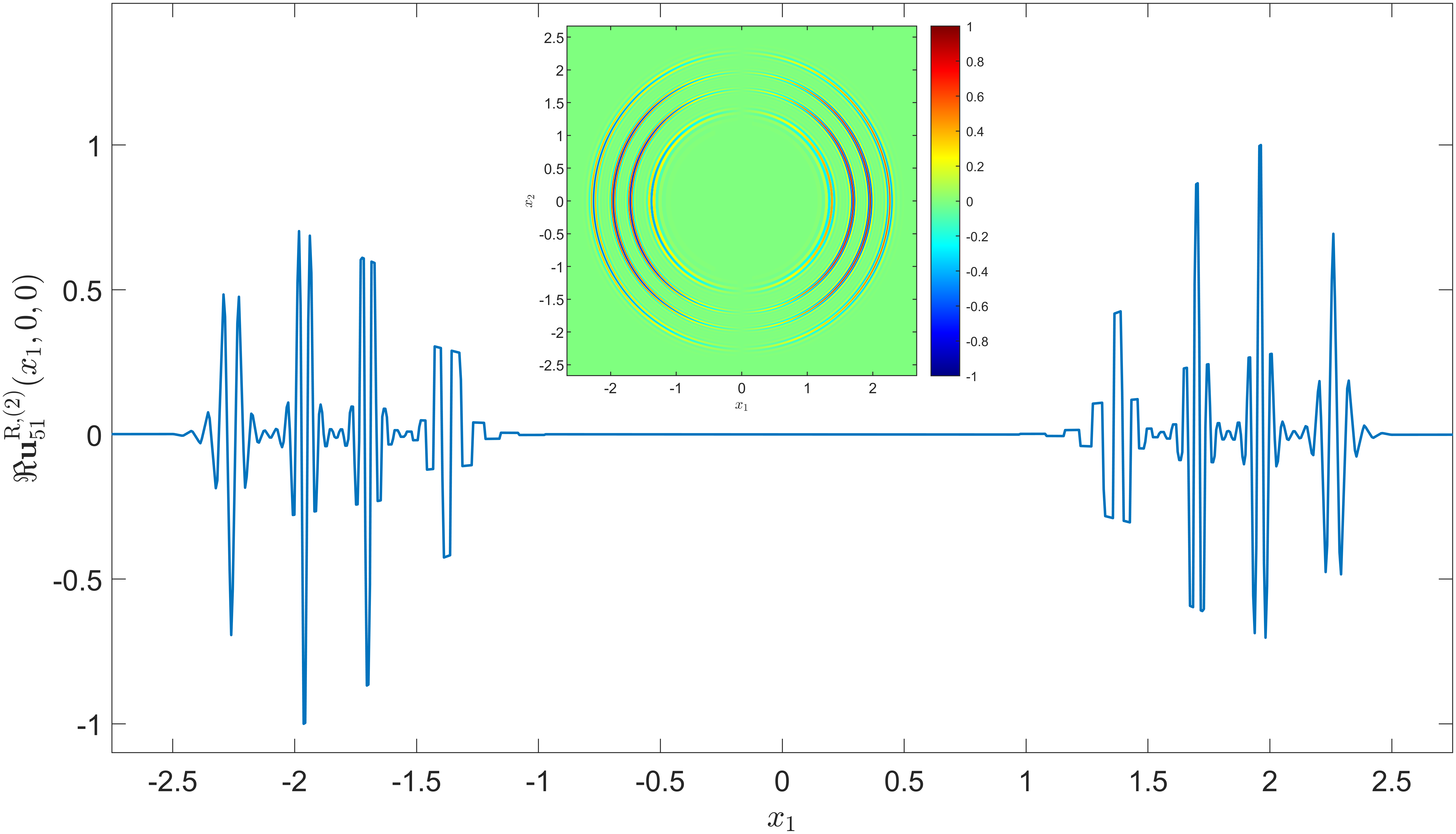}}
	\caption{The second components of the rotational elastic displacement field distributions $\mathbf{u}_{48}^{\mathrm{R},(2)},\mathbf{u}_{49}^{\mathrm{R},(2)},\mathbf{u}_{50}^{\mathrm{R},(2)},\mathbf{u}_{51}^{\mathrm{R},(2)}$, along
		the $x_1$ axis, for the same 51-layered concentric sphere as in Figure \ref{45LHCCBdefect}.
		Each plot corresponds to one of the four eigenfrequencies $\omega^{\mathrm{R},+}_{j}$, $j=48,49,50,51$. 
		The inset displays a contour plot of the function $\Re \mathbf{u}_j^{\mathrm{R},(2)}(x_1, x_2,0)$.
	}\label{Eigenmode_With_same3eta025Defects}
\end{figure}

Finally, we present numerical simulations of localized defect modes.  To facilitate visualization of the results, we consider a  51-layered monomer-type metamaterial with four defects
\[\{(i_l,\eta_l)\}_{l=1}^4=\{(8,-0.25),(20,-0.25), (32,-0.25),(44,-0.25)\},\] under the assumptions \eqref{interface}, \eqref{identicalvolume}, and \eqref{monomersetupdefect1} with $r_1^+=2.5$ and  $\beta^{\textup{R}} = 8800$.
A schematic cross section of this structure is shown in Figure \ref{45LHCCBdefect}, where the
dashed red lines indicate the four ring defects located at $D_8 = \{2.257<|\Bx|<2.262\}$, $D_{20} = \{1.956<|\Bx|<1.964\}$, $D_{32} = \{1.693<|\Bx|<1.707\}$, and $D_{44} = \{1.359<|\Bx|<1.393\}$. Figure \ref{Eigenmode_With_same3eta025Defects} shows the localized defect mode for the same structure as in Figure \ref{45LHCCBdefect}.
The figure clearly shows that the eigenmodes corresponding to the eigenfrequencies above the bulk spectrum are localized to the ring defects and quickly decay in both directions away from the ring defects. 
Specifically,
the amplitudes of the eigenvectors corresponding to the defect eigenvalues, evaluated at the defect sites, are in excellent agreement with the theoretical amplitude ratio \eqref{amplitude_ratio} for the even case $d=12$, and are given respectively by
\[\(\bm{v}_{48}^{\mathrm{R},{(12s-4)}}\)_{1\leq s\leq 4} \approx  0.55\(\sin\frac{4s\pi}{5}\)_{1\leq s\leq 4},\;\(\bm{v}_{49}^{\mathrm{R},{(12s-4)}}\)_{1\leq s\leq 4} \approx -0.55\(\sin\frac{3s\pi}{5}\)_{1\leq s\leq 4},\]
\[\(\bm{v}_{50}^{\mathrm{R},{(12s-4)}}\)_{1\leq s\leq 4} \approx  0.55\(\sin\frac{2s\pi}{5}\)_{1\leq s\leq 4},\;
\(\bm{v}_{51}^{\mathrm{R},{(12s-4)}}\)_{1\leq s\leq 4} \approx  0.55\(\sin\frac{s\pi}{5}\)_{1\leq s\leq 4}.
\]
Let $r_s = (r_s^- + r_s^+)/2$  and 
$ \mathbf{u}_j^{\mathrm{R},(2)}|_{D_s}:=\Re \mathbf{u}_j^{\mathrm{R},(2)}(r_s,0,0).
$
The amplitudes of localized defect modes, evaluated at the defect sites, are given respectively by
\[
\(\mathbf{u}_{48}^{\mathrm{R},(2)}|_{D_{12s-4}}\)_{1\leq s\leq 4} \approx 0.86\(r_s\bm{v}_{48}^{\mathrm{R},{(12s-4)}}\)_{1\leq s\leq 4},\;\(\mathbf{u}_{49}^{\mathrm{R},(2)}|_{D_{12s-4}}\)_{1\leq s\leq 4} \approx 0.86\(r_s\bm{v}_{49}^{\mathrm{R},{(12s-4)}}\)_{1\leq s\leq 4},\] \[\(\mathbf{u}_{50}^{\mathrm{R},(2)}|_{D_{12s-4}}\)_{1\leq s\leq 4} \approx 0.86\(r_s\bm{v}_{50}^{\mathrm{R},{(12s-4)}}\)_{1\leq s\leq 4}\;\(\mathbf{u}_{51}^{\mathrm{R},(2)}|_{D_{12s-4}}\)_{1\leq s\leq 4} \approx 0.86\(r_s\bm{v}_{51}^{\mathrm{R},{(12s-4)}}\)_{1\leq s\leq 4}.
\]
These numerical results show excellent
agreement with the theoretical predictions of Proposition \ref{propMLCB}.
This  dipole-type localized mode with an antisymmetric profile may have relevant applications in directional sensors.

\section{Concluding remarks}\label{sec6}

In this work, we established a rigorous framework for the analysis of subwavelength resonances and seismic-wave localization in Matryoshka-type elastic metamaterials. By combining the displacement-to-traction map, variational methods, with the Gohberg--Sigal theory, we proved the existence and asymptotic behavior of subwavelength resonances through the eigensystem of a generalized stiffness tensor, which serves as the elastic analogue of the generalized capacitance matrix in Minnaert acoustic-cavitation systems. Explicit formulas for concentric radial resonators confirmed the general theory and revealed a block-diagonal tensor structure separating translational and rotational modes. Based on this structure, we show that strategically placed ring defects can create nearly degenerate eigenfrequencies above the bulk spectrum, whose corresponding eigenmodes are exponentially localized simultaneously at multiple defects. This enables multimode localization in  radially laminated architecture, effectively realizing a tunable seismic waveguide that supports both localized trapping and guided propagation along prescribed defect paths. These results provide a mathematical foundation for designing elastic metamaterials capable of controlling destructive low-frequency seismic waves.

\section*{Acknowledgment}
The research was done when L. Kong visited School of Physical and Mathematical Sciences, Nanyang Technological University at Singapore under the support of China Scholarship Council (No. 202406370156), and L. Kong thanks Professor Li-Lian Wang for his invitation and School of Physical and Mathematical Sciences, NTU for their support and kind hospitality. 

\section*{Conflict of interest}
The author declares no conflicts of interest.

\section*{Data availability}
No data was used for the research described in the article.

\begin{thebibliography}{99}
	
%\bibitem{Alu2008}
%{\sc A. Al\`{u} and N. Engheta},
%{\em Multifrequency optical invisibility cloak with layered plasmonic shells},
%{\sl Phys. Rev. Lett.}, 72 (2008), 113901.

%\bibitem{ADAPRB2012}
%F.~Alpeggiani, S.~D'Agostino, and L.~C. Andreani.
%\newblock Surface plasmons and strong light-matter coupling in metallic
%nanoshells.
%\newblock {\em Phys. Rev. B}, 86(3):035421, 2012.

%\bibitem{ACKLM2}
%{\sc H.~Ammari, G.~Ciraolo, H.~Kang, H.~Lee, and G.~W. Milton},
%{\em Anomalous localized resonance using a folded geometry in three dimensions},
%{\sl  Proc. R. Soc. A}, 469 (2013), 20130048.

\bibitem{Achaoui_PRB2011}
{\sc Y. Achaoui, A.  Khelif, S.  Benchabane, L. Robert, and  V. Laude},   {\em Experimental observation of locally-resonant and Bragg band gaps for surface guided waves in a phononic crystal of pillars}. Phys. Rev. B, 83 (2011), 104201.

\bibitem{Achaoui_EML2016}
{\sc Y. Achaoui, B. Ungureanu, S. Enoch, S. Br\^ul\'e, and S. Guenneau},  {\em Seismic waves damping with arrays of inertial resonators}, Extreme Mech. Lett., 8 (2016), 30–37.

%\bibitem{ABSCDE_ARMA2024}
%{\sc H. Ammari, S. Barandun, J. Cao, B. Davies, and E.O. Hiltunen},
%{\em Mathematical foundations of the non-Hermitian skin effect},
%Arch. Ration. Mech. Anal., 248 (2024), 33.

%\bibitem{ABL_arXiv1}
%{\sc H. Ammari, S. Barandun, and P. Liu},
%{\em Applications of Chebyshev polynomials and Toeplitz theory to topological metamaterials}, arXiv:2409.18144.

%\bibitem{ABL_arXiv2}
%{\sc H. Ammari, S. Barandun, and P. Liu},
%{\em Perturbed block Toeplitz matrices and the non-Hermitian skin effect in dimer systems of subwavelength resonators}, arXiv:2307.13551.

%\bibitem{ABBLT_arXiv}
%{\sc H. Ammari, S. Barandun, Y. Bruijn, P. Liu, and C. Thalhammer},
%{\em Spectra and pseudo-spectra of tridiagonal $k$-Toeplitz matrices and the topological origin of the non-Hermitian skin effect}, J. Phys. A: Math. Theor., 58 (2025), 205201.

\bibitem{ABDHKL_SAPM2024}
{\sc H. Ammari, S. Barandun, B. Davies, E. O. Hiltunen, T. Kosche, and P. Liu}, {\em Exponentially localized interface eigenmodes in finite chains of resonators}, Stud. Appl. Math., 153 (2024), e12765.

%\bibitem{ABDHL_SIAP2024}
%{\sc H. Ammari, S. Barandun, B. Davies, E. O. Hiltunen, and P. Liu},
%{\em Stability of the non-Hermitian skin effect in one dimension},
%SIAM J. Appl. Math., 84 (2024),  1697--1717.

\bibitem{HAmmariElasticityImaging}
{\sc H. Ammari, E. Bretin, J. Garnier, H. Kang, H. Lee, and A. Wahab}, {\em Mathematical Methods in Elasticity Imaging}, Princeton University Press, Princeton, 2015.

\bibitem{ADH_CMP2024}
{\sc  H. Ammari, B. Davies, and E.O. Hiltunen},
{\em Anderson localization in the subwavelength regime}, 
Comm. Math. Phys., 405 (2024), 1.

\bibitem{ADH_SIMA2023}
{\sc  H. Ammari, B. Davies, and E.O. Hiltunen},
{\em Convergence rates for defect modes in large finite resonator arrays}, 
SIAM J. Math. Anal., 55 (2023), 7616--7634.

%\bibitem{ADH_JLMS2022}
%{\sc H. Ammari, B. Davies, and E.O. Hiltunen}, {\em Robust edge modes in dislocated systems of subwavelength resonators}, J. Lond. Math. Soc., 106 (2022), 2075--2135.

%\bibitem{ADY_MMS2020}
%{\sc H. Ammari, B. Davies, and S. Yu},
%{\em Close-to-touching acoustic subwavelength resonators: eigenfrequency separation and gradient blow-up},
%Multiscale Model. Simul., 18 (2020), 1299--1317.

%\bibitem{ADKL_AIHPCAN}
%{\sc H. Ammari, Y. Deng, H.Kang, and H. Lee},
%{\em Reconstruction of inhomogeneous conductivities via the concept of generalized polarization tensors}, Ann. Inst. H. Poincar\'e C Anal. Non Lin\'eaire, 31 (2014), 877--897.

%\bibitem{Ammari2016}
%{\sc H.~Ammari, Y.~Deng, and P.~Millien}, {\em Surface plasmon resonance of nanoparticles and applications in	imaging},  Arch. Ration. Mech. Anal., 220 (2016), 109--153.

\bibitem{AFGLZ_AIHPCAN}
{\sc H. Ammari, B. Fitzpatrick, D. Gontier, H. Lee, and H. Zhang},
{\em Minnaert resonances for acoustic waves in bubbly media},
 Ann. Inst. H. Poincar\'e C Anal. Non Lin\'eaire, 35 (2018), 1975--1998.

%\bibitem{AFHLY_JDE2019}
%{\sc H. Ammari, B. Fitzpatrick, E.O. Hiltunen, H. Lee, and S. Yu}, {\em Subwavelength resonances of encapsulated bubbles},  J. Differential Equations, 267 (2019), 4719--4744.

%\bibitem{AFHLY_SIMA2020}
%{\sc H. Ammari, B. Fitzpatrick, E.O. Hiltunen, H. Lee, and S. Yu},
%{\em Honeycomb-lattice Minnaert bubbles}, SIAM J. Math. Anal., 52 (2020), 5441--5466.

\bibitem{AK_book2018}
{\sc H. Ammari, B. Fitzpatrick, H. Kang, M. Ruiz, S. Yu, and H. Zhang},  {\em Mathematical and Computational Methods in Photonics and Phononics},  American Mathematical Society, Providence, RI, 2018.

\bibitem{AFLYZ_JDE2017}
{\sc H. Ammari, B. Fitzpatrick, H. Lee, S. Yu, and H. Zhang},
{\em Subwavelength phononic bandgap opening in bubbly media}, J. Differential Equations, 263 (2017), 5610--5629.

\bibitem{AHLMZ_ArXiv}
{\sc H. Ammari, E. O. Hiltunen, P. Liu, B. Miao, and Y. Zhu},
{\em A tight-binding approach for computing subwavelength guided modes in crystals with line defects}, arXiv:2512.05370.

%\bibitem{AmaariPRSA17}
%H.~Ammari, B.~Fitzpatrick, D.~Gontier, H.~Lee, and H.~Zhang.
%\newblock Sub-wavelength focusing of acoustic waves in bubbly media.
%\newblock {\em Proc. R. Soc. A}, 473(2208):20170469, 2017.

%\bibitem{AKLJCM07}
%{\sc H. Ammari, H. Kang, and H. Lee},
%{\em A boundary integral method for computing elastic moment
%tensors for ellipses and ellipsoids}, {\sl J. Comput. Math.}, 25 (2007),  2--12.

%\bibitem{AFLYZQAM19}
%{\sc H.~Ammari, B.~Fitzpatrick, H.~Lee, S.~Yu, and H.~Zhang},  {\em Double-negative acoustic metamaterials},
%  Quart. Appl. Math., 77 (2019), 767--791.
  
\bibitem{AHYJEMS}
{\sc H. Ammari, E. O. Hiltunen, and S. Yu}, {\em Subwavelength guided modes for acoustic waves in bubbly crystals with a line defect}, J. Eur. Math. Soc., 24 (2021), 2279--2313.

%\bibitem{AKLL_CMP2013}
%{\sc H. Ammari, H. Kang, H. Lee and M. Lim}, {\em Enhancement of near cloaking using generalized polarization tensors vanishing structures. Part I: The conductivity problem}, Commun. Math. Phys., 317 (2013) 253--266.

%\bibitem{AKLL_CMP2013_2}
%{\sc H. Ammari, H. Kang, H. Lee and M. Lim}, {\em Enhancement of near cloaking. Part II: The Helmholtz equation}, Commun. Math. Phys., 317 (2013) 485--502.

%\bibitem{Ammari2007}
%{\sc H. Ammari and H. Kang},
%{\em Polarization and Moment Tensors with Applications to Inverse Problems and Effective Medium Theory}, (Springer-Verlag, New York, 2007).

%\bibitem{AKLLL_JMPA2007}
%{\sc H. Ammari, H. Kang, H. Lee, J. Lee, and M. Lim,} {\em Optimal estimates for the electric field in two dimensions}, J. Math. Pures Appl., 88 (2007), 307--324.

%\bibitem{ALLZ_MMS2024}
%{\sc H. Ammari, B. Li, H. Li and J. Zou}, {\em Fano resonances in all-dielectric electromagnetic metasurfaces},
%Multiscale Model. Simul., 22 (2024), 476--526.

\bibitem{ALZ_TAMS2023}
{\sc H. Ammari, B. Li, and J. Zou}, {\em Mathematical analysis of electromagnetic scattering by dielectric nanoparticles with high refractive indices}, Trans. Amer. Math. Soc., 376 (2023), 39--90.

%\bibitem{AMRZARMA17}
%{\sc H. Ammari, P. Millien, M. Ruiz, and H. Zhang},
%{\em Mathematical analysis of plasmonic nanoparticles: The scalar case},
%{\sl Arch. Ration. Mech. Anal.}, 224 (2017), 597--658.

\bibitem{AF_JDE2004}
{\sc H. Ammari and F. Triki}, {\em Splitting of resonant and scattering frequencies under shape deformation}, J. Differential Equations, 202 (2004), 231--255.

%\bibitem{AmmariSIMA17}
%{\sc H.~Ammari and H.~Zhang},
%{\em Effective medium theory for acoustic waves in bubbly fluids near minnaert resonant frequency}, SIAM J. Math. Anal., 49 (2017), 3252--3276.

\bibitem{AZ_CMP2015}
{\sc H.~Ammari and H.~Zhang},
{\em A mathematical theory of super-resolution by using a system of sub-wavelength Helmholtz resonators}, Comm. Math. Phys., 337 (2015),  379--428.

\bibitem{AM_book1991}
{\sc M. Artin},
{\em Algebra}, Prentice Hall, Inc., Englewood Cliffs, NJ, 1991.

\bibitem{AF_book2011}
{\sc P. Atkins  and R. Friedman}, {\em Molecular quantum mechanics},
Oxford University Press, New York, 2011.

%\bibitem{AJKKY_SIAM2017}
%{\sc K. Ando, Y. Ji, H. Kang, K. Kim, and S. Yu}, {\em Spectrum of Neumann--Poincar\'e operator on annuli and cloaking by anomalous localized resonance for linear elasticity}, SIAM J. Math. Anal., 49 (2017), 4232--4250.

\bibitem{HB_book1985}
{\sc H. Baumg\"artel}, {\em Analytic Perturbation Theory for Matrices and Operators}, Birkh\"auser Verlag, Basel, 1985.

\bibitem{BJEG_PRL2014}
{\sc S. Br\^ul\'e, E. Javelaud, S.  Enoch, and S. Guenneau}, {\em Experiments on seismic metamaterials: molding surface waves}, Phys. Rev. Lett., 112 (2014), 133901.

\bibitem{Colombi_SR2016}
{\sc A. Colombi, P. Roux, S. Guenneau, P. Gueguen, and R. Craster}, {\em Forests as a natural seismic metamaterial: rayleigh wave bandgaps induced by local resonances}, Sci. Rep., 6 (2016), 19238. 

%\bibitem{Colquitt_JMPS_2017}
%{\sc D. Colquitt, A. Colombi, R. Craster, P. Roux, and S. Guenneau},  {\em Seismic metasurfaces: Sub-wavelength resonators and Rayleigh wave interaction}, J. Mech. Phys. Solids, 99 (2017), 379--393.

\bibitem{CDF_AMS}
{\sc C. da Fonseca}
{\em The characteristic polynomial of some perturbed tridiagonal $k$-Toeplitz matrices},
Appl. Math. Sci., (2007), 59--67.

%\bibitem{CJ_NM2005}
%{\sc C. da Fonseca and J. Petronilho}, {\em Explicit inverse of a tridiagonal $k$-Toeplitz matrix}, Numer. Math., 100 (2005),  457--482.

%\bibitem{DGS_IPI2021}
%{\sc A. Dabrowski, A. Ghandriche, and M. Sini},
%{\em Mathematical analysis of the acoustic imaging modality using bubbles as contrast agents at nearly resonating frequencies}, Inverse Probl. Imaging 15 (2021),  555--597.

%\bibitem{CK_book}
%{\sc D. Colton and R. Kress}, {\em Inverse Acoustic and Electromagnetic Scattering Theory}, (Springer, Cham, 2013).

%\bibitem{CAJASA1989}
%{\sc K. Commander and A. Prosperetti}, {\em Linear pressure waves in bubbly liquids: Comparison between theory and experiments},  J. Acoust. Soc. Amer., 85 (1989), 732--746.

\bibitem{DKLLZ_SAPM2025}
{\sc Y. Deng,  L. Kong, H. Li, H. Liu, and L. Zhu},
{\em  Mathematical theory on multi-layered high-contrast acoustic subwavelength resonators},
Stud. Appl. Math., 155 (2025),  e70145.

\bibitem{DKLZ_ESIAM24}
{\sc Y. Deng,  L. Kong, H. Liu, and L. Zhu},
{\em Elastostatics within multi-layer metamaterial structures and an algebraic framework for polariton resonances},
 ESAIM Math. Model. Numer. Anal.,  58 (2024), 1413--1440.
 
 \bibitem{DKLZ_JDE26}
 {\sc Y. Deng,  L. Kong, Y. Liu, and L. Zhu}, {\em Mathematical analysis of subwavelength resonant acoustic scattering in multi-layered high contrast structures}, J. Differential Equations, 462, (2026), 114133.
 
 \bibitem{DKZ_JLMS2026}
 {\sc Y. Deng,  L. Kong, and L. Zhu}, {\em On subwavelength guided acoustic waves in multi-layered high-contrast resonators with a ring defect}, J. Lond. Math. Soc.,  113 (2026), e70441.

%\bibitem{DLL_SIAM2020}
%{\sc Y. Deng,  H. Li, and H. Liu},
%{\em Analysis of surface polariton resonance for nanoparticles in elastic system}, SIAM J. Math. Anal., 52 (2020),  1786--1805.

%\bibitem{DLL_JST2019}
%{\sc Y. Deng, H. Li, and H. Liu}, {\em On spectral properties of Neuman--Poincar\'e operator and plasmonic resonances in 3D elastostatics}, J.
%Spectr. Theory, 9 (2019), 767--789.

%\bibitem{YDeng2020}
%{\sc Y. Deng, H. Li, and H. Liu}, {\em Spectral properties of Neumann--Poincar\'e operator and anomalous localized resonance in elasticity beyond quasi-static limit}, J. Elasticity, 140 (2020), 213--242.

\bibitem{DLbook2024}
{\sc Y. Deng and H. Liu}, {\em Spectral Theory of Localized Resonances and Applications}, Springer, Singapore, 2024.
%\bibitem{DFLMMS22}
%{\sc Y.~Deng, X.~Fang, and H.~Liu}, {\em Gradient estimates for electric fields with multiscale inclusions in the quasi-static regime}, Multiscale Model. Simul., 20 (2022), 641--656.

%\bibitem{DKLZ_AAMM2024}
%{\sc Y. Deng,  L. Kong, H. Liu, and L. Zhu}, {\em On field concentration between nearly-touching multiscale inclusions in the quasi-static regime}, Adv. Appl. Math. Mech., 16 (2024), 1252--1276.

%\bibitem{DHJE2011}
%{\sc R.A. Diaz and W.J. Herrera},
%{\em The positivity and other properties of the matrix of capacitance: Physical and mathematical implications}, J. Electrostat., 69 (2011), 587--595.

%\bibitem{DLZ_JCP2023}
%{\sc M. Ding, H. Liu, and G. Zheng}, {\em Shape reconstructions by using plasmon resonances with enhanced sensitivity}, J. Comput. Phys., 486 (2023),  112131.

%
%\bibitem{DLZJMPA21}
%{\sc Y. Deng, H. Liu, and G.-H. Zheng},
% {\em Mathematical analysis of plasmon resonances for curved nanorods}, J. Math. Pure Appl., 153 (2021), 248--280.

%\bibitem{DLZJDE22}
%{\sc Y. Deng, H. Liu, and G.-H. Zheng},
%{\em Plasmon resonances of nanorods in transverse electromagnetic scattering}, J. Differential Equations, 318 (2022), 502--536.

%\bibitem{DSWYZ_APL2021}
%{\sc H. Duan, X. Shen, E. Wang, F. Yang, X. Zhang, and Q. Yin}, {\em Acoustic multi-layer Helmholtz resonance metamaterials with multiple adjustable absorption peaks}, Appl. Phys. Lett., 118 (2021), 241904.
%
%
%\bibitem{DB_IEEE2011}
%{\sc A. Doinikov and A. Bouakaz}, {\em Review of shell models for contrast agent microbubbles},  IEEE Trans. Ultrason. Ferroelectr. Freq. Control, 58 (2011), 981--993.

\bibitem{dyatlov2019mathematical}
S. Dyatlov and M. Zworski, {\em Mathematical Theory of Scattering Resonances}, Grad. Stud. Math. 200, American Mathematical Society, Providence, RI, 2019.

%\bibitem{Elford2011}
%{\sc D.P. Elford,   L. Chalmers, F.  Kusmartsev  and G. Swallowe}, {\em Matryoshka locally resonant sonic crystal}, J. Acoust. Soc. Am., 130 (2011), 2746--2755. 

%\bibitem{FangdengMMA23}
%{\sc X. Fang and Y. Deng}, {\em On plasmon modes in multi-layer structures}, Math. Methods Appl. Sci., 46 (2023), 18075--18095.

\bibitem{WeinsteinPNAS2014}
{\sc C.L. Fefferman, J.P. Lee-Thorp, and M.I. Weinstein}, {\em  Topologically protected states in one-dimensional continuous systems and Dirac points}, Proc. Natl. Acad. Sci. USA, 111 (2014), 8759--8763.

\bibitem{WeinsteinMAMS2017}
{\sc C.L. Fefferman, J.P. Lee-Thorp, and M.I. Weinstein}, {\em   Topologically protected states in one-dimensional systems}, Mem. Amer. Math. Soc., 247 (2017), 1173.

\bibitem{FA_SAM_2022}
{\sc F. Feppona and H. Ammari},
{\em Modal decompositions and point scatterer approximations near the Minnaert resonance frequencies},
Stud. Appl. Math., 149 (2022),  164--229.

\bibitem{FA_JMPA2024}
{\sc F. Feppona and H. Ammari},
{\em  Subwavelength resonant acoustic scattering in fast time-modulated media},
J. Math. Pures Appl., 187 (2024), 233--293.

\bibitem{FCA_SIAP2023}
{\sc F. Feppon, Z. Cheng, and H. Ammari},
{\em Subwavelength resonances in one-dimensional high-contrast acoustic media},
SIAM J. Appl. Math., 83 (2023),  625--665.

%\bibitem{GBF_book1995}
% {\sc G. B. Folland}, {\em Introduction to Partial Differential Equations},  (Princeton University Press, Princeton, NJ, 1995).

%\bibitem{GS_SIAP2023}
%{A. Ghandriche and M. Sini}, {Simultaneous reconstruction of optical and acoustical properties in photoacoustic imaging using
%plasmonics}, SIAM J. Appl. Math.,  83 (2023),  1738--1765.

%\bibitem{Gaponenko:10}
%S.~Gaponenko.
%\newblock {\em {Introduction to Nanophotonics}}.
%\newblock Cambridge University Press, Cambridge, 2010.
\bibitem{Gohberg_book}
{\sc I. Gohberg, S. Goldberg, and  M. A. Kaashoek},  {\em Classes of Linear Operators  Vol. I},  Birkh\"auser Verlag, Basel, 1990.

\bibitem{Hill1975}
{\sc J. Hill},  {\em Load-deflection relations of bonded pre-compressed spherical rubber bush mountings}, Q. J. Mech. Appl. Math., 28 (1975), 261--270.

\bibitem{HT_IJSS2005}
{\sc J. Horton and G. Tupholme},
{\em Stiffness of spherical bonded rubber bush mountings}, Int. J. Solids Struct., 42 (2005), 3289--3297.

%\bibitem{JKMA23}
%{\sc Y.-G. Ji and H. Kang},
%{\em Spectral properties of the Neumann--Poincar\'e operator on
%rotationally symmetric domains},  Math. Ann., 387 (2023), 1105--1123.

%\bibitem{Hempel_CPDE2000}
%{R. Hempel and K. Lienau}, {Spectral properties of periodic media in the large coupling limit}, Comm. Partial Differential 
%Equations, 25 (2000) 1445--1470.

\bibitem{HJMA2013}
{\sc R. A. Horn and C. R. Johnson}, {\em Matrix Analysis},  Cambridge University Press, Cambridge, 2013.

\bibitem{HW_book}
{\sc G. C. Hsiao and W. L. Wendland}, {\em Boundary Integral Equations},  Springer-Verlag, Berlin, 2008.

%\bibitem{KKLSY_JLMS2016}
%{\sc H. Kang, K. Kim,  H. Lee, J. Shin, and S. Yu},
%{\em Spectral properties of the Neumann--Poincar\'e operator and uniformity of estimates for the conductivity equation with complex coefficients}, J. Lond. Math. Soc.,  93 (2016),  519--545.

\bibitem{Katpbook1995}
{\sc T. Kato}, {\em Perturbation Theory for Linear Operators}, Springer-Verlag, Berlin, 1995.

\bibitem{Khelif_PRE_2004}
{\sc A. Khelif, M. Wilm, V. Laude, S. Ballandras, and B. Djafari-Rouhani}, {\em Guided elastic waves along a rod defect of a two-dimensional phononic crystal}, Phys. Rev. E, 69 (2004), 067601.

%\bibitem{KM_MA2019}
%{\sc J. Kim, and M. Lim},
%{\em Electric field concentration in the presence of an inclusion with eccentric core-shell geometry}, Math. Ann., 373 (2019),  517--551.

%\bibitem{KDZIP24}
%{\sc L.~Kong, Y.~Deng, and L.~Zhu}, {\em Inverse conductivity problem with one measurement: uniqueness of multi-layer structures}, {\sl Inverse Problems}, 40 (2024), 085005.

\bibitem{KZDF_JCP}
{\sc L. Kong, L. Zhu, Y. Deng, and X. Fang}, {\em Enlargement of the localized resonant band gap by using multi-layer structures},  J. Comput. Phys., 518 (2024), 113308.

\bibitem{KS_book2001}
{\sc S. Krantz},  {\em Function Theory of Several Complex Variables}, AMS Chelsea Publishing, Providence, RI, 2001.

%\bibitem{KMKG_JVA2017}
%{\sc A. Krushynska, M. Miniaci, V. Kouznetsova, and M. Geers},
%{\em Multilayered inclusions in locally resonant metamaterials: Two-dimensional versus three-dimensional modeling},
%J. Vib. Acoust. 139 (2017), 024501.

\bibitem{FBTheory}
{\sc P. Kuchment}, {\em Floquet Theory for Partial Differential Equations},  Birkh\"auser-Verlag, Basel, Switzerland, 1993.

\bibitem{Kupradze_book1979}
{\sc V. Kupradze, T. Gegelia, M. Bashele\u{\i}shvili, and T. Burchuladze}, {\em Three-Dimensional Problems of the Mathematical Theory of Elasticity and Thermoelasticity}, North-Holland, Amsterdam-New York, 1979.

%\bibitem{Kushwaha_PRL1993}
%{\sc M. Kushwaha, P. Halevi, L. Dobrzynski and B. Djafari-Rouhani}, {\em Acoustic band structure of periodic elastic composites}, Phys. Rev. Lett. 71 (1993), 2022–5.

%\bibitem{LWSZ_NM2011}
%{\sc Y. Lai, Y. Wu, P. Sheng, and Z.  Zhang}, {\em Hybrid elastic solids}, Nat. Mater., 10 (2011), 620--624.

\bibitem{LPDV_PRE2007}
{\sc H. Larabi, Y. Pennec, B. Djafari-Rouhani, and J. Vasseur},
{\em Multicoaxial cylindrical inclusions in locally resonant phononic crystals}, Phys. Rev. E, 75, (2007) 066601.

\bibitem{NaturePhysics}
{\sc F. Lemoult, N. Kaina, M. Fink, and G. Lerosey}, {\em Wave propagation control at the deep subwavelength scale in metamaterials}, Nat. Phys., 9 (2013), 55--60.

%\bibitem{LVAPL09}
%{\sc V.~Leroy, A.~Bretagne, M.~Fink, H.~Willaime, P.~Tabeling and A.~Tourin}, {\em Design and characterization of bubble phononic crystals}, Appl. Phys. Lett.,  95 (2009), 171904.
%
%\bibitem{LSPS_JASM2008}
%{\sc V. Leroy, A. Strybulevych, J. H. Page, and M. G. Scanlon},
%{\em Sound velocity and attenuation in bubbly gels measured by
%transmission experiments},
%J. Acoust. Soc. Amer., 123, (2008), 1931--1940.

%\bibitem{LL_SIAM2016}
%{\sc H. Li and H. Liu}, {\em On anomalous localized resonance for the elastostatic system}, SIAM J. Math. Anal., 48 (2016), 3322--3344.

%\bibitem{LLZSIAM2022}
%{\sc H.~Li,  H.~Liu, and J. Zou},
%{\em Minnaert resonances for bubbles in soft elastic materials},
%SIAM J. Appl. Math., 82 (2022), 119--141.

%\bibitem{LXSIAM2017}
%{\sc H. Li and L. Xu},
%{\em Optimal estimates for the perfect conductivity problem with inclusions close to the boundary}, SIAM J. Math. Anal., 49 (2017),  3125--3142.

\bibitem{LLL_JMPA2018}
{\sc H. Li, J. Li, and H. Liu},
{\em On novel elastic structures inducing polariton resonances with finite frequencies and cloaking due to anomalous localized resonances}, J. Math. Pures Appl., 120 (2018), 195--219.

\bibitem{LXarXiv} 
{\sc H. Li and L. Xu}, {\em Resonant modes of two hard inclusions within a soft elastic material and their stress estimate},  J. Differential Equations, 453 (2026), 113822. 

%\bibitem{LZ_MMS2023}
%{\sc H. Li and Y. Zhao}, {\em The interaction between two close-to-touching convex acoustic subwavelength resonators}, Multiscale Model. Simul., 21 (2023), 804--826.

\bibitem{LZArxiv}
{\sc H. Li and J. Zou},
{\em Mathematical justifications of dipolar resonances with hard inclusions embedded in a soft elastic material}, SIAM J. Appl. Math., 85 (2025),  1810--1833.

%\bibitem{LL_SIMA2016}
%{\sc H. Li and H. Liu}, {\em On anomalous localized resonance for the elastostatic system}, SIAM J. Math. Anal., 48 (2016),  3322--3344.

\bibitem{LOF}
Lists of earthquakes, Wikipedia, \href{https://en.wikipedia.org/wiki/Lists_of_earthquakes}{https://en.wikipedia.org/wiki/Lists\_of\_earthquakes}. 

%\bibitem{hongyuliu_JE2021}
%{\sc H. Liu, W. Tsui, A. Wahab, and X. Wang},
%{\em Three-dimensional elastic scattering coefficients and enhancement of the elastic near cloaking}, J. Elasticity, 143 (2021), 111--146.

%\bibitem{LCS_PRB2002}
%{\sc Z. Liu, C. T. Chan, and P. Sheng}, {\em Three-component elastic wave band-gap material}, Phys. Rev. B, 65 (2002), 165116.

%\bibitem{LCS_PRB2005}
%{\sc Z. Liu, C. T. Chan, and P. Sheng}, {\em Analytic model of phononic crystals with local resonances}, Phys. Rev. B, 71 (2005), 014103.

\bibitem{hongyuliu_JE2021}
{\sc H. Liu, W. Tsui, A. Wahab, and X. Wang},
{\em Three-dimensional elastic scattering coefficients and enhancement of the elastic near cloaking}, J. Elasticity, 143 (2021), 111--146.

\bibitem{LZScience}
{\sc Z. Liu, X. Zhang, Y. Mao, Y.  Zhu, Z. Yang,  T. Chan, and P. Sheng}, {\em Locally resonant sonic materials}, Science, 289 (2000), 1734--1736.

%\bibitem{LLL16}
%{\sc H. Li, J. Li, and H. Liu},
%{\em On novel elastic structures inducing polariton resonances with finite frequencies and cloaking due to anomalous localized resonance},
%{\sl J. Math. Pures Appl.}, 120 (2018), 195--219.

%\bibitem{Lim_IJM_01}
%{\sc M. Lim}, {\em Symmetry of a boundary integral operator and a
%characterization of balls}, {\sl Illinois J. Math.}, 45 (2001), 537--543.

%\bibitem{Zhang2008}
%{\sc S. Zhang, D. Genov, C. Sun, and X. Zhang},
%{\em Cloaking of Matter Waves}, {\sl Phys. Rev. Lett.}, 100 (2008), 123002.

%\bibitem{Maier07}
%{\sc S.~A. Maier},
%{\em Plasmonics: Fundamentals and Applications}, (Springer, New York, 2007).

%\bibitem{MFZ2005}
%I.~D. Mayergoyz, D.~R. Fredkin, and Z.~Zhang.
%\newblock Electrostatic (plasmon) resonances in nanoparticles.
%\newblock {\em Phys. Rev. B}, 72(15):155412,  2005.

%\bibitem{MGWNNAP_PRSA06}
%{\sc G.~W. Milton and N.-A.~P. Nicorovici},
%{\em On the cloaking effects associated with anomalous localized resonance},
%{\sl Proc. R. Soc. A}, 2074 (2006), 3027--3059.

%\bibitem{Maheshwari_SDEE2022}
%{\sc H.K. Maheshwari and P. Rajagopal},  {\em Novel locally resonant and widely scalable seismic metamaterials for broadband mitigation of disturbances in the very low frequency range of 0-33 Hz}, Soil Dyn. Earthq. Eng., 161 (2022), 107409.

\bibitem{Maldovan_Na2013}
{\sc M. Maldovan}, {\em Sound and heat revolutions in phononics}, Nature, 
503 (2013), 209--217.

\bibitem{MPS_JMPA2022}
{\sc A. Mantile,  A. Posilicano, and M. Sini}, 
{\em On the origin of Minnaert resonances},
J. Math. Pures Appl., 165 (2022), 106--147.

\bibitem{MBKPD_PNAS2016}
{\sc K.H. Matlack, A. Bauhofer, S. Kr\"odel, A. Palermo, and C. Daraio}, {\em Composite 3D-printed metastructures for low-frequency and broadband vibration absorption}, Proc. Natl. Acad. Sci. USA, 113 (2016), 8386--8390.

\bibitem{WMCbook}
{\sc W. McLean}, {\em Strongly Elliptic Systems and Boundary Integral Equations}, Cambridge University Press, Cambridge, 2000.

%\bibitem{MKBPNJP}
%
%Miniaci, M. , Krushynska, A. , Bosia, F. , Pugno, N.M. , 2016. Large scale mechanical 
%metamaterials as seismic shields. New J. Phys. 18 (8), 083041 

%\bibitem{Meseguer_PRB1999}
%{\sc F. Meseguer, M. Holgado, D. Caballero, N. Benaches, J. S\'anchez-Dehesa,  C. L\'opez, and J. Llinares},  {\em Rayleigh-wave attenuation by a semi-infinite two-dimensional elastic-band-gap crystal}, Phys. Rev. B,  59 (1999), 12169.

%\bibitem{Min_1933}
%{\sc M. Minnaert}, {\em On musical air-bubbles and the sounds of running water},  Philos. Mag., 16 (1933), 235--248.

\bibitem{Morsebook1953}
{\sc P. M. Morse and H. Feshbach}, {\em Methods of Theoretical Physics},  (McGraw-Hill, New York, 1953),

%\bibitem{Muzamil_WM2023}
%{\sc M. Muzamil, H. Yang, R. Xu, Y. Zeng, P. Peng, and Q. Du},   {\em  A petal-cylindrical seismic metamaterial occupying low-frequency wide bandgaps in horizontally stratified soils}, Wave Motion, 122 (2023), 103197.

%\bibitem{CPBE2018}
%{\sc T. Ngo, A. Kashani, G. Imbalzano, K. Nguyen, and D. Hui},
%{\em Additive manufacturing (3D printing): A review of materials, methods, applications and challenges}, Compos. Part B: Eng., 143 (2018), 172--196.
\bibitem{Palermo_SDEE_18}
{\sc A. Palermo, M. Vitali, and A. Marzani},  {\em Metabarriers with multi-mass locally resonating units for broad band Rayleigh waves attenuation}, Soil Dyn. Earthq. Eng., 113 (2018), 265--277.

\bibitem{SEP1998}
{\sc B. Parlett}, {\em The Symmetric Eigenvalue Problem}, SIAM, Philadelphia, 1998.

%\bibitem{PRHN2003SCI}
%{\sc E.~Prodan, C.~Radloff, N.~Halas, and P.~Nordlander},
%{\em A hybridization model for the plasmon response of complex nanostructures},  Science, 302(2003), 419--422.

%\bibitem{SW_SIMA2022}
%{\sc M. Sini and H. Wang}, {\em The inverse source problem for the wave equation revisited: A new approach}, SIAM J. Math. Anal., 54 (2022), 5160--5181.

%\bibitem{PVDD_SCR2010}
%{\sc Y. Pennec, J. Vasseur, B. Djafari-Rouhani, L. Dobrzy\'nski, and P. Deymier}, {\em Two-dimensional phononic crystals: Examples and applications}, Surf. Sci. Rep., 65 (2010), 229--291.

\bibitem{Pennec_APL_2005}
{\sc Y. Pennec, B. Djafari-Rouhani, J. O. Vasseur, H. Larabi, A. Khelif,
A. Choujaa, S. Benchabane, and V. Laude}, {\em Acoustic channel drop tunneling in a phononic crystal}, Appl. Phys. Lett., 87 (2005), 261912.

%\bibitem{RMSOPRB22}
%{\sc M.~Ruiz and O.~Schnitzer},
%{\em Plasmonic resonances of slender nanometallic rings}, Phys. Rev. B, 105 (2022), 125412.

%\bibitem{PMSOPRSA19}
%{\sc M.~Ruiz and O.~Schnitzer},
%{\em Slender-body theory for plasmonic resonance},
%{\sl Proc. R. Soc. A}, 475 (2019), 20190294.

%\bibitem{sarid_challener_2010}
%{\sc D.~Sarid and W.~A. Challener}.
%{\em Modern Introduction to Surface Plasmons: Theory, Mathematica
%	Modeling, and Applications}, (Cambridge University Press, Cambridge, 2010).

%\bibitem{SPMW_AA2023}
%{\sc G. Szczepa\'nski, M.  Podle\'sna, L. Morzynski and A. W{\l}udarczyk}, {\em Investigation of the acoustic properties of a metamaterial with a multi-ring structure}, Arch. Acoust., 48 (2023), 497--507.

%\bibitem{JPT2001}
%{\sc J.-P. Tignol}.
% {\em {Galois' Theory of Algebraic Equations}},
%(World Scientific Publishing, River Edge, NJ, 2001).

\bibitem{sini_ARMA2025}
{\sc S. Senapati and M. Sini},  {\em Minnaert frequency and simultaneous reconstruction of the density, bulk and source in the time-domain wave equation}, Arch. Ration. Mech. Anal., 249 (2025), 48.

%\bibitem{WDSK25}
%{\sc Y. Wang, Y. Deng, F. Sun, and L. Kong}, {\em Mathematical theory and numerical method for subwavelength resonances in multi-layer high contrast elastic media}, J. Comput. Phys., 531 (2025), 113924.

\bibitem{Wu_IJMS2021}
{\sc X. Wu, Z. Wen, Y. Jin, T. Rabczuk, and B. Djafari-Rouhani}, 
{\em Broadband Rayleigh wave attenuation by gradient
metamaterials}, Int. J. Mech. Sci.,  205 (2021), 106592.

\bibitem{ANZIAM}
{\sc W.-C. Yueh and S. Cheng}, 
{\em Explicit eigenvalues and inverses of tridiagonal Toeplitz matrices with four perturbed corners},
ANZIAM J., 49 (2008),  361--387.

%\bibitem{ZH_PRB2009}
%{\sc X. Zhou and G. Hu}, {\em Analytic model of elastic metamaterials with local resonances}, Phys. Rev. B, 79 (2009), 195109.

\bibitem{Zeng_IJSS20202}
{\sc Y. Zeng, Y. Xu, H. Yang, M. Muzamil, R. Xu, K. Deng, P. Peng, and Q. Du},
{\em A Matryoshka-like 	seismic 	metamaterial 	with 	wide 	band-gap 	characteristics}, Int. J. Solids Struct., 185-186 (2020), 334--341.

%\bibitem{Zhao_IJMS2023}
%{\sc C. Zhao, C. Chen, C. Zeng, W. Bai, and J. Dai},  {\em Novel periodic pile barrier with low-frequency wide bandgap for Rayleigh waves}, Int. J. Mech. Sci., 243 (2023), 108006.

%\bibitem{YA_SIAMREV18}
%{\sc S.~Yu and H.~Ammari}, {\em Plasmonic interaction between nanospheres}, SIAM Review, 60 (2018), 356--385.

%\bibitem{YA_PNAS19}
%{\sc S.~Yu and H.~Ammari}, {\em Hybridization of singular plasmons via transformation optics}, Proc. Natl. Acad. Sci. USA, 116 (2019), 13785--13790.

\end {thebibliography}

\end{document}